\newcommand{\E}{\mathbb E}
\newcommand{\R}{\mathbb R}
\newcommand{\PP}{\mathbb P}
\newcommand{\eps}{\varepsilon}
\renewcommand{\O}{\mathcal{O}}
\newcommand{\F}{\mathcal{F}}
\newcommand{\C}{\mathcal{C}}
\newcommand{\var}{\mathrm{Var}}
\newcommand{\mG}{\mathcal{G}}
\renewcommand{\H}{\mathbb{H}}
\newcommand{\supp}{\text{supp}}
\newcommand{\Z}{\mathbb{Z}}
\DeclareMathOperator*{\argmin}{arg\,min}
\newtheorem{thm}{Theorem}
\newtheorem{condition}[thm]{Condition}
\newtheorem{lem}[thm]{Lemma}
\newtheorem{cor}[thm]{Corollary}
\newtheorem{defi}[thm]{Definition}
\newtheorem{prop}[thm]{Proposition}
\newtheorem{rk}[thm]{Remark}
\newtheorem*{assumption*}{Assumption}
\title[High frequency multidimensional diffusions]{Nonparametric Bayesian estimation in a multidimensional diffusion model with high frequency data}
\author{Marc Hoffmann}
\email{hoffmann@ceremade.dauphine.fr}
\author{Kolyan Ray}
\email{kolyan.ray@imperial.ac.uk}
\begin{document}

\maketitle

\vspace{-0.5cm}

\begin{center}
CEREMADE, Universit\'e Paris Dauphine-PSL and Imperial College London

\end{center}

\begin{abstract}
We consider nonparametric Bayesian inference in a multidimensional diffusion model with reflecting boundary conditions based on discrete high-frequency observations. We prove a general posterior contraction rate theorem in $L^2$-loss, which is applied to Gaussian priors. The resulting posteriors, as well as their posterior means, are shown to converge to the ground truth at the minimax optimal rate over H\"older smoothness classes in any dimension. Of independent interest and as part of our proofs, we show that certain frequentist penalized least squares estimators are also minimax optimal.\\

\noindent \textit{MSC2020 subject classifications}: 62G20, 62F15, 60J60.

\noindent \textit{Keywords}: Bayesian nonparametrics, multidimensional diffusions, high-frequency data, Gaussian processes, penalized least squares estimator.
\end{abstract}

\tableofcontents

\section{Introduction}

Diffusion models are widely used in applications, including in the physical and biological sciences, economics and finance, as well as for particle filters and emulators amongst many others purposes. Denote by $u(t,x)$ the density of a quantity that is diffusing in a closed system at time $t>0$ and location $x\in \O$ in bounded convex domain $\O\subseteq \R^d$ with smooth boundary $\partial \O$. Fick's laws of diffusion state that the resulting dynamics are governed by the evolution equation
\begin{equation}\label{eq:heat_equation}
\frac{\partial u}{\partial t} = \nabla \cdot (f\nabla u),
\end{equation}
with Neumann boundary conditions and where $f>0$ is the diffusivity of the possibly inhomogeneous medium $\O$. This describes the macroscopic behaviour of many particles in $\O$ undergoing diffusion governed by Brownian dynamics.

On a microscopic level, the individual behaviour of a single such particle can be modelled as the multidimensional diffusion process $(X_t)_{t\geq 0}$ on $\O$ that is reflected upon hitting the boundary $\partial \mathcal O$ and arises as the solution to the SDE:
\begin{equation} \label{eq: tanaka}
\begin{split}
X_t & = X_0 + \int_0^t \nabla f(X_s)ds+ \int_0^t \sqrt{2f(X_s)}dB_s+ \ell_t, 
\\
& \ell_t  = \int_0^t n(X_s)d|\ell|_s, \qquad |\ell|_t = \int_0^t {\mathbf 1}_{\{X_s \in \partial \mathcal O\}} d|\ell|_s,
\end{split}
\end{equation}
where $(B_t)_{t \geq 0}$ is a standard $d$-dimensional Brownian motion, $(\ell_t)_{t \geq 0}$ a bounded variation process with $\ell_0=0$ that accounts for the boundary reflection and $n(x)$ denotes the unit inward normal to the boundary $\partial \mathcal O$ at $x$. We assume
$$f:\O \to [f_{\min},\infty)$$
is a positive function with $f_{\min} > 0$ and take $X_0$ to be uniformly distributed on $\mathcal O$, which is the invariant measure of the SDE \eqref{eq: tanaka}, and hence $X$ is started at stationarity. If $\partial \O$ and $\nabla f$ are continuously differentiable, existence and uniqueness of the solution to \eqref{eq: tanaka} follows from \cite{lions1984stochastic}. These conditions, together with the convexity of the domain $\mathcal O$, can be relaxed, but will be sufficient for the level of generality intended here.

Consider observations regularly collected at discrete time points, resulting in data of the form 
\begin{equation} \label{eq: data}
X^N = (X_0, X_{D}, \ldots, X_{ND})
\end{equation}
with sampling interval $D>0$. The statistical problem at hand is then to make inference on the diffusivity $f$ based on discrete observations of the location of a single particle corresponding to the macroscopic diffusion model \eqref{eq:heat_equation}. We study nonparametric Bayesian estimation of $f$ in model \eqref{eq: tanaka} using Gaussian process priors in the high-frequency and long term sampling regime, where $D = D_N \to 0$ such that the time-horizon $ND_N \to\infty$. In particular, we establish minimax optimal frequentist contraction rates for the posterior for $f$ about the `ground truth' $f_0$ generating the data, thereby establishing theoretical guarantees for this method.

A statistical feature of model \eqref{eq: tanaka} is that its generator $\mathcal L_f \phi = \nabla \cdot (f \nabla \phi) $ is a divergence form operator, which implies that the invariant measure $\mu_f$ is simply the uniform measure on the domain $\O$ for all $f$:
\begin{equation}\label{eq:invariant}
\mu_f(A) = \lim_{T\to\infty} \frac{1}{T}\int_0^T 1_A(X_t) dt \propto \text{vol}(A),
\end{equation}
for any measurable $A \subset \O$, so that the average asymptotic time spent by the process $X$ in different regions does not contain information about $f$. By Markovianity, all information about $f$ is thus contained in the transitions $X_{(i-1)D} \mapsto X_{iD}$, which motivates using a likelihood based approach, such as the Bayesian one we pursue here. In particular, this contrasts with several other well-studied diffusion models where one can identify the relevant model parameters from the invariant measure $\mu_f$, see e.g. \cite{DR07,S18,GR20,AWS22}. 

To better understand our approach, it is helpful to consider the one-dimensional diffusion model,
\begin{equation}\label{eq:diff_general}
dX_t' = b(X_t') dt + \sigma(X_t') dW_t,
\end{equation}
where $b:\R \to \R$ is the drift and $\sigma:\R \to \R \otimes \R^d$ is the diffusion coefficient. In the high frequency case where $b$ and $\sigma$ are both $s$-smooth, their minimax estimation rates scale like $(ND)^{-s/(2s+1)}$ and $N^{-s/(2s+1)}$, respectively, and one can thus estimate the diffusivity $\sigma$ at a faster rate than the drift $b$ \cite{H99}. In model \eqref{eq: tanaka}, where these parameters are coupled, the diffusivity term is similarly more informative about $f$ than the drift, which must be exploited to obtain minimax optimal rates.
While the Bayesian methodology does not explicitly make this distinction, simply placing a prior on $f$ as usual, our proofs themselves heavily rely on this idea. One can therefore view rate-optimal estimation of $f$ in model \eqref{eq: tanaka} as qualitatively analogous to estimation of $\sigma$ in model \eqref{eq:diff_general}. Our work is thus relevant to the literature on estimating the diffusion coefficient \cite{H99, CGCR07, schmisser2012non} under high-frequency sampling. Our results indicate that Bayesian methods can correctly pick up this feature of the data via the likelihood.\\

We prove a general contraction rate theorem for $f$ under approximation-theoretic conditions on the prior following the classic testing approach of Bayesian nonparametrics \cite{GV17}, which requires:
\begin{itemize}
\item[(i)] the integrated log-likelihood process is not too small with high probability;
\item[(ii)] the existence of suitable tests with exponentially decaying type-II errors.
\end{itemize}
Since the present high-frequency setting involves increasingly correlated observations, establishing (i) is much more involved than in the i.i.d. setting, where the log-likelihood tensorizes \cite{GGV00}. In particular, we must deal with the full dependent log-likelihood ratio process as a whole. Following the intuition from the continuous observation setting \cite{VMVVVZ06,NR20,GR20}, we employ martingale techniques to study an approximation of the log-likelihood which allows to deal with the dependence structure of the Markov chain. Making this approximation both precise and uniform is a key challenge, and our proof relies on refined small time expansions of the transition density or heat kernel via Riemannian geometry \cite{azencott1984densite,berline2003heat,B20}.

Turning to (ii), we construct suitable tests by establishing concentration inequalities for frequentist estimators \cite{GN11}. Given the above discussion regarding model \eqref{eq:diff_general}, our estimators must draw information from the diffusivity term to be minimax optimal. For this task, we follow the ideas of Comte et al. \cite{CGCR07} in the scalar case of using penalized least squares estimators. 
As a by-product, we obtain frequentist estimators that we prove to be optimal in a minimax sense. To the best of our knowledge, these are the first minimax results for estimating the diffusion coefficient in a multidimensional setting.
We finally apply our general contraction rate theorem to concrete Gaussian priors, such as the Mat\'ern process.
Gaussian priors are widely used in practice \cite{RW06} and have been applied in multiple ways to diffusion models, see for instance \cite{PSVZ13,RBO13,BRO18}. While we do not focus on computational issues here, note that posterior sampling based on discrete data is possible using Euler–Maruyama schemes \cite{KP92} or other methods \cite{BPRF06,PPRS12,BFS16,vdMS17,vdMS17b}.

Regarding related works, theoretical properties of Bayesian nonparametric methods in the scalar ($d=1$) diffusion model \eqref{eq:diff_general} have been well-studied, see for instance \cite{VMVVVZ06,PSVZ13,VMVZ13,VWVZ16,NS17,A18}. In the multivariate setting, contraction rates were recently obtained under continuous observations \cite{NR20,GR20} and only consistency for discrete observations \cite{GS14,N22b}. While the continuous observation model provides some high-level intuition regarding the use of martingales techniques in our proofs, we note it is fundamentally unsuited as a model for statistical estimation of diffusion coefficients, see Section 3.3 in \cite{GR20} for further discussion. For discrete observations, \cite{N22b} extends the one-dimensional results of \cite{GHR04,NS17} to establishes posterior consistency in the same multidimensional model \eqref{eq: tanaka} in the low-frequency setting using PDE and spectral techniques, showing that Bayesian methods can in principle adapt to the sampling regime, see \cite{A18} for further discussion. In the present high-frequency setup, a spectral approach will not give sharp rates and we instead use refined tools from stochastic analysis (in turn unsuited to the low-frequency setting) to obtain minimax optimal contraction rates.

Inference for the diffusivity $f$ has also been studied in other contexts, notably in the inverse problems literature where one often studies a time-independent version of the PDE \eqref{eq:heat_equation} corresponding to a `steady state' measurement of diffusion, see Section 1.1.2 of \cite{N22}. One can then consider an idealized statistical model where one observes the solution to this PDE under additive noise that does not typically depend on $f$ \cite{S10,N22}. As well as being a simplified observation model, this will typically yield ill-posed convergence rates in terms of the number of observations $N$. In contrast, we show here that in the high-frequency Markovian setting, one recovers the `usual' (and faster) nonparametric convergence rate $N^{-s/(2s+d)}$ for an $s$-smooth $f$. As discussed above, this is possible because our methods exploit information about $f$ contained in the noise structure of \eqref{eq: tanaka} that is absent in these simplified models. While not surprising from a high-frequency diffusion perspective, this does highlight how studying simplified noise models in inverse problems can lead to qualitatively different statistical behaviour, and that embedding unknown parameters into the noise model need not make the problem more difficult, and can even do the opposite.

\section{Main results}

\subsection{Preliminary setup for a reflected diffusion model in $\R^d$}\label{sec:setup}

We now rigorously set up the statistical framework for our results in model \eqref{eq: tanaka}.
To account for the boundary behaviour, we assume that $f$ is known to be 1 in an open neighbourhood of $\partial \O$. More precisely, for $\mathcal K\subset \O $ a known compact set with $\mathsf{dist}(\mathcal K, \partial \O) >0$, define the parameter space
\begin{equation}\label{eq:F0}
\F_0 = \F_0(\mathcal K,d,f_{\min}) =  \left\{ f \in \C^{\alpha} (\O): \inf_{x\in \O} f(x) \geq 2f_{\min} ~ \text{and} ~ f(x) = 1 \text{ for all }x \in\mathcal O\setminus \mathcal K \right\},
\end{equation}
where $0<f_{\min}<1/2$  and the minimal smoothness equals
\begin{equation}\label{eq:alpha}
\alpha = \alpha_d = \max \left( 4 , 2 \left\lfloor d/4+1/2 \right\rfloor \right).
\end{equation}
The minimal value $\alpha_d$ scales like $d/2$ for large dimension $d$ and is needed to employ suitable bounds on the transition densities of the diffusion \cite{C03}, see Section \ref{sec:F_class} for further discussion about the parameter space $\F_0$. In particular, since $\alpha \geq 4$, this implies existence and uniqueness of the solution to \eqref{eq: tanaka}.

We consider the statistical experiment generated by the discrete observations
$$X^N = (X_0, X_{D}, \ldots, X_{ND})$$
from \eqref{eq: tanaka} with sampling rate $D_N = D>0$. We work in the high-frequency and long-term sampling regime as stated in the following assumption.

\begin{assumption*}[Sampling regime]
Suppose that $D = D_N \to 0$ such that $ND \to \infty$, but $ND^2 \to 0$ as $N\to\infty$.
\end{assumption*}
\noindent This is the minimal assumption for all our results and will be assumed throughout the paper without further mention.

Let $\PP_f$ (simply $\PP$ when no confusion may arise) denote the unique law of the Markov process $(X_t)_{t \geq 0}$ arising from \eqref{eq: tanaka}. Consider the corresponding infinitesimal generator
\begin{equation} \label{eq: def generator}
\mathcal L_f(\phi) = \nabla \cdot (f \nabla \phi) = \nabla f \cdot \nabla \phi + f \Delta \phi = \sum_{j = 1}^d \frac{\partial}{\partial x_j}\Big(f \frac{\partial}{\partial x_j}\phi\Big),
\end{equation} 
densely defined on continuous functions. For such a divergence form operator, the unique invariant probability measure can be explicitly computed as the uniform measure on $\mathcal{O}$ as in \eqref{eq:invariant},
see p. 46 of \cite{BGL14}, so that the process $(X_t)_{t \geq 0}$ in \eqref{eq: tanaka} is stationary. Without loss of generality, we will assume that $\mathrm{vol}(\mathcal O)=1$. Let $(P_{f,t})_{t \geq 0}$ denote the family of transition operators associated to $(X_t)_{t \geq 0}$, each of which admits a transition density $p_{f,t}(x,y)$ on $\mathcal O \times \mathcal O$, so that for every real-valued and bounded function $\phi:\O \to \R$,
\begin{equation*}\label{eq:transition}
P_{f,t}\phi(x) = \E_{f}\big[\phi(X_t)\,|\,X_0=x\big] = \int_{\mathcal O}\phi(y)p_{f,t}(x,y)dy.
\end{equation*}
In particular $p_{f,t}$ arises as the fundamental solution to the heat equation \eqref{eq:heat_equation} with Neumann boundary conditions. 
It can be shown that there exists $\lambda_f >0$ such that $\|P_{f,t} \phi \|_{2} \leq \mathrm{e}^{-\lambda_f t}\|\phi \|_{2}$
for every $\phi: \mathcal O \rightarrow \R$ such that $\int_{\mathcal O}\phi d\mu_f = \int_{\mathcal{O}} \phi dx = 0$, where $\lambda_f$ satisfies $\lambda_f \geq f_{\min} /p_\O >0$ with $p_\O>0$ the Poincar\'e constant for the domain $\O$, see for example Section 3.1 of \cite{N22b}.  This implies that the transition operator $P_{f,D}$ of the discrete time Markov chain $X_0,X_D,X_{2D},\dots$ has first non-trivial eigenvalue
\begin{equation}\label{eq:spectral_gap}
1-e^{-D\lambda_f} \geq rD >0,
\end{equation}
for some $r=r(f_{\min},p_\O)>0$, namely the Markov chain has a spectral gap decreasing linearly in the step size $D\to 0$. 

From the above, we obtain the likelihood
$$e^{\ell_N(f)} = e^{\ell(f;X_0,\ldots, X_{ND})} =  \prod_{i = 1}^N  p_{f,D}(X_{(i-1)D}, X_{iD})$$
based on observations $X^N = (X_0,X_D,\dots,X_{ND})$. We consider a Bayesian approach by placing on $f$ a possibly $N$-dependent prior $\Pi= \Pi_N$, supported on some set $\widetilde{\mathcal{F}} \subseteq \{f\in \C^2(\O): \inf_{x\in \O} f(x) \geq f_{\min}\}$, leading to the posterior distribution
\begin{align*}\label{eq:posterior}
\Pi(A|X_0,X_D,\dots,X_{ND}) &= \frac{\int_{A} \prod_{i = 1}^N p_{f,D}(X_{(i-1)D}, X_{iD}) d\Pi(f) }{\int_{\widetilde \F} \prod_{i = 1}^N p_{f,D}(X_{(i-1)D}, X_{iD}) d\Pi(f)} , \qquad  A\subseteq \widetilde{\F}~ \text{  measurable}.
\end{align*}
In the following, we will study frequentist contraction rates for the posterior for $f$ about the `ground truth' $f_0$ assumed to generate the data $X^N$ in \eqref{eq: tanaka}.

\textbf{Additional notation and function spaces.} Let $|\cdot|$ denote the usual Euclidean norm on $\R^d$. For a multi-index $j =(j_1,\dots,j_d) \in \mathbb{N}^d$, set $|j| = j_1+\dots+j_d$ and consider the resulting partial differential operator $\partial^j  = \frac{\partial^{|j|}}{\partial_1^{j_1}\ldots \partial_d^{j_d}}$.
For integer $k \geq 0$, let $\C^k(\O)$ denote the space of $k$-times differentiable functions on $\O$ with uniformly continuous derivatives, while $\C(\O) = \C^0(\O)$ denotes the space of continuous function equipped with the supremum norm $\|\cdot\|_\infty$. For non-integer $\beta>0$, set
$$\C^\beta (\O) = \left\{ f \in C^{\lfloor \beta\rfloor}(\O): \sup_{x,y\in \O: x\neq y} \frac{|\partial^j f(x) - \partial^j f(y)|}{|x-y|^{\beta - \lfloor\beta\rfloor}}<\infty \quad \text{for all } |j|=\lfloor \beta\rfloor \right\},$$
where $\lfloor \beta \rfloor$ denotes the largest integer less than or equal to $\beta$. For $\beta\geq 0$, $\C^\beta(\O)$ is equipped with the usual norm
$$\|f\|_{\C^\beta} = \|f\|_{\mathcal C^\beta(\mathcal O)} = \sum_{|j|=0}^{\lfloor \beta \rfloor} \sup_{x \in \mathcal O}|\partial^j f(x)| + \sum_{|j| = \lfloor \beta \rfloor} \sup_{x,y\in \O:x\neq y} \frac{|\partial^j f(x) - \partial^j f(y)|}{|x-y|^{\beta - \lfloor\beta\rfloor}},$$
where the second sum is removed for integer $\beta$. In particular, $\|f\|_\infty =  \|f\|_{\mathcal C^0}$, and the norms $\|\cdot\|_{\mathcal C^\beta}$ are non-decreasing in $\beta$. 

Write $\langle \cdot , \cdot \rangle_2$ for the usual inner product for $L^2 = L^2(\O)$.
For integer $s\geq 0$, define the Sobolev space on $\O$ by
$$H^s(\O) = \left\{ f \in L^2(\O): \partial^j f \text{ exists weakly and } \partial^j f \in L^2(\O) \text{ for all } |j|\leq s \right\} $$
equipped with the inner product
$$\langle f,g\rangle_{H^s(\O)} = \sum_{|j|\leq s} \langle \partial^j f, \partial^j g\rangle_{2}.$$
For non-integer $s\geq 0$, one defines $H^s(\O)$ by interpolation \cite{LM73,T95}. When no confusion may arise, we will often drop the explicit reference to the domain $\O$ in the notation.

We will repeatedly use positive quantities that do  depend on some parameters of the model, and that we informally call constants, and that may vary from line to line.
These never depend on $N$, but do usually depend on the dimension $d$ of the ambient space. Other relevant dependences may be emphasised by a subscript, such as $C_f$, $C_{\|f\|_{\mathcal C^k}}$ or $C_{f,f_0}$. The notation $A_N \lesssim B_N$ means $A_N\leq CB_N$ for every $N \geq 1$, where $C$ is a ``constant" according to our informal terminology. For real numbers $a,b$, let $a \wedge b$ and $a\vee b$ denote the minimum and maximum of $a$ and $b$, respectively. We also sometimes write $\PP(A,B) = \PP(A \cap B)$ to shorten notation.

\subsection{Gaussian process priors}\label{sec:GP}

Gaussian priors are widely used in diffusion models and we investigate here their theoretical frequentist convergence rates. We assign to $f$ a prior based on a Gaussian process, which must be modified to account for the constrained parameter space $\F_0$ in \eqref{eq:F0}, in particular that $f\geq 2f_{\min}$ and $f=1$ near the boundary. We therefore employ an exponential link function $\Phi:\R \to (f_{\min},\infty)$, similar to those used in density estimation \cite{GV17} or inverse problems \cite{S10}:
\begin{equation}\label{eq:link}
\Phi(x) = f_{\min} + (1-f_{\min}) e^x,
\end{equation}
which is strictly increasing on $\R$ with $\Phi(0) = 1$.

We now detail our full prior construction, starting from an underlying Gaussian process $V$. For definitions and background material on Gaussian processes and their associated RKHS, see Chapter 11 of \cite{GV17} or \cite{RW06}. 

\begin{condition}\label{cond:RKHS}
Let $\Pi_V = \Pi_{V,N}$ be a mean-zero Gaussian Borel probability measure on the Banach space $\C(\O)$ that is supported on a separable measurable linear subspace of $\C^4 (\O)$, and assume that its reproducing kernel Hilbert space (RKHS) $(\H_V,\|\cdot\|_{\H_V})$ embeds continuously into the Sobolev space $H^s(\O)$ for some $s\geq 4$, i.e. $\H_V \hookrightarrow H^s(\O)$. 
\end{condition}

Recall that any $f_0 \in \F_0$ satisfies $\{x\in \O: f_0(x) \neq 1\} \subseteq \mathcal K$ by \eqref{eq:F0}, where $\mathcal K\subset \O $ is a known compact set with $\mathsf{dist}(\mathcal K, \partial \O) >0$. There thus exists $\delta>0$ and an open set $\O_0$ having smooth $\mathcal C^\infty$-boundary $\partial \O_0$ such that $\mathcal K \subsetneq \O_0 \subsetneq \O$ and
\begin{equation}\label{eq:delta}
\mathsf{dist}(\mathcal K, \partial \O_0) \geq \delta \qquad  \text{and} \qquad  \mathsf{dist}(\O_0, \partial \O) \geq \delta,
\end{equation}
see, e.g., the proof of Proposition 8.2.1 in \cite{D08}. A simple example showing the relationship between such sets is plotted in Figure \ref{fig:domain}.

\begin{figure}
\includegraphics[width=0.6\textwidth]{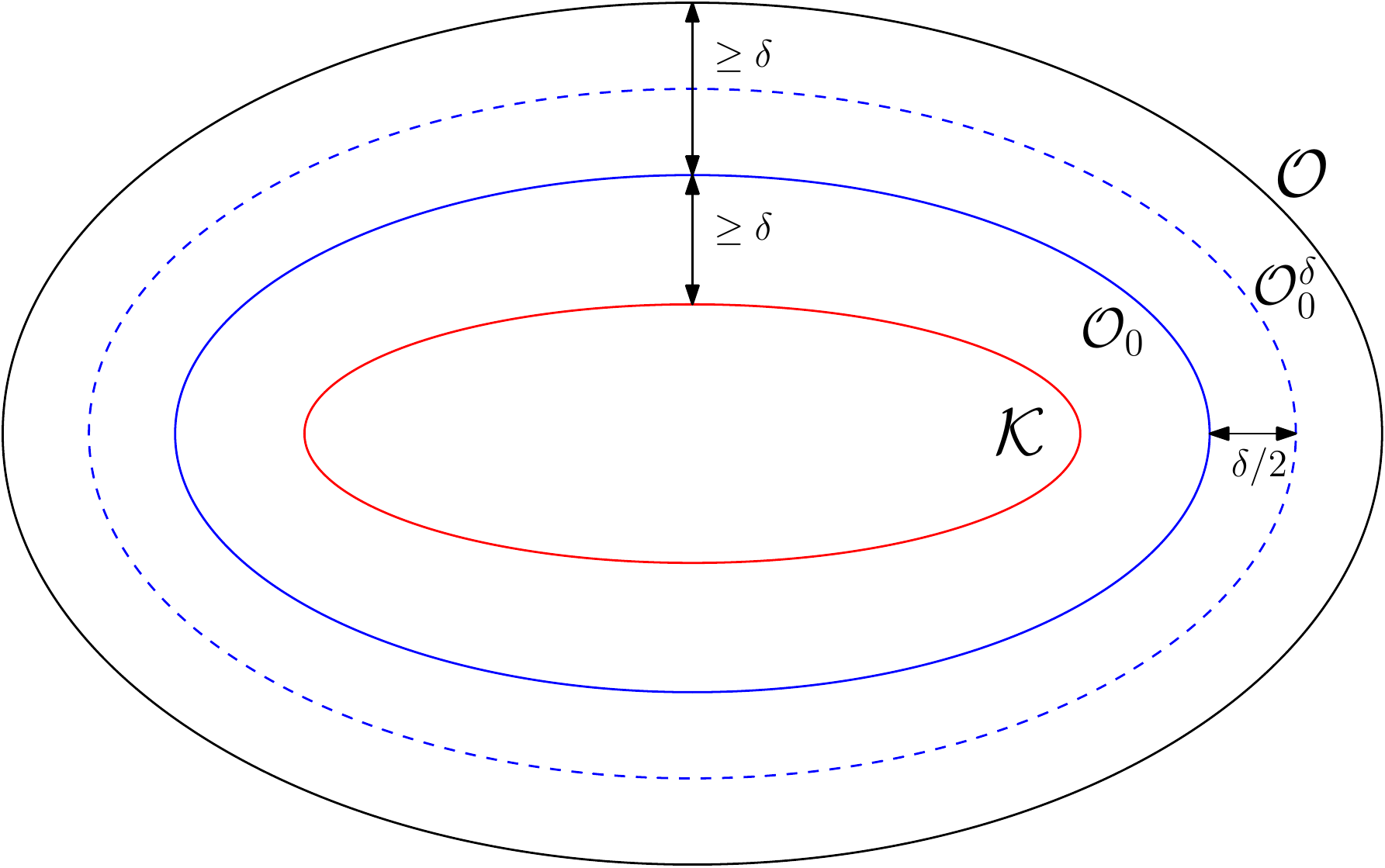}
\caption{A simple example of a suitable domain $\O$ (black), the region $\mathcal{K}$ (red) where the ground truth $f_0 \in \mathcal{F}_0$ is not equal to 1, and a set $\mathcal{O}_0$ (blue) that $\delta$-separates these sets as in \eqref{eq:delta}. The $\delta/2$-enlargement $\O_0^\delta$ (dashed blue) of $\O_0$ given in \eqref{eq: def offset} is also plotted.}\label{fig:domain}
\end{figure}

Let $\chi\in \C^\infty(\O)$ be a smooth bounded cutoff function such that $\chi \equiv 1$ on $\mathcal K$ and $\chi \equiv 0$ outside $\O_0$. We take as prior distribution $\Pi = \Pi_N$ for $f$ the law of the random function
\begin{equation}\label{eq:prior}
f(x) = \Phi ( \chi(x) W(x)), \qquad \qquad W(x) = \frac{V(x)}{N^{d/(4s+2d)}}.
\end{equation}
It follows that $W$ is again a mean-zero Gaussian process with the same support and RKHS as $V$, but with rescaled covariance function. The cutoff function $\chi$ ensures that $f$ transitions smoothly to take value 1 near the domain boundary. Since the sample paths of $V$ are in $\C^4(\O)$ almost surely under the prior by Condition \ref{cond:RKHS}, then $\Pi(\F)=1$ for
\begin{equation}\label{eq:F}
\mathcal F = \left\{f \in \mathcal \C^4(\mathcal O): \inf_{x\in \O} f(x) \geq f_{\min} ~ \text{and} ~ f(x) = 1 \text{ for all }x \in\mathcal O\setminus \mathcal O_0 \right\}.
\end{equation}
Note that $\F_0 \subsetneq \F$ since $\mathcal K \subsetneq \O_0$, so that the prior lives on a slightly larger parameter space than $\F_0$, see Section \ref{sec:F_class} for further discussion. Moreover, since $\Phi^{-1}(y) = \log \frac{y-f_{\min}}{1-f_{\min}}$ has uniformly bounded derivatives of all orders on $(2f_{\min},\infty)$, we have that for any $f_0 \in \F_0 \cap \C^s(\O)$, the function $w_0 = \Phi^{-1}(f_0)$ is also in $\C^s(\O)$ with $\supp(w_0) \subseteq \mathcal K$.

The following is the main result of our paper, establishing contraction rates for these rescaled Gaussian priors.

\begin{thm}\label{thm:GP}
Consider the sampling interval $D = N^{-a}$ for some $a\in(1/2,1)$ and suppose $s>s_{d,a}^*$, where
\begin{equation}\label{eq:s_star}
s_{d,a}^* =  \max \left( 4+\frac{d}{2},  \frac{2-ad}{2a-1}, \frac{d(1+a)}{2(1-a)}  \right).
\end{equation}
Let $\Pi = \Pi_N$ denote the prior \eqref{eq:prior} with mean-zero Gaussian process $V \sim \Pi_V$ having RKHS $\H_V$ and satisfying Condition \ref{cond:RKHS} for this $s$. Let $f_0 \in \F_0 \cap \C^s(\O)$, set $w_0 = \Phi^{-1}(f_0)$ and suppose that there exists a sequence of functions $v_{0,N}\in \H_V$ in the RKHS of $V$ such that
\begin{itemize}
\item[(i)] $\|v_{0,N}\|_{\H_V} = O(1)$,
\item[(ii)] $\|\chi v_{0,N}\|_{\C^{\alpha}} = O(1)$ for $\alpha = \max \left( 4 , 2 \left\lfloor d/4+1/2 \right\rfloor \right)$,
\item[(iii)] $\|w_0 - \chi v_{0,N}\|_{\C^k}  = O(N^{-\frac{s-k}{2s+d}})$ for $k=0,1,2,3$,
\end{itemize}
as $N\to\infty$. Then for $M>0$ large enough, as $N \to \infty$,
$$\E_{f_0} \Pi(f: \|f-f_0\|_2 \geq M N^{-\frac{s}{2s+d}} | X_0,X_D,\dots,X_{ND}) \to 0.$$
\end{thm}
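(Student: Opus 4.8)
The plan is to deduce Theorem~\ref{thm:GP} from a general contraction rate theorem for the posterior (the ``master theorem'' alluded to in the introduction) by verifying its hypotheses for the rescaled Gaussian prior \eqref{eq:prior}. The general theorem requires, at rate $\eps_N = N^{-s/(2s+d)}$ (with possibly a slightly larger rate $\bar\eps_N$ for the prior mass / KL condition): (a) a \emph{prior mass} lower bound $\Pi(f:\,\text{KL-type neighbourhood of }f_0) \geq e^{-c N \eps_N^2}$ controlling the denominator of the posterior; (b) a \emph{sieve} $\F_N\subseteq\F$ with $\Pi(\F_N^c)\leq e^{-C N\eps_N^2}$ for $C$ large; and (c) the \emph{existence of tests} separating $f_0$ from $\{f\in\F_N: \|f-f_0\|_2\geq M\eps_N\}$ with exponentially small type-II errors, which by the earlier discussion will come from concentration inequalities for the penalized least-squares estimator. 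The link $f=\Phi(\chi W)$ is smooth with bounded derivatives on the relevant range, so all statements about $f$ near $f_0$ translate into statements about $w=\chi W$ near $w_0=\Phi^{-1}(f_0)$, and the $\C^k$-Lipschitz property of $\Phi,\Phi^{-1}$ lets me pass freely between $\|f-f_0\|_{\C^k}$ and $\|w-w_0\|_{\C^k}$ for $k\leq 3$.

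For the prior mass bound (a), the standard Gaussian small-ball machinery (Chapter~11 of \cite{GV17}) gives, for the rescaled process $W=V/N^{d/(4s+2d)}$,
\[
-\log \Pi_W\big(\|W - w_0\|_{*} \leq \bar\eps_N\big) \;\lesssim\; \inf_{\substack{h\in\H_V \\ \|h-w_0\|_{*}\leq \bar\eps_N/2}} \|h\|_{\H_W}^2 \;+\; -\log\Pi_W\big(\|W\|_{*}\leq \bar\eps_N/2\big),
\]
where $\|\cdot\|_*$ is whatever norm controls the relevant statistical divergence (here a $\C^k$-type norm, $k\leq 3$, which is why hypothesis (iii) is stated in $\C^k$ rather than $L^2$). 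Hypotheses (i)--(iii) are tailored precisely so that $v_{0,N}$ plays the role of the RKHS approximant: $\|\chi v_{0,N}\|_{\H_W}^2 = N^{d/(2s+d)}\|\chi v_{0,N}\|_{\H_V}^2 \lesssim N^{d/(2s+d)} = N\eps_N^2$ by (i), while (iii) ensures $\chi v_{0,N}$ approximates $w_0$ in the needed norms at rate $\lesssim \eps_N$ (or a log-factor-inflated rate $\bar\eps_N$), and (ii) together with Condition~\ref{cond:RKHS} controls the centred small-ball term via the metric entropy of RKHS balls in $H^s$ (Sobolev embedding $H^s\hookrightarrow\C^\beta$). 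The first step is to translate the KL-type neighbourhood required by the master theorem — which in the high-frequency diffusion setting is not the $L^2$ ball but involves the (approximate) log-likelihood ratio — into a $\C^k$-neighbourhood of $w_0$; this is where the small-time heat-kernel expansions quoted in the introduction feed in, and it is the most delicate point of the argument since it governs exactly which norms appear in (iii). For the sieve (b), I take $\F_N$ to be the image under $f=\Phi(\chi\cdot)$ of a ball $\{w: \|w\|_{\C^\beta}\leq R_N\}$ for suitable $\beta\leq 4$ and $R_N$ a power of $N$, intersected with a large $\H_W$-ball; Borell--TIS and the Gaussian concentration of $\|V\|_{\C^4}$ give the required prior-mass bound on the complement, and one checks $\F_N\subseteq\F$ so the posterior analysis stays inside the admissible class.

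For the testing condition (c), the key input is the minimax-optimal penalized least-squares estimator $\hat f_N$ from the frequentist part of the paper, for which (by the cited concentration inequalities, \cite{GN11,CGCR07}) one has $\PP_{f_0}(\|\hat f_N - f_0\|_2 \geq L\eps_N)$ exponentially small and, for $f_1$ with $\|f_1-f_0\|_2\geq M\eps_N$ and $M$ large, $\PP_{f_1}(\|\hat f_N - f_1\|_2 < (M/3)\eps_N)$ exponentially small uniformly over the sieve; the test $\psi_N = \mathbf 1\{\|\hat f_N - f_0\|_2 > (M/2)\eps_N\}$ then has the required error bounds, using the sieve to discretise the alternative and a union bound against the metric entropy of $\F_N$ in $\C^4$ (polynomial in $N$, hence dominated by the exponential). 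Assembling (a)--(c) in the general contraction theorem yields $\E_{f_0}\Pi(\|f-f_0\|_2\geq M\eps_N\mid X^N)\to0$. The main obstacle is (a) — specifically, controlling the denominator of the posterior, i.e.\ showing the integrated log-likelihood is not too small — because in this increasingly-correlated high-frequency regime the log-likelihood does not tensorise; one must analyse the full dependent log-likelihood ratio process via the martingale approximation and the Riemannian small-time heat-kernel expansions, and crucially this is where the ``diffusivity is more informative than drift'' phenomenon must be exploited to land at the faster rate $N^{-s/(2s+d)}$ rather than the drift-type rate. The conditions $s>s_{d,a}^*$ and the admissibility thresholds $ND\to\infty$, $ND^2\to0$ are exactly what make the remainder terms in that expansion negligible against $N\eps_N^2$.
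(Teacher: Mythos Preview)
Your proposal is correct in outline and matches the paper's approach: verify the general contraction theorem (Theorem~\ref{thm:post_contract}) via (a) the small-ball bound $\Pi(\C_N)\geq e^{-CN\eps_N^2}$ using the Cameron--Martin shift by $v_{0,N}$, (b) a sieve with exponentially small prior complement, and (c) plug-in tests based on the least-squares estimator $\widehat f_N$.

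One genuine slip: your sieve ``intersected with a large $\H_W$-ball'' cannot work, since any Gaussian measure assigns mass zero to its own RKHS. The paper's sieve is instead $\mathcal W_N=\{W=W_1+W_2:\|W_1\|_\infty\leq\eps_N,\ \|W_2\|_{H^s}\leq M,\ \|W\|_{\C^4}\leq M\}$ (Lemma~\ref{lem:prior_concentrate}), i.e.\ an $\eps_N$-enlargement in $\|\cdot\|_\infty$ of an $H^s$-ball intersected with a $\C^4$-ball, whose complement has the required exponentially small mass by standard Borell-type concentration. The $W_2\in H^s$ component is exactly what delivers the wavelet-bias bounds $\|f-\overline P_Jf\|_2\lesssim\xi_N$ and $\|f-\overline P_Jf\|_\infty\lesssim R_J$ required by the exponential inequality for $\widehat f_N$ (Theorem~\ref{thm:exp_inequality}), after passing through the link function via Lemma~\ref{lem:bias}; a pure $\C^\beta$-ball with $\beta\leq 4<s$ would not give these.

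Two smaller points. For (c), no covering or union bound over the alternative is needed: the concentration of $\widehat f_N$ around $\overline P_Jf$ in Theorem~\ref{thm:exp_inequality} is already uniform over $f$ in the sieve, so the single test $\Psi_N=\{\|\widehat f_N-f_0\|_2\geq\widetilde M\xi_N\}$ suffices directly (Lemma~\ref{lem:tests_exist}). For (a), the set $\C_N$ involves simultaneous constraints in $\|\cdot\|_{\C^k}$, $k=0,1,2,3$, together with a $\C^\alpha$-bound; rather than a joint concentration function, the paper decouples these after the Cameron--Martin shift using the Gaussian correlation inequality (see \eqref{eq:GP_SB}), then bounds each centred small-ball probability separately via the metric-entropy estimate $\log\overline N(\H_{V,1},\|\cdot\|_{\C^k},\tau)\lesssim\tau^{-d/(s-k)}$ and Li--Linde.
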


Since $N^{-\frac{s}{2s+d}}$ is the minimax estimation rate in the high-frequency sampling regime (see Theorem \ref{thm:lower_bd} below for the corresponding lower bound), this result says that the posterior contracts about the truth at the minimax-optimal rate in any dimension $d$. Given the invariant measure is uninformative here, being the uniform distribution on $\mathcal{O}$ for all $f\in \F_0$, this confirms that the Bayes method can indeed perform optimal inference by picking up sufficient information from the transitions $X_{(i-1)D} \mapsto X_{iD}$ via the likelihood.

The minimal smoothness condition $s_{d,a}^*$ in \eqref{eq:s_star} can be rewritten as
\begin{equation*}
s_{d,a}^* =  \begin{cases}
\max \left( 4+\tfrac{d}{2}, \tfrac{2-ad}{2a-1}, \tfrac{d(1+a)}{2(1-a)}  \right) \qquad & \text{ if $d=1,2$ and $a\in(1/2,1)$, or $d=3$ and $a\in(1/2,2/3)$},\\
\max \left( 4+\tfrac{d}{2},  \tfrac{d(1+a)}{2(1-a)}  \right) & \text{ if $d=3$ and $a\in[2/3,1)$},\\
\tfrac{d(1+a)}{2(1-a)}  & \text{ if $d\geq 4$ and $a\in(1/2,1)$}.
\end{cases}
\end{equation*}
This becomes more stringent as $a \to 1$, namely the frequency $D = N^{-a} \to N^{-1}$ increases, since the time horizon $ND = N^{1-a}$ then grows more slowly. The term $\tfrac{d(1+a)}{2(1-a)}$ should be thought of as the main condition, with the extra terms for $d = 1,2,3$ coming from minimal smoothness assumptions needed to use various expansions and bounds.

Theorem \ref{thm:GP} requires that the true $w_0 = \Phi^{-1}(f_0)$ can be very well approximated by elements $v_{0,N}$ of the RKHS $\mathbb{H}_V$ of $V$. Under Condition \ref{cond:RKHS}, an $s$-smooth truth almost lies in $\mathbb{H}_V$ as needed, but this in turn implies that the Gaussian process $V$ will typically have $(s-d/2)$-smooth sample paths (e.g. Proposition I.4 of \cite{GV17} for the Mat\'ern process), thereby undersmoothing the truth. When the prior mismatches the true smoothness, suitably rescaling the prior as in \eqref{eq:prior} has been shown to still yield optimal rates in several benchmark statistical models \cite{VVVZ07,KVVVZ11}. This rescaling has also recently been used in the Bayesian inverse problems literature, where it is typically used to control stability estimates \cite{GN20,N22}.

We now consider two examples of Gaussian process priors satisfying the assumptions of Theorem \ref{thm:GP}. Let $V = \{V(x):x\in \O\}$ denote a Mat\'ern process on $\O$ with regularity parameter $s-d/2>0$, that is $V$ is a mean-zero stationary Gaussian process with covariance function
$$K(x,y) = K(x-y) = \int_{\R^d} e^{-i(x-y).\xi} (1+|\xi|^2)^{-s} d\xi, \qquad x,y\in \O.$$
The covariance function can alternatively be represented in terms of special functions, see e.g. p.84 of \cite{RW06}. While the Mat\'ern process models (almost) $(s-d/2)$-smooth functions in a H\"older sense, its rescaled version $W = V/N^{d/(4s+2d)}$ in \eqref{eq:prior} concentrates on $s$-smooth functions.

\begin{cor}\label{cor:Matern}
Let $D= N^{-a}$ for some $a\in(1/2,1)$, and let $\Pi = \Pi_N$ denote the prior \eqref{eq:prior} with $V$ a Mat\'ern process of regularity $s-d/2$ with $s>s_{d,a}^*$ for $s_{d,a}^*$ as in \eqref{eq:s_star}. If $f_0 \in \F_0 \cap \C^s(\O)$, then for $M>0$ large enough, 
$$\E_{f_0} \Pi(f: \|f-f_0\|_2 \geq M N^{-\frac{s}{2s+d}} | X_0,X_D,\dots,X_{ND}) \to 0 \qquad  \text{as }N\to\infty.$$
\end{cor}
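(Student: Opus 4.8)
The plan is to obtain the corollary directly from Theorem \ref{thm:GP}, applied with $V$ the Mat\'ern process of regularity $s-d/2$. Two things must then be checked: that this $V$ satisfies Condition \ref{cond:RKHS} for the given $s$, and that for an arbitrary $f_0\in\F_0\cap\C^s(\O)$ one can exhibit a sequence $v_{0,N}\in\H_V$ satisfying (i)--(iii).

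For Condition \ref{cond:RKHS}: the Mat\'ern covariance has spectral density $(1+|\xi|^2)^{-s}$, so the RKHS of $V$ on $\R^d$ is $H^s(\R^d)$ with equivalent norm, and hence the RKHS of its restriction to the smooth bounded domain $\O$ is norm-equivalent to $H^s(\O)$; in particular $\H_V\hookrightarrow H^s(\O)$ holds, and $s\geq 4$ follows from $s>s_{d,a}^*\geq 4+d/2$. For the sample path regularity, a Mat\'ern field of regularity $s-d/2$ admits a version with paths in $\C^\beta(\O)$ almost surely for every $\beta<s-d/2$ (Kolmogorov/Dudley-entropy estimates, cf.\ Chapter 11 of \cite{GV17}); since $s-d/2>4$, choosing $\beta\in(4,s-d/2)$ places the paths in a separable measurable linear subspace of $\C^4(\O)$, so Condition \ref{cond:RKHS} holds.

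For the approximating sequence, set $w_0=\Phi^{-1}(f_0)$, which lies in $\C^s(\O)$ with $\supp(w_0)\subseteq\mathcal K$ (as noted after \eqref{eq:F}); extend it by zero to a compactly supported function in $\C^s(\R^d)$. When $s$ is an integer one may simply take $v_{0,N}=w_0\in H^s(\O)=\H_V$: then (iii) holds trivially since $w_0-\chi v_{0,N}=0$, while (i) and (ii) reduce to $\|w_0\|_{\H_V}\lesssim\|w_0\|_{\C^s}=O(1)$ and $\|\chi w_0\|_{\C^\alpha}=\|w_0\|_{\C^\alpha}=O(1)$, using $\alpha=\max(4,2\lfloor d/4+1/2\rfloor)\leq 4+d/2\leq s_{d,a}^*<s$ from \eqref{eq:alpha}. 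In general one instead takes $v_{0,N}$ to be the mollification of $w_0$ at bandwidth $\sigma_N=N^{-1/(2s+d)}$ by a kernel $\phi$ with $\widehat\phi$ compactly supported and $\widehat\phi\equiv 1$ near the origin, so that $v_{0,N}$ is frequency-localised at scale $\sigma_N^{-1}$ and $\chi v_{0,N}$ is supported in $\O_0$. Then (ii) is $\|\chi v_{0,N}\|_{\C^\alpha}\lesssim\|v_{0,N}\|_{\C^\alpha}\lesssim\|w_0\|_{\C^\alpha}=O(1)$ (again using $\alpha<s$); and (iii) follows from the Jackson bias bound $\|w_0-v_{0,N}\|_{\C^k}\lesssim\sigma_N^{s-k}\|w_0\|_{\C^s}$ for $k=0,1,2,3$, together with the observation that $v_{0,N}$ and its derivatives of order $\leq k$ are $O(\sigma_N^{s-k})$ on $\O\setminus\mathcal K$ (since $w_0$ and those derivatives vanish there), which controls the discrepancy between $v_{0,N}$ and $\chi v_{0,N}$; both pieces are of the required order $O(N^{-(s-k)/(2s+d)})$. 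The remaining condition (i), $\|v_{0,N}\|_{\H_V}\asymp\|v_{0,N}\|_{H^s(\O)}=O(1)$, is the crux: the bandwidth $\sigma_N$ is forced to decay like $N^{-1/(2s+d)}$ by the approximation rate needed in (iii), so one must weigh this frequency localisation against the growth of the $H^s$-norm of the truncated truth, exploiting the precise fractional-Sobolev regularity of the compactly supported $\C^s$ function $w_0$. I expect this to be the main obstacle; everything else is routine once Condition \ref{cond:RKHS} and the three bounds are in place, after which Theorem \ref{thm:GP} delivers the contraction at rate $N^{-s/(2s+d)}$.
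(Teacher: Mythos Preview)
Your verification of Condition~\ref{cond:RKHS} is exactly the paper's: the RKHS of the Mat\'ern process restricted to $\O$ is $H^s(\O)$ (via Exercise~2.6.5 of \cite{GN16}), and by Proposition~I.4 of \cite{GV17} the sample paths lie in $\C^r(\O)$ for any $r<s-d/2$, so $s>4+d/2$ places them in a separable subspace of $\C^4(\O)$.

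For the approximating sequence the paper is more direct than you: it simply takes the \emph{constant} sequence $v_{0,N}=w_0$ for every $s$, asserting that $w_0=\Phi^{-1}(f_0)\in\C^s(\O)$ with $\supp(w_0)\subseteq\mathcal K$ lies in $H^s(\O)=\H_V$, whereupon (i)--(iii) are trivial. Your integer-$s$ argument is exactly this; the mollification construction for non-integer $s$ is not in the paper.

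Your caution about the non-integer case is nonetheless well placed: the endpoint embedding $\C^s\hookrightarrow H^s$ (i.e.\ $B^s_{\infty\infty}\hookrightarrow B^s_{22}$) fails in general, even for compactly supported functions, so the paper's one-line justification is formally incomplete there. However, your proposed fix does not close this gap either. For $w_0$ merely in $\C^s$ one only has $w_0\in B^s_{2\infty}$ on the bounded domain, and a band-limited approximant at frequency scale $\sigma_N^{-1}\simeq N^{1/(2s+d)}$ then satisfies $\|v_{0,N}\|_{H^s}^2\lesssim\sum_{l\le J}2^{2ls}\sum_r|\langle w_0,\psi_{lr}\rangle|^2\lesssim J\asymp\log N$ rather than $O(1)$; tracing this through the proof of Theorem~\ref{thm:GP} would cost a logarithmic factor in the rate. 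So the ``main obstacle'' you flag is genuine and is not resolved by mollification alone. The cleanest resolutions are to strengthen the hypothesis to $f_0\in H^s(\O)$ (or $f_0\in\C^{s+\varepsilon}$ for some $\varepsilon>0$), or to restrict to integer $s$, in either of which cases the paper's choice $v_{0,N}=w_0$ works and your mollification is unnecessary.
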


As a second example, we consider a truncated Gaussian wavelet series prior. Let $\{\psi_{lr}: l\geq J_0,r \in \Z^d\}$ denote an orthonormal basis of $L^2(\R^d)$ composed of sufficiently regular, compactly supported Daubechies wavelets, where $J_0 \in \mathbb{N}$ is the base resolution level of the scaling functions, which we also denote by $\{\psi_{J_0,r}\}$ to simplify notation (see Chapter 4 of \cite{GN16} for details). Let $R_l$ denote the set of indices $r$ for which the support of $\psi_{lr}$ intersects $\O_0$. For $J_0$ large enough (see Section \ref{sec:general_contraction}), consider the Gaussian series expansion
\begin{equation}\label{eq:gauss_wavelet}
V(x) = \sum_{l=J_0}^J \sum_{r\in R_l} 2^{-ls} g_{lr} \psi_{lr}(x), \qquad g_{lr} \sim^{iid} N(0,1),
\end{equation}
where $2^J \simeq N^\frac{1}{2s+d}$. Note we could extend the second index set from $r \in R_l$ to $r \in \mathbb{Z}^d$ to cover all of $\R^d$ since wavelet functions $\psi_{lr}$ supported outside $\O_0$ ultimately play no role in the prior \eqref{eq:prior} due to the cuffoff function $\chi$.

\begin{cor}\label{cor:gauss_wav}
Let $D= N^{-a}$ for some $a\in(1/2,1)$, and let $\Pi = \Pi_N$ denote the prior \eqref{eq:prior} with $V$ a Gaussian wavelet series as in \eqref{eq:gauss_wavelet} with $s>s_{d,a}^*$ for $s_{d,a}^*$ as in \eqref{eq:s_star}. If $f_0 \in \F_0 \cap \C^s(\O)$, then for $M>0$ large enough, 
$$\E_{f_0} \Pi(f: \|f-f_0\|_2 \geq M N^{-\frac{s}{2s+d}} | X_0,X_D,\dots,X_{ND}) \to 0 \qquad  \text{as }N\to\infty.$$
\end{cor}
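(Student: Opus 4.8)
The plan is to deduce Corollary \ref{cor:gauss_wav} from Theorem \ref{thm:GP} by verifying that the truncated Gaussian wavelet series $V$ in \eqref{eq:gauss_wavelet} satisfies Condition \ref{cond:RKHS} and that the required approximating sequence $v_{0,N}\in \H_V$ exists for any $f_0\in\F_0\cap\C^s(\O)$. First I would identify the RKHS of $V$: since $V=\sum_{l=J_0}^J\sum_{r\in R_l}2^{-ls}g_{lr}\psi_{lr}$ is a finite Gaussian series, its RKHS is the finite-dimensional span of $\{\psi_{lr}: J_0\le l\le J,\ r\in R_l\}$ with squared norm $\|h\|_{\H_V}^2=\sum_{l,r}2^{2ls}\langle h,\psi_{lr}\rangle_2^2$, which is exactly (a truncation of) the Besov/Sobolev $H^s$-type norm restricted to functions with frequencies below level $J$. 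Hence $\H_V\hookrightarrow H^s(\O)$ with constant independent of $N$, and since the series is finite with $\C^\infty$ wavelet building blocks, the sample paths lie in $\C^4(\O)$ (indeed $\C^\infty$) on a separable linear subspace, so Condition \ref{cond:RKHS} holds for the given $s\geq 4$ (note $s>s_{d,a}^*\geq 4+d/2$).

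Next I would construct $v_{0,N}$. The natural choice is the wavelet projection $v_{0,N}=K_J(w_0):=\sum_{l=J_0}^{J}\sum_{r\in R_l}\langle w_0,\psi_{lr}\rangle_2\,\psi_{lr}$ of $w_0=\Phi^{-1}(f_0)$ onto the first $J$ resolution levels; note $\supp(w_0)\subseteq\mathcal K$ so only $r\in R_l$ contribute. Since $w_0\in\C^s(\O)$ (as noted in the text, $\Phi^{-1}$ has bounded derivatives on $(2f_{\min},\infty)$) and $s>4+d/2\ge s_{d,a}^*$, standard wavelet/Besov approximation theory gives (i) $\|v_{0,N}\|_{\H_V}^2=\sum_{l\le J}2^{2ls}\|\langle w_0,\psi_{lr}\rangle\|_{\ell^2(r)}^2\lesssim\|w_0\|_{B^s_{2\infty}}^2 \cdot J\lesssim\log N$ if one is careless, but more carefully using $s$-Hölder smoothness of $w_0$ the dyadic blocks decay like $2^{-2ls}\cdot 2^{2ls}=O(1)$ summably — more precisely $\|w_0\|_{\C^s}$ controls $\|K_J w_0\|_{H^s}$ uniformly, giving $\|v_{0,N}\|_{\H_V}=O(1)$; (ii) $\|\chi v_{0,N}\|_{\C^\alpha}\le\|\chi\|_{\C^\alpha}\|v_{0,N}\|_{\C^\alpha}$ and since $v_{0,N}$ is a band-limited approximation of $w_0\in\C^s$ with $s>\alpha$, its $\C^\alpha$-norm is bounded by $\|w_0\|_{\C^\alpha}$, hence $O(1)$; (iii) the approximation error: $\|w_0-K_J w_0\|_{\C^k}\lesssim 2^{-J(s-k)}\|w_0\|_{\C^s}$ for $k=0,1,2,3$ by the standard Jackson estimate for wavelet projections of Hölder functions, and since $2^J\simeq N^{1/(2s+d)}$ this is exactly $O(N^{-(s-k)/(2s+d)})$. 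One must also note $\|w_0-\chi v_{0,N}\|_{\C^k}\le\|w_0-v_{0,N}\|_{\C^k}+\|(1-\chi)v_{0,N}\|_{\C^k}$, and the second term is supported in $\O\setminus\mathcal K$ where $w_0=0$, so it equals $\|(1-\chi)(v_{0,N}-w_0)\|_{\C^k}\lesssim\|v_{0,N}-w_0\|_{\C^k}$, absorbed into the same bound. With (i)--(iii) verified, Theorem \ref{thm:GP} applies directly and yields the claimed contraction at rate $N^{-s/(2s+d)}$.

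The main obstacle, and the only place requiring genuine care, is controlling the $\C^k$-norms (not just $L^2$) of the wavelet projection and its error uniformly in $N$, together with the interplay of the cutoff $\chi$: one needs the band-limited approximation $K_J w_0$ to inherit Hölder bounds from $w_0$, which uses that Daubechies wavelets of sufficient regularity give a Littlewood--Paley-type characterization of $\C^\beta=B^\beta_{\infty\infty}$ for all $\beta\le s$ below the wavelet regularity, and that truncating at level $J$ does not blow up lower-order norms. This is why the corollary assumes "sufficiently regular" Daubechies wavelets (regularity exceeding $s$) and a "large enough" base level $J_0$ so that the sets $R_l$ and the interaction with $\partial\O_0$ behave well; once these are in place the estimates are routine multiresolution-analysis bounds. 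A secondary, minor point is checking that $s>s_{d,a}^*$ indeed forces $s\geq 4$ and $s>\alpha_d$, so that Condition \ref{cond:RKHS} and assumption (ii) of Theorem \ref{thm:GP} are compatible; this is immediate from $s_{d,a}^*\ge 4+d/2\ge\alpha_d$.
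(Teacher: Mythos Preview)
Your proposal is correct and follows essentially the same route as the paper: identify the RKHS of the truncated wavelet series, note its embedding into $H^s$ and the smoothness of its sample paths to verify Condition~\ref{cond:RKHS}, then take $v_{0,N}$ to be the level-$J$ wavelet projection of $w_0=\Phi^{-1}(f_0)$ and use the standard Besov/H\"older Jackson estimate $\|w_0-v_{0,N}\|_{\C^k}\lesssim 2^{-J(s-k)}$ together with $w_0=\chi w_0$ to check conditions (i)--(iii) of Theorem~\ref{thm:GP}. Your treatment of the cutoff and of condition~(ii) is in fact slightly more explicit than the paper's, but the argument is the same.
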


Using a uniform integrability argument (e.g. Theorem 2.3.2 of \cite{N22}), this implies the same convergence rate for the posterior mean.

\begin{cor}
Let $\bar{f}_N = E^\Pi[f|X_0,X_D,\dots,X_{ND}]$ denote the posterior mean based on the Mat\'ern process or Gaussian wavelet series prior. Under the conditions of Corollary \ref{cor:Matern} (Mat\'ern) or Corollary \ref{cor:gauss_wav} (Gaussian wavelet series), there exists a constant $M>0$ such that as $N\to\infty$,
$$\PP_{f_0} (\|\bar{f}_N-f_0\|_2 \geq M N^{-\frac{s}{2s+d}} ) \to 0.$$
\end{cor}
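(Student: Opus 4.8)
The plan is to deduce the posterior mean rate from the posterior contraction rates of Corollaries \ref{cor:Matern} and \ref{cor:gauss_wav} via a standard uniform integrability argument, following Theorem 2.3.2 of \cite{N22}. First I would write, for $\varepsilon_N = N^{-s/(2s+d)}$ and the posterior $\Pi(\cdot|X^N)$,
$$\|\bar f_N - f_0\|_2 = \Big\| \int (f-f_0)\, d\Pi(f|X^N) \Big\|_2 \leq \int \|f-f_0\|_2\, d\Pi(f|X^N)$$
by Jensen's inequality (here the Bochner integral is justified because the prior, hence the posterior, is supported on the bounded-in-$\C^4$, hence $L^2$-bounded, set $\mathcal F$ of \eqref{eq:F}). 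Splitting the integral over the ball $\{\|f-f_0\|_2 < M\varepsilon_N\}$ and its complement gives
$$\|\bar f_N - f_0\|_2 \leq M\varepsilon_N + \int_{\{\|f-f_0\|_2\geq M\varepsilon_N\}} \|f-f_0\|_2\, d\Pi(f|X^N).$$

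The second step is to control the tail term. Since every $f$ in the support of the posterior lies in $\mathcal F$ from \eqref{eq:F}, we have the deterministic bound $\|f-f_0\|_2 \leq \|f\|_\infty \mathrm{vol}(\O)^{1/2} + \|f_0\|_2 \leq C$ for a constant $C$ depending only on the fixed sets and $f_{\min}$ (using $\mathrm{vol}(\O)=1$ and that $\mathcal C^4$-boundedness of the prior support is part of Condition \ref{cond:RKHS}; for the wavelet prior the truncated series is uniformly bounded as well). Hence
$$\int_{\{\|f-f_0\|_2\geq M\varepsilon_N\}} \|f-f_0\|_2\, d\Pi(f|X^N) \leq C\,\Pi(f: \|f-f_0\|_2 \geq M\varepsilon_N \mid X^N).$$
Taking $\PP_{f_0}$-expectations and invoking Corollary \ref{cor:Matern} (resp. Corollary \ref{cor:gauss_wav}) for $M$ large enough shows this expectation tends to $0$, so in particular the random variable $Z_N := \int_{\{\|f-f_0\|_2\geq M\varepsilon_N\}} \|f-f_0\|_2\, d\Pi(f|X^N)$ satisfies $Z_N \to 0$ in $\PP_{f_0}$-probability. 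Combining with the first display, $\PP_{f_0}(\|\bar f_N - f_0\|_2 \geq 2M\varepsilon_N) \leq \PP_{f_0}(Z_N \geq M\varepsilon_N) \to 0$, which is the claim with constant $2M$.

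The argument is essentially routine; the only point requiring any care is the \emph{a priori} $L^2$-boundedness of the posterior support, which is what lets us turn the posterior contraction statement into a moment bound without a separate uniform integrability estimate — this is exactly where the parameter space $\mathcal F$ with its uniform lower bound $f \geq f_{\min}$ and $\mathcal C^4$-regular (hence bounded) sample paths enters, and it is the step I would state most carefully. No genuine obstacle is expected beyond bookkeeping the constants.
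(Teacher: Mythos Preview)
Your overall structure is the right one and matches the paper's pointer to Theorem~2.3.2 of \cite{N22}, but the step you flag as the only delicate one is actually wrong as stated. You claim a deterministic bound $\|f-f_0\|_2\leq C$ for all $f$ in the prior support, justified by ``$\mathcal C^4$-boundedness of the prior support is part of Condition~\ref{cond:RKHS}''. Condition~\ref{cond:RKHS} says the Gaussian law is supported on a separable linear subspace of $\mathcal C^4(\mathcal O)$, i.e.\ draws are $\mathcal C^4$ functions almost surely; it does \emph{not} say they are bounded in $\mathcal C^4$-norm (a nondegenerate Gaussian measure on a Banach space is never supported on a bounded set). Since the link function $\Phi(x)=f_{\min}+(1-f_{\min})e^x$ is unbounded above, the prior draws $f=\Phi(\chi W)$ are not uniformly bounded in $L^\infty$ or $L^2$. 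The same objection applies to the truncated wavelet series: it is a finite linear combination of wavelets, but the i.i.d.\ $N(0,1)$ coefficients are unbounded. So the inequality
\[
\int_{\{\|f-f_0\|_2\geq M\varepsilon_N\}} \|f-f_0\|_2\, d\Pi(f\,|\,X^N) \;\leq\; C\,\Pi\bigl(\|f-f_0\|_2\geq M\varepsilon_N \,\big|\, X^N\bigr)
\]
does not follow, and your argument stops here.

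The standard fix is to insert the sieve sets $\mathcal F_N$ (equivalently, the $\mathcal W_N$ of Lemma~\ref{lem:prior_concentrate}, on which $\|W\|_{\mathcal C^4}\leq M$ and hence $\|f\|_\infty$ is bounded). On $\mathcal F_N$ your bound $\|f-f_0\|_2\leq C$ is valid and your argument goes through. For the contribution of $\mathcal F_N^c$ one uses Fubini together with the evidence lower bound (Theorem~\ref{thm:variance}) to obtain, on the high-probability event $\Omega_N$,
\[
\E_{f_0}\!\left[\int_{\mathcal F_N^c}\|f-f_0\|_2\,d\Pi(f\,|\,X^N)\,1_{\Omega_N}\right]
\;\leq\; e^{CN\varepsilon_N^2}\int_{\mathcal F_N^c}\|f-f_0\|_2\,d\Pi(f),
\]
and then Cauchy--Schwarz with the finite prior second moment $\int\|f\|_2^2\,d\Pi(f)<\infty$ (log-normal tails) and the freedom to take $K$ arbitrarily large in $\Pi(\mathcal F_N^c)\leq e^{-KN\varepsilon_N^2}$ from Lemma~\ref{lem:prior_concentrate}. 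This is precisely the uniform integrability argument the paper alludes to; without it the proof has a genuine gap.
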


We thus have two concrete examples of Gaussian priors for which the posterior (means) converge to the true $f_0 \in \F_0 \cap \C^s$ at rate $N^{-\frac{s}{2s+d}}$. The following lower bound shows that this is indeed the minimax rate of convergence.

\begin{thm}[Minimax lower bound]\label{thm:lower_bd} Let $D= N^{-a}$ for some $a\in(1/2,1)$. For any $s > \alpha_d$ (the minimal smoothness \eqref{eq:alpha}) and $M>0$, the following frequentist lower bound holds:
$$\liminf_{N\to\infty} \inf_{\widehat f_N}\sup_{f \in \mathcal F_0, \|f\|_{\mathcal C^s} \leq M} N^{\frac{2s}{2s+d}} \E_f \big[\|\widehat f_N-f\|_{2}^2\big] >0,$$
where the infimum is taken among all estimators based on data \eqref{eq: data}.
\end{thm}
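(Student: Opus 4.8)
The plan is to use the standard information-theoretic reduction to a many-hypotheses testing problem via Fano's inequality (or Assouad's lemma), where the main technical input specific to this model is an upper bound on the Kullback--Leibler divergence between the laws $\PP_f$ and $\PP_g$ of the discretely observed Markov chains in terms of the discrepancy between the diffusivities $f$ and $g$. Concretely, I would fix a smooth compactly supported bump $\psi$ with $\supp(\psi)$ strictly inside $\mathcal{K}$, set a resolution level $h_N = N^{-1/(2s+d)}$, and form perturbations $f_\theta = 1 + c \sum_{k} \theta_k h_N^s \psi((\cdot - x_k)/h_N)$ indexed by $\theta \in \{0,1\}^{m}$, where the centres $x_k$ are spaced on an $h_N$-grid so that the bumps have disjoint support, $m \asymp h_N^{-d}$, and $c>0$ is a small constant chosen so that $f_\theta \in \mathcal{F}_0$ (i.e. $f_\theta \geq 2f_{\min}$, $f_\theta = 1$ near $\partial\O$) and $\|f_\theta\|_{\mathcal{C}^s} \leq M$. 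By construction, for $\theta,\theta'$ differing in one coordinate, $\|f_\theta - f_{\theta'}\|_2 \asymp h_N^s h_N^{d/2} = N^{-1/2}$ on each bump, and after selecting a subset $\{\theta^{(0)},\dots,\theta^{(\mathsf{T})}\}$ with pairwise Hamming distance $\gtrsim m$ (Varshamov--Gilbert), one has $\|f_{\theta^{(i)}} - f_{\theta^{(j)}}\|_2 \gtrsim \sqrt{m}\, N^{-1/2} h_N^{-d/2}\cdot h_N^{d/2} $; more carefully, $\|f_{\theta^{(i)}} - f_{\theta^{(j)}}\|_2^2 \asymp m \cdot h_N^{2s+d} \asymp h_N^{2s} = N^{-2s/(2s+d)}$, which gives the desired separation $N^{-s/(2s+d)}$ in $L^2$.

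The key step is bounding $\mathsf{KL}(\PP_{f_\theta} \| \PP_{f_0})$, where $f_0 \equiv 1$. By the Markov property the KL divergence of the chain $(X_0,X_D,\dots,X_{ND})$ tensorises over transitions conditionally on the past, so $\mathsf{KL}(\PP_{f_\theta}\|\PP_{f_0}) = N\, \E_{f_0}[\mathsf{KL}(p_{f_\theta,D}(X_0,\cdot) \| p_{f_0,D}(X_0,\cdot))]$ using stationarity (the invariant measure is uniform for both). I then need the bound $\E_{\mu}[\mathsf{KL}(p_{f_\theta,D}(x,\cdot)\|p_{1,D}(x,\cdot))] \lesssim D \|f_\theta - 1\|_{H^1}^2$ (or a comparable Sobolev norm), which should follow from a small-time heat-kernel expansion as invoked for (i) in the paper's general strategy --- indeed the leading contribution to the log-likelihood ratio scales like $\tfrac1D \cdot D \cdot |\nabla f_\theta|^2$-type terms integrated against the heat kernel, and one gets $\mathsf{KL} \lesssim ND\cdot D^{-1}\cdot \|\nabla(f_\theta - 1)\|_2^2 \cdot D = ND\,\|f_\theta - 1\|_{\dot H^1}^2$-scale, but the right power-counting is $\mathsf{KL}(\PP_{f_\theta}\|\PP_{f_0}) \lesssim N \|f_\theta - f_0\|_{H^1}^2 D / D = N\|f_\theta-f_0\|_2^2$ up to the smoothness loss; one must track this carefully. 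With $\|f_\theta - 1\|_2^2 \asymp h_N^{2s+d} m \asymp h_N^{2s} = N^{-2s/(2s+d)}$ and noting that the diffusivity term contributes at the level of $N\|f_\theta-1\|_2^2$ rather than $ND\|\cdot\|^2$ (this is precisely the ``$\sigma$ is more informative than $b$'' phenomenon emphasised in the introduction, the reason the rate is $N^{-s/(2s+d)}$ and not $(ND)^{-s/(2s+d)}$), one obtains $\mathsf{KL}(\PP_{f_\theta}\|\PP_{f_0}) \lesssim N \cdot N^{-2s/(2s+d)} = N^{d/(2s+d)} \asymp m$. Since the Fano bound requires $\tfrac{1}{\mathsf{T}}\sum_i \mathsf{KL}(\PP_{f_{\theta^{(i)}}}\|\PP_{f_0}) \leq \tfrac{1}{16}\log \mathsf{T}$ and $\log\mathsf{T} \gtrsim m$, this is exactly borderline and closes with the constant $c$ chosen small enough.

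Assembling: by Fano's inequality (e.g. Theorem 2.5 in Tsybakov or Theorem 6.3.2 of \cite{GN16}), $\inf_{\widehat f_N}\max_i \PP_{f_{\theta^{(i)}}}(\|\widehat f_N - f_{\theta^{(i)}}\|_2 \geq c' N^{-s/(2s+d)}) \geq c'' > 0$, and Markov's inequality upgrades this to the claimed bound on $N^{2s/(2s+d)}\E_f\|\widehat f_N - f\|_2^2$. The main obstacle I anticipate is making the heat-kernel/KL estimate rigorous and uniform over the perturbation class with the correct power of $D$ --- one must verify that $\mathsf{KL}(p_{f_\theta,D}(x,\cdot)\|p_{1,D}(x,\cdot))$, after averaging over $x \sim \mathrm{Unif}(\O)$, genuinely scales like $D\cdot(\text{const})\cdot \|f_\theta - 1\|_{H^1}^2$ uniformly, which requires control of the short-time transition density (via the parametrix/Riemannian expansion referenced in the paper, valid since $s > \alpha_d$ guarantees enough smoothness) and careful handling of the boundary (harmless here since the bumps are supported away from $\partial\O$). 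All remaining ingredients --- Varshamov--Gilbert, the $L^2$-separation computation, and the Fano step --- are routine.
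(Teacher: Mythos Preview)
Your overall plan is sound and essentially equivalent to the paper's: the paper uses Assouad's cube with disjointly-supported wavelet perturbations rather than Fano with Varshamov--Gilbert and generic bumps, but these are interchangeable standard reductions. The paper also obtains the crucial KL bound by directly reusing the expectation part of Theorem~\ref{thm:variance} (the bound $\sup_{f\in\mathcal C_N}\E_{f_0}[\log(p_{f_0,D}/p_{f,D})]\lesssim E_N$), applied with the single-bump difference $g_\varepsilon-h_\varepsilon$, after noting that the leading term in the expansion is controlled by $\|f-f_0\|_2^2$ rather than $\|f-f_0\|_\infty^2$ (so that $N E_N\lesssim N\cdot N^{-1}=O(1)$ when $\|g_\varepsilon-h_\varepsilon\|_2\asymp N^{-1/2}$); this recycles the heat-kernel expansion already developed for the contraction theorem and avoids any new analysis.

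However, your KL heuristics are self-contradictory, and the version you commit to in the final paragraph is wrong. The averaged one-step KL is \emph{not} of order $D\|f_\theta-1\|_{H^1}^2$; that is the drift-estimation scaling and would in fact be \emph{smaller} than the true KL in the regime $a>1/2$, so it cannot serve as an upper bound. The correct scaling---which you do identify in the middle of your argument before reverting---is
\[
\E_{\mu}\big[\mathsf{KL}(p_{f,D}(x,\cdot)\,\|\,p_{f_0,D}(x,\cdot))\big]\;\asymp\;\|f-f_0\|_2^2\quad\text{(no factor of }D\text{)},
\]
coming from the quadratic-variation term $\tfrac{|y-x|^2}{4D}(f^{-1}(x)-f_0^{-1}(x))$ in the Gaussian proxy, whose mean against $p_{f_0,D}$ cancels the $\tfrac{d}{2}\log(f_0/f)$ term to order $(f-f_0)^2$. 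This gives total KL $\asymp N\|f_\theta-1\|_2^2\asymp N h_N^{2s}=N^{d/(2s+d)}\asymp m\asymp\log\mathsf T$, which is exactly the borderline needed for Fano and closes with the amplitude constant small enough, as you say. So your argument works once you discard the $D\|\cdot\|_{H^1}^2$ heuristic entirely; the lower-order terms ($\varepsilon_{2,N}D$, $\varepsilon_{3,N}D^{3/2}$ in the paper's $E_N$) involving $\|f_\theta-1\|_{\mathcal C^k}$ still need checking but are straightforward under $s>\alpha_d$ and $a>1/2$.
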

In particular, we extend the univariate minimax lower bound of \cite{H99}. The proof is given in Appendix \ref{sec: proof minimax lower bound}.

\section{A general contraction theorem}\label{sec:general_contraction}

We now state and discuss the abstract contraction rate theorem used to derive the convergence results for Gaussian process priors in Section \ref{sec:GP} above. This is based on the general testing approach of Bayesian nonparametrics \cite{GV17}, which requires (i) that the prior puts sufficient mass on a neighbourhood of the truth and (ii) the existence of suitable tests with exponentially decaying type-II errors.

For $0 < \varepsilon_N \leq \varepsilon_{1,N}\leq \eps_{2,N} \leq \eps_{3,N} \to 0$ positive sequences with $N\eps_N^2 \to \infty$, $r>0$ and $\alpha = \alpha_d$ as in \eqref{eq:alpha}, define the following neighbourhood of $f_0$:
\begin{align}\label{eq:Cn}
\C_N & = \C_N(f_0,\varepsilon_N, \varepsilon_{1,N},\eps_{2,N},\eps_{3,N}, r) \nonumber \\
& =  \Big\{f \in \mathcal F: \|f\|_{\C^{\alpha}} \leq r, \|f-f_0\|_{\infty} \leq \varepsilon_N,  \|f-f_0\|_{\mathcal C^k} \leq \varepsilon_{k,N} ~ \text{for } k=1,2,3 \Big\},
\end{align}
which can be related to the information theoretic distance of the model induced by the log-likelihood ratio process.
Further define 
\begin{equation}\label{eq:EnVn}
\begin{split}
E_N & := \eps_N^2 \Big( 1  +  \tfrac{\eps_{2,N}}{\eps_N^2} D + \tfrac{\eps_{3,N}}{\eps_N^2} D^{3/2} \Big) \\
V_N &:= N\eps_N^2 + N \eps_N^4 D^{-1}  + N^2 \eps_{2,N}^2D^2 + N^2 \eps_{3,N}^2 D^3 +N^2D^4.
\end{split}
\end{equation}
The quantities $NE_N$ and $V_N$ control the expectation and variance, respectively, of the integrated log-likelihood ratio process restricted to the small-ball $\C_N$, which is used to lower bound the normalized denominator of the Bayes formula. This is the content of the next result, which is the most technically involved part of our proof.

\begin{thm}[Evidence lower bound] \label{thm:variance}
Let $f_0 \in \F_0$ and $\nu$ be a probability measure supported on $\C_N$ as in \eqref{eq:Cn} with $r>0$ and sequences $0 < \varepsilon_N \leq \varepsilon_{1,N}\leq \eps_{2,N} \leq \eps_{3,N} \to 0$ satisfying $N\eps_N^2 \to \infty$ as $N\to\infty$. Then the integrated log-likelihood ratio process
$$\Lambda_{D}^N = \int_{\mathcal C_N} \log \prod_{i = 1}^N \frac{p_{f,D}(X_{(i-1)D}, X_{iD})}{p_{f_0,D}(X_{(i-1)D}, X_{iD})}\nu(df)$$
satisfies
\begin{align*}
\sup_{f\in \C_N} \E_{f_0} \left[ \log \frac{p_{f_0,D}(X_{0}, X_{D})}{p_{f,D}(X_{0}, X_{D})} \right] & \lesssim  E_N, \qquad \qquad \mathrm{Var}_{f_0} (\Lambda_{D}^N) \lesssim  V_N,
\end{align*}
where $E_N$, $V_N$ are defined in \eqref{eq:EnVn} and the constants depend only on $f_0,r,d,\O,\delta,f_{\min}$. This implies that for every $c>0$,
\begin{align}\label{eq:elbo}
\PP_{f_0} \left( \int_{\C_N} \prod_{i = 1}^N \frac{p_{f,D}(X_{(i-1)D}, X_{iD})}{p_{f_0,D}(X_{(i-1)D}, X_{iD})}\nu(df) \leq e^{-cN\eps_N^2-C_0N E_N} \right) \leq \frac{V_N}{c^2 N^2 \eps_N^4},
\end{align}
where $C_0>0$ is a fixed constant depending only on $f_0,r,d,\O,\delta,f_{\min}$.
\end{thm}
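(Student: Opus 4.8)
The plan is to establish the mean and variance bounds for the log-likelihood ratio process and then convert them into the probabilistic evidence lower bound \eqref{eq:elbo} via a one-sided Chebyshev-type argument combined with Jensen's inequality. First I would handle the mean bound $\E_{f_0}[\log(p_{f_0,D}/p_{f,D})(X_0,X_D)] \lesssim E_N$ for each fixed $f \in \C_N$. This is a Kullback--Leibler divergence between the one-step transition kernels $p_{f_0,D}(X_0,\cdot)$ and $p_{f,D}(X_0,\cdot)$, averaged over $X_0 \sim \mu_{f_0}$ (the uniform law). The key is a small-time expansion of the heat kernel: using the Riemannian-geometry/parametrix expansions referenced in the introduction (\cite{azencott1984densite,berline2003heat,B20}), one writes $\log p_{f,D}(x,y)$ as a leading Gaussian term depending on the Riemannian distance $d_f(x,y)^2/(4D)$ plus lower-order corrections, and the KL divergence then decomposes into: a term of order $\eps_N^2$ coming from the $O(\|f-f_0\|_\infty^2)$ mismatch in the leading quadratic form (this is the ``diffusivity carries information at rate $\eps_N^2$'' phenomenon), plus drift-type corrections of order $\eps_{2,N}D$ and $\eps_{3,N}D^{3/2}$ coming from the next terms in the expansion, where higher $\C^k$-norms of $f-f_0$ appear multiplied by powers of $D$. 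Controlling the remainder terms in the expansion uniformly over $\C_N$ — which is where the $\C^\alpha$-bound $\|f\|_{\C^\alpha}\le r$ and the restriction $f=1$ near $\partial\O$ enter, via the transition density bounds of \cite{C03} — is the first serious technical point.

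Next, for the variance bound, I would write $\Lambda_D^N = \int_{\C_N}\sum_{i=1}^N \log\frac{p_{f,D}(X_{(i-1)D},X_{iD})}{p_{f_0,D}(X_{(i-1)D},X_{iD})}\,\nu(df)$ and introduce the martingale decomposition of each summand relative to the filtration $\mathcal F_i = \sigma(X_0,\dots,X_{iD})$: namely $\log\frac{p_{f,D}}{p_{f_0,D}}(X_{(i-1)D},X_{iD}) = m_i(f) + g_i(f)$, where $g_i(f) = \E_{f_0}[\,\cdot\,|\mathcal F_{i-1}]$ is the (negative, predictable) conditional KL term and $m_i(f)$ is a martingale increment. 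The variance then splits into the variance of $\sum_i \int m_i(f)\nu(df)$ — a sum of martingale increments, hence with variance equal to the sum of conditional second moments, giving the $N\eps_N^2 + N\eps_N^4 D^{-1}$ contributions (the $D^{-1}$ arising because the martingale increment has conditional variance of order $\eps_N^2 D^{-1} \cdot D = \eps_N^2$... more precisely, the quadratic variation of the leading term scales like $\|f-f_0\|_\infty^2/D$ per step before the $\times D$ from the time increment, but fluctuations around it add the extra factor) — plus the variance of $\sum_i \int g_i(f)\nu(df)$, which is a sum of weakly dependent predictable terms and is controlled using the spectral gap \eqref{eq:spectral_gap}: the $N^2\eps_{2,N}^2 D^2 + N^2\eps_{3,N}^2 D^3 + N^2 D^4$ terms come from the squared mean drift contributions, with the geometric mixing of the chain preventing these $O(N^2)$-looking terms from being worse. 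Getting the precise form of $V_N$ requires carefully tracking which power of $D$ multiplies which $\C^k$-norm in the parametrix expansion of $m_i$ and $g_i$, and using $\mathrm{Var}_{f_0}(\int m_i \nu(df)) \le \int \mathrm{Var}_{f_0}(m_i(f))\nu(df)$ by Jensen together with the stationarity of the chain; the mixing estimate for the $g_i$ part is the second technical crux.

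Finally, to deduce \eqref{eq:elbo}, I would apply Jensen's inequality in the form $\log\int_{\C_N}\prod_i \frac{p_{f,D}}{p_{f_0,D}}\nu(df) \ge \int_{\C_N}\log\prod_i\frac{p_{f,D}}{p_{f_0,D}}\nu(df) = \Lambda_D^N$, so it suffices to lower bound $\Lambda_D^N$. By Fubini and the mean bound, $\E_{f_0}[\Lambda_D^N] = -N\int_{\C_N}\E_{f_0}[\log\frac{p_{f_0,D}}{p_{f,D}}(X_0,X_D)]\nu(df) \ge -C_0 N E_N$ for a suitable constant $C_0$. Then Chebyshev's inequality gives
$$\PP_{f_0}\big(\Lambda_D^N \le -cN\eps_N^2 - C_0 N E_N\big) \le \PP_{f_0}\big(\Lambda_D^N - \E_{f_0}[\Lambda_D^N] \le -cN\eps_N^2\big) \le \frac{\mathrm{Var}_{f_0}(\Lambda_D^N)}{c^2 N^2\eps_N^4} \le \frac{V_N}{c^2 N^2\eps_N^4},$$
and since $\{\int_{\C_N}\prod_i\frac{p_{f,D}}{p_{f_0,D}}\nu(df) \le e^{-cN\eps_N^2 - C_0 N E_N}\} \subseteq \{\Lambda_D^N \le -cN\eps_N^2 - C_0 N E_N\}$ by Jensen, this is exactly \eqref{eq:elbo}. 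The main obstacle throughout is the first part — making the small-time heat-kernel expansion precise enough, and crucially \emph{uniform} over the nonparametric class $\C_N$, to extract the exact dependence of $E_N$ and the martingale/predictable pieces of $V_N$ on the sequences $\eps_{k,N}$ and on $D$; the subsequent probabilistic step is then routine.
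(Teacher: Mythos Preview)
Your final step --- deriving \eqref{eq:elbo} from the mean and variance bounds via Jensen's inequality followed by Chebyshev --- is exactly what the paper does.

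For the mean and variance bounds themselves, your high-level plan is in the right spirit but the paper takes a different, more layered route. Rather than Doob-decomposing the discrete log-likelihood $\log(p_{f,D}/p_{f_0,D})$ into a martingale increment $m_i(f)$ and a predictable part $g_i(f)$, the paper first peels off boundary effects in two stages --- restricting to $\{X_{(i-1)D}\in\O_0^\delta\}$ (Lemma~\ref{lem:KL_near_boundary}) and then replacing the reflected transition density by that of an unreflected diffusion on all of $\R^d$ (Lemma~\ref{lem:p_to_tildep}) --- and only then approximates the log-likelihood by the explicit Gaussian proxy $q_{f,D}(x,y)=(4\pi D f(x))^{-d/2}\exp(-|y-x|^2/(4Df(x)))$ via a second-order Riemannian heat-kernel expansion (Lemma~\ref{thm: first big approx}, Propositions~\ref{prop:tildep_to_q} and~\ref{prop:tildep_to_q_expectation}). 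The martingale structure then comes not from a Doob decomposition but from It\^o's formula applied to $|X_{iD}-X_{(i-1)D}|^2$, which splits the proxy log-likelihood into explicit drift, continuous-martingale, and boundary-local-time pieces (Propositions~\ref{thm: variance proxy} and~\ref{thm: expectation proxy}). Your Doob approach could in principle be made to work, but evaluating $\E_{f_0}[m_i(f)^2\mid\mathcal F_{i-1}]$ and $g_i(f)$ would still require exactly the same heat-kernel expansion, so it is not a shortcut; the paper's proxy route has the advantage of making every term explicit and computable.

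Your attributions of the individual terms in $V_N$ are off in ways that suggest the Doob picture is not yet doing the work you claim. The $N\eps_N^4 D^{-1}$ term arises from the spectral-gap variance estimate (Lemma~\ref{lem: variance esti}) applied to the \emph{predictable} leading term $\log(f_0/f)-(f_0/f-1)=O(\eps_N^2)$, not from the martingale part. Conversely, the $N^2\eps_{2,N}^2 D^2$, $N^2\eps_{3,N}^2 D^3$, $N^2 D^4$ terms come from the crude bound $\mathrm{Var}(\sum_{i=1}^N Z_i)\le N^2\sup_i\E[Z_i^2]$ applied to small remainders in the heat-kernel expansion --- no mixing is invoked there, and the factor $N^2$ is simply tolerated because the remainders carry enough powers of $D$. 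Finally, the boundary handling you mention only in passing is in the paper a substantial separate step requiring the two-sided heat-kernel bounds \eqref{eq: estimate coulhon} and a hitting-time estimate; it is not absorbed into the interior expansion.
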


Theorem \ref{thm:variance} shows that the Bayesian evidence is at least $e^{-CN\eps_N^2}$ with $\PP_{f_0}$-probability tending to one if 
\begin{equation}\label{eq:main_conditions}
E_N \lesssim \eps_N^2 \qquad \text{and} \qquad V_N /(N^2 \eps_N^4) \to 0.
\end{equation}
An overview of the proof of Theorem \ref{thm:variance} is found in Section \ref{sec:small_ball}.

\begin{rk}\label{rk:ex}
For $a\in(1/2,1)$, consider the `usual' nonparametric choices of sequences for a $\C^s$-smooth truth:
$$D = N^{-a}, \quad \eps_N = N^{-\frac{s}{2s+d}}, \quad  \eps_{k,N} = N^{-\frac{s-k}{2s+d}}, \quad k=1,2,3.$$
Then \eqref{eq:main_conditions} is implied by
$$s > \begin{cases}
\frac{2-ad}{2a-1} \quad & \text{ if } d=1,2,~a\in(1/2,1), \text{ or } d=3,~a\in(1/2,2/3],\\
0 & \text{ if } d=3,~a\in[2/3,1), \text{ or } d\geq4, ~a\in(1/2,1).
\end{cases}$$
\end{rk}

For comparison, consider the i.i.d model $Y_1,\dots,Y_N \sim^{iid} P$ with $P$ having density $p$, so that $(Y_1,\dots,Y_N) \sim P^N = \otimes_{i=1}^N P$. The corresponding neighbourhood can then be expressed in terms of the Kullback-Leibler divergence and its $2^{nd}$-variation \cite{GGV00},
\begin{align*}
B_2(p_0,E_N,V_N) &= \left\{ p: P_0^N \log (p_0^N/p^N) \leq NE_N, ~ \var_{P_0^N} (\log(p_0^N/p^N) \leq V_N \right\} \\
&= \left\{ p: P_0 \log (p_0/p) \leq E_N, ~ \var_{P_0} (\log(p_0/p)) \leq V_N/N \right\},
\end{align*}
where the last equality follows from the densities tensorizing in the i.i.d. model. In this model, one can take $E_n \simeq \eps_N^2$ and $V_N \simeq N\eps_N^2$ (\cite{GV17}, Lemma 8.10 with $k=2$), which gives some intuition behind the roles of $E_N$ and $V_N$. In contrast, the present diffusion setting induces a Markovian dependence structure in the data and hence the likelihood. In the low frequency regime where $D>0$ is fixed, the spectral gap of the Markov chain is bounded away from zero by \eqref{eq:spectral_gap} and hence one can use spectral techniques \cite{Paulin2015,NS17,N22b} to show that the model dependence is not too dissimilar from i.i.d.. However, in the present high frequency setting, the spectral gap shrinks rapidly as $D\to 0$ inducing a much stronger dependence, thereby making such techniques unsuitable. We must therefore deal with the full dependent log-likelihood ratio process $\Lambda_D^N$ as a whole, rendering this computation much more involved.

Our strategy is to use second-order small time expansions of the transition densities \cite{azencott1984densite,berline2003heat,B20}, which heuristically corresponds to replacing $p_{f,D}$ in $\Lambda_D^N$ with the corresponding Euler scheme without drift, that is a $N_d(x,2Df(x) I_d)$ density. This reflects the property of model \eqref{eq: tanaka} that the drift is of smaller order than the diffusivity, and hence does not affect the leading order terms in $\Lambda_D^N$ as $D \to 0$. Such expansions can also be viewed as a quantitative form of local asymptotic normality (LAN) with uniform remainders over $\C_N$. We then further approximate the resulting process by a martingale difference, which allows us to deal with the dependence of the process. Note that we require control of the derivatives of $f-f_0$ up to third order in $\C_N$ in \eqref{eq:Cn} to ensure the higher order terms of these expansions are negligible, uniformly over $\C_N$.

Turning to the existence of tests with exponentially decreasing type-II errors, we employ plug-in tests based on estimators satisfying suitable concentration inequalities as was done for i.i.d. models in \cite{GN11}, see also \cite{R13,NS17,A18,MRS20}. As estimators, we follow the ideas of Comte et al. \cite{CGCR07} who use penalized least squares estimators in the scalar case. We therefore extend the results of \cite{CGCR07} to the multidimensional setting with domain boundary, exploiting the structure of the $L^2$-loss function to obtain the required concentration inequalities. 

Our proofs require certain wavelet-based approximations, which must be adapted to deal with the boundary as we now make precise. Let
\begin{equation} \label{eq: def offset}
\mathcal O_0^{\delta} = \{x \in \mathcal O,\,\mathsf{dist}(x,\mathcal O_0) \leq \delta/2\}
\end{equation}
denote the $\delta/2$-enlargement of $\mathcal O_0$, and note that $\mathcal O_0^{\delta} \subsetneq \mathcal O$ since $\mathsf{dist}(\mathcal O_0, \partial \mathcal O) \geq \delta$ by assumption \eqref{eq:delta}, see Figure \ref{fig:domain} for an example. As in Section \ref{sec:GP}, let $\{\psi_{lr}: l\geq J_0,r \in \Z^d\}$ denote an orthonormal basis of $L^2(\R^d)$ composed of sufficiently regular, compactly supported Daubechies wavelets, where $J_0 \in \mathbb{N}$ is the base resolution level of the scaling functions, which we also denote by $\{\psi_{J_0,r}\}$ to simplify notation. Let $R_l$ denote the set of indices $r$ for which the support of $\psi_{lr}$ intersects $\O_0$, in which case $|R_l| = O(2^{ld})$ since $\O_0$ is a bounded domain in $\R^d$. Since $\mathsf{dist}(\O_0^\delta, \partial \O) \geq \delta/2>0$ and $\text{diam}(\supp(\psi_{lr})) = O(2^{-l})$, we may take $J_0$ large enough such that no wavelet function $\psi_{lr}$, $l \geq J_0$, has support intersecting both $\O_0$ and $(\O_0^\delta)^c$. Any function $g\in L^2(\O)$ with $\supp(g) \subseteq \O_0$ can then be uniquely represented as
\begin{equation}\label{eq:wavelet_expansion}
g = \sum_{l=J_0}^\infty \sum_{r \in R_l} \langle g,\psi_{lr} \rangle_{2} \psi_{lr}.
\end{equation}
Even though the wavelets $\{\psi_{lr}: l \geq J_0, r\in R_l\}$ do not form an orthonormal basis of $L^2(\O)$ due to their behaviour on $\O \setminus \O_0$, any $g$ as above can be extended to a function on $\R^d$ (or any set containing $\O_0$) by setting it to zero outside $\O_0$. In particular, the Sobolev and H\"older norms on $\O$ and $\R^d$ coincide for such functions and we may therefore use all the usual wavelet characterizations and embeddings for Sobolev and H\"older norms. This fact will be used without mention in the proofs.

For $J \geq J_0$, set
\begin{equation}\label{eq:VJ}
V_J = V_J(\O_0) = \left\{ f = \sum_{l = J_0}^J \sum_{r \in R_l} f_{lr} \psi_{lr}  \right\} \subset \C(\O)
\end{equation}
to be the linear space of all functions in the wavelet projection space of resolution level $J$, restricted to those wavelets with support intersecting $\O_0$ but not $(\O_0^\delta)^c$. Note that
\begin{equation} \label{eq: technically useful}
g(x) = g(x) {\bf 1}_{x \in \mathcal O_0^{\delta}} \qquad \qquad \text{for every}\;\;g \in V_J
\end{equation}
since $\supp (g) \subseteq \O_0^\delta$ for $g\in V_J$. We must slightly modify the usual notion of a wavelet projection to account for the behaviour near the boundary in $\F$ in \eqref{eq:F}. To that end, note that $\supp(f-1)\subseteq \O_0$ for any $f\in \F$ and thus $f-1$ has a wavelet expansion as in \eqref{eq:wavelet_expansion}. Setting $P_J:L^2(\O) \to V_J$ to be the $L^2$-projection operator onto $V_J$, we define the projection operator $\overline{P}_J : \F \to \F$ by
\begin{align}\label{eq:PJ}
\overline{P}_J[f](x) := 1+P_J [f-1](x) = 1 +  \sum_{l = J_0}^J \sum_{r \in R_l} \langle f-1,\psi_{lr} \rangle_{2} \psi_{lr} (x) ,\qquad  x\in\O.
\end{align}
Since $\supp(f-1) \subseteq \O_0$, $P_J[f-1](x) =\sum_{l = J_0}^J \sum_{r \in R_l} \langle f-1,\psi_{lr} \rangle_{2} \psi_{lr} (x)$ coincides with the usual wavelet projection of $f-1$ on all of $\R^d$. Intuitively, $\overline{P}_J[f]$ should be thought of as the usual wavelet projection of resolution level $J$ adapted to account for the boundary conditions of $\F$.

We are now ready to define our projection estimator following Comte {\it et al.} \cite{CGCR07}. Let
\begin{equation}\label{eq:estimator}
\widehat g_{N} \in \argmin_{g \in V_J}\sum_{i = 1}^N\big( (Y_{i,D} - 1){\bf 1}_{\mathcal A_{i,D}}-g(X_{(i-1)D})\big)^2,
\end{equation}
with $\mathcal A_{i,D} = \{X_{(i-1)D} \in \mathcal O_0^\delta\}$ and 
$$Y_{i,D} = \frac{1}{2dD}|X_{iD}-X_{(i-1)D}|^2.$$
Note that by \eqref{eq: technically useful}, the sum in \eqref{eq:estimator} actually spans over $i$ such that $X_{(i-1)D}$ lies in $\mathcal O_0^\delta$. We then consider the estimator
\begin{equation}\label{eq:estimator_f}
\widehat f_N(x)  = 1 + \widehat{g}_N(x), \qquad \qquad x \in \O.
\end{equation}
The idea behind \eqref{eq:estimator} is that for small $D>0$, {by It\^o's formula, we have the signal plus noise representation} 
$$Y_{i,D}-1 = f(X_{(i-1)D}) -1+ \varepsilon_{i,D}+ r_{i,D},$$
where $\varepsilon_{i,D}$ is a martingale error term with variance of order $1$ and $r_{i,D}$ is a small remainder term combining stochastic expansions and boundary effects, see \eqref{eq: expand square} in the proofs for a precise definition. Thus $\widehat{g}_N$ is an estimate of $f-1$, which has support contained in $\O_0$ for all $f\in \F$. Indeed, one needs only estimate $f$ on $\O_0$, since $f \equiv 1$ is already known on $\O \backslash \O_0$ for all $f\in \F$. This permits to separate estimation on the interior of $\O$, where the function $f$ is unknown, with the behaviour near the boundary $\partial \O$, where the reflecting boundary conditions alter the diffusion dynamics. We next establish a concentration inequality for the estimator $\widehat{f}_N$.

\begin{thm}[Exponential inequality] \label{thm:exp_inequality}
Let $\widehat{f}_N = \widehat{f}_N(X_0,X_1,\dots,X_{ND})$ be the estimator \eqref{eq:estimator_f}. Let $\eps_N,\xi_N \to 0$, $2^J = 2^{J_N} \to\infty$ and $R_J$ be sequences such that $N\eps_N^2 \to \infty$ as $N\to \infty$. Define the sets
\begin{align*}
\F_N' = \big\{ f\in \F: \|f\|_{\mathcal C^1} \leq r, & ~ \|f-\overline{P}_Jf\|_2 \leq C\xi_N, ~ \|f-\overline{P}_Jf\|_\infty \leq R_{J} \big\},
\end{align*}
where $C,r>0$ and $\overline{P}_Jf$ denotes the projection \eqref{eq:PJ}. Assume further that $2^{Jd} = o(\sqrt{ND})$,
\begin{equation*}
R_{J}^2  \eps_N^2 \lesssim D \xi_N^2, \qquad \qquad 2^{3Jd/2}N^{-1} + 2^{Jd/2} N^{-1/2} + 2^{Jd/2} \eps_N^2 + \eps_N \lesssim \xi_N,
\end{equation*}
as $N \to \infty$. Then there exist events $\mathcal B_N$ satisfying
$$\sup_{f \in \mathcal F}\PP_{f}(\mathcal B_N^c) \rightarrow 0$$
as $N \rightarrow \infty$, such that for every $L>0$,
$$\sup_{f\in \F_N'} \PP_f\big(\|\widehat f_N -\overline{P}_Jf \|_{2} \geq K\xi_N,\mathcal B_N\big) \leq C' e^{-LN\eps_N^2},$$
where $K$ depends on $L$.
\end{thm}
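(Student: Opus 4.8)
The plan is to exploit the signal-plus-noise representation $Y_{i,D} - 1 = f(X_{(i-1)D}) - 1 + \varepsilon_{i,D} + r_{i,D}$ and the linear least-squares structure of $\widehat g_N$ to reduce the problem to controlling a quadratic form in the martingale noise on the (random) design, together with remainder and bias terms. First I would fix $f \in \F_N'$ and write $\widehat f_N - \overline P_J f = \widehat g_N - P_J[f-1]$; since both lie in the finite-dimensional space $V_J$ and $\widehat g_N$ minimizes the empirical least-squares criterion over $V_J$, the standard projection argument gives that $\widehat g_N - P_J[f-1]$ is the empirical-$L^2$ projection onto $V_J$ of the ``data residual'' $(Y_{i,D}-1)\mathbf 1_{\mathcal A_{i,D}} - P_J[f-1](X_{(i-1)D})$. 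Decomposing this residual via the It\^o expansion into a martingale part $\varepsilon_{i,D}$, a remainder part $r_{i,D}$, and the approximation error $(f-1 - P_J[f-1])(X_{(i-1)D})\mathbf 1_{\mathcal A_{i,D}}$ (plus boundary-indicator defects, controlled using \eqref{eq: technically useful} and the fact that $\supp P_J[f-1]\subseteq \O_0^\delta$), I would bound $\|\widehat g_N - P_J[f-1]\|_2$ in terms of the $L^2(\O)$-norm versus the empirical norm on the design. This requires a \emph{design norm equivalence}: on a good event $\mathcal B_N$, the empirical norm $\frac1N\sum_i g(X_{(i-1)D})^2\mathbf 1_{\mathcal A_{i,D}}$ is comparable (up to constants) to $\|g\|_2^2$ for all $g\in V_J$, uniformly. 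Because the $X_{iD}$ are stationary with uniform invariant law but strongly dependent at high frequency, I would establish this using a Bernstein-type inequality for the dependent Markov chain (or a mixing/blocking argument exploiting the spectral gap \eqref{eq:spectral_gap}), with the condition $2^{Jd} = o(\sqrt{ND})$ ensuring the fluctuations of the empirical Gram matrix over the unit ball of $V_J$ are negligible; this is where $|R_l| = O(2^{ld})$ and the wavelet localization enter.

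Next, conditionally on the design, $\sum_i \varepsilon_{i,D}\psi_{lr}(X_{(i-1)D})$ is a sum of martingale differences with conditional variances of order $1$ (by construction of $\varepsilon_{i,D}$), so $\|\widehat g_N - P_J[f-1]\|_2^2$ contributes, after inverting the (empirical) Gram matrix, a $\chi^2$-like quadratic form of dimension $\dim V_J = O(2^{Jd})$ with normalization $1/N$; on $\mathcal B_N$ this has expectation of order $2^{Jd}/N$. To get the \emph{exponential} tail $e^{-LN\varepsilon_N^2}$ rather than merely a bound in probability, I would apply a martingale Bernstein inequality (e.g. Freedman's inequality, or the Bernstein bound for martingales with bounded increments after a truncation step using the Gaussian-type tails of $\varepsilon_{i,D}$) to the linear functional $\sum_i \varepsilon_{i,D} h(X_{(i-1)D})$ for each fixed $h$ in a $\delta$-net of the unit ball of $V_J$, then take a union bound over the net of cardinality $e^{O(2^{Jd})}$. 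The condition $2^{Jd/2}N^{-1/2} \lesssim \xi_N$ (from the second displayed hypothesis) guarantees that the ``typical size'' $\sqrt{2^{Jd}/N}$ of the noise term is $\lesssim \xi_N$, while the union-bound cost $2^{Jd} = o(N\varepsilon_N^2)$ — which follows from $2^{Jd/2}\varepsilon_N^2 \lesssim \xi_N$ combined with $\xi_N \to 0$, or more directly from $2^{Jd}=o(\sqrt{ND})$ and $ND^2\to 0$ — is absorbed. For the remainder term $\sum_i r_{i,D}\psi_{lr}(X_{(i-1)D})$, I would use the deterministic/stochastic bounds on $r_{i,D}$ from the It\^o expansion (small time, combining $D$-powers with boundary effects) together with Cauchy–Schwarz and a crude concentration bound, arranging that its $L^2$-contribution is $\lesssim \xi_N$ on $\mathcal B_N$ using the hypothesis $2^{3Jd/2}N^{-1} + 2^{Jd/2}\varepsilon_N^2 \lesssim \xi_N$; the condition $R_J^2\varepsilon_N^2 \lesssim D\xi_N^2$ is what lets me trade the $\|f-\overline P_J f\|_\infty \leq R_J$ approximation defect against the noise scale. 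Finally the bias term is $\|P_J[f-1] - (f-1)\|_2 = \|f - \overline P_J f\|_2 \leq C\xi_N$ by the definition of $\F_N'$, so collecting all pieces yields $\|\widehat f_N - \overline P_J f\|_2 \leq K\xi_N$ off an event of probability $\leq C' e^{-LN\varepsilon_N^2}$, with $K$ depending on $L$ through the Bernstein constants.

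The main obstacle I anticipate is \textbf{handling the strong temporal dependence of the high-frequency Markov chain} in two places: (a) establishing the uniform design-norm equivalence on $V_J$ with a genuinely exponential failure probability (not just $o(1)$) for $\mathcal B_N$, and (b) obtaining exponential deviation bounds for the noise quadratic form despite the $\varepsilon_{i,D}$ being only conditionally-centered martingale increments with heavy-ish (sub-exponential after rescaling by $\sqrt D$) tails. Both are delicate because the spectral gap $rD$ of the chain degenerates as $D\to 0$, so naive mixing arguments lose too much; the resolution is that the relevant functionals $h(X_{(i-1)D})$ live on a fixed spatial scale $2^{-J}$ and the \emph{stationary} marginal is exactly uniform, so one can work with the martingale structure directly (filtration-by-filtration Freedman/Bernstein) rather than with mixing, treating the chain's correlations as harmless once one conditions appropriately. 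A secondary technical point is that $\overline P_J$ is not the honest $L^2(\O)$-projection onto a wavelet space but its boundary-adapted modification \eqref{eq:PJ}; I would handle this by systematically using that $\supp(f-1)\subseteq \O_0$ and that wavelets at level $\geq J_0$ do not straddle $\O_0$ and $(\O_0^\delta)^c$, so all wavelet characterizations of Besov/Sobolev norms apply verbatim to $f-1$ extended by zero, as noted in the text preceding \eqref{eq:VJ}.
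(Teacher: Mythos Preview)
Your approach is essentially the paper's: a least-squares risk decomposition on a design-norm-equivalence event $\mathcal B_N$, followed by martingale Bernstein inequalities for the noise and remainder pieces. Two corrections are worth making.

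First, you misplace the condition $R_J^2\varepsilon_N^2 \lesssim D\xi_N^2$ and under-develop the bias step. In the risk decomposition the bias enters as $|f-1-P_J[f-1]|_N$ in the \emph{empirical} seminorm, not the $L^2$-norm; since $g_J := f-1-P_J[f-1]\notin V_J$, the norm equivalence defining $\mathcal B_N$ does not apply to it, and one needs a separate Markov-chain Bernstein inequality (via the spectral gap $\gtrsim D$) to show $|g_J|_N^2 \le \|g_J\|_2^2 + M\xi_N^2$ with the required exponential probability. This is precisely where $\|g_J\|_\infty \le R_J$ and the hypothesis $R_J^2\varepsilon_N^2 \lesssim D\xi_N^2$ are consumed. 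Conversely, the remainder $\mathcal R_{i,D}$ is handled by splitting (via It\^o) into a martingale piece, an exponentially-rare boundary piece, and a drift piece treated by a further It\^o expansion plus the same chaining as below; none of these steps invoke $R_J$.

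Second, for the supremum over $\{g\in V_J:\|g\|_2\le 1\}$ the paper uses a \emph{generic chaining} bound for mixed-tail processes (Dirksen/Talagrand) rather than your $\delta$-net plus union bound. Both routes work here because the process is linear in $g$ and indexed by an $O(2^{Jd})$-dimensional ball, and the theorem's hypothesis $2^{3Jd/2}N^{-1}+2^{Jd/2}N^{-1/2}+2^{Jd/2}\varepsilon_N^2+\varepsilon_N\lesssim\xi_N$ is exactly the chaining output; the chaining route is just cleaner because it handles the sub-Gaussian/sub-exponential mixture in the Bernstein tail without case-splitting. Finally, you over-require on $\mathcal B_N$: the theorem only asks $\sup_{f\in\mathcal F}\PP_f(\mathcal B_N^c)\to 0$, and the exponential bound is on $\PP_f(\cdot,\mathcal B_N)$, so the polynomial rate one gets for the design-norm equivalence under $2^{Jd}=o(\sqrt{ND})$ already suffices.
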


The proof can be found in Section \ref{sec:proof_exp_ineq}. Theorem \ref{thm:exp_inequality} says that if the underlying function $f$  can be well-enough approximated by its projection $\overline{P}_J f$ as quantified by the conditions in $\mathcal{F}_N'$, then the estimator $\widehat{f}_N$ will concentrate around this projection with all but exponentially small $\PP_f$-probability.\\

As far as we are aware, this extension of \cite{CGCR07} in Theorem \ref{thm:exp_inequality} is the first frequentist estimator of the diffusion function $f$ in the multivariate setting and thus may be of independent interest. 
Combined with the lower bound of Theorem \ref{thm:lower_bd}, we obtain 
\begin{thm} \label{thm: minimaxity final}
Let $D= N^{-a}$ for some $a\in(1/2,1)$, $s > \alpha_d$, and let $M>0$. Define $\widehat f_N^\star = \min(\widehat f_N, M)_+$, where $\widehat f_N$ is as in \eqref{eq:estimator_f} constructed with Daubechies wavelets with at least $\lfloor s\rfloor-1$ vanishing moments, 
and $2^J \simeq N^{1/(2s+d)}$. Then
$$\sup_{f \in \mathcal F_0, \|f\|_{\mathcal C^s} \leq M} \E_f\big[\|\widehat f_N-f\|_2^2\big] \lesssim N^{-2s/(2s+d)}.$$
Combined with Theorem \ref{thm:lower_bd}, we obtain that the (normalised) rate $N^{-s/(2s+d)}$ is asymptotically minimax for estimating $f$ over $\{f \in \mathcal{F}_0 :\|f\|_{\mathcal C^s} \leq M\}$.
\end{thm}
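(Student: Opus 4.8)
The plan is to obtain the upper bound by combining the concentration inequality of Theorem \ref{thm:exp_inequality} with a standard bias-variance analysis, verifying that the ``usual'' nonparametric choices of tuning sequences satisfy all the hypotheses appearing there. First I would fix $D = N^{-a}$, $2^J \simeq N^{1/(2s+d)}$ and set $\eps_N^2 = 2^{Jd}/(ND) = N^{d/(2s+d)}/N^{1-a}$ (up to constants), so that $N\eps_N^2\to\infty$ since $a<1$ and $2^{Jd}=o(\sqrt{ND})$ for $s>\alpha_d$ large enough (this is where the minimal smoothness $s>\alpha_d$ enters). For the approximation terms, since $f\in\F_0$ with $\|f\|_{\C^s}\le M$ and the Daubechies wavelets have at least $\lfloor s\rfloor-1$ vanishing moments, standard wavelet approximation theory gives $\|f-\overline{P}_Jf\|_2 \lesssim 2^{-Js}\|f\|_{\C^s}$ and $\|f-\overline{P}_Jf\|_\infty \lesssim 2^{-J(s-d/2)}\|f\|_{\C^s}$ (recall from the excerpt that $\overline{P}_Jf$ is the genuine wavelet projection of $f-1$, whose support lies in $\O_0$, so the usual $\R^d$ characterizations apply). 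Thus I would take $\xi_N \simeq 2^{-Js} \simeq N^{-s/(2s+d)}$ and $R_J \simeq 2^{-J(s-d/2)}$, and then check the two displayed constraints $R_J^2\eps_N^2 \lesssim D\xi_N^2$ and $2^{3Jd/2}N^{-1}+2^{Jd/2}N^{-1/2}+2^{Jd/2}\eps_N^2+\eps_N \lesssim \xi_N$; each reduces to an inequality of the form $s > (\text{something involving }a,d)$, which is implied by $s>\alpha_d$ together with the sampling regime $ND^2\to 0$ (equivalently $a>1/2$). Also $\|f\|_{\C^1}\le\|f\|_{\C^s}\le M=:r$, so $f\in\F_N'$ for all $f$ in the competition class.

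With the hypotheses verified, Theorem \ref{thm:exp_inequality} provides events $\mathcal B_N$ with $\sup_{f\in\F}\PP_f(\mathcal B_N^c)\to 0$ and, for every $L>0$, $\sup_{f\in\F_N'}\PP_f(\|\widehat f_N-\overline{P}_Jf\|_2 \ge K\xi_N, \mathcal B_N) \le C'e^{-LN\eps_N^2}$. Next I would pass from $\widehat f_N$ to the truncated estimator $\widehat f_N^\star = \min(\widehat f_N,M)_+$. Since $0\le f\le M$ on $\O$ for $f$ in the competition class (here I use $\inf f\ge 2f_{\min}>0$ and $\|f\|_\infty\le\|f\|_{\C^s}\le M$, so truncation at $[0,M]$ can only decrease the distance: $\|\widehat f_N^\star - f\|_2 \le \|\widehat f_N - f\|_2$ pointwise, hence in $L^2$. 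Then by the triangle inequality $\|\widehat f_N^\star - f\|_2 \le \|\widehat f_N-\overline{P}_Jf\|_2 + \|\overline{P}_Jf - f\|_2 \le \|\widehat f_N-\overline{P}_Jf\|_2 + C\xi_N$.

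To bound the expected squared loss, I would split $\E_f\|\widehat f_N^\star-f\|_2^2$ over the three regions $\{\|\widehat f_N-\overline{P}_Jf\|_2 < K\xi_N\}\cap\mathcal B_N$, $\{\|\widehat f_N-\overline{P}_Jf\|_2 \ge K\xi_N\}\cap\mathcal B_N$, and $\mathcal B_N^c$. On the first event the loss is at most $(K\xi_N + C\xi_N)^2 \lesssim \xi_N^2 \simeq N^{-2s/(2s+d)}$. On the third event, $\PP_f(\mathcal B_N^c)\to 0$, but since $\widehat f_N^\star\in[0,M]$ and $f\in[0,M]$ the loss is deterministically bounded by $M^2$; more carefully, to get the right rate I would instead absorb $\mathcal B_N^c$ into a slightly larger exceptional-probability estimate, or simply note that $\widehat f_N^\star$ truncated makes $\|\widehat f_N^\star-f\|_2^2 \le M^2\,\mathrm{vol}(\O)=M^2$ always, and choose $L$ large enough (e.g. $L$ with $LN\eps_N^2 \gg \log N$, automatic since $N\eps_N^2$ is polynomial in $N$) so that $M^2 e^{-LN\eps_N^2}\cdot(\text{constant}) = o(N^{-2s/(2s+d)})$, handling the middle event. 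The genuinely delicate point is the contribution of $\mathcal B_N^c$: the uniform bound $\sup_{f}\PP_f(\mathcal B_N^c)\to 0$ alone only gives $o(1)\cdot M^2$, not the rate. The cleanest fix is to note that $\mathcal B_N$ is the event on which certain high-probability empirical-process/design bounds hold; one typically shows $\PP_f(\mathcal B_N^c)$ is itself exponentially small (or at least $o(N^{-2s/(2s+d)})$), which can be read off from the proof of Theorem \ref{thm:exp_inequality} in Section \ref{sec:proof_exp_ineq}. Granting that, all three contributions are $\lesssim N^{-2s/(2s+d)}$ uniformly over $\{f\in\F_0:\|f\|_{\C^s}\le M\}$, yielding the claimed upper bound. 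Combining with the minimax lower bound of Theorem \ref{thm:lower_bd} (valid for all $s>\alpha_d$) gives that $N^{-s/(2s+d)}$ is the asymptotically minimax rate over this class. The main obstacle, as indicated, is controlling the loss on the rare event $\mathcal B_N^c$ at the required polynomial rate rather than merely $o(1)$; everything else is bookkeeping to check that $s>\alpha_d$ and $a\in(1/2,1)$ make the tuning-sequence constraints of Theorem \ref{thm:exp_inequality} hold.
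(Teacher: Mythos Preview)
Your overall strategy matches the paper's proof almost exactly: split on $\mathcal B_N$, use the truncation $\widehat f_N^\star \in [0,M]$ to make the loss deterministically bounded, invoke Theorem~\ref{thm:exp_inequality} on $\mathcal B_N$, and control $\PP_f(\mathcal B_N^c)$ at a polynomial rate. Your identification of the delicate point is exactly right: the paper extracts $\PP_f(\mathcal B_N^c) \lesssim e^{-2^{Jd}} \lesssim N^{-1}$ from the proof of Lemma~\ref{lem: compdisccont} (Section~\ref{sec: proof of good event}), which is precisely the sharpening you anticipated.

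There is, however, a concrete error in your tuning. With $\eps_N^2 = 2^{Jd}/(ND)$, $2^J \simeq N^{1/(2s+d)}$ and $\xi_N = N^{-s/(2s+d)}$, the last summand in the second displayed constraint of Theorem~\ref{thm:exp_inequality}, namely $\eps_N \lesssim \xi_N$, becomes
\[
N^{\frac{1}{2}\big(\frac{d}{2s+d} - (1-a)\big)} \lesssim N^{-\frac{s}{2s+d}}
\quad\Longleftrightarrow\quad
\frac{d+2s}{2s+d} \le 1-a
\quad\Longleftrightarrow\quad
a \le 0,
\]
which is never true for $a\in(1/2,1)$. So your claim that ``each reduces to $s >$ (something in $a,d$)'' fails here: no smoothness condition rescues this particular constraint. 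The fix is simple and is what the paper implicitly uses (see the proof of Theorem~\ref{thm:GP}): take $\eps_N = \xi_N = N^{-s/(2s+d)}$. Then $N\eps_N^2 = N^{d/(2s+d)} \to \infty$, the last term of the second constraint is trivially satisfied, and the remaining conditions ($R_J^2\eps_N^2 \lesssim D\xi_N^2$, $2^{3Jd/2}N^{-1} + 2^{Jd/2}N^{-1/2} + 2^{Jd/2}\eps_N^2 \lesssim \xi_N$, and $2^{Jd}=o(\sqrt{ND})$) all reduce to genuine inequalities in $s,a,d$. With this change, your three-region argument goes through verbatim: the middle region contributes $M^2 C' e^{-LN\eps_N^2} = M^2 C' e^{-LN^{d/(2s+d)}} = o(N^{-k})$ for every $k$, and the $\mathcal B_N^c$ contribution is $M^2 \cdot O(N^{-1})$, both negligible compared to $\xi_N^2 \simeq N^{-2s/(2s+d)}$.
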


\begin{rk}[Minimax rates in related diffusion models]\label{rk:minimax}
The minimax rates for estimating a drift with smoothness $s_b$ and a diffusion coefficient with smoothness $s$ when both terms are decoupled are $(ND)^{-s_b/(2s_b+1)}$ and $N^{-s/(2s+1)}$ in dimension $d=1$, see \cite{H99}. Theorem \ref{thm:lower_bd} extends the lower bound for the diffusion coefficient to an arbitrary dimension and, together with Theorem \ref{thm: minimaxity final}, shows that the rate $N^{-s/(2s+d)}$ is indeed minimax optimal. While we prove our results in a coupled model for which the drift has the form $\nabla f(x)$, a glance at the proofs of the upper bound and the lower bound, where we use the control of the KL divergence from Theorem \ref{thm:variance} in an extended model, shows that the result is actually valid in a more general setting with decoupled drift and diffusion coefficient.
\end{rk}

The proof of Theorem \ref{thm: minimaxity final} is given in Appendix \ref{sec: proof minimaxity}. We emphasize that while this estimator plays a key role in our \textit{proof} of minimax optimal posterior contraction rates using the Gaussian prior \eqref{eq:prior}, it is not actually involved in the prior construction or Bayesian method itself. Using Theorems \ref{thm:variance} and \ref{thm:exp_inequality}, we obtain the following general posterior contraction theorem for our setting, whose proof is found in Section \ref{sec:general_contract_proof}.

\begin{thm}[General contraction theorem]\label{thm:post_contract}
Let $\Pi=\Pi_N$ be a sequence of prior distributions supported on $\F$ in \eqref{eq:F}, let $r,K_0>0$ be fixed constants and $0 < \varepsilon_N \leq \varepsilon_{1,N}\leq \eps_{2,N} \leq \eps_{3,N} \to 0$, $\xi_N \to 0$, $2^J = 2^{J_n} \to\infty$ and $R_{J}$ be sequences satisfying $N\eps_N^2\to\infty$, $2^{Jd} = o(\sqrt{ND})$ and 
\begin{equation}\label{eq:exp_ineq_conditions}
R_{J}^2  \eps_N^2 \lesssim D \xi_N^2, \qquad \qquad 2^{3Jd/2}N^{-1} + 2^{Jd/2} N^{-1/2} + 2^{Jd/2} \eps_N^2 + \eps_N \lesssim \xi_N,
\end{equation}
as $N\to \infty$. Further suppose that $E_N$ and $V_N$ in \eqref{eq:EnVn} satisfy
 $$E_N \leq K_0 \eps_N^2, \qquad \text{and} \qquad  V_N /(N^2 \eps_N^4) \to 0.$$ 
Let 
$$\C_N = \Big\{f \in \mathcal F: \|f\|_{\C^{\alpha}} \leq r, \|f-f_0\|_{\infty} \leq \varepsilon_N,  \|f-f_0\|_{\mathcal C^k} \leq \varepsilon_{k,N} ~ \text{for } k=1,2,3 \Big\}$$
be the set defined in \eqref{eq:Cn},
\begin{align*}
\F_N \subseteq \big\{ f\in \F: \|f\|_{\mathcal C^1} \leq r, & ~ \|f-\overline{P}_Jf\|_2 \lesssim \xi_N, ~ \|f-\overline{P}_Jf\|_\infty \leq R_{J} \big\},
\end{align*}
where $\overline{P}_J$ denotes the projection \eqref{eq:PJ}, and $C_0>0$ be the fixed constant in Theorem \ref{thm:variance}, which depends only on $f_0,r,d,\O,\delta,f_{\min}$.
Assume the true $f_0\in \F_0$ satisfies $\|f_0-\overline{P}_Jf_0\|_2 \lesssim \xi_N$ and $\|f_0 - \overline{P}_J f_0\|_\infty \lesssim R_{J}$. Suppose that for some $C,L>0$,
\begin{enumerate}
\item[(i)] $\Pi(\F_N^c) \leq Le^{-(C+C_0K_0+2)N\eps_N^2}$,
\item[(ii)] $\Pi(\C_N) \geq e^{-CN\eps_N^2}$.
\end{enumerate}
Then for $M>0$ large enough, as $N \to\infty$,
$$\E_{f_0} \Pi(f: \|f-f_0\|_2 \geq M \xi_N|X_0,X_D,\dots,X_{ND}) \to 0.$$
\end{thm}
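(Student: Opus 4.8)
The plan is the classical testing argument for posterior contraction \cite{GV17}, adapted to the dependent diffusion setting: lower bound the denominator of Bayes' formula by the evidence lower bound of Theorem \ref{thm:variance}, and control the numerator by a plug-in test built from the penalized least squares estimator $\widehat f_N$ via the exponential inequality of Theorem \ref{thm:exp_inequality}. Since those two results already carry the entire analytic burden (the dependent log-likelihood ratio and the concentration of $\widehat f_N$), what remains is essentially to glue them together, so I expect this proof to be routine; the only genuinely delicate points are (a) that the high-probability events $\mathcal{B}_N$ from Theorem \ref{thm:exp_inequality} must be carried through the change of measure from $\PP_{f_0}$ to $\PP_f$, which is legitimate precisely because those events sit inside the probabilities there, and (b) tracking exponential constants so that the sieve exponent $C+C_0K_0+2$ in hypothesis (i) beats the evidence weight $e^{(C+C_0K_0+1)N\eps_N^2}$.

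First I would write $\Pi(A_N\mid X^N)=N_N/D_N$ for $A_N=\{f\in\F:\|f-f_0\|_2\ge M\xi_N\}$, where $D_N=\int_\F\prod_{i=1}^N\frac{p_{f,D}(X_{(i-1)D},X_{iD})}{p_{f_0,D}(X_{(i-1)D},X_{iD})}\,d\Pi(f)$ and $N_N$ is the same integral over $A_N$. For the denominator I would apply Theorem \ref{thm:variance} to $\nu=\Pi(\,\cdot\,\cap\C_N)/\Pi(\C_N)$, which is well-defined and supported on $\C_N$ by hypothesis (ii); since $E_N\le K_0\eps_N^2$, estimate \eqref{eq:elbo} with $c=1$ together with (ii) yields an event $\mathcal{G}_N$ with $\PP_{f_0}(\mathcal{G}_N^c)\le V_N/(N^2\eps_N^4)\to 0$ on which $D_N\ge\int_{\C_N}(\cdots)\,d\Pi\ge\Pi(\C_N)\,e^{-N\eps_N^2-C_0NE_N}\ge e^{-(C+C_0K_0+1)N\eps_N^2}$.

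Next I would build the test. The hypotheses $2^{Jd}=o(\sqrt{ND})$ and \eqref{eq:exp_ineq_conditions} are exactly those of Theorem \ref{thm:exp_inequality}, $\F_N$ lies inside the corresponding set $\F_N'$, and so does $f_0$ by the assumed bounds $\|f_0-\overline{P}_Jf_0\|_2\lesssim\xi_N$, $\|f_0-\overline{P}_Jf_0\|_\infty\lesssim R_J$. Theorem \ref{thm:exp_inequality} thus supplies events $\mathcal{B}_N$ with $\sup_{f\in\F}\PP_f(\mathcal{B}_N^c)\to 0$ and, for any prescribed $L>0$, a constant $K=K(L)$ with $\sup_{f\in\F_N'}\PP_f(\|\widehat f_N-\overline{P}_Jf\|_2\ge K\xi_N,\mathcal{B}_N)\le C'e^{-LN\eps_N^2}$. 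I would fix $L>C+C_0K_0+1$, then $K=K(L)$, then $M$ large enough (depending on $K$ and the constants in the approximation bounds), and set $\psi_N=\mathbf{1}\{\|\widehat f_N-f_0\|_2>\tfrac M4\xi_N\}$. Using $\|f_0-\overline{P}_Jf_0\|_2\lesssim\xi_N$, the event $\{\psi_N=1,\mathcal{B}_N\}$ forces $\|\widehat f_N-\overline{P}_Jf_0\|_2\ge K\xi_N$, so $\PP_{f_0}(\psi_N=1,\mathcal{B}_N)\le C'e^{-LN\eps_N^2}$; while for $f\in A_N\cap\F_N$ one has $\|\overline{P}_Jf-f_0\|_2\ge M\xi_N-O(\xi_N)$, so $\{\psi_N=0,\mathcal{B}_N\}$ forces $\|\widehat f_N-\overline{P}_Jf\|_2\ge K\xi_N$, giving $\sup_{f\in A_N\cap\F_N}\PP_f(\psi_N=0,\mathcal{B}_N)\le C'e^{-LN\eps_N^2}$.

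Finally I would assemble: decomposing on $\mathcal{B}_N$, $\mathcal{G}_N$, $\psi_N$, bounding $\Pi(A_N\mid X^N)\le 1$ on $(\mathcal{B}_N\cap\mathcal{G}_N\cap\{\psi_N=0\})^c$, and using $D_N\ge e^{-(C+C_0K_0+1)N\eps_N^2}$ on $\mathcal{G}_N$,
\begin{equation*}
\E_{f_0}\Pi(A_N\mid X^N)\le\PP_{f_0}(\mathcal{B}_N^c)+\PP_{f_0}(\mathcal{G}_N^c)+\PP_{f_0}(\psi_N=1,\mathcal{B}_N)+e^{(C+C_0K_0+1)N\eps_N^2}\,\E_{f_0}\big[\mathbf{1}_{\mathcal{B}_N}(1-\psi_N)N_N\big].
\end{equation*}
The first three terms vanish by the above and by $V_N/(N^2\eps_N^4)\to 0$. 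For the last term I would split $N_N$ over $A_N\cap\F_N$ and $A_N\cap\F_N^c$: on the first piece, Tonelli and the likelihood-ratio change of measure $\E_{f_0}[\mathbf{1}_{\mathcal{B}_N}(1-\psi_N)\prod_i\tfrac{p_{f,D}}{p_{f_0,D}}]=\E_f[\mathbf{1}_{\mathcal{B}_N}(1-\psi_N)]$ bound it by $\sup_{f\in A_N\cap\F_N}\PP_f(\psi_N=0,\mathcal{B}_N)\le C'e^{-LN\eps_N^2}$, so after multiplication by the weight it is $C'e^{-(L-C-C_0K_0-1)N\eps_N^2}\to 0$ by the choice of $L$ and $N\eps_N^2\to\infty$; on the second piece, dropping $\mathbf{1}_{\mathcal{B}_N}(1-\psi_N)\mathbf{1}_{A_N}\le 1$ and using $\E_{f_0}\prod_i\tfrac{p_{f,D}}{p_{f_0,D}}=1$ bound it by $\Pi(\F_N^c)\le Le^{-(C+C_0K_0+2)N\eps_N^2}$ via hypothesis (i), so after multiplication it is $Le^{-N\eps_N^2}\to 0$. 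Hence $\E_{f_0}\Pi(A_N\mid X^N)\to 0$, which is the claim. As anticipated, the two points that make this go through are that hypothesis (i) carries exactly one spare factor $e^{-N\eps_N^2}$ beyond the evidence weight, and that $\mathbf{1}_{\mathcal{B}_N}$, being a function of the data, passes unchanged through the change of measure so that Theorem \ref{thm:exp_inequality} (whose bound is stated with $\mathcal{B}_N$ inside) can be invoked under $\PP_f$.
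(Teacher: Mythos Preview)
Your proposal is correct and follows essentially the same approach as the paper: the paper packages the argument into two lemmas (one combining the testing framework with the evidence lower bound of Theorem~\ref{thm:variance}, one constructing the plug-in test $\Psi_N=\{\|\widehat f_N-f_0\|_2\ge \widetilde M\xi_N\}$ from Theorem~\ref{thm:exp_inequality}), while you carry out the same decomposition and constant-tracking in one pass. The only cosmetic difference is that the paper shows $\Pi(A_N\mid X^N)(1-\Psi_N)\mathbf 1_{\mathcal B_N}\to 0$ in $\PP_{f_0}$-probability via Markov's inequality and then invokes boundedness, whereas you bound the expectation directly; both routes are equivalent here.
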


To summarize, in order to apply Theorem \ref{thm:post_contract} in examples, we must verify that the prior places most of its mass on functions $f\in \F_N$ that are well-approximable by their projections $\overline{P}_Jf$, and also that the prior places at least an exponentially small amount of mass in a neighbourhood of the truth in the sense of $\C_N$, i.e. conditions (i) and (ii) of Theorem \ref{thm:post_contract}, respectively.

\section{Discussion and generalizations}

\subsection{The parameter space $\mathcal{F}_0$}\label{sec:F_class}

The parameter space $\F_0$ in \eqref{eq:F0} captures relevant features of the underlying physical model, while allowing us to focus on the inferential task of estimating the diffusivity $f_0$ in the \textit{interior} of the domain $\O$ without being overly encumbered by boundary technicalities. We consider reflecting boundary conditions, rather than say a simpler periodic model \cite{NR20,GR20}, to better represent the microscopic process corresponding to diffusion as in \eqref{eq:heat_equation}. However, estimation near boundaries often behaves in qualitatively different ways \cite{Ray16,Reiss20}, for instance exhibiting different convergence rates. Since this is not the focus of the present work, we assume $f_0\in\F_0$ is known near the boundary $\partial \O$ to avoid such statistical issues (note we must still deal with probabilistic aspects of the boundary in our proofs). We set $f=1$ on $O\backslash \mathcal K$ for simplicity and clarity of presentation, but our theory could accommodate more general constraints of the form $f = g$ on $\O\backslash \mathcal{K}$ for some \textit{known} smooth $g \geq 2f_{\min}$ by considering a modified prior $f(x) = \Phi(\chi(x)W(x) + \Phi^{-1}(g(x)))$. Our proofs can straightforwardly be adapted by considering wavelet projections of the form $\overline{P}_J[f](x) := g+P_J [f-g](x)$ instead of \eqref{eq:PJ}, since this still separates out the boundary because $\supp (f-g) \subseteq \O_0^\delta$ for any $f$ in the prior support.

The lower bound $f_0\geq 2f_{\min}$ is arbitrary, but provides uniformity for our results in terms of the constants in the convergence rates. While our prior link function depends on this unknown quantity, this is purely for our proof arguments and in any case one can take $f_{\min}$ as close to zero as desired. In practice, we would expect the standard exponential link function $\Phi(x) = e^x$ (i.e. with $f_{\min}=0$) often used in inverse problems \cite{S10,N22} to work equally well. The class $\F_0$ also assumes minimal smoothness $f_0\in\C^{\alpha_d}$, where $\alpha_d$ in \eqref{eq:alpha} grows roughly like $\max(4,d/2)$. The $d/2$-dependence is needed to establish suitable bounds on the transition densities as in \eqref{eq: estimate coulhon}, which follow from results in \cite{D89,C03}. The minimal smoothness $\alpha_d \geq 4$ dominates for small dimensions and is an artifact of our proof, which requires expansions of the transition densities to third order with suitable control of the remainder terms.

\subsection{Extensions to other models}

While the model \eqref{eq: tanaka} we consider here is motivated by physical considerations, our results conceptually extend to SDEs with more general drifts of the form
$$dX_t = b(\nabla f(X_t),X_t) dt + \sqrt{2f(X_t)} dW_t + d\ell_t,$$
where $b$ is suitably regular, see \eqref{eq: tanaka ext} in the proofs below for the exact formulation. Indeed, many of our probabilistic results are actually proved for this more general framework, which includes the present model with $b(x,y) =x$, while also allowing an independent drift $G(X_t)$ by taking $b(x,y) = G(y)$. Our results and techniques should thus be extendable to estimation of the diffusion coefficient $f$ in these more general models where either the drift is viewed as a nuisance parameter or is less informative than the diffusion term regarding $f$.

To deal with the boundary, we first show that in small time $D\to 0$, a particle started away from the boundary will hit it with very small probability (Step 1 in Section \ref{sec:variance_bound_proof}). Our proof then approximates the present model by one \textit{without boundary} on the whole of $\R^d$, see \eqref{eq: diff whole space} below, so that our probabilistic expansions actually treat this unbounded diffusion case. Our techniques should thus extend to diffusion models where one can show that with high probability, the particle is constrained to a compact set, for instance unbounded domains in $\R^d$ with sufficiently strong confining drifts. In this way, our proof essentially separates out the boundary problem (corresponding to either reflecting boundary conditions or confining drifts) from statistical inference in the interior of the effective domain.

One can also consider the problem of simultaneous estimation of the drift and diffusion coefficient when these are in principle uncoupled as in \eqref{eq:diff_general}. However, in the one-dimensional case $d=1$, it is known that these estimation problems interact with each other and it can still be helpful to couple the priors, for instance by first modelling the diffusion coefficient $f$ and then modelling the drift \textit{conditional} on $f$, see Remark 7 in \cite{NS17}. This is an interesting direction, but is beyond the scope of the present work.

\section{Proof of Theorem \ref{thm:variance}: expectation and variance of the integrated log-likelihood restricted to $\C_N$ } \label{sec:small_ball}

The proof of the variance and expectation of the log-likelihood is split into several approximation steps employing different techniques. 
Following the suggestion of a referee, we write the proof of Theorem 8 and its technical auxiliary results in a slightly more general setting: we replace \eqref{eq: tanaka} by the extended model
\begin{equation} \label{eq: tanaka ext}
\begin{split}
X_t & = X_0 + \int_0^t b(\nabla f(X_s),X_s)ds+ \int_0^t \sqrt{2f(X_s)}dB_s+ \ell_t, 
\\
& \ell_t  = \int_0^t n(X_s)d|\ell|_s, \qquad |\ell|_t = \int_0^t {\mathbf 1}_{\{X_s \in \partial \mathcal O\}} d|\ell|_s,
\end{split}
\end{equation}
The drift function: $b = (b^1,\ldots, b^d): \R^{2d}  \rightarrow \R^d$ belongs to the class 
\begin{equation} \label{eq: def drift class}
\mathcal B = \left\{b: \R^{2d}  \rightarrow \R^d,\; \sum_{i = 1}^d\|b^i\|_{\mathcal C^4} \leq \mathfrak b \right\}
\end{equation}
for some $\mathfrak b >0$.
In particular, we recover model \eqref{eq: tanaka} for the choice $b(x,y) = x$ and putting $b(x,y) = G(y)$ for some arbitrary sufficiently smooth $G:\R^d \rightarrow \R^d$ enables us to have a generic drift vector field that can be considered as a nuisance.

We provide here an overview of the proof, deferring the detailed technical arguments to later in the paper to aid readability.
We first prove the more difficult variance bound for $\Lambda_{D}^N$ in Theorem \ref{thm:variance}.

\subsection{Proof of the variance bound of Theorem \ref{thm:variance} }\label{sec:variance_bound_proof}

For an integer $m\geq 1$, real-valued random variables $(Z_{f,k}:f \in \mathcal F, k\geq 1)$ and a probability measure $\nu(df)$ with support $\mathcal A \subset \mathcal F$, we will repeatedly use the inequalities
\begin{align} 
\mathrm{Var}_{f_0}\Big(\int_{\mathcal F}\sum_{k=1}^m Z_{f,k} \nu(df)\Big)  
\leq \int_{\mathcal F} \mathrm{Var}_{f_0}\Big(\sum_{k=1}^m Z_{f,k}\big)\nu(df)   \leq m\sum_{k=1}^m\sup_{f \in \mathcal A}\E_{f_0}[Z_{k,f}^2].  \label{eq: rough}
\end{align}

\noindent {\it Step 1: Restricting observations away from the boundary}. We first decompose the log-likelihood according to whether $X_{(i-1)D} \in \mathcal O_0$ or not. On the event $\mathcal A_{i,D }= \{X_{(i-1)D}\in \mathcal O_0^\delta$\}, where $\mathcal O_0^\delta$ is the $\delta/2$-enlargement of $\mathcal O_0$ defined in \eqref{eq: def offset} above, the process does not hit the boundary during $[(i-1)D, iD]$ with overwhelming probability,  and hence we may use analytic approximation techniques that ignore the boundary reflection. Since $f=f_0=1$ near the boundary for $f,f_0 \in \mathcal{F}$, on $(\mathcal A_{i,D})^c$ the likelihood ratio is already close to one and thus makes a negligible contribution. We quantify this last observation in the following result.

\begin{lem}\label{lem:KL_near_boundary}
Let $\F'  = \{ f\in \F: \|f\|_{\mathcal C^\alpha} \le r\}$ for $\alpha=\max( 4 , 2 \left\lfloor d/4+1/2 \right\rfloor)$. Then, in Model \eqref{eq: tanaka ext}, for $k=1,2$, any $f,f_0 \in \F'$ and any $\gamma>0$,
\begin{align*}
&\int_{x \in \O \setminus \O_0^{\delta}, y \in \O} \Big(\log \frac{p_{f_0,D}(x,y)}{p_{f,D}(x,y)}\Big)^k p_{f_0}(x,y) dx \, dy \\
& \leq  C_1 D^{-d/2} (1+\gamma^{2k} (\log N)^k) N^{-C_2 \gamma^2} + C_3 N^{C_4 \gamma^2} \exp(-c D^{-1}),
\end{align*}
where the constants $C_1-C_4,c>0$ are uniform over $(\mathcal O, d, f_{min}, r,\delta,\mathfrak b)$.
\end{lem}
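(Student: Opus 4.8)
The plan is to exploit the key structural fact that $f = f_0 = 1$ on $\O\setminus\O_0$ for all $f,f_0\in\F'$, so that the two diffusions agree near $\partial\O$ and the log-likelihood ratio is only nonzero through the event that the path wanders from $\O\setminus\O_0^\delta$ back into $\O_0$ during a time interval of length $D\to 0$. First I would write
$$\log\frac{p_{f_0,D}(x,y)}{p_{f,D}(x,y)} = \log\frac{p_{f_0,D}(x,y)/q_D(x,y)}{p_{f,D}(x,y)/q_D(x,y)}$$
for a convenient reference density $q_D$ (e.g. the transition density of reflected Brownian motion with $f\equiv 1$, or the free Gaussian kernel $N_d(x,2DI_d)$), and split $\O$ (the $y$-integral) into the region $\{y: |y-x|\le \rho_D\}$ with $\rho_D$ a slowly growing multiple of $\sqrt{D\log N}$, and its complement. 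On the far region $\{|y-x|>\rho_D\}$ I would use Gaussian-type upper and lower bounds on all the transition densities involved (Aronson/Coulhon-type estimates of the form cited later as \eqref{eq: estimate coulhon}, valid under the $\C^\alpha$ bound $\|f\|_{\C^\alpha}\le r$) to bound both the log-ratio crudely by $C(1+|y-x|^2/D + \log(1/D))$ and the weight $p_{f_0,D}(x,y)$ by $C D^{-d/2} e^{-c|y-x|^2/D}$; integrating this tail against a Gaussian of variance $\asymp D$ over $\{|y-x|>\rho_D\}$ produces, after the substitution $u=(y-x)/\sqrt D$, a bound of the form $C D^{-d/2}(1+\gamma^{2k}(\log N)^k) N^{-C\gamma^2}$ once $\rho_D = \gamma\sqrt{D\log N}$, which is exactly the first term claimed.

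On the near region $\{|y-x|\le\rho_D\}$, since $x\in\O\setminus\O_0^\delta$ which is at distance $\ge\delta/2$ from $\O_0$, for $N$ large enough $\rho_D\le\delta/2$ so the ball $B(x,\rho_D)$ does not meet $\O_0$; hence both $y$ and a whole neighbourhood of the segment from $x$ to $y$ sit in the region where $f=f_0=1$. Here the idea is that the only way the transition densities $p_{f,D}(x,y)$ and $p_{f_0,D}(x,y)$ can differ is through Brownian excursions that reach $\O_0$ and come back; by the strong Markov property and a Bernstein/reflection-type estimate for the supremum of the diffusion over $[(i-1)D,iD]$, the probability of such an excursion is at most $C\exp(-c\,\mathrm{dist}(x,\O_0)^2/D)\le C\exp(-c D^{-1})$ (absorbing $\delta$ into $c$). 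More precisely I would argue that
$$\big|p_{f,D}(x,y) - p_{f_0,D}(x,y)\big| \le q_D(x,y)\cdot C\exp(-cD^{-1})$$
uniformly for $x\in\O\setminus\O_0^\delta$, $|y-x|\le\rho_D$, by coupling the two diffusions until they first enter $\O_0$ and noting they coincide before that time; combined with the lower bound $p_{f,D}(x,y)\ge c D^{-d/2}e^{-C|y-x|^2/D}\ge c' q_D(x,y)$ (again valid on the near region uniformly in $f\in\F'$), this gives $|\log(p_{f_0,D}/p_{f,D})|\le C\exp(-cD^{-1})$ pointwise on the near region, hence its $k$-th power integrated against $p_{f_0,D}(x,y)\,dx\,dy$ (a sub-probability) is at most $C N^{C\gamma^2}\exp(-cD^{-1})$ after crudely bounding the $x$-volume and reinserting the $N^{C\gamma^2}$ slack, matching the second claimed term. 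Adding the two regional bounds yields the lemma.

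The main obstacle I anticipate is making the near-region coupling argument \emph{fully rigorous} for reflected diffusions with variable, merely $\C^\alpha$ coefficients: one must control the first hitting time of $\O_0$ by a process that is itself reflected at $\partial\O$, and show the two SDEs \eqref{eq: tanaka ext} driven by the same Brownian motion genuinely agree pathwise until that hitting time (so that the densities differ only through the excursion event), and then convert this pathwise statement into the pointwise density bound above — this requires either a careful Girsanov/Dynkin argument localized to $B(x,\delta/2)\cap\O$ or a parametrix-type comparison of the two fundamental solutions with Neumann boundary data, keeping all constants uniform over $(\O,d,f_{\min},r,\delta,\mathfrak b)$. The Gaussian tail estimate on the far region is comparatively routine given the cited heat-kernel bounds; the bookkeeping of the two free parameters ($\rho_D$ via $\gamma$, and the resolution between the $N^{-C\gamma^2}$ gain and the $N^{C\gamma^2}$ loss) just needs to be arranged so that the statement holds for \emph{every} $\gamma>0$ with constants independent of $\gamma$ except where indicated.
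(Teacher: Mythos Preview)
Your overall strategy matches the paper's: split at $|y-x|=\gamma\sqrt{D\log N}$, use the two-sided Gaussian heat-kernel bounds \eqref{eq: estimate coulhon} on the far region (this part is essentially identical to the paper), and on the near region exploit $f=f_0=1$ outside $\O_0$ together with a hitting-time estimate for $\partial\O_0$.

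The near-region execution differs, and this is precisely where your anticipated obstacle lives. The pointwise inequality $|p_{f,D}(x,y)-p_{f_0,D}(x,y)|\le Cq_D(x,y)e^{-cD^{-1}}$ does \emph{not} follow from the coupling alone: strong Markov gives the difference as $\E_x[(p_{f,D-\tau}-p_{f_0,D-\tau})(X_\tau,y)\mathbf 1_{\{\tau\le D\}}]$ with $\tau$ the hitting time of $\partial\O_0$, and the inner density is singular as $D-\tau\to 0$. This can be repaired (since $|X_\tau-y|\ge\delta/4$ on the near region, the Gaussian upper bound kills the short-time singularity), but the paper sidesteps the issue entirely by reducing instead to the \emph{integrated} quantity $\|p_{f,D}(x,\cdot)-p_{f_0,D}(x,\cdot)\|_{TV}\le 2\PP_x(\tau\le D)\lesssim e^{-cD^{-1}}$, which \emph{does} follow immediately from the coupling (this is Lemma~\ref{lem: tot variation small}). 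The passage from $(\log p_{f_0,D}/p_{f,D})^k$ to total variation is then elementary: for $k=1$ use $\log(1+z)\le|z|$ together with the ratio bound $p_{f_0,D}/p_{f,D}\le CN^{C'\gamma^2}$ on $\{|y-x|\le\gamma\sqrt{D\log N}\}$ coming from \eqref{eq: estimate coulhon}; for $k=2$ split further on $\mathcal E=\{|p_{f_0,D}/p_{f,D}-1|\le\tfrac12\}$, where $(\log(1+z))^2\le 4z^2$ applies, and on $\mathcal E^c$, whose indicator is itself dominated by a multiple of $|p_{f,D}-p_{f_0,D}|$ via the lower heat-kernel bound. This is the origin of the prefactor $N^{C_4\gamma^2}$ on the second term, and it dissolves the obstacle you identified without needing any pointwise density comparison.
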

The proof is based on explicit  bounds for the heat kernel in small time, for instance Theorem 3.2.9 in \cite{D89} for the upper bound and Theorem 3.1 in \cite{C03} for the lower bound 
 It is delayed until Section \ref{sec: proof of prop away from boundary 1}.  
Abbreviating $X_i^D = (X_{(i-1)D},X_{iD})$ and applying Lemma \ref{lem:KL_near_boundary} with $k=2$ together with \eqref{eq: rough} yields
\begin{align*}
&\mathrm{Var}_{f_0}\Big(\int_{\mathcal C_N} \sum_{i = 1}^N \log \frac{p_{f,D}}{p_{f_0,D}}(X_i^D) {\mathbf 1}_{(\mathcal A_{i,D})^c}\nu(df)\Big) \\
&\leq N^2 \int_{x\in \mathcal O\setminus \mathcal O_0^{\delta/2}, y \in \mathcal O} \Big( \log \frac{p_{f,D}(x,y)}{p_{f_0,D}(x,y)}\Big)^2p_{f_0,D}(x,y)dxdy \\
& \lesssim D^{-d/2} (\log N)^2 N^{-C_2 \mu} + N^{C_4 \mu} \exp(-c  D^{-1})
\end{align*}
for large enough $\mu>0$ and constants uniform over $\C_N$. In view of the statement of Theorem \ref{thm:variance}, it remains to consider the variance of $\int_{\mathcal C_N} \sum_{i = 1}^N \log \frac{p_{f,D}}{p_{f_0,D}}(X_i^D) {\mathbf 1}_{\mathcal A_{i,D}}\nu(df)$, i.e. we may restrict to the events $\mathcal{A}_{i,D}$.\\

\noindent {\it Step 2}: \textit{Approximation by an unbounded diffusion model on $\R^d$}. We can smoothly extend any $f \in \mathcal F$ to all of $\R^d$ by setting $f=1$ outside $\mathcal O$. Consider a diffusion process $(\widetilde X_t)_{t \geq 0}$ taking values on all of $\R^d$ arising as the solution to the stochastic differential equation 
\begin{equation} \label{eq: diff whole space}
\widetilde X_{t} = x+\int_{0}^{t}b(\nabla f(\widetilde X_s), X_s)ds+\int_{0}^{t}\big(2f(\widetilde X_s)\big)^{1/2}dB_s,
\end{equation}
defined on the same probability space with the same driving Brownian motion $(B_t)_{t \geq 0}$ as $(X_t)_{t \geq 0}$. Thus in this section, $\PP_f$ denotes a probability measure defined on a rich enough probability space  to accommodate a Brownian motion $(B_t)_{t \geq 0}$ (and therefore the strong solutions $(X_t)_{t \geq 0}$ of \eqref{eq: tanaka ext} and $(\widetilde X)_{t \geq 0}$ of  \eqref{eq: diff whole space} driven by the parameters $f, b$).

The process $(\widetilde X_t)_{t \geq 0}$ in turn generates a family of transition densities $\widetilde p_{f,D}: \mathcal \R^d \times \R^d \rightarrow [0,\infty)$. We may pick a continuous version of $\widetilde p_{f,D}$ so that
in particular, for any $x \in \mathcal O$,
\begin{equation}  \label{eq: diff whole}
\widetilde p_{f,D}(x,y)dy = \PP_{f}(\widetilde X_D \in dy\,|\,X_0=x).
\end{equation}
Consider the decomposition
\begin{align}
\int_{\mathcal C_N} \sum_{i = 1}^N \log \frac{p_{f,D}}{p_{f_0,D}}(X_i^D) {\mathbf 1}_{\mathcal A_{i,D}}\nu(df) &  = \int_{\mathcal C_N} \sum_{i = 1}^N \log \frac{\widetilde p_{f,D}}{\widetilde p_{f_0,D}}(X_i^D) {\mathbf 1}_{\mathcal A_{i,D}}\nu(df) \nonumber \\
& \quad + \int_{\mathcal C_N}  \sum_{i = 1}^N\Big(\log \frac{p_{f,D}}{\widetilde p_{f,D}}+\log \frac{\widetilde p_{f_0,D}}{p_{f_0,D}}\Big)(X_i^D) {\mathbf 1}_{\mathcal A_{i,D}}\nu(df). \label{eq:p_to_tildep}
\end{align}
We have an analogous result to Lemma \ref{lem:KL_near_boundary} for controlling the expectation and variance of the approximation of $\log p_{f,D}$ by $\log \widetilde p_{f,D}$ starting from $x \in \mathcal O_0^\delta$.
\begin{lem}\label{lem:p_to_tildep}
Let $\F'  = \{ f\in \F: \|f\|_{\mathcal C^\alpha} \le r \}$ for $\alpha=\max( 4 , 2 \left\lfloor d/4+1/2 \right\rfloor)$. Then for small enough $D$, $k=1,2$, in Model \eqref{eq: tanaka ext}, any $g = f,f_0 \in \F'$ and any $\gamma>0$,
\begin{align*}
&\int_{x \in \O_0^{\delta}, y \in \O} \Big(\log \frac{p_{g,D}(x,y)}{\widetilde p_{g,D}(x,y)}\Big)^k p_{f_0}(x,y) dx \, dy \\
& \leq  C_1 D^{-d/2} (1+\gamma^{2k} (\log N)^k) N^{-C_2 \gamma^2} + C_3 N^{C_4 \gamma^2} \exp(-c D^{-1}),
\end{align*}
where the constants $C_1-C_4,c>0$ are uniform over $(\mathcal O, d, f_{min}, r,\delta,\mathfrak b)$. 
The same estimate holds replacing $(\log \frac{p_{g,D}(x,y)}{\widetilde p_{g,D}(x,y)})^k$ by $(\log \frac{\widetilde p_{g,D}(x,y)}{p_{g,D}(x,y)})^k$.
\end{lem}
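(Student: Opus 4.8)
The plan is to exploit the pathwise coincidence of the reflected process $X$ and the free process $\widetilde X$ (both driven by the same Brownian motion, cf. \eqref{eq: diff whole space}) up to the first time $\tau := \inf\{t: X_t\in\partial\O\} = \inf\{t: \widetilde X_t\notin\O\}$, together with the fact that for $x\in\O_0^\delta$ — hence at distance $\ge\delta/2$ from $\partial\O$ — one has $\PP_{g,x}(\tau\le D)\le Ce^{-c/D}$ for $D$ small by a standard small-time exponential tail bound for diffusions with bounded coefficients, uniformly over $g\in\F'$ and $b\in\mathcal B$. Fixing $\gamma>0$, I would split the $y$-integral at the threshold $\rho_N := \gamma\sqrt{c_*^{-1}D\log N}$, where $c_*>0$ is the Gaussian exponent appearing below; for $N$ large $\rho_N\le\delta/4$, so on the near region $\{|y-x|\le\rho_N\}$ both $x$ and $y$ lie at distance $\ge\delta/4$ from $\partial\O$. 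Throughout one uses two-sided Gaussian heat-kernel bounds $c\,t^{-d/2}e^{-C|x-y|^2/t}\le p_{g,t}(x,y)\le C\,t^{-d/2}e^{-c|x-y|^2/t}$ for the Neumann kernel (upper bound from Theorem 3.2.9 of \cite{D89}, lower bound from Theorem 3.1 of \cite{C03}, this being where the $\C^{\alpha}$-control in $\F'$ and the drift class $\mathcal B$ enter to keep constants uniform), the classical Aronson bounds for $\widetilde p_{g,t}$ on $\R^d$, and the analogous bounds for the Dirichlet heat kernel $q_{g,t}$ of $\widetilde X$ killed at $\partial\O$ (all obtained, if needed, conditionally on the path of $X$, since \eqref{eq: diff whole space} has bounded drift and a uniformly elliptic bounded diffusion matrix).

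On the near region I would use $q_{g,D}$ as a common comparison: since $X=\widetilde X$ before $\tau$, the strong Markov property gives the exact identities $p_{g,D}(x,y) = q_{g,D}(x,y) + \E_x[\mathbf 1_{\tau\le D}\,p_{g,D-\tau}(X_\tau,y)]$ and $\widetilde p_{g,D}(x,y) = q_{g,D}(x,y) + \E_x[\mathbf 1_{\tau\le D}\,\widetilde p_{g,D-\tau}(\widetilde X_\tau,y)]$. For $|y-x|\le\rho_N$ every $z\in\partial\O$ satisfies $|z-y|\ge\delta/4$, so the Gaussian upper bound yields $\sup_{0<s\le D,\,z\in\partial\O}\big(p_{g,s}(z,y)\vee\widetilde p_{g,s}(z,y)\big)\le CD^{-d/2}e^{-c\delta^2/(16D)}$ for $D$ small (no blow-up as $s\to0$, precisely because $z\ne y$), whence each correction term is at most $\PP_x(\tau\le D)\cdot CD^{-d/2}e^{-c\delta^2/(16D)}\le CD^{-d/2}e^{-c_1/D}$ with $c_1=c_1(\delta)>0$. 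Dividing by the interior lower bound $\widetilde p_{g,D}(x,y)\ge cD^{-d/2}e^{-C\rho_N^2/D} = cD^{-d/2}N^{-C'\gamma^2}$ gives
$$\Big|\tfrac{p_{g,D}(x,y)}{\widetilde p_{g,D}(x,y)}-1\Big|\le\frac{|p_{g,D}(x,y)-q_{g,D}(x,y)|+|\widetilde p_{g,D}(x,y)-q_{g,D}(x,y)|}{\widetilde p_{g,D}(x,y)}\le C N^{C'\gamma^2}e^{-c_1/D} =: \eta_N,$$
so for $N$ large $|\log(p_{g,D}/\widetilde p_{g,D})|\le 2\eta_N$ on this region, and integrating $(2\eta_N)^k$ against $p_{f_0,D}(x,\cdot)$ (of total mass $\le1$) over $x\in\O_0^\delta$ contributes $\lesssim\eta_N^k\le C_3 N^{C_4\gamma^2}e^{-cD^{-1}}$, matching the second term of the claimed bound.

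On the far region $\{|y-x|>\rho_N\}$ the two-sided Gaussian bounds for $p_{g,D}$ and $\widetilde p_{g,D}$ (the $D^{-d/2}$ prefactors cancelling in the ratio) give $|\log(p_{g,D}(x,y)/\widetilde p_{g,D}(x,y))|\le C_5 + C_6|x-y|^2/D$, while $p_{f_0,D}(x,y)\le C_1 D^{-d/2}e^{-c_*|x-y|^2/D}$; substituting $z=(y-x)/\sqrt D$ and using the elementary tail estimate $\int_{|z|>R}(C_5+C_6|z|^2)^k e^{-c_*|z|^2}dz\le Ce^{-c_*R^2/2}$ with $R=\rho_N/\sqrt D$ yields a bound $\lesssim N^{-C_2\gamma^2}$, hence $\le C_1 D^{-d/2}(1+\gamma^{2k}(\log N)^k)N^{-C_2\gamma^2}$, matching the first term. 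Adding the two contributions proves the estimate, and the version with $\log(\widetilde p_{g,D}/p_{g,D})$ follows at once since the argument controls $|\log(p_{g,D}/\widetilde p_{g,D})|^k$ throughout. The main obstacle is the input of uniform, two-sided, small-time Gaussian heat-kernel estimates for the Neumann kernel — the point at which the minimal smoothness $\alpha_d\sim d/2$ of $\F'$ is genuinely needed, through \cite{D89,C03} — together with the uniform no-escape bound $\PP_x(\tau\le D)\le Ce^{-c/D}$; once these are in hand, the killed-kernel decomposition renders the rest of the argument routine.
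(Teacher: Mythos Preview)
Your argument is correct. Both you and the paper split at the scale $\gamma\sqrt{D\log N}$ and handle the far region identically via the two-sided Gaussian bounds \eqref{eq: estimate coulhon} (and their Aronson analog for $\widetilde p$), but the near-region treatments differ. The paper does not aim for a pointwise bound on $|\log(p_{g,D}/\widetilde p_{g,D})|$; instead it replays the machinery of Lemma~\ref{lem:KL_near_boundary} verbatim---including the case split on $\mathcal E=\{|p_{f_0}/p_f-1|\le\tfrac12\}$---to reduce the near-region contribution to $N^{C\gamma^2}\int_{\O_0^\delta}\|p_{g,D}(x,\cdot)-\widetilde p_{g,D}(x,\cdot)\|_{TV}\,dx$, and then bounds the total variation by $2\PP_x(\widetilde\tau\le D)\lesssim e^{-c/D}$ via the same first-passage observation you use (this is \eqref{eq: assoc trans}, proved exactly as in Lemma~\ref{lem: tot variation small}). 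Your route---introducing the common Dirichlet kernel $q_{g,D}$ and exploiting that both correction terms $p-q$ and $\widetilde p-q$ are $\le CD^{-d/2}e^{-c_1/D}$ when $y$ stays $\ge\delta/4$ away from $\partial\O$---yields a direct pointwise bound $|p_{g,D}/\widetilde p_{g,D}-1|\le\eta_N\to 0$ and so avoids the $\mathcal E/\mathcal E^c$ split entirely. The paper's approach is more economical (it literally reuses Lemma~\ref{lem:KL_near_boundary} and only needs the TV bound as new input), while yours is more self-contained and gives a sharper pointwise conclusion at the cost of one extra ingredient, namely the uniform-in-$s$ bound $\sup_{0<s\le D,\,z\in\partial\O}p_{g,s}(z,y)\vee\widetilde p_{g,s}(z,y)\le CD^{-d/2}e^{-c\delta^2/(16D)}$, which is indeed valid for $D$ small since $s\mapsto s^{-d/2}e^{-c|z-y|^2/s}$ is increasing on $(0,D]$ once $D<2c(\delta/4)^2/d$. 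A minor remark: you do not actually need two-sided bounds on the Dirichlet kernel $q_{g,D}$ itself, only the decomposition $p=q+\text{correction}$, $\widetilde p=q+\text{correction}$ and the Aronson lower bound on $\widetilde p_{g,D}$ in the denominator.
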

The proof uses similar tools as for Lemma \ref{lem:KL_near_boundary} and is delayed until Section \ref{sec: proof near boundary 2}. Using \eqref{eq: rough} and Lemma \ref{lem:p_to_tildep} with $k=2$ and $\gamma>0$ large enough, the $\PP_{f_0}$-variance of the second term in the RHS of \eqref{eq:p_to_tildep} is bounded above by
\begin{align*}
& N^2 \sup_{f \in \C_N, b \in \mathcal B} \E_{f_0}\Big[\Big(\log \frac{p_{f,D}}{\widetilde p_{f,D}}(X_0^D)+\log \frac{\widetilde p_{f_0,D}}{p_{f_0,D}}(X_0^D)\Big)^2 {\mathbf 1}_{\mathcal A_{i,D}}\Big] \\
& \lesssim D^{-d/2} (\log N)^2 N^{-C_2 \mu} + N^{C_4 \mu} \exp(-c  D^{-1}),
\end{align*}
for large enough $\mu>0$ and constants uniform over $\C_N$. 
We have thus established that for any large enough but fixed $\mu>0$,
\begin{align*}
\mathrm{Var}_{f_0}\left(\Lambda_{D}^N \right) & \lesssim \mathrm{Var}_{f_0} \Big(\int_{\mathcal C_N} \sum_{i = 1}^N \log \frac{\widetilde p_{f,D}}{\widetilde p_{f_0,D}}(X_i^D) {\mathbf 1}_{\mathcal A_{i,D}}\nu(df)\Big) \\
& \qquad +  D^{-d/2} (\log N)^2 N^{-C_2 \mu} + N^{C_4 \mu} \exp(-c  D^{-1}).
\end{align*}

\noindent {\it Step 3: Approximating $\widetilde p_{f,D}$ by a Gaussian transition density in small-time}. We approximate $\widetilde p_{f,D}$ by the transition density of the corresponding Euler scheme $N_d(x,2Df(x) I_d)$ without drift, namely
\begin{equation}\label{eq:proxy}
q_{f,D}(x,y) = \frac{1}{(4\pi D f(x))^{d/2}} \exp\Big(-\frac{|y-x|^2}{4Df(x)}\Big).
\end{equation}
Note that while $q_{f,D}$ is defined on $\R^d\times \R^d$, it will be evaluated at $X_i^D$ which lies in $\mathcal O \times \mathcal O$, $\PP_{f_0}$-almost surely. Define
$$\Lambda_{q,D}^N = \int_{\mathcal C_N} \sum_{i = 1}^N\log \frac{q_{f,D}}{q_{f_0,D}}(X_i^D){\mathbf 1}_{\mathcal A_{i,D}}\nu(df),$$
again writing $X_i^D = (X_{(i-1)D},X_{iD})$. The key estimate to prove Theorem \ref{thm:variance} is the following result, which quantifies this approximation.
\begin{prop}\label{prop:tildep_to_q}
For $f_0 \in \F_0$ and any probability measure $\nu$ supported on $\C_N$ as in \eqref{eq:Cn}, in Model \eqref{eq: tanaka ext}, it holds that
\begin{align*}
& \mathrm{Var}_{f_0} \Big(\int_{\mathcal C_N} \sum_{i = 1}^N \log \frac{\widetilde p_{f,D}}{\widetilde p_{f_0,D}}(X_i^D) {\mathbf 1}_{\mathcal A_{i,D}}\nu(df)\Big) \\
 & \lesssim \var_{f_0} \big( \Lambda_{q,D}^N \big) + N \eps_{1,N}^2  (D+ND^2) + N^2  \varepsilon_{2,N}^2 D^2 + N^2 \varepsilon_{3,N}^2 D^3+ N^2 D^4 \\
& \qquad \qquad \qquad \qquad + N^2 \eps_{1,N}^2 D\exp(-cD^{-1}) ,
\end{align*}
where the constants are uniform over $(\mathcal O, d, f_{min}, r,\delta,\mathfrak b)$.
\end{prop}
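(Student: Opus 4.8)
The plan is to pass from the true transition density $\widetilde p_{f,D}$ to the frozen Gaussian proxy $q_{f,D}$ by means of a second-order small-time expansion of the heat kernel, and then to absorb the resulting error into the stated terms via a martingale decomposition (with the spectral gap \eqref{eq:spectral_gap} available if a sharper bound on the predictable part is needed). Write $\Lambda_{\widetilde p,D}^N := \int_{\C_N}\sum_{i=1}^N \log\frac{\widetilde p_{f,D}}{\widetilde p_{f_0,D}}(X_i^D)\,{\bf 1}_{\mathcal A_{i,D}}\,\nu(df)$ for the quantity whose variance we must bound. Since $\var_{f_0}(\Lambda_{\widetilde p,D}^N)\le 2\var_{f_0}(\Lambda_{q,D}^N)+2\var_{f_0}(\Lambda_{\widetilde p,D}^N-\Lambda_{q,D}^N)$, it suffices to control the variance of the difference. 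Setting $R_{g,D}:=\log(\widetilde p_{g,D}/q_{g,D})$ and $Z_i:=\int_{\C_N}(R_{f,D}-R_{f_0,D})(X_i^D)\,{\bf 1}_{\mathcal A_{i,D}}\,\nu(df)$, this difference equals $\sum_{i=1}^N Z_i$, so the argument splits into (a) a graded small-time expansion of $R_{g,D}$ and (b) a variance bound for $\sum_i Z_i$.

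For (a), I would first restrict to the event $\{|X_{iD}-X_{(i-1)D}|\le C\sqrt{D\log N}\}$, whose complement is handled by the sub-Gaussian tail bounds for $\widetilde X_D-x$ already used in Lemmas \ref{lem:KL_near_boundary} and \ref{lem:p_to_tildep}, and which produces the term $N^2\eps_{1,N}^2 D\exp(-cD^{-1})$. On this event, the classical parametrix / Riemannian heat-kernel expansions of \cite{azencott1984densite,berline2003heat,B20} give $\widetilde p_{g,D}(x,y)=q_{g,D}(x,y)\exp(\rho_{g,D}(x,y))$, where $\rho_{g,D}$ is a finite sum of terms of the schematic form $D^{j/2}\,\Theta_g(x)\,\big((y-x)/\sqrt D\big)^{\otimes m}$ with $j+m\ge 1$ and $\Theta_g$ a smooth bounded function of $g$, of its derivatives of order $\le 3$, and of $b$ (the drift enters $R_{g,D}$ only through such terms, in line with Remark \ref{rk:minimax}), plus a remainder that is $O\big(D^{2}(1+|y-x|/\sqrt D)^{p}\big)$ for a fixed power $p$, uniformly over $g\in\F'$ and $b\in\mathcal B$. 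Since $R_{g,D}=\rho_{g,D}$, subtracting the expansions at $f$ and at $f_0$ shows that every term of $R_{f,D}-R_{f_0,D}$ other than the remainder acquires a factor which is a derivative of $f-f_0$ of some order $k\le 3$ times bounded functions of $f,f_0,b$; hence, $\nu$ being supported on $\C_N$, a factor $\eps_{1,N}$, $\eps_{2,N}$ or $\eps_{3,N}$ according to $k$.

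For (b), I would decompose $Z_i=M_i+h_i(X_{(i-1)D})$ with $h_i(X_{(i-1)D}):=\E_{f_0}[Z_i\mid X_0,\dots,X_{(i-1)D}]$ and $M_i$ a martingale difference. By orthogonality, stationarity, and Jensen in $\nu$, $\var_{f_0}(\sum_i M_i)\le N\,\E_{f_0}[M_1^2]\le N\sup_{f\in\C_N}\E_{f_0}[(R_{f,D}-R_{f_0,D})^2(X_0,X_D){\bf 1}_{\mathcal A_{1,D}}]$; inserting the expansion from (a) together with the conditional increment moments $\E_{f_0}[|X_D-X_0|^m\mid X_0=x]=O(D^{m/2})$ for $m\ge 2$ and $\E_{f_0}[X_D-X_0\mid X_0=x]=O(D)$ (the latter carried by the drift), and reading off, term by term, the contribution of the linear, quadratic, cubic and remainder pieces, yields $N\eps_{1,N}^2 D$, $N^2\eps_{2,N}^2 D^2$, $N^2\eps_{3,N}^2 D^3$ and $N^2 D^4$ respectively. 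For the predictable part, each $h_i$ is already of size $O(\eps_{1,N}D)$ pointwise — the conditioning kills the leading fluctuation — so even the crude bound $\var_{f_0}(\sum_i h_i)\le N\sum_i\E_{f_0}[h_i^2]\lesssim N^2\eps_{1,N}^2 D^2$ is within the stated budget; alternatively one centres the $h_i$ and sums covariances using \eqref{eq:spectral_gap}, gaining a factor $(1-e^{-\lambda_{f_0}D})^{-1}\lesssim D^{-1}$. Adding up these contributions gives the proposition.

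The main obstacle is step (a): making the heat-kernel expansion precise with remainder estimates that are \emph{uniform} over the shrinking neighbourhood $\C_N$ and over $b\in\mathcal B$, and, above all, keeping exact track of which derivatives of $g$ appear at which order in $D^{1/2}$, so that after differencing each surviving term carries the sharp power of $\eps_{k,N}$; this bookkeeping, combined with the separation of near-boundary contributions already carried out in Step 1 of Section \ref{sec:variance_bound_proof}, is where the real work lies. The variance estimate in step (b) is then essentially a martingale computation whose only genuinely necessary feature is that the leading linear-in-$(y-x)$ term be treated as a martingale difference, giving $N\eps_{1,N}^2 D$ rather than the crude $N^2\eps_{1,N}^2 D$, which would be too large; one must therefore also check that the indicators ${\bf 1}_{\mathcal A_{i,D}}$ do not destroy this (approximate) martingale structure, which is again ensured by Step 1.
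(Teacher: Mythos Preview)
Your high-level strategy coincides with the paper's: expand the difference of log-densities via a small-time heat-kernel expansion (this is Lemma~\ref{thm: first big approx}) and then split the resulting sum into martingale and predictable pieces, with the martingale structure being essential only for the linear-in-$(y-x)$ term. The execution differs slightly. Rather than your abstract Doob decomposition $Z_i=M_i+h_i$, the paper substitutes the SDE decomposition $X_{iD}-X_{(i-1)D}=b_{i,D}+\Sigma_{i,D}+L_{i,D}$ directly into the linear term $\gamma_{f_0,f}(X_{(i-1)D})\cdot(X_{iD}-X_{(i-1)D})$, and for the cubic term $\tfrac{1}{8D}|y-x|^2\,\nabla(f^{-1}-f_0^{-1})(x)\cdot(y-x)$ it iterates It\^o's formula once more via \eqref{eq: expand square} and \eqref{eq: ito second level} to extract further martingale and drift pieces. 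Both routes yield the same bound; the explicit It\^o route has the advantage of making transparent where the $\exp(-cD^{-1})$ contribution arises, namely from the boundary local-time term $L_{i,D}$ on $\mathcal A_{i,D}$ via Lemma~\ref{lem: hitting time} and the third estimate in \eqref{eq: moment bounds reflect}, not from a large-increment cutoff as you suggest.

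Two minor corrections to your sketch. First, no cutoff on $|X_{iD}-X_{(i-1)D}|$ is needed here: Lemma~\ref{thm: first big approx} already gives an expansion whose remainder $r_{f_0,f}$ is bounded \emph{uniformly} on $\O\times\O$, so the graded moment estimates of Lemma~\ref{lem: moment bis} apply directly to each term. Second, your claim that $h_i=O(\eps_{1,N}D)$ \emph{pointwise} overlooks the boundary piece $\E_{f_0}[L_{i,D}\mid\mathcal F_{(i-1)D}]$, which need not be $O(D)$ pointwise on $\mathcal A_{i,D}$; its second moment, however, is $\lesssim D\exp(-cD^{-1})$ by \eqref{eq: moment bounds reflect}, and this is exactly how the exponential term enters the stated bound. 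These are presentational rather than substantive gaps.
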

The proof of Proposition \ref{prop:tildep_to_q}  relies on second-order small time expansions of the heat-kernel and is deferred to Section \ref{sec:tildep_to_p}.\\

\noindent {\it Step 4: Variance of the proxy log-likelihood}. It remains to control the final variance term in the RHS of the last estimate. It involves the proxy density $q_{f,D}$ but is evaluated at the points of the original diffusion $X$ given by \eqref{eq: tanaka} with reflection at the boundary.
\begin{prop} \label{thm: variance proxy}
For $f_0 \in \F$ and any probability measure $\nu$ supported on $\C_N$ as in \eqref{eq:Cn}, in Model \eqref{eq: tanaka ext}, it holds that
$$\mathrm{Var}_{f_0} (\Lambda_{q,D}^N) \lesssim  N\varepsilon_N^2\big(1+D+ND^2+D^{-1}\varepsilon_N^2 +N e^{-C_{f_0}'D^{-1}} \big),$$
where the constants are uniform over $(\mathcal O, d, f_{min}, r,\delta,\mathfrak b)$.
\end{prop}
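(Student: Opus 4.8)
Here is how I would prove Proposition~\ref{thm: variance proxy}.

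\medskip
\noindent\emph{Overall plan.} The idea is to reduce to a single $f\in\C_N$ by convexity of the variance, decompose each summand of $\Lambda_{q,D}^N$ into a martingale difference plus a predictable part, bound the martingale part by a fourth‑moment estimate on the diffusion increments, and bound the predictable part by a sharp second‑order expansion of the conditional second moment of the increments (which produces a quadratic cancellation) combined with the shrinking spectral gap \eqref{eq:spectral_gap} of the Markov chain. Throughout, the reflection at $\partial\O$ will only contribute exponentially small errors because the relevant functions are supported in $\O_0$, which by \eqref{eq:delta} lies at distance $\geq\delta$ from $\partial\O$.

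\medskip
\noindent\emph{Reduction and the martingale part.} Writing $\Lambda_{q,D}^N=\int_{\C_N}\big(\sum_{i=1}^N Z_{f,i}\big)\nu(df)$ with $Z_{f,i}:=\big(\log\tfrac{q_{f,D}}{q_{f_0,D}}\big)(X_{(i-1)D},X_{iD})\mathbf 1_{\mathcal A_{i,D}}$, Jensen's inequality (the first inequality in \eqref{eq: rough}) gives $\mathrm{Var}_{f_0}(\Lambda_{q,D}^N)\leq\sup_{f\in\C_N}\mathrm{Var}_{f_0}\big(\sum_{i=1}^N Z_{f,i}\big)$, so I would fix $f\in\C_N$ from now on. By \eqref{eq:proxy}, $Z_{f,i}=a_f(X_{(i-1)D})+D^{-1}|X_{iD}-X_{(i-1)D}|^2\,b_f(X_{(i-1)D})$, where $a_f:=\tfrac d2\log(f_0/f)$ and $b_f:=\tfrac14(f_0^{-1}-f^{-1})$; since $f=f_0=1$ off $\O_0$, both are supported in $\O_0$, the indicator $\mathbf 1_{\mathcal A_{i,D}}$ is then harmless, and $\|a_f\|_\infty,\|b_f\|_\infty\lesssim\|f-f_0\|_\infty\leq\eps_N$. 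Setting $A_{f,i}:=\E_{f_0}[Z_{f,i}\mid X_0,\dots,X_{(i-1)D}]$ and $M_{f,i}:=Z_{f,i}-A_{f,i}$, the $(M_{f,i})$ form a martingale difference sequence, so $\mathrm{Var}_{f_0}(\sum_i Z_{f,i})\leq 2\sum_i\E_{f_0}[M_{f,i}^2]+2\,\mathrm{Var}_{f_0}(\sum_i A_{f,i})$. Conditioning and using the standard moment bound $\E_{f_0}[|X_{iD}-X_{(i-1)D}|^4\mid X_{(i-1)D}=x]\lesssim D^2$ for the reflected diffusion (uniform in $x$, and in particular for $x\in\O_0$ where the boundary is not seen) gives $\E_{f_0}[M_{f,i}^2]\leq\E_{f_0}[Z_{f,i}^2]\lesssim\|a_f\|_\infty^2+\|b_f\|_\infty^2\lesssim\eps_N^2$, hence $\sum_i\E_{f_0}[M_{f,i}^2]\lesssim N\eps_N^2$.

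\medskip
\noindent\emph{The predictable part and the key cancellation.} By the Markov property, $A_{f,i}=\Psi_f(X_{(i-1)D})$ with $\Psi_f(x)=a_f(x)+D^{-1}b_f(x)\,\E_{f_0}[|X_D-x|^2\mid X_0=x]$. For $x\in\O_0$ the particle hits $\partial\O$ before time $D$ with probability $\lesssim e^{-cD^{-1}}$, so comparing with the boundary‑free diffusion \eqref{eq: diff whole space} and expanding to second order (legitimate since $f_0\in\C^4$ and $b$ has bounded $\C^4$‑norm) yields $\E_{f_0}[|X_D-x|^2\mid X_0=x]=2dD f_0(x)+R(x)$ with $|R(x)|\lesssim D^2+De^{-cD^{-1}}$, uniformly on $\O_0$. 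Thus $\Psi_f=h_f+D^{-1}b_f R$ with leading term $h_f:=a_f+2d f_0 b_f=\tfrac d2\big(\log(f_0/f)+1-f_0/f\big)$; since $f,f_0$ take values in a fixed compact subinterval of $(0,\infty)$ and $t\mapsto\log t+1-t$ vanishes to second order at $t=1$, we get $\|h_f\|_\infty\lesssim\|f-f_0\|_\infty^2\leq\eps_N^2$, while $\|D^{-1}b_f R\|_\infty\lesssim\eps_N D+\eps_N e^{-cD^{-1}}$. This quadratic cancellation — the local asymptotic normality of the model, which is exactly why the diffusion term carries the leading‑order information on $f$ — is the crux of the argument, and I would then split $\mathrm{Var}_{f_0}(\sum_i A_{f,i})\leq 2\,\mathrm{Var}_{f_0}(\sum_i h_f(X_{(i-1)D}))+2\,\mathrm{Var}_{f_0}(\sum_i D^{-1}b_f R(X_{(i-1)D}))$. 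For the first term I invoke \eqref{eq:spectral_gap}: $X$ is stationary with uniform invariant measure, $1-e^{-D\lambda_{f_0}}\geq rD$, and the standard variance bound for additive functionals of a reversible chain with a spectral gap (e.g.\ \cite{Paulin2015}) gives $\mathrm{Var}_{f_0}(\sum_i h_f(X_{(i-1)D}))\lesssim\tfrac{N}{rD}\,\mathrm{Var}_{\mu_{f_0}}(h_f)\lesssim\tfrac{N}{D}\|h_f\|_\infty^2\lesssim N\eps_N^4 D^{-1}$ (for the extended model \eqref{eq: tanaka ext} the same holds, since uniform ellipticity on the bounded domain gives $\|P_{f_0,D}\phi\|_{L^2(\mu_{f_0})}\leq e^{-\lambda D}\|\phi\|_{L^2(\mu_{f_0})}$ for centred $\phi$ with $\lambda$ bounded below, which is all that is used); for the second term the crude bound $\mathrm{Var}_{f_0}(\sum_i D^{-1}b_f R(X_{(i-1)D}))\leq N^2\|D^{-1}b_f R\|_\infty^2\lesssim N^2\eps_N^2 D^2+N^2\eps_N^2 e^{-cD^{-1}}$ suffices.

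\medskip
\noindent\emph{Conclusion and main obstacle.} Collecting the estimates, $\mathrm{Var}_{f_0}(\sum_i Z_{f,i})\lesssim N\eps_N^2+N\eps_N^4 D^{-1}+N^2\eps_N^2 D^2+N^2\eps_N^2 e^{-cD^{-1}}\lesssim N\eps_N^2\big(1+D+ND^2+D^{-1}\eps_N^2+Ne^{-cD^{-1}}\big)$ uniformly over $f\in\C_N$, which together with the reduction step yields the claim. I expect the only genuine obstacles to be (i) making the second‑moment expansion precise enough to obtain the $O(\eps_N^2)$ cancellation in $h_f$, since merely bounding $\Psi_f$ by $O(\eps_N)$ would already cost an extra factor $D^{-1}$ and destroy the rate, and (ii) replacing the trivial $N^2$ bound on the variance of $\sum_i h_f(X_{(i-1)D})$ by the mixing bound $\tfrac{N}{D}\|h_f\|_\infty^2$ and making this uniform over $f\in\C_N$; the boundary reflection itself is a minor issue, contributing only the exponentially small terms because of the separation $\mathrm{dist}(\O_0,\partial\O)\geq\delta$ in \eqref{eq:delta}.
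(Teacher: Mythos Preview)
Your proposal is correct and rests on exactly the same three ingredients as the paper's proof: the quadratic cancellation $\log(f_0/f)+1-f_0/f=O(\eps_N^2)$, the spectral--gap variance bound $\mathrm{Var}_{f_0}\big(\sum_i h_f(X_{(i-1)D})\big)\lesssim ND^{-1}\|h_f\|_\infty^2$ (the paper's Lemma~\ref{lem: variance esti}), and martingale orthogonality for the fluctuation term. The organisation differs slightly: the paper first applies It\^o's formula to write $|X_{iD}-X_{(i-1)D}|^2=2dDf_0(X_{(i-1)D})+\widetilde b_{i,D}+\widetilde\Sigma_{i,D}+\widetilde L_{i,D}$ (display~\eqref{eq: expand square}) and then bounds each piece separately---including a further It\^o expansion of $\widetilde b_{i,D}$ in its Step~2---whereas you take the Doob decomposition $Z_{f,i}=M_{f,i}+A_{f,i}$ once and handle the predictable part through the expansion $\E_{f_0}[|X_D-x|^2\mid X_0=x]=2dDf_0(x)+O(D^2)$. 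Your route is arguably more streamlined: the paper's extra It\^o step produces an intermediate $N\eps_N^2D$ term (from the martingale piece $\overline\Sigma_{i,D}(\widetilde G)$) that you simply absorb into the crude bound $\E_{f_0}[M_{f,i}^2]\leq\E_{f_0}[Z_{f,i}^2]\lesssim\eps_N^2$. Both arguments arrive at the same estimate.
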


The proof of Proposition \ref{thm: variance proxy} is given in  Section \ref{sec:proxy_variance} and relies on martingale arguments, which are key to dealing with the dependence structure of the Markov chain. Combining the bounds from Steps 1-4 and keeping track of the leading order terms establishes the desired variance bound for $\Lambda_{D}^N$ in Theorem \ref{thm:variance}.

\subsection{Proof of the expectation bound of Theorem \ref{thm:variance}}

The proof follows along similar, though easier, lines as the variance bound and is written in the extended Model \eqref{eq: tanaka ext} likewise. The expectation bounds will be proved uniformly over $f\in \C_N$, which will sometimes be implied without explicit reference.\\

\noindent {\it Step 1: Restricting observations away from the boundary}. Applying Lemma \ref{lem:KL_near_boundary} above with $k=1$ gives
\begin{align*}
\E_{f_0}\Big[ \log \frac{p_{f_0,D}}{p_{f,D}}(X_0^D) \Big]  & \leq \E_{f_0}\Big[ \log \frac{p_{f_0,D}}{p_{f,D}}(X_0^D) {\mathbf 1}_{\mathcal A_{1,D}}\Big]  \\
& \quad + C_1 D^{-d/2} (\log N) N^{-C_2 \mu} + C_3 N^{C_4 \mu} \exp(-c  D^{-1})
\end{align*}
for large enough $\mu>0$, and where the constants $C_1-C_4,c>0$ are uniform over $\C_N$  {\color{red} and $\mathcal B$}.\\

\noindent {\it Step 2: Local approximation by an unbounded diffusion model on $\R^d$}. Let $\widetilde p_{f,D}: \mathcal \R^d \times \R^d \rightarrow [0,\infty)$ denote the transition density defined in \eqref{eq: diff whole} above, corresponding to the unbounded diffusion model \eqref{eq: tanaka ext} over the whole space $\R^d$.
Consider $\E_{f_0}$-expectation of the integrands in (minus) the decomposition \eqref{eq:p_to_tildep}. Using Lemma \ref{lem:p_to_tildep} above with $k=1$ and large enough $\gamma>0$, the $\E_{f_0}$-expectation of (minus) the integrand in the second term in \eqref{eq:p_to_tildep} is bounded above by
\begin{align*}
& \sup_{f \in \C_N, b \in \mathcal B} \E_{f_0}\Big[\log \frac{\widetilde p_{f,D}}{p_{f,D}}(X_0^D) {\mathbf 1}_{\mathcal A_{1,D}}\Big] 
 \lesssim D^{-d/2} (\log N) N^{-C_2 \mu} + N^{C_4 \mu} \exp(-c  D^{-1}),
\end{align*}
for large enough $\mu>0$. Arguing in the same way, an identical bound also holds for the $\E_{f_0}$-expectation of the integrand of minus the third term in \eqref{eq:p_to_tildep}. Therefore,
\begin{align*}
\E_{f_0} \Big[ \log \frac{p_{f_0,D}}{p_{f,D}}(X_0^D) {\mathbf 1}_{\mathcal A_{1,D}} \Big] &\lesssim  
\E_{f_0} \Big[ \log \frac{\widetilde p_{f_0,D}}{\widetilde p_{f,D}}(X_0^D) {\mathbf 1}_{{\mathcal A}_{1,D}} \Big] \\
&\qquad + D^{-d/2} (\log N) N^{-C_2 \mu} +  N^{C_4 \mu} \exp(-c  D^{-1}).
\end{align*}

\noindent {\it Step 3: Approximating $\widetilde p_{f,D}$ by a Gaussian transition density in small-time}. Recalling the definition \eqref{eq:proxy} of the proxy density $q_{f,D}$ from above, we now approximate the expected log-likelihood. The proof of the following proposition is deferred to Section \ref{sec:tildep_to_p}.

\begin{prop}\label{prop:tildep_to_q_expectation}
For $f_0 \in \F_0$ and any probability measure $\nu$ supported on $\C_N$ as in \eqref{eq:Cn}, in Model \eqref{eq: tanaka ext}, it holds that
\begin{align*}
& \sup_{f \in \C_N, b \in \mathcal B} \E_{f_0} \Big[ \log \frac{\widetilde p_{f_0,D}}{\widetilde p_{f,D}}(X_0^D) {\mathbf 1}_{\mathcal A_{1,D}} \Big] \\
& \lesssim \sup_{f\in \C_N, b \in \mathcal B} \E_{f_0} \Big[ \log \frac{q_{f_0,D}}{q_{f,D}}(X_0^D) {\mathbf 1}_{\mathcal A_{1,D}} \Big]  \\
&\quad +  \varepsilon_N^2 \Big(\tfrac{\eps_{1,N} }{\eps_N^2} D + \tfrac{\eps_{2,N}}{\eps_N^2}D + \tfrac{\eps_{3,N}}{\eps_N^2}D^{3/2} + \tfrac{\eps_{1,N}}{\eps_N^2} D^{1/2} \exp(-cD^{-1}) \Big),
\end{align*}
where the constants are uniform over $(\mathcal O, d, f_{min}, r,\delta,\mathfrak b)$.
\end{prop}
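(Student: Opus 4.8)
The plan is to add and subtract the Gaussian proxy $q$ and reduce everything to a quantitative small-time expansion of the heat kernel. Writing, for $(x,y)=(X_0,X_D)$ and $\rho_{g,D}:=\log(\widetilde p_{g,D}/q_{g,D})$,
$$
\log\frac{\widetilde p_{f_0,D}}{\widetilde p_{f,D}}(x,y) = \log\frac{q_{f_0,D}}{q_{f,D}}(x,y) + \big(\rho_{f_0,D}(x,y) - \rho_{f,D}(x,y)\big),
$$
and taking $\E_{f_0}[\,\cdot\,{\mathbf 1}_{\mathcal A_{1,D}}]$, the first term is exactly the proxy expectation in the statement, so it suffices to show
$$
\Big| \E_{f_0}\big[(\rho_{f_0,D} - \rho_{f,D})(X_0,X_D){\mathbf 1}_{\mathcal A_{1,D}}\big] \Big| \lesssim \eps_{1,N}D + \eps_{2,N}D + \eps_{3,N}D^{3/2} + \eps_{1,N}D^{1/2}e^{-cD^{-1}},
$$
uniformly over $f\in\C_N$ and $b\in\mathcal B$. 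I would first localize: on $\mathcal A_{1,D}=\{X_0\in\O_0^\delta\}$ the reflected process $X$ coincides on $[0,D]$ with the free process $\widetilde X$ of \eqref{eq: diff whole space} outside an event of probability $\le C e^{-cD^{-1}}$ (Step~1 of Section~\ref{sec:variance_bound_proof}), and outside a moderate-deviation set $\{|X_D-X_0|\le \gamma\sqrt{D\log N}\}$ whose complement contributes only a negligible term (of order $N^{-c\gamma^2}$ up to polynomial factors, absorbed for $\gamma$ large). On the exceptional events one bounds $\rho_{f_0,D}-\rho_{f,D}$ by the crude (polynomial-in-$D^{-1}$) bounds underlying the expansion below, and a Cauchy--Schwarz argument against the exponentially small probabilities produces the $\eps_{1,N}D^{1/2}e^{-cD^{-1}}$ remainder; on the good set one replaces $X_D$ by $\widetilde X_D$, whose conditional law given $X_0=x$ in the interior is $\widetilde p_{f_0,D}(x,\cdot)$.

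The analytic core is the expansion itself. Invoking the second-order small-time parametrix expansion of the heat kernel in geodesic normal coordinates for the Riemannian metric $g_{ij}=f^{-1}\delta_{ij}$ induced by the diffusion, together with a Girsanov correction for the drift $b(\nabla f,\cdot)$ (cf.\ \cite{azencott1984densite,berline2003heat,B20} and the Euler-scheme heuristic of Section~\ref{sec:small_ball}), I would write, for $z=(y-x)/\sqrt D$ in the moderate region,
$$
\rho_{f,D}(x,y) = \sqrt D\, a_1[f](x,z) + D\, a_2[f](x,z) + D^{3/2} a_3[f](x,z) + D^2 R_{f,D}(x,z),
$$
where each $a_j[f](x,\cdot)$ is a polynomial of bounded degree whose coefficients are smooth functions of $x$ built from $f(x)$ and its derivatives up to order $j$ and from $b$; crucially $a_1,a_3$ are odd and $a_2$ is even in $z$, and $R_{f,D}$ has polynomial growth in $z$ and depends on $f$ Lipschitz-continuously in $\C^3$-norm, all uniformly over $f\in\C_N$, $b\in\mathcal B$ and small $D$. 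Proving this expansion with these uniform, $f$-Lipschitz remainder bounds — by tracking the dependence on $f$ through the parametrix construction and the geodesic distance $d_g(x,y)^2 = f(x)^{-1}|y-x|^2 + O(|y-x|^3)$ — is the main obstacle, and is the content of the key lemma of Section~\ref{sec:tildep_to_p}.

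Finally I would difference and integrate. Subtracting the $f_0$-expansion, the order-$\sqrt D$ coefficient $a_1[f]-a_1[f_0]$ involves only $\nabla(f-f_0)$ and $b(\nabla f,\cdot)-b(\nabla f_0,\cdot)$, hence is $O(\|f-f_0\|_{\C^1})=O(\eps_{1,N})$ on $\C_N$; likewise $a_2[f]-a_2[f_0]=O(\eps_{2,N})$ (it involves $\nabla^2(f-f_0)$), $a_3[f]-a_3[f_0]=O(\eps_{3,N})$, and $R_{f,D}-R_{f_0,D}=O(\eps_{3,N})$. Taking $\E_{f_0}[\cdot\mid X_0=x]$ and using that, conditionally on $X_0=x$ in the interior, $z=(\widetilde X_D-x)/\sqrt D$ has all moments $O(1)$ with every \emph{odd} moment $O(\sqrt D)$ (since $\widetilde X$ is, to leading order, a driftless time-rescaled Brownian motion, the drift contributing only $O(\sqrt D)$ and the boundary an $e^{-cD^{-1}}$ correction): the odd polynomial $a_1$ contributes $\sqrt D\cdot O(\eps_{1,N})\cdot O(\sqrt D)=O(\eps_{1,N}D)$, the even $a_2$ contributes $D\cdot O(\eps_{2,N})\cdot O(1)=O(\eps_{2,N}D)$, $a_3$ contributes $O(\eps_{3,N}D^{3/2})$, and the remainder $O(\eps_{3,N}D^2)$, which is dominated. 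Integrating over $x$ against the uniform invariant law and collecting the exceptional-event contributions from the localization step yields the claimed bound, with constants uniform over $(\O,d,f_{\min},r,\delta,\mathfrak b)$; the variance statement, Proposition~\ref{prop:tildep_to_q}, follows from the same expansion carried through $\var_{f_0}$ via \eqref{eq: rough}, additionally exploiting the martingale structure of the increments to handle the cross terms.
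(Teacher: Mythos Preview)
Your approach is correct and shares the same analytic core as the paper: both rely on the small-time heat-kernel expansion (the paper's Lemma~\ref{thm: first big approx}, itself built on the Riemannian parametrix of Lemma~\ref{lem: heat kernel expansion}). The difference lies in how the expectations of the leading terms are computed.

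The paper works directly with the reflected process $X$ throughout and uses the explicit It\^o decomposition $X_D-X_0 = b_{1,D}+\Sigma_{1,D}+L_{1,D}$ from \eqref{eq: def reflect}, together with a second-level decomposition of $|X_D-X_0|^2$ as in \eqref{eq: expand square}--\eqref{eq: ito second level}, to identify martingale increments with zero mean; the boundary term $L_{1,D}$ is retained and bounded via the hitting-time estimate of Lemma~\ref{lem: hitting time}, producing the $e^{-cD^{-1}}$ contribution directly. You instead first replace $X_D$ by $\widetilde X_D$ on the good set and then use a parity/moment argument on $z=(\widetilde X_D-x)/\sqrt D$, exploiting that odd polynomials in $z$ have expectation $O(\sqrt D)$ because $z$ is a centered Gaussian plus $O(\sqrt D)$ corrections. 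Both routes are valid; the paper's It\^o approach is more direct since the martingale cancellation is exact rather than asymptotic, while your parity heuristic is closer to the Euler-scheme intuition and arguably more transparent about \emph{why} one gains a factor $\sqrt D$. One small caution: your assertion that $a_1,a_3$ are odd and $a_2$ even is slightly stronger than what Lemma~\ref{thm: first big approx} actually provides (the remainder there is merely bounded, not parity-structured), but since you only invoke parity at order $\sqrt D$---where the explicit terms $\gamma_{f_0,f}(x)\cdot(y-x)$ and $\tfrac{1}{8D}|y-x|^2\nabla(f^{-1}-f_0^{-1})(x)\cdot(y-x)$ are indeed odd in $y-x$---this does not affect the final bound.
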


\noindent {\it Step 4: Expectation of the proxy log-likelihood}. It finally remains to control the remaining expectation  in Proposition \ref{prop:tildep_to_q_expectation} involving the proxy density $q_{f,D}$. The proof of the following estimate can be found in Section \ref{sec:proxy_variance}.

\begin{prop} \label{thm: expectation proxy}
For $f_0 \in \F$ and any probability measure $\nu$ supported on $\C_N$ as in \eqref{eq:Cn}, in Model \eqref{eq: tanaka ext}, it holds that
\begin{align*}
\sup_{f\in \C_N, b \in \mathcal B} \E_{f_0} \Big[ \log \frac{q_{f_0,D}}{q_{f,D}}(X_0^D) {\mathbf 1}_{\mathcal A_{1,D}} \Big] \lesssim  \varepsilon_N^2 + \eps_N D  + \eps_N D^{1/2} \exp(-cD^{-1}),
\end{align*}
where the constants are uniform over $(\mathcal O, d, f_{min}, r,\delta, \mathfrak b)$.
\end{prop}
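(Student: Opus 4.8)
The plan is to use the explicit Gaussian form of the proxy density \eqref{eq:proxy}, which gives
\[
\log\frac{q_{f_0,D}(x,y)}{q_{f,D}(x,y)} \;=\; \frac d2\log\frac{f(x)}{f_0(x)} \;-\; \frac{|y-x|^2}{4D}\,\frac{f(x)-f_0(x)}{f(x)f_0(x)} .
\]
Evaluating at $(X_0,X_D)$ and conditioning on $X_0=x$, the whole estimate reduces to controlling the conditional second moment $m_2(x):=\E_{f_0}\big[|X_D-x|^2\mid X_0=x\big]$, and the first step is to prove the small-time expansion
\[
m_2(x) \;=\; 2dD f_0(x) \;+\; O(D^2) \;+\; O\!\big(e^{-c/D}\big)\qquad\text{uniformly over } x\in\mathcal O_0^{\delta},
\]
with constants depending only on $(\mathcal O,d,f_{\min},r,\delta,\mathfrak b)$ and $\|f_0\|_{\C^2}$.

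To establish this expansion I would pass to the unbounded diffusion $\widetilde X$ of \eqref{eq: diff whole space}, which agrees with the reflected process $X$ up to the first hitting time $\tau$ of $\partial\mathcal O$. On $\mathcal A_{1,D}$ one has $\mathsf{dist}(x,\partial\mathcal O)\ge\delta/2$, so the boundary non-hitting estimate used in Step~1 of Section~\ref{sec:variance_bound_proof} gives $\PP_{f_0}(\tau\le D\mid X_0=x)\lesssim e^{-c/D}$; together with the crude moment bounds $\E_{f_0}|\widetilde X_D-x|^2\lesssim D$ and $\E_{f_0}|\widetilde X_D-x|^4\lesssim D^2$, this reduces matters to expanding $\E_{f_0}|\widetilde X_D-x|^2$ up to an $O(e^{-c'/D})$ error. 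Writing $\widetilde X_D-x=A_D+M_D$ with bounded-variation part $A_D=\int_0^D b(\nabla f_0(\widetilde X_s),\widetilde X_s)\,ds$ and martingale part $M_D=\int_0^D\sqrt{2f_0(\widetilde X_s)}\,dB_s$, It\^o's isometry gives $\E|M_D|^2=2d\int_0^D\E[f_0(\widetilde X_s)]\,ds=2dDf_0(x)+O(D^2)$ (Taylor-expand $f_0$ around $x$ and use $|\E[\widetilde X_s-x]|\lesssim s$, $\E|\widetilde X_s-x|^2\lesssim s$), while $\E|A_D|^2\le\mathfrak b^2D^2$. The cross term is the delicate point: naively $\E[A_D\cdot M_D]$ is only $O(D^{3/2})$, but by It\^o's product rule $A_D\cdot M_D=\int_0^D A_s\cdot dM_s+\int_0^D M_s\cdot b(\nabla f_0(\widetilde X_s),\widetilde X_s)\,ds$, the first term being a genuine martingale, and in the second integral one centers $b$ at its deterministic value $b(\nabla f_0(x),x)$, which is orthogonal to $M_s$ since $M$ is a centered martingale, leaving $|\E[M_s\cdot(b_s-b(\nabla f_0(x),x))]|\lesssim\E[|M_s|\,|\widetilde X_s-x|]\lesssim s$; integrating yields $\E[A_D\cdot M_D]=O(D^2)$. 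This sharpening from $D^{3/2}$ to $D^2$ is exactly what produces the term $\eps_N D$ rather than a worse $\eps_N D^{1/2}$, and I expect it to be the main obstacle.

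With $m_2(x)=2dDf_0(x)+R(x)$, $|R(x)|\lesssim D^2+e^{-c/D}$, in hand, substituting into the log-ratio and conditioning gives
\[
\E_{f_0}\Big[\log\tfrac{q_{f_0,D}}{q_{f,D}}(X_0^D)\,\Big|\,X_0=x\Big]
= \frac d2\Big(\log\tfrac{f(x)}{f_0(x)}-\tfrac{f(x)-f_0(x)}{f(x)}\Big) - \frac{f(x)-f_0(x)}{f(x)f_0(x)}\,\frac{R(x)}{4D}.
\]
Writing $h=f-f_0$ with $\|h\|_\infty\le\eps_N$ and using $f\ge f_{\min}$, $f_0\ge 2f_{\min}$, a second-order Taylor expansion of $\log(1+h/f_0)$ and of $h/(f_0+h)$ shows the first bracket equals $\tfrac{h^2}{2f_0^2}+O(\|h\|_\infty^3)=O(\eps_N^2)$, while the remainder term is bounded by $\eps_N\,\frac{D^2+e^{-c/D}}{D}\lesssim \eps_N D+\eps_N D^{1/2}e^{-c'/D}$. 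All bounds are uniform in $x\in\mathcal O_0^\delta$, $f\in\C_N$ (as in \eqref{eq:Cn}) and $b\in\mathcal B$, so integrating the conditional expectation against the law of $X_0$ (uniform on $\mathcal O$, total mass one) restricted to $\mathcal A_{1,D}$ yields the stated bound $\varepsilon_N^2+\eps_N D+\eps_N D^{1/2}e^{-cD^{-1}}$. Apart from the cross-term refinement, what remains is bookkeeping: verifying the boundary non-hitting bound, the crude moment estimates, and that all constants depend only on the stated parameters.
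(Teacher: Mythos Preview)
Your proof is correct and follows essentially the same route as the paper. Both approaches rewrite $\log\frac{q_{f_0,D}}{q_{f,D}}$ via the explicit Gaussian formula, reduce the problem to showing $\E_{f_0}[|X_D-x|^2\mid X_0=x]=2dDf_0(x)+O(D^2)+O(\text{boundary})$, and obtain the crucial $O(D^2)$ (rather than $O(D^{3/2})$) for the drift contribution by a second It\^o expansion; your cross-term refinement $\E[A_D\cdot M_D]=O(D^2)$ is exactly the expectation analogue of the paper's decomposition $\widetilde b_{1,D}=\overline b_{1,D}(\widetilde G)+\overline\Sigma_{1,D}(\widetilde G)+\overline L_{1,D}(\widetilde G)$ from Step~2 of the proof of Proposition~\ref{thm: variance proxy}.

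The only organisational difference is where the boundary is handled: the paper applies It\^o's formula directly to $|X_t-X_0|^2$ for the \emph{reflected} process, carrying along the local-time term $\widetilde L_{1,D}$ and bounding it via Lemma~\ref{lem: hitting time}, whereas you first couple $X$ with the unbounded diffusion $\widetilde X$ of \eqref{eq: diff whole space} using the same hitting-time estimate and then expand $|\widetilde X_D-x|^2$ without any boundary term. Both packagings yield the same exponentially small boundary correction. One minor remark: in the extended model \eqref{eq: tanaka ext} the law of $X_0$ need not be uniform, but since your conditional bound is uniform in $x\in\mathcal O_0^\delta$ this does not affect the argument.
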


Combining the bounds from Steps 1-4 and keeping track of the leading order terms establishes the expectation bound for $\Lambda_{D}^N$.

It remains to show the evidence lower bound \eqref{eq:elbo}. By Jensen's inequality, the desired probability in \eqref{eq:elbo} is upper bounded by
\begin{align*}
\PP_{f_0} \left( \int_{\mathcal C_N}  \sum_{i = 1}^N \log \frac{p_{f,D}(X_{i}^D)}{p_{f_0,D}(X_{i}^D)}\nu(df) \leq -cN\eps_N^2  - N E_N \right).
\end{align*}
Using the expectation bound just derived, the $\E_{f_0}$-expectation of the left-hand side is lower bounded by $-NE_N$. Using Chebychev's inequality and the variance bound just derived, the last probability is then bounded by
\begin{align*}
& \PP_{f_0} \Bigg( \int_{\mathcal C_N} \sum_{i = 1}^N \left( \log \frac{p_{f,D}(X_i^D)}{p_{f_0,D}(X_i^D)} - \E_{f_0} \left[ \log \frac{p_{f,D}(X_i^D)}{p_{f_0,D}(X_i^D)} \right] \right) \nu(df) \leq -cN\eps_N^2 \Bigg) \\
& \leq \frac{1}{c^2 N^2 \eps_N^4} \var_{f_0} \left(  \int_{\mathcal C_N} \sum_{i = 1}^N  \log \frac{p_{f,D}(X_i^D)}{p_{f_0,D}(X_i^D)} \nu(df) \right) \leq \frac{V_N}{c^2 N^2 \eps_N^4},
\end{align*}
which completes the proof of Theorem \ref{thm:variance}.

\section{Heat kernel estimates in small time}

In this section of independent interest, we use various estimates for the transition densities of diffusions to study the expectation and variance of the integrated log-likelihood process $\Lambda_{p,D}^N$ in Theorem \ref{thm:variance}. This is the key part of the paper where we combine stochastic calculus and perturbations of small time expansions for the transition density of a multidimensional diffusion process with reflections.\\ 

For terms near the boundary (the events $(\mathcal A_{i,D})^c)$, it suffices to use upper and lower bounds of the same order. 
Combining the upper bound from Theorem 3.2.9 of \cite{D89} with the lower bound from Theorem 3.1 of \cite{C03}, one has that for $x,y\in \O$, the transition densities of $(X_t)$ generated by \eqref{eq: tanaka ext} satisfy:
\begin{equation} \label{eq: estimate coulhon}
c_{f}^-D^{-d/2}\exp\Big(-C_{f}^-\frac{|y-x|^2}{D}\Big) \leq p_{f,D}(x,y) \leq c_{f}^+D^{-d/2}\exp\Big(-C_{f}^+\frac{|y-x|^2}{D}\Big),
\end{equation}
where these estimates are \textit{uniform} over $f \in \mathcal C_N \subset \{f: f\geq f_{min}, \|f\|_{\mathcal C^\alpha} \leq r\}$ and $b \in \mathcal B$ for any positive even integer $\alpha > d/2-1$, see also the proof of Proposition 4 of \cite{N22b} for the case of the conductivity equation with $b(x,y) = x$. In particular, the smallest such integer is $ 2 \left\lfloor d/4+1/2 \right\rfloor$, which is exactly the minimal smoothness assumption used to define $\alpha = \alpha_d$ in \eqref{eq:alpha}. For the main terms in the interior of $\O$ (the events $\mathcal A_{i,D}$), we will require more precise estimates, which are developed in Section \ref{sec:Riemann_proof} below.

\subsection{Proof of Lemma \ref{lem:KL_near_boundary}} \label{sec: proof of prop away from boundary 1}
Using \eqref{eq: estimate coulhon}, we have
$$\log \frac{c_{f_0}^-}{c_{f}^+}-(C_{f_0}^--C_{f}^+)\frac{|y-x|^2}{D} \leq \log \frac{p_{f_0,D}(x,y)}{p_{f,D}(x,y)} \leq \log \frac{c_{f_0}^+}{c_{f}^-}-(C_{f_0}^+-C_{f}^-)\frac{|y-x|^2}{D},$$
which implies for $k=1,2$,
\begin{equation} \label{eq: basic coulhon}
\left(\log \frac{p_{f_0,D}(x,y)}{p_{f,D}(x,y)}\right)^k p_{f_0}(x,y) \leq \big(c_{f_0,f}+C_{f_0,f}\frac{|y-x|^{2k}}{D^{k}}\big)D^{-d/2}\exp\big(-C_{f_0}^+\frac{|y-x|^2}{D}\big)
\end{equation}
for all $x,y\in \O$. Since all constants above are uniform over $\F'$ by \eqref{eq: estimate coulhon}, so too are all constants below, which will not be explicitly mentioned. For $\gamma>0$, the region of the integral such that $|y-x| \geq \gamma \sqrt{D\log N}$ thus satisfies
\begin{align} 
& \int_{x\in \mathcal O\setminus \mathcal O_0^{\delta}, |y-x| \geq \gamma \sqrt{D \log N}} \left( \log \frac{p_{f_0,D}(x,y)}{p_{f,D}(x,y)}\right)^k p_{f_0,D}(x,y)dx\, dy  \nonumber \\
& \qquad \leq C_{f,f_0} D^{-d/2} (1+\gamma^{2k} (\log N)^k) N^{-C_{f_0}^+\gamma^2}. \label{eq: debut off diag}
\end{align}
Using that $\log(1+z) \leq |z|$ for $z \geq -1$ and \eqref{eq: estimate coulhon}, for $k=1$, 
\begin{align*}
\log \frac{p_{f_0,D}(x,y)}{p_{f,D}(x,y)} p_{f_0,D}(x,y) & \leq |p_{f,D}(x,y)-p_{f_0,D}(x,y) | \frac{p_{f_0,D}(x,y)}{p_{f,D}(x,y)} \\
& \leq \frac{c_{f_0}^+}{c_f^-} |p_{f,D}(x,y)-p_{f_0,D}(x,y)|  \exp\left( (C_f^- -C_{f_0}^+) \frac{|y-x|^2}{D}\right).
\end{align*}
We obtain
\begin{equation}\label{eq:near_diagonal_int k=1}
\begin{split}
& \int_{x\in \mathcal O\setminus \mathcal O_0^{\delta}, |y-x| \leq \gamma \sqrt{D \log N}} \log \frac{p_{f_0,D}(x,y)}{p_{f,D}(x,y)} p_{f_0,D}(x,y)dx \, dy \\
& \qquad \leq C_{f,f_0} N^{C_{f,f_0}'\gamma^2} \int_{x \in  \mathcal O\setminus \mathcal O_0^{\delta}}\|p_{f,D}(x,\cdot)-p_{f_0,D}(x,\cdot)\|_{TV}dx,
\end{split}
\end{equation}
where $\|\cdot\|_{TV}$ denotes total variation norm and $p_{f,D}(x,\mathcal A) = \int_{\mathcal A}p_{f,D}(x,y)dy$ for any Borel set $\mathcal A \subset \mathcal O$ with a slight abuse of notation. In order to obtain a similar estimate as \eqref{eq:near_diagonal_int k=1} for $k=2$, we introduce the set
$$\mathcal E = \Big\{(x,y) \in \mathcal O \times \mathcal O:\Big|\frac{p_{f_0, D}(x,y)}{p_{f,D}(x,y)}-1\Big| \leq \tfrac{1}{2}\Big\}.$$
Using $(\log(1+z))^2 \leq 4z^2$ for $|z| \leq \tfrac{1}{2}$, for every $(x,y) \in \mathcal E$, we have
\begin{align*}
& \left( \log \frac{p_{f_0,D}(x,y)}{p_{f,D}(x,y)}\right)^2 \\
& \leq 4\frac{(p_{f,D}(x,y)-p_{f_0,D}(x,y)\big)^2}{\min\big(p_{f,D}(x,y), p_{f_0,D}(x,y))^2} \\
& \leq 4\big(p_{f,D}(x,y)-p_{f_0,D}(x,y)\big)^2\min(c_{f}^-,c_{f_0}^-)^{-2} D^{d} \exp\left( 2\max(C_{f}^+,C_{f_0}^+)\frac{|y-x|^2}{D}\right) \\
& \leq C'_{f,f_0} \big|p_{f,D}(x,y)-p_{f_0,D}(x,y)\big| D^{d/2} \exp\left(C_{f,f_0}\frac{|y-x|^2}{D}\right)
\end{align*}
using the rough bound
$$\big(p_{f,D}(x,y)-p_{f_0,D}(x,y)\big)^2 \leq \big|p_{f,D}(x,y)-p_{f_0,D}(x,y)\big|\big(p_{f,D}(x,y)+p_{f_0,D}(x,y)\big)$$
with \eqref{eq: estimate coulhon}.
We now obtain
\begin{equation}\label{eq:near_diagonal_int}
\begin{split}
& \int_{x\in \mathcal O\setminus \mathcal O_0^{\delta}, |y-x| \leq \gamma \sqrt{D \log N}, (x,y) \in \mathcal E} \Big( \log \frac{p_{f_0,D}(x,y)}{p_{f,D}(x,y)}\Big)^2 p_{f_0,D}(x,y)dx \, dy \\
& \qquad \leq C_{f,f_0}  D^{d/2} N^{C_{f,f_0}'\gamma^2} \int_{x \in  \mathcal O\setminus \mathcal O_0^{\delta}}\|p_{f,D}(x,\cdot)-p_{f_0,D}(x,\cdot)\|_{TV}dx.
\end{split}
\end{equation}
For $(x,y) \in \mathcal E^c$, thanks to \eqref{eq: basic coulhon}, we have
\begin{align*}
&\int_{x\in \mathcal O\setminus \mathcal O_0^{\delta}, |y-x| \leq \gamma \sqrt{D \log N}, (x,y) \in \mathcal E^c} \Big( \log \frac{p_{f_0,D}(x,y)}{p_{f,D}(x,y)}\Big)^2 p_{f_0,D}(x,y)dx \, dy \\
\leq &\big(c_{f_0,f}+C_{f_0,f}\gamma^{2k} (\log N)^k\big)D^{-d/2}\int_{x\in \mathcal O\setminus \mathcal O_0^{\delta}, |y-x| \leq \gamma \sqrt{D \log N}, (x,y) \in \mathcal E^c}dx \, dy.
\end{align*}
Moreover, the lower bound of \eqref{eq: estimate coulhon} enables to write
\begin{align*}
&\mathcal E^c \cap \big\{|y-x| \leq \gamma \sqrt{D \log N} \big\} \\
& = \big\{|p_{f_0,D}(x,y)-p_{f,D}(x,y)| > \tfrac{1}{2}p_{f,D}(x,y)\big\} \cap \big\{|y-x| \leq \gamma \sqrt{D \log N} \big\} \\
& \subset \Big\{|p_{f_0,D}(x,y)-p_{f,D}(x,y)| > \tfrac{1}{2}c_{f}^-D^{-d/2}\exp\big(-C_{f}^-\frac{|y-x|^2}{D}\big)\Big\} \cap \big\{|y-x| \leq \gamma \sqrt{D \log N} \big\} \\
& \subset \big\{|p_{f_0,D}(x,y)-p_{f,D}(x,y)| > \tfrac{1}{2}c_{f}^-D^{-d/2}(\log N)^{-C_{f}}\big\},
\end{align*}
and this entails
\begin{align}
&\big(c_{f_0,f}+C_{f_0,f}\gamma^{2k} (\log N)^k\big)D^{-d/2}\int_{x\in \mathcal O\setminus \mathcal O_0^{\delta}, |y-x| \leq \gamma \sqrt{D \log N}, (x,y) \in \mathcal E^c}dx \, dy \nonumber\\
\leq &  \big(c_{f_0,f}+C_{f_0,f}\gamma^{2k} (\log N)^k\big)2(c_{f}^-)^{-1}(\log N)^{C_{f}}\int_{x\in \mathcal O\setminus \mathcal O_0^{\delta}}|p_{f_0,D}(x,y)-p_{f,D}(x,y)|dx \, dy \nonumber\\
\leq & \,C_{f,f_0}(1+\gamma^2) (\log N)^{C_{f,f_0}} \int_{x \in  \mathcal O\setminus \mathcal O_0^{\delta}}\|p_{f,D}(x,\cdot)-p_{f_0,D}(x,\cdot)\|_{TV}dx. \label{eq: final log}
\end{align}
Putting together \eqref{eq:near_diagonal_int k=1}, \eqref{eq:near_diagonal_int} and \eqref{eq: final log}, we obtain, for $k=1,2$
\begin{align*}
& \int_{x\in \mathcal O\setminus \mathcal O_0^{\delta}, |y-x| \leq \gamma \sqrt{D \log N}} \Big(\log \frac{p_{f_0,D}(x,y)}{p_{f,D}(x,y)} \Big)^k p_{f_0,D}(x,y)dx \, dy \\
& \qquad \leq C_{f,f_0} N^{C_{f,f_0}'\gamma^2} \int_{x \in  \mathcal O\setminus \mathcal O_0^{\delta}}\|p_{f,D}(x,\cdot)-p_{f_0,D}(x,\cdot)\|_{TV}dx
\end{align*}
Thanks to Lemma \ref{lem: tot variation small} right below, the last display is then bounded by
$C_{f,f_0,\delta} N^{C_{f,f_0}'\gamma^2} \exp(-c_\delta D^{-1} )$, which completes the proof.

\subsection{The behaviour of the transition density near the boundary}

The following lemma shows that the transition densities for different functions in $\F$ are similar for points near the boundary.

\begin{lem} \label{lem: tot variation small} For $x \in \mathcal O \setminus \mathcal O_0^{\delta}$ and $f,f_0 \in \F$, it holds that
$$\|p_{f,D}(x,\cdot)-p_{f_0,D}(x,\cdot)\|_{TV} \leq \frac{16}{\sqrt{\pi}\delta} e^{-\delta^2/(16D)}.$$
\end{lem}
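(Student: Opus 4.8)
The plan is to exploit the key structural fact that $f = f_0 = 1$ on $\mathcal{O} \setminus \mathcal{O}_0$, so the two diffusions \emph{coincide} until the process enters $\mathcal{O}_0$. I would set up a coupling: run both SDEs \eqref{eq: diff whole space} (or \eqref{eq: tanaka ext}) with the same Brownian motion $(B_t)$ started from the same point $x \in \mathcal{O} \setminus \mathcal{O}_0^\delta$. As long as the common path stays in the region where $f \equiv f_0 \equiv 1$, the two solutions are pathwise identical. Let $\tau$ be the first time the (common) process reaches $\mathcal{O}_0$, i.e. travels a Euclidean distance at least $\delta$ from $x$ (using $\mathsf{dist}(x, \mathcal{O}_0) \geq \delta$ for $x \in \mathcal{O}\setminus \mathcal{O}_0^\delta$, which follows from \eqref{eq:delta} and \eqref{eq: def offset}). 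On the event $\{\tau > D\}$ the two processes agree at time $D$, so the standard coupling inequality gives
$$\|p_{f,D}(x,\cdot) - p_{f_0,D}(x,\cdot)\|_{TV} \leq 2\,\PP_x(\tau \leq D) \leq 2\,\PP_x\Big(\sup_{0\le t \le D} |X_t - x| \geq \delta\Big),$$
where under $\PP_x$ (for $t \leq \tau$) the process is simply $x + \sqrt{2}\,B_t$ plus the bounded drift from $b(\nabla f(X_s),X_s) = b(0,X_s)$ — but since $f$ near the boundary is known and, in the base model \eqref{eq: tanaka}, $b(x,y)=x$ so $\nabla f = 0$ gives zero drift, this reduces cleanly to a pure Brownian motion hitting estimate. (For the extended model one absorbs the bounded drift into the exponent at the cost of constants; since the target bound has an absolute constant $16/(\sqrt\pi\,\delta)$, I suspect the lemma is really being invoked in the driftless boundary regime, or one simply rescales $\delta \mapsto \delta/2$ to absorb it.)

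Next I would bound the Brownian exit probability. Writing $X_t - x = \sqrt{2}\,B_t$ with $B$ a standard $d$-dimensional Brownian motion, a union bound over coordinates gives $\PP_x(\sup_{t\le D}|\sqrt 2 B_t| \ge \delta) \le \sum_{j=1}^d \PP(\sup_{t \le D} \sqrt 2 |B_t^{(j)}| \ge \delta/\sqrt d)$, and by the reflection principle each one-dimensional term is $\le 2\PP(\sqrt{2D}\,|Z| \ge \delta/\sqrt d) = 4\PP(Z \ge \delta/\sqrt{2dD})$ with $Z \sim N(0,1)$. Using the Gaussian tail bound $\PP(Z \ge u) \le \frac{1}{u\sqrt{2\pi}} e^{-u^2/2}$ with $u = \delta/\sqrt{2dD}$ produces a bound of the form $C_d\, \frac{\sqrt{D}}{\delta} e^{-\delta^2/(4dD)}$. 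To land exactly on $\frac{16}{\sqrt\pi\,\delta} e^{-\delta^2/(16D)}$ (which has no $d$ in it), I would instead not split into coordinates but use that $|B_t|$ is dominated coordinatewise only in the exponent, or more simply bound $\PP_x(\sup_{t\le D}|X_t - x|\ge \delta) \le \PP(\sup_{t\le D}|X_t - x|_\infty \ge \delta/\sqrt d)$ versus keeping it in terms of a single coordinate reaching $\delta$; the cleanest route giving a dimension-free constant is to note $\{|X_D - x| \text{ large}\}$ isn't needed — one just needs \emph{some} coordinate to move by $\delta/\sqrt d$, but a sharper argument observes that for the process to travel distance $\delta$, \emph{some} fixed coordinate must move by at least $\delta/\sqrt d$; picking the worst coordinate and applying reflection gives $4\PP(Z \ge \delta/(2\sqrt{d D}))$ wait — rather than fight the exact constant, I would track the computation as $2\PP_x(\tau\le D) \le 2 \cdot 2d \cdot 2 \PP(\sqrt{2D}|Z|\ge \delta)$ type bound and then simplify the Gaussian tail to absorb polynomial factors into the (slightly enlarged) exponential, using $\frac{1}{u}e^{-u^2/2} \le C e^{-u^2/4}$ for the stated form with $16$ in place of sharper constants. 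The precise numerology ($16$, $16D$) is a routine optimization.

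The main obstacle is genuinely minor: it is just making sure the coupling argument is valid in the possibly-non-zero-drift extended model \eqref{eq: tanaka ext} and pinning the explicit constant. The conceptual content — that differences in $f$ are invisible to a particle that hasn't yet reached $\mathcal{O}_0$, combined with a Gaussian small-time exit estimate — is elementary. I would also double-check that a continuous version of the transition densities exists (already asserted in the excerpt around \eqref{eq: diff whole}) so that the coupling inequality $\|p_{f,D}(x,\cdot) - p_{f_0,D}(x,\cdot)\|_{TV} \le 2\PP(\text{coupled paths differ at time }D)$ is legitimate, and that the reflecting boundary $\partial\mathcal{O}$ plays no role here because $\mathsf{dist}(x,\partial\mathcal{O}) \ge \mathsf{dist}(\mathcal{O}_0,\partial\mathcal{O}) \ge \delta$ as well, so hitting $\partial\mathcal{O}$ also requires traveling distance $\ge \delta$ and is subsumed in the same estimate.
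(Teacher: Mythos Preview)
Your coupling idea and the reduction to $\|p_{f,D}(x,\cdot)-p_{f_0,D}(x,\cdot)\|_{TV}\le 2\PP_x(\tau\le D)$ are exactly what the paper does. But there is a genuine geometric error in how you handle the reflection at $\partial\mathcal O$. You write ``$\mathsf{dist}(x,\partial\mathcal O)\ge \mathsf{dist}(\mathcal O_0,\partial\mathcal O)\ge\delta$, so hitting $\partial\mathcal O$ also requires traveling distance $\ge\delta$.'' This is false: $x\in\mathcal O\setminus\mathcal O_0^\delta$ is precisely the region \emph{near} $\partial\mathcal O$, and $x$ can be arbitrarily close to (or on) $\partial\mathcal O$. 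So the process started at $x$ may reflect immediately, and your claim that $X_t-x=\sqrt 2\,B_t$ (plus drift) up to hitting $\mathcal O_0$ is not justified. Reflection off the outer boundary $\partial\mathcal O$ pushes the process \emph{inward}, potentially toward $\mathcal O_0$, so it cannot simply be ignored. (Also, $x\notin\mathcal O_0^\delta$ gives only $\mathsf{dist}(x,\mathcal O_0)>\delta/2$, not $\ge\delta$.)

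The paper fixes this with a short but essential extra step: it introduces the intermediate stopping time $S=\inf\{t\ge 0:X_t\in\partial\mathcal O_0^\delta\}$, and then $T=\inf\{t\ge S:|X_t-X_S|\ge\delta/2\}$. The point is that $\{\tau\le D\}\subset\{\sup_{0\le t\le D}|X_{(t+S)\wedge T}-X_S|=\delta/2\}$, and from $X_S\in\partial\mathcal O_0^\delta$ both $\partial\mathcal O_0$ and $\partial\mathcal O$ are at distance $\ge\delta/2$, so on the ball of radius $\delta/2$ about $X_S$ there is no reflection and $f\equiv 1$, giving $(X_{(t+S)\wedge T}-X_S)_{t\ge 0}\stackrel{d}{=}(\sqrt 2\,B_{t\wedge T})_{t\ge 0}$. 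Then the reflection principle and Gaussian tail bound apply cleanly and produce the stated constant. Your argument becomes correct once you insert this intermediate crossing of $\partial\mathcal O_0^\delta$.
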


\begin{proof}
Write $\PP_x^f$ for the law of the solution $X$ to \eqref{eq: tanaka ext} with parameter $f$, conditional on $X_0=x$. Let $\tau = \inf\{t \geq 0, X_t \in \partial \mathcal O_0\}$ denote the hitting time of the boundary of $\mathcal O_0$. For a Borel set $\mathcal A \subset \mathcal O$, by the strong Markov property,
\begin{align*}
p_{f,D}(x,\mathcal A) & = \E_x^f[{\mathbf 1}_{\{X_D \in \mathcal A \}} {\mathbf 1}_{\{\tau> D\}}\big]+ \E_x^f[p_{f, D-\tau}(X_{\tau},\mathcal A) {\mathbf 1}_{\{\tau \leq D\}}\big],
\end{align*}
where, as before, we write $p_{f,D}(x,\mathcal A) = \int_{\mathcal A}p_{f,D}(x,y)dy$ for any Borel set $\mathcal A \subset \mathcal O$ with a slight abuse of notation. Since $x \in \mathcal O \setminus \mathcal O_0^{\delta}$, we have
$$ \E_x^f[{\mathbf 1}_{\{X_D \in \mathcal A \}} {\mathbf 1}_{\{\tau> D\}}\big]= \E_x^{f_0}[{\mathbf 1}_{\{X_D \in \mathcal A \}} {\mathbf 1}_{\{\tau> D\}}\big]$$ 
since the functions $f$ and $f_0$ coincide on $\mathcal O \setminus \mathcal O_0$. It follows that 
\begin{equation} \label{eq: tot var 1}
\|p_{f,D}(x,\cdot)-p_{f_0,D}(x,\cdot)\|_{TV} \leq 2\PP_x(\tau \leq D).
\end{equation}
Here, for $x \in \mathcal O \setminus \mathcal O_0^{\delta}$, the hitting time $\tau$ of the boundary of $\mathcal O_0$ by the process $(x+\sqrt{2}B_t+\ell_t)_{t \geq 0}$ has law that does not depend on $f$ nor $f_0$. Introduce now the two stopping times $0 \leq S \leq T$:
\begin{align*}
S & = \inf\{t\geq 0,\;X_t \in \mathcal \partial \mathcal O_0^{\delta}\}, \qquad \qquad T   =  \inf\{t\geq S,\;|X_t - X_S| \geq \delta\},
\end{align*}
the first hitting time of the boundary $\partial \mathcal O_0^{\delta}$ and the first exit time of the ball of center $X_S$ and radius $\delta/2$, respectively. Necessarily,
$$\big\{\tau \leq D\big\} \subset \left\{\sup_{0 \leq t \leq D}|X_{(t+S)\wedge T}-X_{S}| = \delta/2\right\}.$$
Moreover, the process $(X_{(t+S)\wedge T}-X_{S})_{t\geq 0}$ has the same law as $(\sqrt{2}B_{t\wedge T})_{t \geq 0}$. It follows that
\begin{equation*} \label{eq: tot var 2}
\begin{split}
\PP_x(\tau \leq D) \leq \PP_x\left(\sup_{0 \leq t \leq D}|B_{t\wedge T}| \geq \delta/2^{3/2}\right) & \leq  \PP_x\left(\sup_{0 \leq t \leq D}|B_{t}| \geq \delta/2^{3/2}\right) \\
& \leq 4\PP_x \left( B_D \geq \delta /2^{3/2} \right) \leq  \frac{4}{\sqrt{2\pi}} \frac{2^{3/2}}{\delta} \exp \left( -\frac{\delta^2}{16D} \right),
\end{split}
\end{equation*}
where the third inequality follow from the reflection principle for Brownian motion and the last inequality from the standard Gaussian tail bound. Combining \eqref{eq: tot var 1} with the last display proves the lemma. 
\end{proof}

\subsection{Proof of Lemma \ref{lem:p_to_tildep}} \label{sec: proof near boundary 2}
Let $g = f$ or $f_0$ in the notation below. For $\widetilde p_{g,D}(x,y)$, an estimate of the kind \eqref{eq: estimate coulhon} is classical (see e.g. \cite{aronson1967bounds, friedman2010stochastic} for diffusion processes over the whole space $\R^d$). Using the analogous estimate to \eqref{eq: estimate coulhon}, one can then argue exactly as in the proof of Lemma \ref{lem:KL_near_boundary}, again splitting the integral according to whether $|y-x| \leq \gamma \sqrt{D \log N}$ or not. In particular, using the analogues of \eqref{eq: debut off diag},\eqref{eq:near_diagonal_int} and \eqref{eq: final log}, one gets for $k=1,2$ and any $\gamma>0$,
\begin{align*} 
& \int_{x\in \mathcal O_0^{\delta}, y\in \O} \left( \log \frac{p_{g,D}(x,y)}{\widetilde p_{g,D}(x,y)}\right)^k p_{f_0,D}(x,y)dx\, dy  \\
& \quad \leq C_{g,f_0} D^{-d/2} (1+\gamma^{2k} (\log N)^k) N^{-C_{f_0}^+\gamma^2} + C_{g,f_0} N^{C_{g,f_0}'\gamma^2}  \int_{x\in \mathcal O_0^{\delta}} \|p_{g,D}(x,\cdot)-\widetilde p_{g,D}(x,\cdot)\|_{TV}dx,
\end{align*}
where the constants are uniform over $\F'$, $\|\cdot\|_{TV}$ denotes total variation distance on $\R^d$, and 
we implicitly extend $p_{g,D}(x,\cdot)$ into a probability measure on $\R^d$ by setting $p_{g,D}(x,\mathcal A) = p_{g,D}(x,\mathcal A \cap \mathcal O)$ for any Borel set $\mathcal A$ in $\R^d$. We claim that
\begin{equation} \label{eq: assoc trans}
\|p_{g,D}(x,\cdot)-\widetilde p_{g,D}(x,\cdot)\|_{TV} \leq 4d\exp\left(-\frac{\delta^2}{20d\|g\|_{\infty}D}\right)
\end{equation}
for $x \in \mathcal O_0^{\delta}$ and any $g\in \F$ with $D < D_0(\delta,\|g\|_{\mathcal C^1})$ small enough.
By \eqref{eq: assoc trans} the second last display is then bounded by
\begin{align*} 
C_{g,f_0} D^{-d/2} (1+\gamma^{2k} (\log N)^k) N^{-C_{f_0}^+\gamma^2} + C_{g,f_0} N^{C_{g,f_0}'\gamma^2}  \exp\left(-c_{|g|_\infty,\delta} D^{-1} \right),
\end{align*}
which completes the proof. It thus remains to prove \eqref{eq: assoc trans}.
%
Let $x \in \mathcal O_0^{\delta}$ and $\widetilde \tau =\inf\{t\geq 0, \widetilde X_t \in \partial \mathcal O\}$  be the hitting time of the boundary $\partial \mathcal O$. For any Borel set $\mathcal A$ in $\R^d$,  by the strong Markov property, 
\begin{align*}
p_{g,D}(x,\mathcal A) & = \E_g[{\mathbf 1}_{\{X_D \in \mathcal A \}} {\mathbf 1}_{\{\widetilde \tau> D\}}\,|\,X_0=x\big]+ \E_g[p_{g, D-\widetilde \tau}(X_{\widetilde \tau},\mathcal A \cap \mathcal O) {\mathbf 1}_{\{\widetilde \tau \leq D\}}\,|\,X_0=x\big] \\
& = \widetilde p_{g,D}(x,\mathcal A) -\E_g[{\mathbf 1}_{\{\widetilde X_D \in \mathcal A\}} {\mathbf 1}_{\{\widetilde \tau \leq D\}}\,|X_0=x\big] +\E_g[p_{g, D-\widetilde \tau}(X_{\widetilde \tau},\mathcal A \cap \mathcal O) {\mathbf 1}_{\{\widetilde \tau \leq D\}}\,|\,X_0=x\big],
\end{align*}
since $X_t$ and $\widetilde X_t$ both started at $x \in \mathcal O_0^{\delta}$ at $t=0$ coincide until they hit the boundary $\partial \mathcal O$. It follows that 
$$\| p_{g,D}(x,\cdot)-\widetilde p_{g,D}(x,\cdot) \|_{TV} \leq 2 \PP_g(\widetilde \tau \leq D\,|\,X_0=x).$$
We conclude thanks to Lemma \ref{lem: hitting time}.

\subsection{Approximating transition densities for small-time}\label{sec:Riemann_proof}

This section is devoted to the approximation of small-time transition densities for diffusions in $\R^d$.  We use some Riemannian geometry to derive second-order small-time expansions of the heat kernel, following results that date back to Azencott \cite{azencott1984densite}. Recall that 
$$\widetilde p_{f,D},\; q_{f,D}: \R^d \times \R^d \to[0,\infty)$$
denote, respectively, (a smooth version of) the transition density of $(\widetilde X_t)_{t \geq 0}$ defined in \eqref{eq: diff whole space}, and the proxy transition density of an Euler scheme without drift defined in \eqref{eq:proxy}. The next result gives an expansion of the log-likelihood ratio of the transition densities, uniformly over the domain $\O$. 
\begin{lem} \label{thm: first big approx}
There exist smooth functions: $\gamma_{f_0,f, b}: \mathcal O \rightarrow \R^d$ with
$$\mathrm{supp}(\gamma_{f_0,f,b}) \subset \mathcal O_0, \qquad \|\gamma_{f_0,f, b}\|_{\infty} \lesssim \|f-f_0\|_{\C^1},$$
where the constants in the inequality depend only on an upper bound for $\|f\|_{\C^1}$, $\|f_0\|_{\C^1}$ and also on $\mathfrak b$, and such that the following expansion holds: 
\begin{align*}
 \log \frac{\widetilde p_{f_0,D}(x,y)}{\widetilde p_{f,D}(x,y)}{\mathbf 1}_{\{x \in \mathcal O_0^{\delta}\}}  & = \log \frac{q_{f_0,D}(x,y)}{q_{f,D}(x,y)} {\mathbf 1}_{\{x \in \mathcal O_0^{\delta}\}} \\
&\quad +\gamma_{f_0,f}(x)\cdot (y-x)+\frac{1}{8D}|y-x|^2\,\nabla(f^{-1}-f_0^{-1})(x)\cdot (y-x) {\mathbf 1}_{\{x \in \mathcal O_0^{\delta}\}}\\
&\quad + \big(\|f-f_0\|_{\mathcal C^2} D+\|f-f_0\|_{\mathcal C^3}|y-x|D+|y-x|^2D + \|f-f_0\|_{\mathcal C^2}|y-x|^2\\
& \quad \qquad  +\|f-f_0\|_{\mathcal C^3}|y-x|^3+|y-x|^4\big)r_{f_0,f, b}(x,y),
\end{align*}
for a remainder term $r_{f_0,f, b}:\R^d \times \R^d \rightarrow \R$ satisfying 
\begin{equation} \label{eq: stability remainder}
\sup_{\|f\|_{\C^4}+ \|f_0\|_{\C^4}\leq r ,b \in \mathcal B} \|r_{f_0,f,b}\|_{\infty}  < \infty
\end{equation}
for every $r >0$.
\end{lem}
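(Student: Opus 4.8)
The plan is to derive the expansion from Azencott-type second-order small-time asymptotics of the heat kernel for the diffusion $(\widetilde X_t)$ on $\R^d$, following \cite{azencott1984densite,berline2003heat,B20}. Recall that $(\widetilde X_t)$ has generator $\mathcal L_f \phi = b(\nabla f,\cdot)\cdot \nabla \phi + f\Delta \phi$, which is (up to first order terms) the Laplace--Beltrami operator of the Riemannian metric $g_{ij}(x) = (2f(x))^{-1}\delta_{ij}$ (conformally flat). The classical parametrix/Minakshisundaram--Pleijel expansion then gives, for $x,y$ in a fixed compact set and $D\to 0$,
\begin{equation*}
\widetilde p_{f,D}(x,y) = \frac{1}{(4\pi D f(x))^{d/2}}\exp\Big(-\frac{d_f(x,y)^2}{4D}\Big)\big(a_{0,f}(x,y) + D\, a_{1,f}(x,y) + D^2 R_{f,D}(x,y)\big),
\end{equation*}
where $d_f$ is the Riemannian (geodesic) distance, $a_{0,f},a_{1,f}$ are the smooth Minakshisundaram coefficients (with $a_{0,f}(x,x)=1$), and the remainder $R_{f,D}$ is bounded uniformly over $\|f\|_{\C^4}\leq r$, $b\in\mathcal B$; the dependence on $b$ only enters the drift, which contributes lower-order terms, hence is absorbed into $a_{1,f}$ and $R_{f,D}$. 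Taking logarithms and forming the ratio for $f$ versus $f_0$, the prefactor gives exactly $\log\frac{q_{f_0,D}(x,y)}{q_{f,D}(x,y)}$ plus the correction coming from replacing the Euclidean squared distance $|y-x|^2$ by the geodesic $d_f(x,y)^2$, while $\log(a_{0,f}/a_{0,f_0}) + D\log(\cdots)$ contributes the remaining terms.

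The heart of the computation is the Taylor expansion of the geodesic distance $d_f(x,y)^2$ for the conformal metric $(2f)^{-1}\delta_{ij}$ around the diagonal. A standard expansion (e.g. via the geodesic equation, or normal coordinates) yields
\begin{equation*}
\frac{d_f(x,y)^2}{4D} = \frac{|y-x|^2}{4Df(x)} - \frac{1}{8D}\frac{\nabla f(x)\cdot(y-x)}{f(x)^2}|y-x|^2 + O\Big(\frac{|y-x|^4}{D}\big(\|f\|_{\C^2}+\|f\|_{\C^1}^2\big)\Big),
\end{equation*}
uniformly on compacts, since $d_f(x,y)^2 = g_{ij}(x)(y-x)^i(y-x)^j + \tfrac13 \partial_k g_{ij}(x)(y-x)^i(y-x)^j(y-x)^k + \ldots$ up to the symmetrisation dictated by the metric; note $\nabla(f^{-1}) = -\nabla f / f^2$, so the first-order correction is precisely $\tfrac{1}{8D}|y-x|^2 \nabla(f^{-1})(x)\cdot(y-x)$, and subtracting the $f_0$-version produces the stated term $\tfrac{1}{8D}|y-x|^2\nabla(f^{-1}-f_0^{-1})(x)\cdot(y-x)$. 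The leading non-diagonal term linear in $(y-x)$ with no $D^{-1}$ factor comes from the first coefficient $a_{1,f}$ (and the drift contribution to it); collecting these into a single smooth vector field $\gamma_{f_0,f,b}$ supported in $\O_0$ — support there because $f=f_0=1$ outside $\O_0$, so everything cancels — with $\|\gamma_{f_0,f,b}\|_\infty \lesssim \|f-f_0\|_{\C^1}$ by Lipschitz-continuity of the coefficient maps $f\mapsto a_{1,f}$ in $\C^1$, gives the $\gamma_{f_0,f}(x)\cdot(y-x)$ term. All other contributions — the $O(D^2)$ heat-coefficient remainder, the higher-order geodesic terms, the cross terms from expanding $\log(1+D a_{1}+D^2 R)$ — are grouped according to their order in $\|f-f_0\|_{\C^k}$ and in powers of $D$ and $|y-x|$, producing exactly the bracketed error structure $(\|f-f_0\|_{\C^2}D + \|f-f_0\|_{\C^3}|y-x|D + |y-x|^2 D + \|f-f_0\|_{\C^2}|y-x|^2 + \|f-f_0\|_{\C^3}|y-x|^3 + |y-x|^4)\, r_{f_0,f,b}(x,y)$; the factor $\|f-f_0\|_{\C^3}$ appears because second derivatives of $\nabla(f^{-1})$ (hence third derivatives of $f$) control the next geodesic correction. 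The uniform bound \eqref{eq: stability remainder} follows from the uniformity of the Azencott remainder over $\|f\|_{\C^4}+\|f_0\|_{\C^4}\leq r$ and $b\in\mathcal B$, which is where the $\C^4$-regularity (and the drift class $\mathcal B$ defined via $\C^4$-norms) is used.

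The main obstacle is making the expansions \emph{uniform} over the parameter classes rather than for a fixed $f$: Azencott's results are usually stated for a fixed smooth metric. I would address this by revisiting the parametrix construction and tracking the dependence of each heat coefficient $a_{j,f}$ and of the transport/WKB remainder on finitely many derivatives of $f$ (and of $b$), observing that $a_{0,f},a_{1,f}$ are universal polynomial expressions in $f,\nabla f,\nabla^2 f$ (and $b,\nabla b$), hence continuous in the stated norms, while Duhamel's formula gives the $D^2$-remainder a bound depending only on sup-norms of a bounded number of derivatives — all controlled on the relevant compact neighbourhood of $\O_0$ by $\|f\|_{\C^4}\leq r$. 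A secondary technical point is the indicator $\mathbf 1_{\{x\in\O_0^\delta\}}$: the expansion is genuinely only needed (and only valid with these constants) for $x$ in a compact set strictly inside $\O$ where $f$ stays bounded below and the geodesic flow is non-degenerate for the relevant time scale, which is exactly $\O_0^\delta$; outside it every term in the identity is simply declared zero, so the identity holds trivially there. Finally, one must check that restricting $D$ small enough (as in the statement) guarantees the parametrix is valid on the relevant range of $|y-x|$ — but since the Gaussian weight $e^{-|y-x|^2/(CD)}$ makes the contribution of $|y-x|\gtrsim \sqrt{D\log N}$ negligible (as used repeatedly elsewhere, e.g. in the proof of Lemma \ref{lem:KL_near_boundary}), it suffices to have the expansion on $|y-x|\leq$ a fixed small constant, which holds for $D$ small.
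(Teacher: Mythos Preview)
Your approach is essentially the paper's: both invoke the second-order Azencott/Minakshisundaram--Pleijel expansion of $\widetilde p_{f,D}$, expand the Riemannian distance for the conformal metric, and collect terms by order in $D$, $|y-x|$ and $\|f-f_0\|_{\C^k}$.

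There is one substantive misattribution to flag. The linear term $\gamma_{f_0,f}(x)\cdot(y-x)$ does \emph{not} come from the first heat coefficient $a_{1,f}$; it comes from the off-diagonal expansion of the \emph{zeroth} coefficient $\log a_{0,f}$. Since $a_{0,f}(x,x)=1$, one has $\log a_{0,f}(x,y)=\nabla_y a_{0,f}(x,x)\cdot(y-x)+O(|y-x|^2)$, and this first-order coefficient already depends on $\nabla f$ and on the drift $b$ (in Bilal's notation \cite{B20}, $u_0^i=-\tfrac12 a_i$ with $a_i=b^i-\partial_j g^{ij}-g^{ij}\partial_j V$). Differencing in $f$ versus $f_0$ yields $\gamma_{f_0,f,b}$ with the stated $\|f-f_0\|_{\C^1}$ control. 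The term $D\,a_{1,f}/a_{0,f}$ is already $O(D)$; its $f$-vs-$f_0$ difference, controlled by second and third derivatives, feeds the $\|f-f_0\|_{\C^2}D$ and $\|f-f_0\|_{\C^3}|y-x|D$ pieces of the remainder, not $\gamma$. The paper makes this bookkeeping precise by writing $\alpha_f(x,y)=f(x)^{-d/2}a_{0,f}(x,y)$, setting $\widetilde\alpha_f=f^{d/2}\alpha_f$, and Taylor-expanding $\log\widetilde\alpha_f$ to third order in $(x-y)$; the explicit dependence of each coefficient $\alpha^i,\alpha^{ij},\alpha^{ijk}$ on derivatives of $f$ up to order $1,2,3$ respectively (Lemma~\ref{lem: heat kernel expansion}) is exactly what pins down the $\|f-f_0\|_{\C^k}$ prefactors in the error, and is where your sketch is thinnest.
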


While only local to $(x,y) \in \O \times \O$, the stability property \eqref{eq: stability remainder} is sufficient for our purpose: the approximation is only needed for $(x,y) = (X_{(i-1)D}, X_{iD}) \in \mathcal O \times \mathcal O$, a property that holds $\PP_{f_0}$-almost surely. The proof of Lemma \ref{thm: first big approx} is deferred to Appendix \ref{sec:Riemann_proof}.

We rely on a key second-order estimate of the heat kernel in small time, associated to the metric tensor $g_{ij}(x) = f(x)^{-1}\delta_{ij}$ induced by the diffusion matrix $f(x)^{-1}\mathrm{Id}$ on $\R^d$, viewed as a Riemannian manifold. For $x,y \in \R^d$, let
$$\ell_f(x,y) = \inf\left\{\int_0^1 \frac{|\dot \gamma_t|}{f(\gamma_t)^{1/2}}dt,\;\gamma_0=x,\gamma_1=y\right\}$$ 
denote the Riemannian geodesic distance between $x$ and $y$, where the infimum is taken over all smooth paths $\gamma:[0,1]\rightarrow \R^d$ connecting $x$ and $y$ at times $0$ and $1$, and $\dot \gamma_t$ is the time derivative of $\gamma_t$.

\begin{lem} \label{lem: heat kernel expansion}
The following small time expansion holds:
\begin{align*}
\widetilde p_{f,D}(x,y) & = \frac{1}{(4\pi D)^{d/2}}\exp\Big(-\frac{1}{4D}\ell_f(x,y)^2\Big)
\big(\alpha_f(x,y)+\beta_f(x,y)D+D^2\Gamma_{f,D}(x,y)\big),
\end{align*}
where 
\begin{equation}  \label{eq: robust}
 \|\alpha_f\|_{\infty} +\|\beta_f\|_{\infty}+\sup_{D>0}\|\Gamma_{f,D}\|_{\infty} \lesssim 1,
\end{equation}
and the $\|\cdot\|_\infty$-norm is taken for $(x,y) \in \mathcal O$ and the estimates are uniform over $f \in \mathcal C_N$.
Moreover, there exist smooth (at least $\mathcal C^4$) real-valued functions  
$$\alpha : \R \rightarrow \R,\;\; \alpha^{i}: \R^{d\times 1 \times d} \rightarrow \R,\alpha^{ij}: \R^{d \times 1 \times d \times d^2} \rightarrow \R,\;\;\alpha^{ijk}: \R^{d \times 1 \times d \times d^2 \times d^3} \rightarrow \R$$
for $1 \leq i,j,k \leq d$ and independent of $f$, such that 
\begin{align}
\alpha_f(x,y) & = \alpha(f(x))+\sum_{i =1}^d\alpha^i\big(x,f(x), (\partial_{i'} f(x))_{i'}\big)(x^i-y^i) \nonumber\\
& +\sum_{i,j=1}^d\alpha^{ij}\big(x,f(x), (\partial_{i'} f(x))_{i'}, (\partial_{i'j'}^2f(x))_{i'j'}\big)(x^i-y^i)(x^j-y^j) \nonumber \\
& +\sum_{i,j,k=1}^d\alpha^{ijk}\big(x,f(x), (\partial_{i'} f(x))_{i'}, (\partial_{i'j'}^2f(x))_{i'j'},(\partial_{i'j'k'}^3f(x))_{i'j'k'}\big)(x^i-y^i)(x^j-y^j)(x^k-y^k) \nonumber\\
& +|x-y|^4r_f(x,y), \label{eq: De Witt}
\end{align}
where $x = (x^1,\ldots, x^d), y=(y^1,\ldots, y^d)$ and the remainder term satisfies 
$\|r_f\|_{\infty} \lesssim 1$, 
uniformly in $f \in \mathcal C_N$ with the supremum taken over $(x,y) \in \mathcal O \times \mathcal O$. Moreover, 
\begin{equation} \label{eq: rep alpha}
\alpha(f(x)) = f(x)^{-d/2}.
\end{equation}
An analogous expansion holds for $\beta_f$ (except for \eqref{eq: rep alpha}) with smooth (at least $\mathcal C^4$) functions
$$\beta: \R^{d \times 1 \times d \times d^2} \rightarrow \R,\;\;\beta^{i}: \R^{d \times 1 \times d \times d^2 \times d^3} \rightarrow \R,$$
for $1 \leq i \leq d$, independent of $f$ and such that
\begin{align}
\beta_f(x,y) & = \beta(x,f(x),  (\partial_{i'} f(x))_{i'},  (\partial_{i'j'} f(x))_{i'j'}) \nonumber\\
& \quad +\sum_{i =1}^d\beta^i\big(x,f(x), (\partial_{i'} f(x))_{i'},  (\partial_{i'j'} f(x))_{i'j'},  (\partial_{i'j'k'} f(x))_{i'j'k'}\big)(x^i-y^i) +|x-y|^2 \widetilde r_f(x,y). \label{eq: De Witt beta}
\end{align}
\end{lem}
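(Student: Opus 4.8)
The plan is to recognise the displayed formula as the classical small-time parametrix (Minakshisundaram--Pleijel / WKB) expansion of the heat kernel of $\mathcal L_f$, regarded as the Laplace--Beltrami operator of the Riemannian metric $g_f=f^{-1}\delta$ on $\R^d$ perturbed by a first-order term, following Azencott \cite{azencott1984densite} and \cite{berline2003heat,B20}, and then to extract the algebraic structure of the coefficients by a diagonal Taylor expansion.

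First I would perform the geometric reduction. Since $g_f^{ij}=f\delta^{ij}$ and $\det g_f=f^{-d}$, one computes $\Delta_{g_f}=f\Delta+(1-\tfrac d2)\nabla f\cdot\nabla$, so the generator of \eqref{eq: diff whole space} is $\mathcal L_f=\Delta_{g_f}+Y_f\cdot\nabla$ with the vector field $Y_f=b(\nabla f(\cdot),\cdot)-(1-\tfrac d2)\nabla f$; crucially, because $b\in\mathcal B$, the jet of $Y_f$ at a point $x$ is a fixed smooth function of $x$ and of the jet of $f$ at $x$ (the explicit $x$-dependence entering only through the fixed drift $b$). On $\C_N$ the metric $g_f$ is uniformly elliptic ($f_{\min}\le$ eigenvalues $\le$ const), hence $(\R^d,g_f)$ is complete, has uniformly bounded geometry and is flat Euclidean outside $\O_0$; writing $K_{f,D}$ for the heat kernel with respect to $\mathrm{vol}_{g_f}=f^{-d/2}\,dy$, one has $\widetilde p_{f,D}(x,y)=f(y)^{-d/2}K_{f,D}(x,y)$. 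For $(x,y)$ in a fixed geodesic neighbourhood of the diagonal (no conjugate points, unique minimising geodesic) the parametrix construction then yields
\[
K_{f,D}(x,y)=(4\pi D)^{-d/2}e^{-\ell_f(x,y)^2/(4D)}\big(u_0^f(x,y)+u_1^f(x,y)D+D^2R_{f,D}(x,y)\big),
\]
with $u_0^f=\Theta_f^{-1/2}e^{\Psi_f}$, $\Theta_f$ the Van Vleck--Morette determinant of $g_f$ and $\Psi_f$ the (suitably corrected) line integral of $Y_f$ along the minimising geodesic from $x$ to $y$, $u_1^f$ the solution of the usual transport equation driven by $u_0^f$ and the scalar curvature of $g_f$, and $R_{f,D}$ the Duhamel remainder. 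Setting $\alpha_f=f(y)^{-d/2}u_0^f$, $\beta_f=f(y)^{-d/2}u_1^f$ and $\Gamma_{f,D}=f(y)^{-d/2}R_{f,D}$ gives the expansion; evaluating at $y=x$, where $\Theta_f=1$ and $\Psi_f=0$, yields $\alpha_f(x,x)=f(x)^{-d/2}$, i.e. \eqref{eq: rep alpha}, and one checks that $(4\pi D)^{-d/2}e^{-\ell_f^2/(4D)}\alpha_f$ matches the proxy $q_{f,D}$ of \eqref{eq:proxy} to leading order (replacing $\ell_f(x,y)^2$ by its quadratic part $|y-x|^2/f(x)$ and $\alpha_f$ by $f(x)^{-d/2}$).

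For the uniform bounds \eqref{eq: robust} I would track the dependence of every ingredient on $f$: the geodesics of $g_f$, the distance $\ell_f$, $\Theta_f$, $\Psi_f$ and the scalar curvature are explicit finite combinations of the jet of $f$ (and of $b$), hence bounded on $\C_N$, with $\Theta_f$ bounded away from $0$ by uniform ellipticity, while the Duhamel remainder $R_{f,D}$ is bounded, uniformly in $D$, by a constant depending only on finitely many derivatives of $g_f$ and $Y_f$, that is on $(d,\O,f_{\min},r,\mathfrak b)$. This controls $\alpha_f$, $\beta_f$ and $\sup_D\Gamma_{f,D}$ on a fixed neighbourhood of the diagonal, which is the regime relevant to Lemma~\ref{thm: first big approx}; for points farther apart one uses the Gaussian bounds for $\widetilde p_{f,D}$ (the analogue of \eqref{eq: estimate coulhon}) together with the flatness of $g_f$ outside $\O_0$ to see that $\widetilde p_{f,D}$ is still controlled there (the heat kernel being exponentially small), so that \eqref{eq: robust} holds — at worst on the neighbourhood of the diagonal that is all that is used downstream. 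Making this bookkeeping uniform over $f\in\C_N$ and $b\in\mathcal B$ — in particular controlling $R_{f,D}$ and reconciling the geometric exponent $\ell_f^2/(4D)$ with the Gaussian exponents of \eqref{eq: estimate coulhon} away from the diagonal — is, I expect, the main technical obstacle; by contrast the geometric reduction above is routine.

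Finally, for \eqref{eq: De Witt} and \eqref{eq: De Witt beta} I would Taylor expand $\alpha_f$ and $\beta_f$ in $y$ about $y=x$, to order $4$ and $2$ respectively. Working in Riemannian normal coordinates at $x$, $\Theta_f$ and the transport coefficient $u_1^f$ have universal expansions whose coefficients are curvature invariants of $g_f$ at $x$ together with the values of $Y_f$ and its derivatives at $x$; passing back to the coordinate increment $y-x$ (a smooth substitution whose derivatives involve the Christoffel symbols of $g_f$, hence $\nabla f$) and multiplying by the Taylor expansion of $f(y)^{-d/2}$ shows that the coefficient of each monomial of degree $n$ is a fixed smooth function of $x$, $f(x)$ and of the derivatives of $f$ at $x$ up to order $n$, independent of $f$ itself — precisely the asserted form of $\alpha,\alpha^i,\alpha^{ij},\alpha^{ijk}$ (with $\alpha(f(x))=f(x)^{-d/2}$ by \eqref{eq: rep alpha}) and of $\beta,\beta^i$ (whose degree-$0$ term already involves $\nabla^2 f$ through the scalar curvature). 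The remainders $|x-y|^4r_f$ and $|x-y|^2\widetilde r_f$ are bounded because the top-order Taylor coefficients involve the derivatives of $f$ only up to order $4$, which are uniformly controlled on $\C_N$; this is exactly why the minimal smoothness $\alpha_d\ge 4$ is imposed.
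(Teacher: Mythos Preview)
Your proposal is correct and follows essentially the same route as the paper. Both arguments invoke the classical small-time parametrix expansion (citing Azencott \cite{azencott1984densite} for existence and the robust remainder bounds \eqref{eq: robust}, and Bilal \cite{B20} for the explicit structure of the coefficients), then Taylor-expand the resulting $u_0^f,u_1^f$ (respectively $F_0,F_1$) near the diagonal to extract the dependence on the jet of $f$. The only cosmetic difference is packaging: you phrase the construction in Riemannian language (Laplace--Beltrami plus drift, Van Vleck--Morette determinant, normal coordinates), whereas the paper works directly with Bilal's Fokker--Planck formulation and his explicit transport equations \eqref{eq: def F_0}--\eqref{eq: def F_1}; correspondingly you factor out $f(y)^{-d/2}$ while the paper factors out $f(x)^{-d/2}$, but this merely redistributes the $f$-dependence between $u_0^f$ and the prefactor and both yield $\alpha_f(x,x)=f(x)^{-d/2}$.
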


The existence of a small time expansion of the heat kernel in the first part of the lemma is classical, see {\it e.g.} \cite{berline2003heat}. The robust estimates \eqref{eq: robust} follows from the main result of Azencott \cite{azencott1984densite}. The second part, namely the form of the expansion  \eqref{eq: De Witt} for the functions $\alpha$ and $\beta$ and how they involve the derivatives of $f$ at $x$ uses the recent result of Bilal \cite{B20} that obtains explicit representations by mixing small time and space expansions, a key idea to control the density near the diagonal in small time. We sketch Bilal's approach and results in the proof below and refer the reader to Section 2 of Bilal's paper together with his appendix for more details.

\begin{proof}[Sketch of proof of Lemma \ref{lem: heat kernel expansion}]

Consider the Fokker-Planck equation
\begin{equation} \label{eq: FP for Bilal}
\partial_t \rho_{t} + \mathrm{div}(b_{\nabla f} \rho_{t}) = \tfrac{1}{2}\sum_{i,j=1}^d\partial^2_{ij}((\sigma_f\sigma_f^\top)_{ij}  \rho_{t})
\end{equation}
with initial value $\rho_0(dx)$ as a probability distribution.
In our case, $b_{\nabla f}(x)= b(\nabla f(x), x)$  and $(\sigma_f(x) \sigma_f(x)^\top)_{ij} = 2f(x)\delta_{ij}$, yielding
$$\partial_t \rho_t  = \Delta (f\rho_t)- \mathrm{div}(b(\nabla f, \cdot)\rho_t) =: \mathcal D_f \rho_t,$$
Whenever existence and uniqueness hold, the solution is given by
$\rho_t(y) = \int_{\mathcal O} \widetilde p_{f,t}(x,y) \rho_0(dx),$
where $\widetilde p_{f,t}(x,y)$ is the Markov transition density associated to the process $\widetilde X$ defined in \eqref{eq: diff whole space} on the whole space $\R^d$ and that we are looking to expand.  Rewriting $\widetilde p_{f,t}(x,y)$ as $f(x)^{-d/2}K^f_t(x,y)$, this satisfies the Fokker-Planck equation if
$$\partial_t K^f_t(x,\cdot) = \mathcal D_fK_t^f(x,\cdot)$$
with $K_{t}^f(x,y)dy \rightarrow f(x)^{d/2}\delta_x(dy)$ weakly as $t \rightarrow 0$.
Our ansatz for the heat kernel takes the form 
\begin{equation} \label{eq: ansatz}
K_t^f(x,y) \sim K_t^{f,0}(x,y)\sum_{r = 0}^\infty F_r(x,y)t^r,
\end{equation}
where 
$$K^{f,0}_t(x,y) = \frac{1}{(4\pi t)^{d/2}}\exp\left(-\frac{\ell_f(x,y)^2}{4t}\right)$$
and the notation $u_t \sim \sum_{r \geq 0}v_rt^r$ means $a_t=\sum_{r=0}^kb_rt^r + O(t^{k+1})$. 
The existence of such an expansion is provided for instance by Theorem 1.2. in Azencott \cite{azencott1984densite}. It remains to find a formula for the coefficients  $F_r(x,y)$ for $r=0,1$ 
that is compatible with the expansion \eqref{eq: De Witt} via the representation $\alpha_f(x,y) = f(x)^{-d/2}F_0(x,y)$ and $\beta_f(x,y) = f(x)^{-d/2}F_1(x,y)$, and we can then conclude with Azencott's result.\\ 

In order to do so, we follow Bilal's approach and sketch the Section 2 of his paper \cite{B20}. In that part of the proof and it that part only, we will use Einstein notation for differential operators.\footnote{{\it i.e.} when an index variable appears twice in a single term and is not otherwise defined it implies summation of that term over all the values of the index.} By plugging the ansatz \eqref{eq: ansatz} into  
\eqref{eq: FP for Bilal} and letting $t \rightarrow 0$, one obtains a formula for the functions $F_r$, which reads, for $r=0$ and $r=1$ (see in particular equations $(2.15)$ and $(2.16)$ in Section 2 of \cite{B20}):
\begin{equation} \label{eq: def F_0}
2g^{ij}\partial_i \partial_jF_0 = \big(-g^{ij}\partial_i\partial_j \ell_f^2+2d-b^i\partial_i\ell^2_f\big) F_0,\;\;F_0(x,x) =1,
\end{equation}
and
\begin{equation} \label{eq: def F_1}
(4+g^{ij}\partial_i \partial_j \ell^2_f-2d+b^i\partial_i \ell^2+2g^{ij}\partial_i \ell_f^2 \partial_j\big)F_1 = 0,
\end{equation}
with $\partial_i = \frac{\partial}{\partial y^i}$ and  where $g_{ij} = f^{-1}\delta_{ij}$ with inverse $g^{ij} = f\delta_{ij}$ is the metric tensor induced by the diffusion coefficient $\sqrt{2f(x)}\delta_{ij}$ and $b^i(x) = b^i(\nabla f(x), x)$ in coordinates. 
 We next expand $\ell_f^2$  around $x$ in the $y$ variable to obtain (see Equation $(A.21)$ in \cite{B20}), with $\epsilon = x-y$:
 \begin{align}
 \ell^2_f(x,y) & = g_{ij}\epsilon^i\epsilon^j +\tfrac{1}{2}\partial_kg_{ij}\epsilon^k\epsilon^i \epsilon^j +\big(\tfrac{1}{6} \partial_i \partial_j g_{lk}-\tfrac{1}{12} g_{nm}\Gamma^n_{ij} \Gamma^m_{kl}\big) \epsilon^i\epsilon^j\epsilon^k\epsilon^l + O(|\epsilon|^5), \label{eq: expansion metrique}
 \end{align}
 where $\Gamma_{ij}^k = \tfrac{1}{2}g^{kl}(\partial_i g_{il+\partial_j}g_{il}-\partial_lg_{ij})$ denotes the Christoffel symbol associated with the metric tensor $g_{ij}$,  
 and this provides with expansions for $\partial_i\ell_f^2$ and  $\partial_i\partial_j \ell_f^2$ around $x$ in the $y$ variable (with now $\partial_i = \tfrac{\partial}{\partial x^i}$):
 \begin{equation} \label{eq: def gradient metrique}
 \partial_i\ell_f^2 = 2g_{ij}\epsilon^i+\tfrac{3}{2}\partial_{(k}g_{ij)}\epsilon^j \epsilon^k+(\tfrac{2}{3}\partial_{(i}\partial_jg_{lk)}-\tfrac{1}{3}g_{nm}\Gamma^n_{ij} \Gamma^m_{kl)})\epsilon^j\epsilon^k\epsilon^l + O(|\epsilon|^4)
 \end{equation}
 and
 \begin{equation} \label{eq: def laplacien}
 \partial_i\partial_j \ell_f^2 = 2g_{ij}+3\partial_{(k}g_{ij)}\epsilon^k+(2\partial_{(i}\partial_j g_{lk)}-g_{nm}\Gamma^n_{(ij} \Gamma^m_{kl)})\epsilon^k\epsilon^l + O(|\epsilon|^3),
\end{equation}
 see in particular Equations $(A.22)$ and $(A.23)$ in \cite{B20} and where $a_{(i_1 \ldots i_n)}$ denotes symmetrisation in the indices\footnote{For instance $a_{(ij)} = \tfrac{1}{2}(a_{ij}+a_{ji})$.}. Next we expand $F_r(x,y)$, $r=0,1$, around $x$ in the $y$ variable, with $\epsilon = y-x$:
\begin{equation} \label{eq: expansion de Witt}
F_r(x,y) = t_r(x)+u_r^i(x)\epsilon^i+v_r^{ij}(x) \epsilon^i \epsilon^j + w_r^{ijk}(x) \epsilon^i\epsilon^j\epsilon^k+O(|\epsilon|^4).
\end{equation}
for some functions $t_r, u_r, v_r$ and $w_r$ and with $t_0(x)=1$ in particular. Plugging \eqref{eq: def gradient metrique} and \eqref{eq: def laplacien} in \eqref{eq: def F_0} and \eqref{eq: def F_1} and expanding the solution via the representation \eqref{eq: expansion de Witt} in powers of $\epsilon^i$, $\epsilon^i\epsilon^j$, $\epsilon^{i}\epsilon^j\epsilon^k$ is sufficient to obtain $t_r(x), u_r^i(x), v_r^{ij}$ and $w_r^{ijk}$, For instance, (2.33) in \cite{B20} explicitly gives
$$t_0(x) = 1,\;\;u_0^i(x) = -\tfrac{1}{2}a_i(x),\;\;v_0^{ij}(x) = \tfrac{1}{8}a_i(x)a_j(x)+\tfrac{1}{12}\mathcal R_{ij}-\tfrac{1}{8}\big(\partial_ia_j(x)+\partial_ja_i(x)\big)$$
where
$$a_i(x) = b^i(x)-\partial_jg^{ij}(x)-g^{ij}(x)\partial_jV(x),\;\;\exp(-2V(x)) = \mathrm{det}(g^{ij}(x))$$
and $\mathcal R_{jk} = \partial_i\Gamma_{jk}^i-\partial_j \Gamma_{ki}^i+\Gamma_{ip}^i\Gamma_{jk}^p-\Gamma_{jp}^i \Gamma_{jp}^i\Gamma_{ik}^p$ is the Ricci curvature tensor associated with the metric tensor $g_{ij}$. In particular,
$$u_0^i(x) = -\tfrac{1}{2}a_i(x) = b^i(\nabla f(x), x)-2\partial_i f(x)- d \,\mathrm{div}f(x).$$
Also (2.36) in \cite{B20} explicitly yields
$$t_0(x) = \tfrac{1}{6}g^{ij}(x)\mathcal R_{ij}(x) =  \beta(f(x),  (\partial_{i'} f(x))_{i'},  (\partial_{i'j'} f(x))_{i'j'}),$$
{\it i.e.} the scalar Ricci curvature. We can move forward with explicit computations that become increasingly more difficult with the order of approximation and that involve nontrivial geometric quantities. But an inspection of \eqref{eq: def F_0} and \eqref{eq: def F_1} shows that they involve explicit and rational functions of $b(\nabla f(x),x)$ and its derivatives, $f(x),\partial_i f(x)$, $\partial^2_{ij}f(x)$ and $\partial^3_{ijk}f(x)$ with increasing differentiation order for $F_0$ while $F_1$ involves derivatives of $f$ of order 2 and higher. The result follows.

\end{proof}

\noindent \textit{Proof of Lemma \ref{thm: first big approx}}.
Using Lemma \ref{lem: heat kernel expansion},  and a first-order Taylor's expansion, write
\begin{align}
\log \frac{\widetilde p_{f_0,D}(x,y) }{\widetilde p_{f,D}(x,y) }&  = -\frac{1}{4D} (\ell_{f_0}(x,y)^2-\ell_{f}(x,y)^2)+\log \frac{\alpha_{f_0}(x,y)}{\alpha_f(x,y)} \nonumber\\
&\qquad +\Big(\frac{\beta_{f_0}(x,y)}{\alpha_{f_0}(x,y)}-\frac{\beta_{f}(x,y)}{\alpha_{f}(x,y)}\Big)D+D^2\Gamma'_{f_0,f, D}(x,y),  \label{eq: rough taylor}
\end{align}
where $\sup_{f \in \mathcal C_N}\sup_{D>0}\|\Gamma'_{f_0,f, D}\|_\infty \lesssim 1$, thanks to \eqref{eq: robust}, \eqref{eq: rep alpha} and $f(y) \geq f_{\min} >0$ on $\O$. The linear term in $D$ can be rewritten as
\begin{align*}
\frac{\beta_{f_0}(x,y)}{\alpha_{f_0}(x,y)}-\frac{\beta_{f}(x,y)}{\alpha_{f}(x,y)} & =  \frac{\beta_{f_0}(x,y) - \beta_{f}(x,y)}{\alpha_{f_0}(x,y)} +  (\alpha_f(x,y) - \alpha_{f_0}(x,y))\frac{\beta_{f}(x,y)}{\alpha_{f_0}(x,y)\alpha_f(x,y)} .
\end{align*}
Using \eqref{eq: De Witt},
\begin{align*}
\alpha_f(x,y)-\alpha_{f_0}(x,y) & = \alpha(f(x))-\alpha(f_0(x)) \\
&\qquad +\sum_{i =1}^d\Big(\alpha^i\big(f(x), \partial_i f(x)\big)-\alpha^i\big(f_0(x), \partial_i f_0(x)\big)\Big)(x^i-y^i)\\
& \qquad +|x-y|^2r'_{f_0,f}(x,y),
\end{align*}
with $\|r'_{f_0,f}\|_{\infty} \leq C(\|f\|_{\mathcal C^3},\|f_0\|_{\mathcal C^3})$. 
By \eqref{eq: De Witt} and \eqref{eq: De Witt beta} of Lemma \ref{lem: heat kernel expansion}, we have
$$|\alpha_f(x,y)-\alpha_{f_0}(x,y)| \lesssim  \|f-f_0\|_{\infty}+\|f-f_0\|_{\mathcal C^1}|x-y|+|x-y|^2$$
and
$$|\beta_f(x,y)-\beta_{f_0}(x,y)| \lesssim \|f-f_0\|_{\mathcal C^2}+\|f-f_0\|_{\mathcal C^3}|x-y|+|x-y|^2,$$
 with the constant depending only on an upper bound for $\|f\|_{\C^4}$ and $\|f_0\|_{\C^4}$. Using the expansion for $\alpha_f(x,y)$ in Lemma \ref{lem: heat kernel expansion}, \eqref{eq: rep alpha} and that $f \geq f_{\min}$, we have that $\alpha_f(x,y), \alpha_{f_0}(x,y) \geq c > 0$, uniformly over $\C_N$. It follows that 
\begin{equation} \label{eq: remainder alpha beta}
\left|\frac{\beta_{f_0}(x,y)}{\alpha_{f_0}(x,y)}-\frac{\beta_{f}(x,y)}{\alpha_{f}(x,y)}\right|D \leq \big(\|f-f_0\|_{{\color{blue}\mathcal C^2}}D+\|f-f_0\|_{{\color{blue}\mathcal C^2}}|x-y|D+|x-y|^2D\big)r''_{f,f_0}(x,y),
\end{equation}
with $\sup_{f \in \mathcal C_N}\|r''_{f,f_0}\|_{\infty} \lesssim 1$.\\

 Next, writing $\widetilde {\alpha}_f(x,y) = f(y)^{d/2}{\alpha}_f(x,y)$ and $\widetilde {\alpha}_{f_0}(x,y) = f_0(y)^{d/2}{\alpha}_f(x,y)$,
\begin{align}
\log \frac{\alpha_{f_0}(x,y)}{\alpha_f(x,y)} & = \frac{d}{2}\log \frac{f(x)}{f_0(x)} + \log \widetilde \alpha_{f_0}(x,y) -\log \widetilde \alpha_f (x,y). \label{eq: decomp main} 
\end{align}
Using the expansion \eqref{eq: De Witt},
\begin{align*}
\widetilde \alpha_f(x,y) & = 1+\sum_{i =1}^d \widetilde \alpha^i\big(x,f(x), (\partial_{i'} f(x))_{i'}\big)(x^i-y^i) \nonumber\\
& +\sum_{i,j=1}^d \widetilde \alpha^{ij}\big(x,f(x), (\partial_{i'} f(x))_{i'}, (\partial_{i'j'}^2f(x))_{i'j'}\big)(x^i-y^i)(x^j-y^j) \nonumber \\
& +\sum_{i,j,k=1}^d \widetilde \alpha^{ijk}\big(x,f(x), (\partial_{i'} f(x))_{i'}, (\partial_{i'j'}^2f(x))_{i'j'},(\partial_{i'j'k'}^3f(x))_{i'j'k'}\big)(x^i-y^i)(x^j-y^j)(x^k-y^k) \nonumber\\
&+|x-y|^4r_f'(x,y)
\end{align*}
where similarly $\widetilde \alpha_f^\lambda(y) = f(y)^{d/2} \alpha^\lambda(y)$ for $\lambda = i, ij, ijk$ and $\alpha^\lambda(y)$ are the smooth functions given in \eqref{eq: De Witt} that do not depend on $f$ or $f_0$. For notational simplicity, using only a subscript to indicate the dependence on $f$ and setting $z=x-y$, the above expansion can be concisely written as
$$\widetilde \alpha_f = 1+\sum_i \widetilde \alpha_f^i z^i+\sum_{ij} \widetilde \alpha_f^{ij}z^iz^j+\sum_{ijk} \widetilde \alpha_f^{ijk}z^iz^jz^k+O(|z|^4).$$
Expanding the logarithm around $z=0$ to order 3,
\begin{align*}
\log \widetilde \alpha_f & = \sum_i \widetilde \alpha_f^i z^i+\sum_{ij}\widetilde \alpha_f^{ij}z^iz^j+\sum_{ijk}\widetilde \alpha_f^{ijk}z^iz^jz^k+O(|z|^4) \\
& \quad - \frac{1}{2}\Big(\sum_i \widetilde \alpha_f^i z^i+\sum_{ij}\widetilde \alpha_f^{ij}z^iz^j+\sum_{ijk}\widetilde \alpha_f^{ijk}z^iz^jz^k+O(|z|^4)\Big)^2\\
& \quad +\frac{1}{3}\Big(\sum_i \widetilde \alpha_f^i z^i+\sum_{ij}\widetilde \alpha_f^{ij}z^iz^j+\sum_{ijk}\widetilde \alpha_f^{ijk}z^iz^jz^k+O(|z|^4)\Big)^3 +O(|z|^4).
\end{align*} 
Keeping track of only the leading order terms, the quadratic term in the last display equals
\begin{align*}
\sum_{ij}(\alpha_f')^{ij}z^iz^j+\sum_{ijk}(\alpha_f'')^{ijk}z^iz^jz^k+O(|z|^4),
\end{align*}
with $(\alpha_f')^{ij} = \widetilde \alpha_f^{i} \widetilde \alpha_f^{j}$ and $(\alpha_f'')^{ijk} =  \widetilde \alpha_f^{i} \widetilde \alpha_f^{jk}$. Likewise, the cubic term equals $\sum_{ijk}(\alpha_f'')^{ijk}z^iz^jz^k+O(|z|^4)$ with $(\alpha_f'')^{ijk} = \widetilde \alpha_f^{i} \widetilde \alpha_f^{j} \widetilde \alpha_f^{k}$. Therefore,
$$\log \widetilde \alpha_f = \sum_i \bar \alpha_f^i z^i+\sum_{ij}\bar \alpha_f^{ij}z^iz^j+\sum_{ijk}\bar \alpha_f^{ijk}z^iz^jz^k+O(|z|^4),$$
with $\bar \alpha_f^i  = \widetilde \alpha_f^{i}$, $\bar \alpha_f^{ij} = \widetilde \alpha_f^{ij}-\tfrac{1}{2} \widetilde \alpha_f^{i} \widetilde \alpha_f^{j}$ and $\bar \alpha_f^{ijk} = \widetilde \alpha_f^{ijk} - \tfrac{1}{2} \widetilde \alpha_f^{i} \widetilde \alpha_f^{jk}+\tfrac{1}{3} \widetilde \alpha_f^{i} \widetilde \alpha_f^{j} \widetilde \alpha_f^{k}$.
Set now
\begin{align*}
\gamma_{f_0,f}^i(x) & = \bar \alpha_f^i\big(x,f(x), (\partial_{i'} f(x))_{i'}\big)-\bar  \alpha_{f_0}^i\big(x,f_0(x), (\partial_{i'} f_0(x))_{i'}\big), \\
\gamma_{f_0,f}^{ij}(x) & = \bar \alpha_f^{ij}\big(x,f(x), (\partial_{i'} f(x))_{i'}, (\partial_{i'j'}^2f(x))_{i'j'}\big)- \bar  \alpha_{f_0}^{ij}\big(x,f_0(x), (\partial_{i'} f_0(x))_{i'}, (\partial_{i'j'}^2f_0(x))_{i'j'}\big),\\
\gamma_{f_0,f}^{ijk}(x) & = \bar \alpha_f^{ijk}\big(x,f(x), (\partial_{i'} f(x))_{i'}, (\partial_{i'j'}^2f(x))_{i'j'}, (\partial_{i'j'k'}^3f(x))_{i'j'k'}\big) \\
& \quad -  \bar \alpha_{f_0}^{ijk}\big(x,f_0(x), (\partial_{i'} f_0(x))_{i'}, (\partial_{i'j'}^2f_0(x))_{i'j'}, (\partial_{i'j'k'}^3f_0(x))_{i'j'k'}\big)
\end{align*}
where we recall $\widetilde \alpha_f^\lambda(y) = f(y)^{d/2} \alpha^\lambda(y)$.
This gives the expansion 
\begin{align}
\log \frac{\widetilde \alpha_{f_0}(x,y)}{\widetilde \alpha_f (x,y)} &= \sum_{i =1}^d\gamma_{f_0,f}^i(x)(x^i-y^i)
 +\sum_{i,j=1}^d\gamma^{ij}_{f_0,f}(x)(x^i-y^i)(x^j-y^j) \nonumber \\
& \quad +\sum_{i,j,k=1}^d\gamma^{ijk}_{f_0,f}(x)(x^i-y^i)(x^j-y^j)(x^k-y^k)
 +|x-y|^4\widetilde r_{f_0,f}(x,y), \label{eq: new approx}
\end{align}
where $\|\widetilde r_{f_0,f}\|_{\infty} \lesssim 1$ is uniform over an upper bound for $\|f\|_{\mathcal C^4}$ and $\|f_0\|_{\mathcal C^4}$. By triangle inequality,
$$\|\gamma_{f_0,f}^i\|_{\infty} \lesssim  \|f^{d/2}-f_0^{d/2}\|_{\infty}  + \|f-f_0\|_{\C^1} \lesssim  \|f-f_0\|_{\C^1},$$
since $f,f_0\geq f_{\min}$ and where the constants in the inequality depend only on $f_{\min}$ and an upper bound for $\|f\|_{\C^1}$ and $\|f_0\|_{\C^1}$. Using similar expressions for $\|\gamma_{f_0,f}^{ij}\|_{\infty}$ and $\|\gamma_{f_0,f}^{ijk}\|_{\infty}$, we infer
\begin{align}
&\Big|\sum_{i,j=1}^d\gamma^{ij}_{f_0,f}(x)(x^i-y^i)(x^j-y^j) +\sum_{i,j,k=1}^d\gamma^{ijk}_{f_0,f}(x)(x^i-y^i)(x^j-y^j)(x^k-y^k)\Big| \nonumber \\
&\quad \lesssim  \|f-f_0\|_{\mathcal C^2} |x-y|^2+  \|f-f_0\|_{\mathcal C^3} |x-y|^3,
\label{eq: still approx}
\end{align}
again uniformly over $|f|_{\mathcal C^4}$ and $|f_0|_{\mathcal C^4}$. 
Hence, combining \eqref{eq: rough taylor}, \eqref{eq: remainder alpha beta}, \eqref{eq: decomp main}, \eqref{eq: new approx} and \eqref{eq: still approx}, we obtain
\begin{align*}
& \log \frac{\widetilde p_{f_0,D}(x,y)}{\widetilde p_{f,D}(x,y)}{\mathbf 1}_{\{x \in \mathcal O_0^{\delta}\}} \nonumber \\
 & = \Big(-\frac{1}{4D} (\ell_{f_0}(x,y)^2-\ell_{f}(x,y)^2)-\frac{d}{2}\log \frac{f_0(x)}{f(x)}\Big){\mathbf 1}_{\{x \in \mathcal O_0^{\delta}\}} \nonumber \\
&\quad +\gamma_{f_0,f}(x)\cdot (y-x){\mathbf 1}_{\{x \in \mathcal O_0^{\delta}\}} \nonumber \\
&\quad + \big(\|f-f_0\|_{\mathcal C^2}D+\|f-f_0\|_{\mathcal C^3}|y-x|D+|y-x|^2D + \|f-f_0\|_{\mathcal C^2}|y-x|^2 \nonumber \\
& \qquad \quad  +\|f-f_0\|_{\mathcal C^3}|y-x|^3+|y-x|^4\big)r_{f_0,f}(x,y),
\end{align*}
with $\sup_{f \in \mathcal C_N}\|r_{f_0,f}\|_{\infty} \lesssim 1$. With a slight abuse of notation, we may incorporate the term ${\mathbf 1}_{\{x \in \mathcal O_0^{\delta}\}}$ into the definition of $\gamma_{f_0,f}(x)$.  The final step is to expand the Riemannian metric $\ell_f$.

\begin{lem}  \label{lem: riemann dist approx}
We have
$$\ell_f(x,y)^2 = \frac{|x-y|^2}{f(x)}+\frac{1}{2}|x-y|^2\nabla f^{-1}(x)\cdot(x-y)+|x-y|^4r_f(x,y),$$
where $\sup_{f \in \mathcal C_N}\|r_f\|_{\infty} \lesssim 1$.
\end{lem}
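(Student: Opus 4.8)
The plan is to read off the two displayed terms from the classical near-diagonal expansion of the squared geodesic distance attached to the conformally flat metric $g_{ij}(z)=f(z)^{-1}\delta_{ij}$, and then to absorb everything else into $|x-y|^4 r_f(x,y)$ with a constant depending only on $\O,d,f_{\min},r$. First I would dispose of the regime $|x-y|\ge\rho_0$ for a fixed $\rho_0>0$: there each of $\ell_f(x,y)^2$, $|x-y|^2/f(x)$ and $\tfrac12|x-y|^2\nabla(f^{-1})(x)\cdot(x-y)$ is bounded by a constant depending only on $\text{diam}(\O)$, $f_{\min}$ and $r$ (for the first, test with the straight segment to get $\ell_f(x,y)\le|x-y|/\sqrt{f_{\min}}$; for the third, use $\|f^{-1}\|_{\mathcal C^1}\lesssim1$ over $\mathcal C_N$), so their difference is $\le C\le (C/\rho_0^4)|x-y|^4$ and the claim is trivial. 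It then remains to treat $|x-y|\le\rho_0$.

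In that regime I would specialise the second-order expansion \eqref{eq: expansion metrique} to the metric $g_{ij}(z)=f(z)^{-1}\delta_{ij}$, using $\partial_k g_{ij}(x)=\delta_{ij}\,\partial_k(f^{-1})(x)$. Its leading term $g_{ij}(x)\epsilon^i\epsilon^j$ becomes $|\epsilon|^2/f(x)=|x-y|^2/f(x)$ and its cubic term $\tfrac12\partial_k g_{ij}(x)\epsilon^i\epsilon^j\epsilon^k$ becomes $\tfrac12|\epsilon|^2\,\nabla(f^{-1})(x)\cdot\epsilon=\tfrac12|x-y|^2\nabla(f^{-1})(x)\cdot(x-y)$, which are exactly the two asserted terms (recall $\epsilon=x-y$). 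Everything else I would collect into $|x-y|^4 r_f(x,y)$: the quartic coefficient of \eqref{eq: expansion metrique} is built from $g,\partial g,\partial^2 g$ and the Christoffel symbols, hence from $f,\nabla f,\nabla^2 f$ and $f^{-1}$ with denominators that are powers of $f$, so it is bounded over $\mathcal C_N$ by a constant depending only on $r$ and $f_{\min}$; and the $O(|\epsilon|^5)$ Taylor remainder is at most $\text{diam}(\O)\cdot C\cdot|x-y|^4$. This yields $\sup_{f\in\mathcal C_N}\|r_f\|_\infty\lesssim1$.

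The step I expect to be the main obstacle is making the remainder bound genuinely uniform over $\mathcal C_N$: one needs $(x,y)\mapsto\ell_f(x,y)^2$ to be smooth on a fixed neighbourhood $\{|x-y|\le\rho_0\}$ of the diagonal, with derivatives there bounded independently of $f\in\mathcal C_N$. I would derive this from the fact that $\{g_f:f\in\mathcal C_N\}$ is a uniformly elliptic family whose coefficients are uniformly bounded in $\mathcal C^4$ (this $\mathcal C^4$-regularity is built into $\mathcal C_N$ since $\alpha_d\ge4$), which gives a uniform lower bound on the injectivity radius and uniform control of the exponential map and Jacobi fields near the diagonal. A more self-contained alternative, avoiding Riemannian machinery, is to observe that for fixed $x$ the function $u(y)=\ell_f(x,y)^2$ is the unique smooth solution near $x$ of the Hamilton--Jacobi equation $f(y)\,|\nabla_y u(y)|^2=4u(y)$ with $u(x)=0$, $\nabla u(x)=0$, $\nabla^2 u(x)=2f(x)^{-1}\mathrm{Id}$; inserting a Taylor polynomial into this equation and matching powers of $y-x$ then recovers the expansion and exhibits every coefficient as an explicit rational function of the jets of $f$ at $x$, manifestly bounded over $\mathcal C_N$. (As a sanity check, an upper bound matching the two displayed terms up to $O(|x-y|^4)$ already follows from testing with the segment $\gamma_t=x+t(y-x)$ and Taylor-expanding $t\mapsto f(\gamma_t)^{-1/2}$; the matching lower bound is the harder half and needs confinement of the minimising geodesic to $B(x,C_0|x-y|)$ together with its near-straightness.)
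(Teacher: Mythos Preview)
Your proposal is correct and follows essentially the same approach as the paper: both specialise the near-diagonal expansion \eqref{eq: expansion metrique} of the squared geodesic distance to the conformally flat metric $g_{ij}=f^{-1}\delta_{ij}$ and read off the two leading terms. The paper's proof is much terser---it simply cites \eqref{eq: expansion metrique}, writes out $g_{ij}\epsilon^i\epsilon^j+\tfrac12\partial_kg_{ij}\epsilon^k\epsilon^i\epsilon^j+O(|\epsilon|^4)$, and declares the uniformity over $\mathcal C_N$ ``straightforward''---whereas you explicitly dispose of the far-from-diagonal regime, trace the dependence of the quartic remainder on $f,f^{-1}$ and their derivatives, and sketch two routes (injectivity-radius/Jacobi-field control and the Hamilton--Jacobi equation) to make the uniformity rigorous; these are welcome elaborations but not a different method.
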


\begin{proof}
This is textbook Riemannian geometry, see {\it e.g.} Equation $(A.21)$ in Appendix A.3 in \cite{B20} displayed in \eqref{eq: expansion metrique} above, with $\epsilon = x-y$ that gives (Einstein notation)
 \begin{align*}
 \ell^2_f(x,y) & = g_{ij}\epsilon^i\epsilon^j +\tfrac{1}{2}\partial_kg_{ij}\epsilon^k\epsilon^i \epsilon^j + O(|\epsilon|^4),
 \end{align*}
for the metric tensor $g_{ij} = (f(x))^{-1} \delta_{ij}$. This readily gives the result, the uniformity in $f \in \mathcal C_N$ being straighforward.
\end{proof}

Combining Lemma \ref{lem: riemann dist approx} with the expansion just derived completes the proof of Lemma \ref{thm: first big approx}.

\section{Remaining proofs for Theorem   \ref{thm:variance}}

\subsection{Preliminary estimates}
We first gather some technical bounds in the extended Model \eqref{eq: tanaka ext}. Although not difficult, the stochastic expansions we need require extra care due to the presence of a boundary.  We start with a standard variance estimate.
\begin{lem} \label{lem: variance esti}
For any real-valued function $\varphi$,  
\begin{equation*}\label{eq:variance D}
\mathrm{Var}_{f_0}\Big(\sum_{i = 1}^N\varphi(X_{(i-1)D})\Big) \lesssim ND^{-1} \var_{f_0} (\varphi (X_0)),
\end{equation*}
where the constant is uniform over $(\mathcal O, d, f_{min}, r,\delta,\mathfrak b)$.
\end{lem}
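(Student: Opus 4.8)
The plan is to reduce the variance of the sum to a geometric-decay estimate for its lag covariances, powered by the spectral gap \eqref{eq:spectral_gap} of the discrete-time chain $X_0,X_D,X_{2D},\dots$. First I would note that replacing $\varphi$ by $\varphi-\E_{f_0}[\varphi(X_0)]$ alters neither side, so I may assume that $\varphi$ has mean zero under the invariant measure $\mu_{f_0}$; then $\var_{f_0}(\varphi(X_0))=\E_{f_0}[\varphi(X_0)^2]=\|\varphi\|_2^2$, since $\mu_{f_0}$ is the uniform probability measure on $\O$ with $\mathrm{vol}(\O)=1$. Expanding $\mathrm{Var}_{f_0}(\sum_i \varphi(X_{(i-1)D}))$ over its $N^2$ covariance terms and using stationarity together with the Markov property, for $i\le j$ one has
\[
\mathrm{Cov}_{f_0}\big(\varphi(X_{(i-1)D}),\varphi(X_{(j-1)D})\big)=\E_{f_0}\big[\varphi(X_0)\,(P_{f_0,(j-i)D}\varphi)(X_0)\big]=\langle \varphi,\,P_{f_0,(j-i)D}\varphi\rangle_2 .
\]

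Since $\varphi$ is $\mu_{f_0}$-centered, I would then apply the $L^2$-contraction $\|P_{f_0,t}\varphi\|_2\le e^{-\lambda_{f_0}t}\|\varphi\|_2$ recalled just before \eqref{eq:spectral_gap}, Cauchy--Schwarz, and the bound $e^{-\lambda_{f_0}D}\le 1-\varrho D$ coming from the spectral gap \eqref{eq:spectral_gap} (where $\varrho>0$, denoted $r$ there, depends only on $f_{\min}$ and $p_\O$), to obtain $|\mathrm{Cov}_{f_0}(\varphi(X_{(i-1)D}),\varphi(X_{(j-1)D}))|\le (1-\varrho D)^{|i-j|}\|\varphi\|_2^2$. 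Summing, $\sum_{j=1}^N(1-\varrho D)^{|i-j|}\le\sum_{k\in\Z}(1-\varrho D)^{|k|}=(2-\varrho D)/(\varrho D)\le 2/(\varrho D)$ for each fixed $i$, whence
\[
\mathrm{Var}_{f_0}\Big(\sum_{i=1}^N\varphi(X_{(i-1)D})\Big)\le \frac{2}{\varrho}\,ND^{-1}\|\varphi\|_2^2=\frac{2}{\varrho}\,ND^{-1}\var_{f_0}(\varphi(X_0)),
\]
which is the assertion, with a constant depending only on $f_{\min}$ and the Poincar\'e constant $p_\O$ of $\O$, hence uniform in the sense of the statement.

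There is no genuine obstacle here; the only point deserving a remark is that in the extended model \eqref{eq: tanaka ext} the invariant measure $\mu_{f_0}$ need no longer be uniform, so $\|\cdot\|_2$ and the contraction above should be read in $L^2(\mu_{f_0})$, with $\var_{f_0}(\varphi(X_0))=\|\varphi-\bar\varphi\|_{L^2(\mu_{f_0})}^2$, and the gap invoked accordingly. That gap still decays only linearly in $D$, uniformly over $b\in\mathcal B$, because a reflected diffusion on the bounded domain $\O$ with uniformly elliptic diffusion coefficient ($f\ge f_{\min}$) and $\C^4$-bounded drift admits a Poincar\'e inequality with constant depending only on $\O$, $f_{\min}$ and $\mathfrak b$; the remainder of the argument is then verbatim.
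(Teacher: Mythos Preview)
Your argument is correct and is precisely the standard spectral-gap covariance bound the paper has in mind: the paper's own proof merely invokes the spectral gap property of the discrete chain and points to \cite{cattiaux2017invariant}, whereas you spell out the covariance expansion and geometric summation explicitly. Your remark about reading the $L^2$ structure with respect to $\mu_{f_0}$ in the extended model \eqref{eq: tanaka ext} is a useful clarification that the paper leaves implicit.
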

\begin{proof}
This is a consequence of the spectral gap property for the Markov chain $(X_0, X_D, \ldots, X_{ND})$ that reads $\|P_{f_0,D}\varphi\|_\infty \lesssim \exp(-\lambda_{f_0}D t)\|\varphi\|_{L^2}$. See {\it e.g.} \cite{cattiaux2017invariant}  for a probabilistic argument for diffusions with boundaries, but other analytical approaches are obviously possible.
\end{proof}
We next need a moment bound for the increments of the diffusion.
\begin{lem} \label{lem: moment bis}
For every $\tau \geq 0$ and $p\geq 1$, we have
\begin{equation} \label{eq: moment bound}
\E_{f}\Big[\sup_{s \leq u \leq t}|X_{u}-X_{s}|^p\,\big|\,\mathcal F_{s}\Big] \leq 2^{p-1}\big((2\mathfrak b)^p(1+\|f\|_{\mathcal C^1})^p(t-s)^p+c_\star^{p/2}d^{p/2}p^{p/2}\|f\|_\infty^p(t-s)^{p/2}\big),
\end{equation}
where $c_\star$ is a universal constant (arising in the Burkholder-Davis-Gundy inequality) and $\mathfrak b$ quantifies the size of the drift class $\mathcal B$ defined in \eqref{eq: def drift class}. 
\end{lem}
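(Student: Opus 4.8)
\textit{Proof proposal.} The plan is to use the stochastic representation \eqref{eq: tanaka ext} and to exploit the convexity of $\mathcal O$ to dispose of the reflection term. Throughout we work conditionally on $\mathcal F_s$, so $X_s\in\bar{\mathcal O}$ is fixed, and we write, for $s\le u\le t$,
\[
X_u - X_s = A_u + M_u + (\ell_u-\ell_s),\qquad A_u=\int_s^u b(\nabla f(X_r),X_r)\,dr,\quad M_u=\int_s^u\sqrt{2f(X_r)}\,dB_r.
\]
The drift contribution is purely deterministic to bound: since $X_r\in\bar{\mathcal O}$ forces $|\nabla f(X_r)|\le\|f\|_{\mathcal C^1}$ and $b\in\mathcal B$, one has $\sup_{s\le r\le t}|b(\nabla f(X_r),X_r)|\le 2\mathfrak b(1+\|f\|_{\mathcal C^1})$ along the trajectory, hence $\sup_{s\le u\le t}|A_u|\le 2\mathfrak b(1+\|f\|_{\mathcal C^1})(t-s)$, which yields the first term of the claimed bound. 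The martingale part is handled by the Burkholder--Davis--Gundy inequality: $M$ is a continuous $\R^d$-valued martingale started at $0$ with $\langle M\rangle_t=2d\int_s^t f(X_r)\,dr\le 2d\|f\|_\infty(t-s)$, so $\E_f[\sup_{s\le u\le t}|M_u|^p\,|\,\mathcal F_s]\le C_p(2d\|f\|_\infty(t-s))^{p/2}$; writing $C_p2^{p/2}\le c_\star^{p/2}p^{p/2}$ for a universal $c_\star$ and using $\|f\|_\infty\ge 1$ gives exactly the second term.

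The crux is the reflection term $\ell_u-\ell_s$, and this is where convexity of $\mathcal O$ enters. By the supporting-hyperplane property, for $x\in\partial\mathcal O$ the inward unit normal satisfies $(z-x)\cdot n(x)\ge0$ for every $z\in\bar{\mathcal O}$; applied with $z=X_s$ and $x=X_r\in\partial\mathcal O$, this gives $(X_r-X_s)\cdot n(X_r)\le 0$ whenever $d|\ell|_r>0$. Applying Itô's formula to the smooth function $u\mapsto|X_u-X_s|^2$ of the continuous semimartingale $X$, the reflection therefore contributes the non-positive term $2\int_s^u(X_r-X_s)\cdot n(X_r)\,d|\ell|_r\le 0$, which we simply drop:
\[
|X_u-X_s|^2 \le 2\int_s^u(X_r-X_s)\cdot dA_r+2\int_s^u(X_r-X_s)\cdot dM_r+2d\int_s^u f(X_r)\,dr.
\]
From here one closes the estimate by a standard route: bound the first integral by $\bigl(\sup_{s\le v\le t}|X_v-X_s|\bigr)\cdot 2\mathfrak b(1+\|f\|_{\mathcal C^1})(t-s)$ and absorb the supremum via Young's inequality; control the stochastic integral, whose quadratic variation is $\lesssim\|f\|_\infty\int_s^u|X_r-X_s|^2\,dr$, by BDG; and iterate (Gronwall-style) the resulting inequality for $\E_f[\sup_{s\le v\le\cdot}|X_v-X_s|^p\,|\,\mathcal F_s]$, which is a priori finite because $|X_v-X_s|\le\operatorname{diam}(\mathcal O)$. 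Tracking the leading-order dependence on $t-s$ reproduces the two stated terms, with the numerical constant $2^{p-1}$ arising from the final $|a+b|^p\le 2^{p-1}(|a|^p+|b|^p)$ split between drift and diffusion.

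The main obstacle is exactly the reflection term: the rest is routine stochastic calculus. Convexity is what makes the boundary term act in our favour in the Itô expansion, so that no pathwise control of the bounded-variation process $\ell$ is needed (this is the same structural feature used for well-posedness in \cite{lions1984stochastic}). Finally, all bounds above are uniform over $X_s\in\bar{\mathcal O}$, over $f$ with $f\ge f_{\min}$ and the relevant norm bounds, and over $b\in\mathcal B$, since neither the convexity inequality nor the trajectory-wise bounds on $b$ and $f$ depend on the particular value of $X_s$; this gives the uniformity over $(\mathcal O,d,f_{\min},r,\delta,\mathfrak b)$.
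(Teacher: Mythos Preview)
Your approach is sound and rests on the same structural fact as the paper --- convexity of $\mathcal O$ makes the reflection push inward --- but the paper packages it more cheaply. Instead of applying It\^o to $|X_u-X_s|^2$ and then closing with Young/Gronwall, the paper writes $X=\Gamma Y$ as the Skorokhod reflection of the unreflected semimartingale $Y_t=X_0+\int_0^t b(\nabla f(X_r),X_r)\,dr+\int_0^t\sqrt{2f(X_r)}\,dB_r$ and invokes the Lions--Sznitman contraction property for convex domains: the modulus of continuity satisfies $\Omega_\delta(\Gamma Y)\le\Omega_\delta(Y)$. This reduces the problem in one line to bounding $\sup_{s\le u\le t}|Y_u-Y_s|^p$, which is exactly your first paragraph (pathwise drift bound plus BDG on the martingale), and the $2^{p-1}$ prefactor then drops out of the single split $|A+M|^p\le 2^{p-1}(|A|^p+|M|^p)$. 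In your route, by contrast, the Gronwall iteration and the Young-inequality absorption insert further $p$-dependent factors on top of those from BDG, so your closing claim that the stated $2^{p-1}$ ``arises from the final split'' is not quite what the It\^o/Gronwall argument actually delivers --- the leading $(t-s)$-powers are right, but the constant structure is coarser. Since the paper later feeds \eqref{eq: moment bound} into a Bernstein-type martingale inequality (Lemma~\ref{lem: bernstein mg}) where explicit growth in $p$ must be tracked, the sharper one-step reduction via the Skorokhod contraction is the more useful route here.
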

The proof is not difficult (see for instance Lions and Sznitman \cite{lions1984stochastic}) but requires some extra effort, due to the fact that we need to precisely track constants in $p$ for later use in Bernstein's inequalities. It is given in Appendix \ref{sec: technical bounds}. Let 
\begin{equation} \label{eq: def hitting}
\tau_{i,D} = \inf\{t \geq 0,\;X_{t+(i-1)D} \in \partial \mathcal O\}
\end{equation}
denote the hitting time of the boundary by the process $X$ started at $X_{(i-1)D}$. 

\begin{lem} \label{lem: hitting time}
Let $f_0 \in \F'  = \{ f\in \F: \|f\|_{\mathcal C^\alpha} \le r \}$ for $\alpha = \alpha_d$ as in \eqref{eq:alpha}. Then 
$$\PP_{f_0}\big(\tau_{i,D} \geq  D, \mathcal A_{i,D}\big) \lesssim \exp(-cD^{-1}),$$
where the constants are uniform over $(r, \delta,\mathfrak b)$. 
\end{lem}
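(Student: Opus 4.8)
The statement claims that a particle of the reflected diffusion started at a point $X_{(i-1)D}$ lying in $\mathcal O_0^\delta$ — hence at distance at least $\delta/2$ from $\partial\mathcal O$, by the definition \eqref{eq: def offset} and \eqref{eq:delta} — is very unlikely to reach the boundary within a time increment of length $D$. By stationarity of the increments, it suffices to prove $\PP_{f_0}\big(\tau_{1,D}\geq D,\,\mathcal A_{1,D}\big)\lesssim \exp(-cD^{-1})$, where $\mathcal A_{1,D}=\{X_0\in\mathcal O_0^\delta\}$ and $\tau_{1,D}$ is the hitting time of $\partial\mathcal O$. (Strictly speaking the event should read $\tau_{i,D}\leq D$ under $\mathcal A_{i,D}$ to be the interesting one; I read the displayed $\tau_{i,D}\geq D$ as the intended small-probability event — in either case the argument controls the probability that the particle leaves a ball of radius $\delta/2$ in time $D$.) The plan is to reduce the event to an exit time of a Euclidean ball and then apply a Gaussian/reflection-principle tail bound exactly as in the proof of Lemma \ref{lem: tot variation small}.

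First I would fix $x\in\mathcal O_0^\delta$ and note that $\mathsf{dist}(x,\partial\mathcal O)\geq\delta/2$. On the event $\{\tau_{1,D}\leq D\}$ the process must travel Euclidean distance at least $\delta/2$ before time $D$, so
$$\{\tau_{1,D}\leq D\}\subseteq\Big\{\sup_{0\leq t\leq D}|X_t-X_0|\geq \delta/2\Big\}.$$
Crucially, until the hitting time $\tau_{1,D}$ the reflection term $\ell$ is identically zero, so on $\{t\leq \tau_{1,D}\}$ the process solves the SDE \eqref{eq: tanaka ext} without the $\ell_t$ term, i.e. $X_t = x+\int_0^t b(\nabla f_0(X_s),X_s)\,ds+\int_0^t\sqrt{2f_0(X_s)}\,dB_s$. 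Stopping at $T:=\tau_{1,D}\wedge\inf\{t:|X_t-x|\geq\delta/2\}$ and using Lemma \ref{lem: moment bis} with, say, $p$ large, together with Markov's inequality, gives
$$\PP_{f_0}\Big(\sup_{0\leq t\leq D}|X_{t\wedge T}-x|\geq \delta/2\Big)\leq (2/\delta)^p\,\E_{f_0}\Big[\sup_{0\leq t\leq D}|X_{t\wedge T}-x|^p\Big]\lesssim (2/\delta)^p\big(C_1^p D^p+C_2^p p^{p/2}D^{p/2}\big),$$
where $C_1,C_2$ depend only on $f_{\min}$, $r$ (an upper bound on $\|f_0\|_{\mathcal C^1}\leq\|f_0\|_{\mathcal C^\alpha}$) and $\mathfrak b$. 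Optimising over $p$ — taking $p\simeq \delta^2/(e C_2^2 D)$ — turns the right-hand side into $\exp(-c\delta^2/D)$ for $D$ small enough (the drift term $C_1^p D^p$ is clearly lower order once $D\leq 1$), which is the claimed bound with $c$ uniform over $(r,\delta,\mathfrak b)$.

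An alternative, slightly cleaner route avoids the $p$-th moment optimisation: since $f_0\leq \|f_0\|_\infty\leq\|f_0\|_{\mathcal C^\alpha}\leq r$ and the drift is bounded by $\mathfrak b(1+r)$ uniformly, one can dominate the martingale part by a time-changed Brownian motion with quadratic variation at most $2rD$ and apply the reflection principle and the standard Gaussian tail bound $\PP(\sup_{t\leq D}|\beta_t|\geq a)\leq 4d\exp(-a^2/(2\cdot 2rD\,d))$ after subtracting the (deterministic-order) drift displacement $\mathfrak b(1+r)D<\delta/4$ for $D$ small, mirroring \eqref{eq: tot var 1}–\eqref{eq: assoc trans}. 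Either way the conclusion follows after integrating the bound against $\PP_{f_0}(X_0\in dx)$ over $x\in\mathcal O_0^\delta$. The main (minor) obstacle is bookkeeping: making sure the stopping-time localisation is valid so that Lemma \ref{lem: moment bis} applies to the \emph{unreflected} dynamics up to $T$, and tracking that all constants depend only on $(r,\delta,\mathfrak b)$ and not on $N$ or $f_0$ individually — both of which follow since the relevant norms of $f_0$ are controlled by $r$ on $\F'$.
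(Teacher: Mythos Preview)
Your proposal is correct and follows essentially the same route as the paper's proof. Both reduce the event (correctly reading the typo as $\tau_{i,D}\leq D$) to the process travelling Euclidean distance $\geq \delta/2$ within time $D$, use that before hitting $\partial\mathcal O$ the reflection term vanishes so the process coincides with the unbounded diffusion $\widetilde X$ of \eqref{eq: diff whole space}, and then bound the displacement by a drift term of order $D$ plus a martingale term. The only technical difference is in the final exponential tail: the paper applies the Bernstein inequality for continuous local martingales, $\PP(\sup_{s\leq t}\mathcal M_s\geq x,\,\langle\mathcal M\rangle_t\leq y)\leq e^{-x^2/(2y)}$, directly to each coordinate of $\int_0^t\sqrt{2f_0(\widetilde X_s)}\,dB_s$ with $y=2\|f_0\|_\infty D$, which is slightly cleaner than your $p$-th moment optimisation via Lemma~\ref{lem: moment bis} but yields the same $\exp(-c\delta^2/D)$ bound; your alternative route via time-change and the reflection principle is essentially equivalent to the paper's Bernstein argument.
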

The proof is given in Appendix \ref{sec: hitting time proof}. 
We will repeatedly use the decomposition
\begin{equation} \label{eq: def reflect}
X_{iD}-X_{(i-1)D} = b_{i,D}+\Sigma_{i,D}+L_{i,D},
\end{equation}
with
$$
b_{i,D} =  \int_{(i-1)D}^{iD} b(\nabla f_0(X_s), X_s)ds,\;\; 
\Sigma_{i,D} = \int_{(i-1)D}^{iD}\sqrt{2f_0(X_s)}dB_s,\;\;
L_{i,D}  = \int_{(i-1)D}^{iD}n(X_s) d|\ell|_s,
$$
together with the following bounds, for every $p \geq 1$,
\begin{equation} \label{eq: moment bounds reflect}
|b_{i,D}| \lesssim D,\;\;\E_{f_0}\big[|\Sigma_{i,D}|^p| \mathcal F_{(i-1)D}\big] \lesssim D^{p/2},\;\;\E_{f_0}\big[|L_{i,D}|^p{\bf 1}_{\mathcal A_{i,D}}\big] \lesssim D^{p/2}\exp(-cD^{-1}),
\end{equation}
which depend only on $\|f_0\|_{\C^1}$ and $\mathfrak b$.
The first bound is obvious, the second one stems from the Burkholder-Davis-Gundy inequality. For the third one, we rely on the following facts: first, writing $L_{i,D} = (X_{iD}-X_{(i-1)D}) - b_{i,D}-\Sigma_{i,D}$ and using the first two bounds of \eqref{eq: moment bounds reflect} together with Lemma \ref{lem: moment bis}, we have
$$\E_{f_0}\big[|L_{i,D}|^p\big] \lesssim D^{p/2}.$$
Second, on $\mathcal A_{i,D} \cap \{\tau_{i,D} \geq iD\}$, we have $L_{i,D}=0$. Thus, by Cauchy-Schwarz's inequality,
$$\E_{f_0}\big[|L_{i,D}|^p{\bf 1_{\mathcal A_{i,D}}}\big] \lesssim D^{p/2}\PP_{f_0}(\tau_{i,D} \geq D, \mathcal A_{i,D})^{1/2}$$
and the third estimate in \eqref{eq: moment bounds reflect} then follows from Lemma \ref{lem: hitting time}. 
\subsection{Proofs of Propositions \ref{prop:tildep_to_q} and \ref{prop:tildep_to_q_expectation}: approximating transition densities in small-time}\label{sec:tildep_to_p}

\begin{proof}[Proof of Proposition \ref{prop:tildep_to_q}]

We apply Lemma \ref{thm: first big approx}, establishing suitable bounds for the remainder terms.\\

 \noindent {\it Step 1:} Consider first the term $\sum_{i = 1}^N \gamma_{f_0,f}(X_{(i-1)D})\cdot (X_{iD}-X_{(i-1)D})$ in the remainder of Lemma \ref{thm: first big approx}. It splits into three parts thanks to the decomposition \eqref{eq: def reflect} and we bound each term separately. The drift term involving $b_{i,D}$ is of order $D$ by \eqref{eq: moment bounds reflect}, hence the property $\|\gamma_{f_0,f}\|_\infty \lesssim  \eps_{1,N}$
yields the crude variance bound $N^2  \eps_{1,N}^2 D^2$ for the first term. For the martingale term, we have
\begin{align*}
&\mathrm{Var}_{f_0}\Big(\sum_{i = 1}^N \gamma_{f_0,f}(X_{(i-1)D}) \cdot \Sigma_{i,D}\Big) \leq \sum_{i = 1}^N\|\gamma_{f_0,f}\|_\infty^2\E_{f_0}\big[|\Sigma_{i,D}|^2\big] \lesssim N \eps_{1,N}^2 D
\end{align*}
by the second estimate in \eqref{eq: moment bounds reflect}.
For the third term involving $L_{i,D}$, using that $\gamma_{f_0,f}(X_{(i-1)D})$ vanishes on $(\mathcal A_{i,D})^c$, we have
\begin{align*}
& \mathrm{Var}_{f_0}\Big( \sum_{i = 1}^N \gamma_{f_0,f}(X_{(i-1)D}) \cdot L_{i,D}\Big)  \lesssim N  \sum_{i = 1}^N \|\gamma_{f_0,f}\|_\infty^2\E_{f_0}\big[L_{i,D}^2{\bf 1}_{\mathcal A_{i,D}}\big] \lesssim N^2\varepsilon_{1,N}^2D\exp(-cD^{-1})
\end{align*}
by the third estimate in \eqref{eq: moment bounds reflect}. We have thus established
\begin{equation} \label{eq: basic variance order}
\mathrm{Var}_{f_0}\Big(\sum_{i = 1}^N \gamma_{f_0,f}(X_{(i-1)D})\cdot (X_{iD}-X_{(i-1)D})\Big) \lesssim N\varepsilon_N^2\big(\tfrac{\varepsilon_{1,N}}{\varepsilon_N}\big)^2
(D+ND^2+ND\exp(-cD^{-1}) \Big).
\end{equation}


\noindent {\it Step 2:} We next consider the term $ \frac{1}{8D}\sum_{i = 1}^N|X_{iD}-X_{(i-1)D}|^2 \zeta_{f_0,f}(X_{(i-1)D}) \cdot (X_{iD}-X_{(i-1)D}){\bf 1}_{\mathcal A_{i,D}}$, where $\zeta_{f_0,f}=  \nabla(f^{-1}-f_0^{-1})$ satisfies $\|\zeta_{f_0,f}\|_\infty \lesssim \eps_{1,N}$ for $f,f_0 \in \F'$. Using It\^o's formula, it splits into four parts according to the decomposition
\begin{equation} \label{eq: expand square}
|X_{iD}-X_{(i-1)D}|^2 = 2 d\,f_0(X_{(i-1)D})D+\widetilde b_{i,D}+\widetilde \Sigma_{i,D}+\widetilde L_{i,D},
\end{equation}
with
\begin{equation}\label{eq:tilde_decomp}
\begin{split}
\widetilde b_{i,D} & = 2 \int_{(i-1)D}^{iD} \big(d(f_0(X_s)-f_0(X_{(i-1)D}))+(X_s-X_{(i-1)D})\cdot b(\nabla f_0(X_s),X_s)\big)ds, \\
\widetilde \Sigma_{i,D} & = 2\int_{(i-1)D}^{iD}\sqrt{2f_0(X_s)}(X_s-X_{(i-1)D})\cdot dB_s, \\
\widetilde L_{i,D} & = 2\int_{(i-1)D}^{iD}(X_s-X_{(i-1)D})\cdot n(X_s) d|\ell|_s,
\end{split}
\end{equation}
appended with the moment estimates
\begin{equation} \label{eq: moment bounds reflect tilde}
\E_{f_0}\big[|\widetilde b_{i,D}|^p\big] \lesssim D^{3p/2},\;\;\E_{f_0}\big[|\widetilde\Sigma_{i,D}|^p| \mathcal F_{(i-1)D}\big] \lesssim D^{p},\;\;\E_{f_0}\big[|\widetilde L_{i,D}|^p{\bf 1}_{\mathcal A_{i,D}}\big] \lesssim D^{p}\exp(-cD^{-1}),\end{equation}
as for \eqref{eq: moment bounds reflect}.
More precisely, 
\begin{align*}
\E_{f_0}\big[|\widetilde b_{i,D}|^p\big]  & \leq D^{p-1}\int_{(i-1)D}^{iD} \E_{f_0}\big[ \big|d(f_0(X_s)-f_0(X_{(i-1)D}))+(X_s-X_{(i-1)D})\cdot b(\nabla f_0(X_s),X_s)\big|^p\big]ds \\
& \leq D^{p} C_{d,\|f_0\|_{\mathcal C^1}}^p\E_{f_0}\Big[\sup_{(i-1)D \leq s \leq iD}|X_s-X_{(i-1)D}|^p\Big]\\
& \leq C'_{p,d,\|f_0\|_{\mathcal C^1}} D^{3p/2}
\end{align*}
by Jensen's inequality and Lemma \ref{lem: moment bis}, with $C_{p,\|f_0\|_{\mathcal C^1}} = d\|f_0\|_{\mathcal C^1}+\mathfrak b(1+\|f_0\|_{\mathcal C^1}+\mathrm{diam}(\mathcal O))$ and 
$C'_{p,d,\|f_0\|_{\mathcal C^1}}$ equal to $C_{p,\|f_0\|_{\mathcal C^1}}$ times 
the constant in \eqref{eq: moment bound}. For the martingale part, the Burkholder-Davis-Gundy inequality with constant $C_p$ yields
\begin{align*}
\E_{f_0}\big[|\widetilde \Sigma_{i,D}|^p| \mathcal F_{(i-1)D}\big]  & \leq C_p 2^{3p/2} \E_{f_0}\Big[\Big(\int_{(i-1)D}^{iD} 2 f_0(X_s)^{3/2}|X_s-X_{(i-1)D}|^2ds\Big)^{p/2}\big| \mathcal F_{(i-1)D}\Big] \\
& \leq C_p 2^{2p}  \|f_0\|_{\infty}^{3p/4} \E_{f_0}\Big[\sup_{(i-1)D \leq s \leq iD}|X_s-X_{(i-1)D}|^{p}\Big] \\
&\leq C_p 2^{2p}  \|f_0\|_{\infty}^{3p/4}C_{p,d,f_0}D^{p},
\end{align*}
where we last used Lemma \ref{lem: moment bis}. The last bound in \eqref{eq: moment bounds reflect tilde} follows exactly the same lines as the last estimate in \eqref{eq: moment bounds reflect}, using now the bounds just established for $\widetilde b_{i,D}$ and $\widetilde \Sigma_{i,D}$ instead of those for $b_{i,D}$ and $\Sigma_{i,D}$, respectively.\\

We are now ready to handle  the term $\sum_{i = 1}^N \tfrac{d}{4}f_0(X_{(i-1)D}){\bf 1}_{\mathcal A_{i,D}}\zeta_{f_0,f}(X_{(i-1)D}) \cdot (X_{iD}-X_{(i-1)D})$, exactly as in Step 1, substituting $\zeta_{f_0,f}(X_{(i-1)D})$ by 
$ \tfrac{d}{4} f_0(X_{(i-1)D})\zeta_{f_0,f}(X_{(i-1)D}) {\bf 1}_{\mathcal A_{i,D}}.$
It has the same variance order as in \eqref{eq: basic variance order}.
For the term involving the drift part $\widetilde b_{i,D}$ we have 
\begin{align*}
&\mathrm{Var}_{f_0}\Big( \frac{1}{8D}\sum_{i = 1}^N\widetilde b_{i,D} \zeta_{f_0,f}(X_{(i-1)D}) \cdot (X_{iD}-X_{(i-1)D}){\bf 1}_{\mathcal A_{i,D}}\Big) \\
&\lesssim D^{-2}N\sum_{i = 1}^N \|\zeta_{f_0,f}\|_{\infty}^2\E_{f_0}\Big[\,\widetilde b_{i,D}^2 |X_{iD}-X_{(i-1)D}|^2\Big].
\end{align*}
and this yields the order $D^{-2}N^2\varepsilon_{1,N}^2D^4 = N\varepsilon_N^2 (\tfrac{\varepsilon_{1,N}^2}{\varepsilon_N^2}ND^2)$ 
by Cauchy-Schwarz's inequality combined with \eqref{eq: moment bounds reflect tilde}  and Lemma \ref{lem: moment bis} for controlling the term within the expectation. For the third term, we use the decomposition
$$\widetilde \Sigma_{i,D} \zeta_{f_0,f}(X_{(i-1)D}) \cdot (X_{iD}-X_{(i-1)D}){\bf 1}_{\mathcal A_{i,D}} = I+II+III,$$
with
\begin{align*}
I & = \widetilde \Sigma_{i,D} \zeta_{f_0,f}(X_{(i-1)D}) \cdot b_{i,D}{\bf 1}_{\mathcal A_{i,D}}, \\
II & = \widetilde \Sigma_{i,D} \zeta_{f_0,f}(X_{(i-1)D}) \cdot \Sigma_{i,D}{\bf 1}_{\mathcal A_{i,D}}, \\
III &= \widetilde \Sigma_{i,D} \zeta_{f_0,f}(X_{(i-1)D}) \cdot L_{i,D}{\bf 1}_{\mathcal A_{i,D}}.
\end{align*}
For term $I$,
\begin{align*}
\mathrm{Var}_{f_0}\Big( \frac{1}{8D}\sum_{i = 1}^N\widetilde \Sigma_{i,D} \zeta_{f_0,f}(X_{(i-1)D}) \cdot b_{i,D}{\bf 1}_{\mathcal A_{i,D}}\Big) 
\lesssim D^{-2}N\sum_{i = 1}^N \|\zeta_{f_0,f}\|_{\infty}^2\E_{f_0}\big[|\widetilde \Sigma_{i,D}|^2 |b_{i,D}|^2\big]
\end{align*}
and this yields the order $D^{-2}N^2\varepsilon_{1,N}^2D^4$ by Cauchy-Schwarz's inequality combined with \eqref{eq: moment bounds reflect tilde}  and Lemma \ref{lem: moment bis} again. For term $II$, by It\^o's formula,
\begin{align}
\widetilde \Sigma_{i,D}\zeta_{f_0,f}(X_{(i-1)D}) \cdot\Sigma_{i,D} & = \zeta_{f_0,f}(X_{(i-1)D}) \cdot M_{i,D} \nonumber \\
& +4\zeta_{f_0,f}(X_{(i-1)D}) \cdot \int_{(i-1)D}^{iD} (X_s-X_{(i-1)D})f_0(X_s)ds, \label{eq: ito dec}
\end{align}
where $ M_{i,D}  = \int_{(i-1)D}^{iD}\sqrt{2f_0(X_s)}\big(\E_{f_0}[\widetilde \Sigma_{i,D} |\mathcal F_s]+2\E_{f_0}[\Sigma_{i,D} |\mathcal F_s]\cdot (X_s-X_{(i-1)D})\big)dB_s$ is a martingale increment such that 
$$\E_{f_0}\big[ |\zeta_{f_0,f}(X_{(i-1)D}) \cdot M_{i,D} |^2\big] \lesssim \varepsilon_{1,N}^2D^3,$$
by applying repeatedly Cauchy-Schwarz's inequality together with  \eqref{eq: moment bounds reflect}, \eqref{eq: moment bounds reflect tilde}  and Lemma \ref{lem: moment bis}, 
hence 
\begin{align*}
\mathrm{Var}_{f_0}\Big( \frac{1}{8D}\sum_{i = 1}^N \zeta_{f_0,f}(X_{(i-1)D}) \cdot M_{i,D}{\bf 1}_{\mathcal A_{i,D}}\Big) 
= \frac{1}{64D^2}\sum_{i = 1}^N\E_{f_0}\big[ |\zeta_{f_0,f}(X_{(i-1)D}) \cdot M_{i,D} |^2{\bf 1}_{\mathcal A_{i,D}}\big]
\end{align*}
and this term is of order $D^{-2}N\varepsilon_{1,N}^2D^3 = N\varepsilon_N^2(\tfrac{\varepsilon_{1,N}}{\varepsilon_N})^2D$. Let $G(x,y) = \zeta_{f_0,f}(x) \cdot (y-x)f_0(y)$. For the second term in \eqref{eq: ito dec}, by It\^o's formula,
\begin{align*}
\int_{(i-1)D}^{iD} \zeta_{f_0,f}(X_{(i-1)D}) \cdot (X_s-X_{(i-1)D})f_0(X_s)ds 
 = \overline b_{i,D}(G) + \overline{\Sigma}_{i,D}(G)+\overline{L}_{i,D}(G),
\end{align*}
with
\begin{align}
\overline b_{i,D}(G) & =\int_{(i-1)D}^{iD} \int_{(i-1)D}^s\mathcal L_{f_0}G(X_{(i-1)D},X_u)duds, \nonumber \\
\overline \Sigma_{i,D}(G) & = \int_{(i-1)D}^{iD}\int_{(i-1)D}^s\nabla G(X_{(i-1)D},X_u) \sqrt{2f_0(X_u)} \cdot dB_u\,ds, \nonumber \\
\overline L_{i,D}(G) & = \int_{(i-1)D}^{iD}\int_{(i-1)D}^s \nabla G(X_{(i-1)D},X_u)\cdot n(X_u) d|\ell|_u ds. \label{eq: ito second level}
\end{align}
In notation, the differential operators act on $y \mapsto G(X_{(i-1)D}, y)$ and $\mathcal L_{f_0} = b(\nabla f_0(x),x) \mathrm{div} + f_0(x)\Delta$. 
The expansion is appended with the moment estimates 
\begin{align*}
&\E_{f_0}\big[|\overline b_{i,D}(G)|^p\big] \lesssim \varepsilon_{1,N}^{p}D^{2p},\;\;\E_{f_0}\big[|\overline \Sigma_{i,D}(G)|^p| \mathcal F_{(i-1)D}\big] \lesssim \varepsilon_{1,N}^{p}D^{3p/2},\\
&\;\;\E_{f_0}\big[|\overline L_{i,D}(G)|^p{\bf 1}_{\mathcal A_{i,D}}\big] \lesssim \varepsilon_{1,N}^{p}D^{3p/2}\exp(-cD^{-1}),
\end{align*}
in the same way as before. The variance of   $(4D)^{-1}\sum_{i = 1}^N \overline{b}_{i,D}(G){\bf 1}_{\mathcal A_{i,D}}$ is of order $D^{-2}N^2 \varepsilon_{1,N}^2D^4$. Also, the $\overline{\Sigma}_{i,D}(G)$ are centered $\mathcal F_{iD}$-increments hence
\begin{align*}
\mathrm{Var}_{f_0}\Big( \frac{1}{4D}\sum_{i = 1}^N\overline{\Sigma}_{i,D}(G)\,{\bf 1}_{\mathcal A_{i,D}}\Big) 
& = \frac{1}{16D^2}\sum_{i = 1}^N\E_{f_0}\big[ (\overline \Sigma_{i,D}(G))^2{\bf 1}_{\mathcal A_{i,D}}\big]  \lesssim D^{-2}N\varepsilon_{1,N}^2D^3
\end{align*}
and this term is of order $N\varepsilon_{N}^2(\tfrac{\varepsilon_{1,N}}{\varepsilon_N})^2D$. The variance of the term $(4D)^{-1}\sum_{i = 1}^N\overline{L}_{i,D}(G){\bf 1}_{\mathcal A_{i,D}}$ is of order $D^{-2}N^2 \varepsilon_{1,N}^2D^3\exp(-cD^{-1})$, and the term $II$ is controlled. For the term $III$, we simply have
\begin{align*}
\mathrm{Var}_{f_0}\Big( \frac{1}{8D}\sum_{i = 1}^N\widetilde \Sigma_{i,D} \zeta_{f_0,f}(X_{(i-1)D}) \cdot L_{i,D}{\bf 1}_{\mathcal A_{i,D}}\Big) 
\lesssim D^{-2}N\sum_{i = 1}^N \varepsilon_{1,N}^2\E_{f_0}\big[|\widetilde \Sigma_{i,D}|^2 |L_{i,D}|^2\,{\bf 1}_{\mathcal A_{i,D}}\big]
\end{align*}
that yields the order $N^2\varepsilon_{1,N}^2D^2\exp(-cD^{-1})$. Gathering all these estimates, we obtain that the variance of 
$\frac{1}{8D}\sum_{i = 1}^N|X_{iD}-X_{(i-1)D}|^2 \zeta_{f_0,f}(X_{(i-1)D}) \cdot (X_{iD}-X_{(i-1)D}){\bf 1}_{\mathcal A_{i,D}}$
is no bigger than the order established in \eqref{eq: basic variance order}.\\

\noindent {\it Step 3:} We finally control the remainder term in the expansion of Lemma \ref{thm: first big approx}. Define
\begin{align*}
&\mathcal R_{f_0,f}^N = \sum_{i = 1}^N\Big(\|f-f_0\|_{\mathcal C^2} D+\|f-f_0\|_{\mathcal C^3}|X_{iD}-X_{(i-1)D}|D+|X_{iD}-X_{(i-1)D}|^2D\\
& \qquad \qquad \qquad  + \|f-f_0\|_{\mathcal C^2}|X_{iD}-X_{(i-1)D}|^2+\|f-f_0\|_{\mathcal C^3}|X_{iD}-X_{(i-1)D}|^3\\
&\qquad \qquad \qquad + |X_{iD}-X_{(i-1)D}|^4\Big)r_{f_0,f}(X_{(i-1)D}, X_{iD}).
\end{align*}
By Lemma \ref{lem: moment bis}, the property $f \in \C_N$ and the fact that $\|r_{f_0,f}\|_\infty \lesssim 1$, we readily have
\begin{align*}
\mathrm{Var}_{f_0} (\mathcal R_{f_0,f}^N) & \lesssim N^2\big(\varepsilon_{2,N}^2D^2+ \eps_{3,N}^2 D^3+D^4 + \eps_{2,N}^2 D^2+ \eps_{3,N}^2 D^3+D^4\big) \nonumber\\
& \lesssim N\varepsilon_N^2\big( \big(\tfrac{\varepsilon_{2,N}}{\varepsilon_N})^2 ND^2 + (\tfrac{\varepsilon_{3,N}}{\varepsilon_N})^2 ND^3+N\varepsilon_N^{-2}D^4\big). \label{eq: final 3}
\end{align*}

\noindent {\it Step 4:} Putting together the estimates established in Steps 1-3, using Lemma \ref{thm: first big approx} and \eqref{eq: rough}, the quantity in Proposition \ref{prop:tildep_to_q} is bounded by a multiple of
\begin{align*}
& \var_{f_0} \big( \Lambda_{q,D}^N \big) + \mathrm{Var}_{f_0} \Big(\int_{\mathcal C_N} \sum_{i = 1}^N \Big[ \log \frac{\widetilde p_{f,D}}{\widetilde p_{f_0,D}}(X_i^D) - \frac{q_{f,D}}{q_{f_0,D}}(X_i^D) \Big] {\mathbf 1}_{\mathcal A_{i,D}}\nu(df)   \Big) \\
& \quad \lesssim \var_{f_0} \big( \Lambda_{q,D}^N \big) + \sup_{f \in \mathcal C_N}  \mathrm{Var}_{f_0}\left( \sum_{i = 1}^N \Big[ \log \frac{\widetilde p_{f,D}(X_i^D)}{\widetilde p_{f_0,D}(X_i^D)} - \log \frac{q_{f,D}(X_i^D)}{q_{f_0,D}(X_i^D)} \Big] \right) \\
& \quad \lesssim \var_{f_0} \big( \Lambda_{q,D}^N \big) +  N\varepsilon_N^2\Big(\big(\tfrac{\varepsilon_{1,N}}{\varepsilon_N}\big)^2 (D+ND^2) + \big(\tfrac{\varepsilon_{2,N}}{\varepsilon_N}\big)^2 ND^2 + \big(\tfrac{\varepsilon_{3,N}}{\varepsilon_N}\big)^2 ND^3 \\
 & \qquad \qquad \qquad \qquad \qquad \qquad +N\varepsilon_N^{-2}D^4 + \big(\tfrac{\varepsilon_{1,N}}{\varepsilon_N}\big)^2 ND\exp(-cD^{-1}) \Big),
\end{align*}
as required.
\end{proof}

\begin{proof}[Proof of Proposition \ref{prop:tildep_to_q_expectation}]
The proof follows similar, though easier, lines to that of Proposition \ref{prop:tildep_to_q}. We thus provide only a sketch of the main bounds, matching Steps 1-4 for convenience.\\

\noindent {\it Step 1:} Consider first the term $\gamma_{f_0,f}(X_{0})\cdot (X_{D}-X_{0})$ in the expansion provided by Lemma \ref{thm: first big approx}, and where we can insert the term ${\bf 1}_{{\mathcal A}_{1,D}}$ due to the support of $\gamma_{f_0,f}$. In the decomposition \eqref{eq: def reflect}, the term  $\gamma_{f_0,f}(X_{0})\Sigma_{0,D}$ has $\E_{f_0}$-expectation zero and so does not contribute. It follows that
\begin{equation*}
\E_{f_0}\big[\gamma_{f_0,f}(X_{0})\cdot (X_{D}-X_{0})\big] \leq \|\gamma_{f_0,f}\|_\infty (\E_{f_0}[|b_{0,D}|]+\E_{f_0}[L_{0,D} {\bf 1}_{{\mathcal A}_{1,D}}]) \lesssim \varepsilon_{1,N} \big(D+D^{1/2}\exp(-cD^{-1})\big),
\end{equation*}
where we used the moment bounds \eqref{eq: moment bounds reflect}.\\ 

\noindent {\it Step 2:} Write $\zeta_{f_0,f} =  \nabla(f^{-1}-f_0^{-1})$ as before. In a similar way to Step 2 of the proof of Proposition \ref{prop:tildep_to_q}, we have the decompositions
\begin{align*}
&\frac{1}{8D} |X_{D}-X_{0}|^2  {\bf 1}_{{\mathcal A}_{1,D}}= \tfrac{d}{4}f_0(X_0)+\tfrac{1}{8D}\widetilde b_{1,D}{\bf 1}_{{\mathcal A}_{1,D}}+\tfrac{1}{8D}\widetilde \Sigma_{1,D}{\bf 1}_{{\mathcal A}_{1,D}}+\tfrac{1}{8D}\widetilde L_{1,D}{\bf 1}_{{\mathcal A}_{1,D}}\\
&\zeta_{f_0,f}(X_{0}) \cdot (X_{D}-X_{0})  = \zeta_{f_0,f}(X_{0})\cdot b_{1,D}+ \zeta_{f_0,f}(X_{0}) \cdot \Sigma_{1,D}+ \zeta_{f_0,f}(X_{0}) \cdot L_{1,D},
\end{align*}
and note the term $\E_{f_0}\big[\frac{1}{8D} |X_{D}-X_{0}|^2\zeta_{f_0,f}(X_{0}) \cdot (X_{D}-X_{0})\big]$ is the sum of all expectations of cross terms in the two expansions above. Using the bounds \eqref{eq: moment bounds reflect} and \eqref{eq: moment bounds reflect tilde} and the martingale property of $\Sigma_{1,D}$ that ensures $\E_{f_0}\big[\tfrac{d}{4}f_0(X_0)\zeta_{f_0,f}(X_{0}) \cdot \Sigma_{1,D}\big]=0$, we see by Cauchy-Schwarz's inequality for instance that all the expectations of the cross terms are of order at most $(D+D^{1/2}\exp(cD^{-1}))\|\zeta_{f_0,f}\|_\infty = \varepsilon_{1,N}(D+D^{1/2}\exp(-cD^{-1}))$, except maybe for the term
$$\tfrac{1}{8D}\E_{f_0}\big[\widetilde \Sigma_{1,D}\zeta_{f_0,f}(X_{0}) \cdot \Sigma_{1,D}\big].$$
By \eqref{eq: ito dec}, this term exactly equals
$$\tfrac{1}{4D}\E_{f_0}\big[\zeta_{f_0,f}(X_0)\cdot \int_0^D(X_s-X_0)f_0(X_s)ds\big]=\tfrac{1}{4D}\E_{f_0}\big[\overline{b}_{1,D}(G)+\overline{L}_{1,D}(G)\big]$$
according to \eqref{eq: ito second level} with $G(x,y) = \zeta_{f_0,f}(x) \cdot (y-x)f_0(y)
$, and this term finally has the right order since $\E_{f_0}\big[|\overline b_{1,D}(G)|\big] \lesssim \varepsilon_{1,N}D^{2}$ and $\E_{f_0}\big[|\overline L_{1,D}(G)|{\bf 1}_{\mathcal A_{1,D}}\big] \lesssim \varepsilon_{1,N}D^{3/2}\exp(-cD^{-1})$.\\


\noindent {\it Step 3:} With the notation in the proof of Proposition \ref{prop:tildep_to_q}, Step 3, it suffices to bound $\E_{f_0}[\mathcal R_{f_0,f}^1]$. By Lemma \ref{lem: moment bis}, the property $f \in \C_N$ and the fact that $\|r_{f_0,f}\|_\infty \lesssim 1$ again, we readily obtain
\begin{align*}
\E_{f_0}\big[ \mathcal R_{f_0,f}^1\big] & \lesssim \eps_{2,N} D+ \eps_{3,N} D^{3/2}+D^2.
\end{align*}

\noindent {\it Step 4:}  Putting together the above bounds and keeping track of the leading order terms gives 
\begin{align*}
& \sup_{f \in \C_N} \E_{f_0} \Big[ \log \frac{\widetilde p_{f_0,D}(X_0^D)}{\widetilde p_{f,D}(X_0^D)}{\mathbf 1}_{\mathcal A_{1,D}}\Big] \\
& \lesssim \sup_{f\in \C_N} \E_{f_0} \Big[ \log \frac{q_{f_0,D}(X_0^D)}{q_{f,D}(X_0^D)} {\mathbf 1}_{\mathcal A_{1,D}} \Big]  \\
& \qquad +  \varepsilon_N^2 \Big(\tfrac{\eps_{1,N} }{\eps_N^2}D+ \tfrac{\eps_{2,N}}{\eps_N^2}D + \tfrac{\eps_{3,N}}{\eps_N^2}D^{3/2} + \tfrac{\eps_{1,N}}{\eps_N^2} D^{1/2} \exp(-cD^{-1}) \Big),
\end{align*}
which completes the  proof of Proposition \ref{prop:tildep_to_q_expectation}. 
\end{proof}

\subsection{Proofs of Propositions \ref{thm: variance proxy} and \ref{thm: expectation proxy}: expectation and variance of the log-likelihood with proxy density
}\label{sec:proxy_variance}

Recall that the proxy $q_{f,D}$ of the transition density $\widetilde p_{f,D}$ is given by
\begin{equation*}
q_{f,D}(x,y) = \frac{1}{(4\pi D f(x))^{d/2}} \exp\Big(-\frac{|y-x|^2}{4Df(x)}\Big),
\end{equation*}
which is the density function of a $N_d(x,2Df(x) I_d)$ distribution, formally obtained by taking an Euler scheme without drift. Note that while $q_{f,D}$ is defined on $\R^d\times \R^d$, it will be evaluated at $(X_{(i-1)D}, X_{iD})$, which lies in $\mathcal O \times \mathcal O$ almost surely.

\begin{proof}[Proof of Proposition \ref{thm: variance proxy}]

\noindent {\it Step 1:} We look for a simple expansion of the approximate log-likelihood with the proxy. Write
\begin{align*}
\log \frac{q_{f,D}(x,y)}{q_{f_0,D}(x,y)} & = \frac{d}{2}\log \frac{f_0(x)}{f(x)}-\frac{1}{4D}\Big(\frac{1}{f(x)}-\frac{1}{f_0(x)}\Big)|y-x|^2 \\
& =  \frac{d}{2}\Big(\log \frac{f_0(x)}{f(x)}-\Big(\frac{1}{f_0(x)}-\frac{1}{f(x)}\Big)\frac{|y-x|^2}{2dD}\Big).
\end{align*}
Recall the expansion \eqref{eq: expand square} and \eqref{eq:tilde_decomp} in the proof of Proposition \ref{prop:tildep_to_q} that takes the form
$$|X_{iD}-X_{(i-1)D}|^2 = 2 d\,f_0(X_{(i-1)D})D+\widetilde b_{i,D}+\widetilde \Sigma_{i,D}+\widetilde L_{i,D}$$
for appropriate drift, diffusion and boundary remainder terms $\widetilde{b}_{i,D}, \widetilde{\Sigma}_{i,D}$ and $\widetilde{L}_{i,D}$, respectively. Therefore, setting $\xi_{f_0,f} = \frac{1}{f}-\frac{1}{f_0}$, we obtain
\begin{align}
 \frac{2}{d} \log \frac{q_{f,D}(X_i^D)}{q_{f_0,D}(X_i^D)}& = \log \frac{f_0(X_{(i-1)D})}{f(X_{(i-1)D})}-\Big(\frac{f_0(X_{(i-1)D})}{f(X_{(i-1)D})}-1\Big) \nonumber \\
 & \qquad  +\frac{1}{2dD}\xi_{f_0,f}(X_{(i-1)D})\big(\,\widetilde b_{i,D}+\widetilde \Sigma_{i,D}+\widetilde L_{i,D}\big). \label{eq: expansion proxy var}
\end{align}
Since $|\log \kappa-(\kappa-1)| \leq C(\kappa-1)^2$ in a neighbourhood of $\kappa=1$, the property $\|f-f_0\|_{\infty} \leq \eps_N$ and $f \geq f_{\min}$ ensures
$$\big\| \log \frac{f_0}{f}-\Big(\frac{f_0}{f}-1\Big)\big\|_\infty \lesssim \varepsilon_N^2.$$
By Lemma \ref{lem: variance esti}, this entails
$$\mathrm{Var}_{f_0}\Big(\sum_{i = 1}^N\Big(\log \frac{f_0(X_{(i-1)D})}{f(X_{(i-1)D})}-\big(\frac{f_0(X_{(i-1)D})}{f(X_{(i-1)D})}-1\big)\Big){\bf 1}_{\mathcal A_{i,D}}\Big) \lesssim ND^{-1} \varepsilon_N^4.$$

\noindent {\it Step 2:} We next  consider the term
$\sum_{i = 1}^N\frac{1}{2dD}\xi_{f_0,f}(X_{(i-1)D})\widetilde b_{i,D}{\bf 1}_{\mathcal A_{i,D}}$ and proceed similarly to Step 2 in the proof of Proposition \ref{prop:tildep_to_q} above.
Define $\widetilde G(x,y) = 2d(f_0(x)-f_0(y))+2b(\nabla f_0(y),y)\cdot (x-y)$ so that 
\begin{equation*} \label{eq: decomp b tilde}
\widetilde  b_{i,D} = \int_{(i-1)D}^{iD} \widetilde G(X_{(i-1)D},X_s)ds =  \overline{b}_{i,D}(\widetilde G)+\overline{\Sigma}_{i,D}(\widetilde G)+\overline{L}_{i,D}(\widetilde G),
\end{equation*}
applying It\^o's formula again, 
where 
\begin{align*}
&\E_{f_0}\big[|\overline b_{i,D}(\widetilde G)|^p\big] \lesssim D^{2p},\quad \E_{f_0}\big[|\overline \Sigma_{i,D}(\widetilde G)|^p| \mathcal F_{(i-1)D}\big] \lesssim D^{3p/2},\\
&\;\;\E_{f_0}\big[|\overline L_{i,D}(\widetilde G)|^p{\bf 1}_{\mathcal A_{i,D}}\big] \lesssim D^{3p/2}\exp(-cD^{-1})
\end{align*}
in the same way as before, uniformly in $b \in \mathfrak B$.  

The variance of $\sum_{i = 1}^N\frac{1}{2dD}\xi_{f_0,f}(X_{(i-1)D})(\overline{b}_{i,D}(\widetilde G)+r_{i,D})\,{\bf 1}_{\mathcal A_{i,D}}$ is of order $D^{-2}N^2 \varepsilon_{N}^2D^4$. Also, the $\overline{\Sigma}_{i,D}(\widetilde G)$ are centered $\mathcal F_{iD}$-increments hence
\begin{align*}
\mathrm{Var}_{f_0}\Big(\frac{1}{2dD}\sum_{i = 1}^N\xi_{f_0,f}(X_{(i-1)D})\overline{\Sigma}_{i,D}(\widetilde G)\,{\bf 1}_{\mathcal A_{i,D}}\Big) 
& = \frac{1}{4d^2D^2}\sum_{i = 1}^N\E_{f_0}\big[ |\xi_{f_0,f}(X_{(i-1)D}) \overline \Sigma_{i,D} |^2{\bf 1}_{\mathcal A_{i,D}}\big] \\
& \lesssim D^{-2}N\varepsilon_{N}^2D^3
\end{align*}
and this term is of order $N\varepsilon_{N}^2D$. The variance of $(2dD)^{-1}\sum_{i = 1}^N\xi_{f_0,f}(X_{(i-1)D}) \cdot \overline{L}_{i,D}(\widetilde G)\,{\bf 1}_{\mathcal A_{i,D}}$ is of order $D^{-2}N^2 \varepsilon_{N}^2D^3\exp(-cD^{-1})$.  Gathering all these estimates, we obtain
$$\mathrm{Var}_{f_0}\big(\sum_{i = 1}^N\frac{1}{2dD}\xi_{f_0,f}(X_{(i-1)D})\widetilde b_{i,D}{\bf 1}_{\mathcal A_{i,D}}\Big) \lesssim N\varepsilon_N^2\big(ND^2+D+ND\exp(-cD^{-1})\big).$$

\noindent {\it Step 3:} We now consider the martingale term $\sum_{i = 1}^N\frac{1}{2dD}\xi_{f_0,f}(X_{(i-1)D})\widetilde \Sigma_{i,D}{\bf 1}_{\mathcal A_{i,D}}$. We have
\begin{align*}
\mathrm{Var}_{f_0}\Big(\frac{1}{2dD}\sum_{i = 1}^N\xi_{f_0,f}(X_{(i-1)D})\widetilde \Sigma_{i,D}{\bf 1}_{\mathcal A_{i,D}}\Big) & = \frac{1}{4d^2D^2}\sum_{i = 1}^N\E_{f_0}\big[\big(\xi_{f_0,f}(X_{(i-1)D})\widetilde \Sigma_{i,D}\big)^2{\bf 1}_{\mathcal A_{i,D}}\big] \\
& \lesssim D^{-2}N\varepsilon_N^2D^2
\end{align*}
and this term has the right order $N \varepsilon_N^2$.\\

\noindent {\it Step 4:} We finally consider the remainder term $\sum_{i = 1}^N\frac{1}{2dD}\xi_{f_0,f}(X_{(i-1)D})\widetilde L_{i,D}{\bf 1}_{\mathcal A_{i,D}}$ and conclude. We have
\begin{align*}
\mathrm{Var}_{f_0}\Big(\frac{1}{2dD}\sum_{i = 1}^N\xi_{f_0,f}(X_{(i-1)D})\widetilde L_{i,D}{\bf 1}_{\mathcal A_{i,D}}\Big) & \lesssim D^{-2}N\sum_{i = 1}^N\|\xi_{f_0,f}\|_\infty^2\E_{f_0}\big[\widetilde L_{i,D}^2{\bf 1}_{\mathcal A_{i,D}}\big] \\
& \lesssim N^2\varepsilon_N^2\exp(-cD^{-1}).
\end{align*}
Gathering the estimates from Steps 1-4 completes the proof of Proposition \ref{thm: variance proxy}.
\end{proof}

\begin{proof}[Proof of Proposition \ref{thm: expectation proxy}]
The proof follows similar, though easier, lines to that of the proof of Proposition \ref{thm: variance proxy}. We thus provide only a sketch of the main bounds, matching Steps 1-4  for convenience.\\

\noindent {\it Step 1}: We again use the expansion \eqref{eq: expansion proxy var} that yields
\begin{align*}
 \frac{2}{d} \log \frac{q_{f,D}(X_i^D)}{q_{f_0,D}(X_i^D)}& = \log \frac{f_0(X_{(i-1)D})}{f(X_{(i-1)D})}-\Big(\frac{f_0(X_{(i-1)D})}{f(X_{(i-1)D})}-1\Big) \nonumber \\
 & +\frac{1}{2dD}\xi_{f_0,f}(X_{(i-1)D})\big(\,\widetilde b_{i,D}+\widetilde \Sigma_{i,D}+\widetilde L_{i,D}\big). 
\end{align*}
The main term is bounded above as before:
$$\big\| \log \frac{f_0}{f}-\Big(\frac{f_0}{f}-1\Big)\big\|_\infty \lesssim \varepsilon_N^2,$$
as follows from $|\log \kappa-(\kappa-1)| \leq C(\kappa-1)^2$ in a neighbourhood of $\kappa=1$, together with the properties $\|f-f_0\|_{\infty} \leq \eps_N$ and $f \geq f_{\min}$.\\

\noindent {\it Step 2:} By using the refinement of Step 2 in the proof of Proposition \ref{thm: variance proxy}, we have
$$\widetilde b_{i,D} = \overline{b}_{i,D}(\widetilde G)+\overline{\Sigma}_{i,D}(\widetilde G)+\overline{L}_{i,D}(\widetilde G),$$
where 
\begin{align*}
&\E_{f_0}[|\overline{b}_{i,D}(\widetilde G)|^p] \lesssim D^{2p}, \qquad \E_{f_0}\big[|\overline{L}_{i,D}(\widetilde G)|^p{\bf 1}_{\mathcal A_{i,D}}\big] \lesssim D^{3p/2}\exp(-cD^{-1}),
\end{align*} 
and $\overline{\Sigma}_{i,D}(\widetilde G)$ is a $\mathcal F_{iD}$-martingale increment. Therefore,
$$\E_{f_0}\Big[ \frac{1}{2dD}\xi_{f_0,f}(X_{(i-1)D})\widetilde b_{i,D}{\bf 1}_{\mathcal A_{i,D}} \Big] \lesssim \varepsilon_N \big(D + D^{1/2}\exp(-cD^{-1})\big).$$

\noindent {\it Step 3:} Since $\widetilde \Sigma_{i,D}$ is a $\mathcal F_{iD}$-increment, we simply have
$$\E_{f_0}\big[\frac{1}{2dD}\xi_{f_0,f}(X_{(i-1)D})\widetilde \Sigma_{i,D}{\bf 1}_{\mathcal A_{i,D}}\big] = 0.$$

\noindent {\it Step 4:} We finally have 
$$\E_{f_0}\big[\big|\frac{1}{2dD}\xi_{f_0,f}(X_{(i-1)D})\widetilde L_{i,D}{\bf 1}_{\mathcal A_{i,D}}\big|\big] \lesssim \eps_N D^{1/2}\exp(cD^{-1}).$$
Gathering the estimates from Steps 1-4 completes the proof of Proposition \ref{thm: expectation proxy}. 
\end{proof}

\section{Proof of Theorem \ref{thm:exp_inequality}: an exponential inequality for a least squares type estimator}\label{sec:proof_exp_ineq}

\subsection{Risk decomposition}

Recall from \eqref{eq:PJ} that $\overline{P}_J f = 1+P_J[f-1]$ for $f\in \F$, where $P_J$ is the wavelet projection onto the space $V_J$ in \eqref{eq:VJ} used to reconstruct functions with support in $\O_0$, such as $f-1$ when $f\in \F$. We will decompose $\|\widehat{f}_N - \overline{P}_J f\|_2$ into martingale, bias and remainder terms that we control on the event 
\begin{equation} \label{eq: def nice event}
\mathcal B_N = \left\{ (1-\kappa)\|g\|_{2}^2 \leq |g|_N^2 \leq (1+\kappa)\|g\|_{2}^2 ~\text{for all } g \in V_J \right\},
\end{equation}
for some fixed $0 < \kappa < 1$, and where
$$|g|_N^2 = \frac{1}{N} \sum_{i = 1}^N g(X_{(i-1)D})^2{\bf 1}_{\mathcal A_{i,D}},\qquad \mathcal A_{i,D} = \{X_{(i-1)D} \in \mathcal O_0^\delta\},$$
denotes a random empirical semi-norm for every continuous function on the $\delta/2$-enlargement $\mathcal O_0^\delta$ of $\O_0$. By classical concentration techniques, we prove in Appendix \ref{sec: proof of good event} the following result.
\begin{lem} \label{lem: compdisccont}
Suppose that $2^J \to \infty$ and $2^{Jd} = o(\sqrt{ND})$ as $N\to\infty$. Then for every $0 < \kappa < 1$, as $N \rightarrow \infty$,
$$\sup_{f \in \mathcal F} \PP_f(\mathcal B_N^c) \rightarrow 0.$$
\end{lem}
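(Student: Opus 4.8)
The plan is to reformulate the event $\mathcal B_N$ as a spectral statement about an empirical Gram matrix and then control its deviation from the identity in Hilbert--Schmidt norm using the Markov-chain variance bound of Lemma \ref{lem: variance esti}.

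First I would set up the matrix reformulation. By \eqref{eq:VJ} and the construction of $J_0$, the wavelets $\{\psi_{lr}: J_0 \le l\le J,\, r\in R_l\}$ spanning $V_J$ are compactly supported in $\mathcal O_0^\delta \subset \mathcal O$, hence form an orthonormal family in $L^2(\mathcal O)$, and $\dim V_J =: m_J \lesssim 2^{Jd}$. Writing $g = \sum_{l,r}\theta_{lr}\psi_{lr}\in V_J$ gives $\|g\|_2^2 = |\theta|^2$; moreover $g = g\,\mathbf 1_{\mathcal O_0^\delta}$ by \eqref{eq: technically useful}, so the indicators $\mathbf 1_{\mathcal A_{i,D}}$ in $|g|_N^2$ are redundant and $|g|_N^2 = \theta^\top\widehat\Sigma_N\theta$ with $\widehat\Sigma_N = N^{-1}\sum_{i=1}^N \Psi(X_{(i-1)D})\Psi(X_{(i-1)D})^\top$ and $\Psi(x) = (\psi_{lr}(x))_{l,r}$. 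Since the chain is stationary with invariant measure equal to Lebesgue measure on $\mathcal O$ for every $f$, orthonormality yields $\E_f[\widehat\Sigma_N] = \mathrm{Id}_{m_J}$, and one checks that $\mathcal B_N^c = \{\|\widehat\Sigma_N - \mathrm{Id}_{m_J}\|_{\mathrm{op}} > \kappa\}$ exactly.

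Next I would pass to a second-moment estimate. Bounding the operator norm by the Hilbert--Schmidt norm and applying Markov's inequality,
\[
\sup_{f\in\F}\PP_f(\mathcal B_N^c) \le \kappa^{-2}\sup_{f\in\F}\E_f\big[\|\widehat\Sigma_N - \mathrm{Id}_{m_J}\|_{\mathrm{HS}}^2\big] = \kappa^{-2}\sup_{f\in\F}\sum_{(l,r),(l',r')}\var_f\!\big((\widehat\Sigma_N)_{(l,r),(l',r')}\big),
\]
the last equality because the entrywise mean is $\delta_{(l,r),(l',r')}$. Applying Lemma \ref{lem: variance esti} with $\varphi = \psi_{lr}\psi_{l'r'}$ gives $\var_f((\widehat\Sigma_N)_{(l,r),(l',r')}) \lesssim (ND)^{-1}\var_f(\psi_{lr}(X_0)\psi_{l'r'}(X_0)) \le (ND)^{-1}\int_{\mathcal O}\psi_{lr}^2\psi_{l'r'}^2$, with a constant depending only on the spectral gap \eqref{eq:spectral_gap}, hence uniform in $f\in\F$. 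Then standard wavelet localization ($\|\psi_{lr}\|_\infty \lesssim 2^{ld/2}$ with bounded support overlap at each level, so that $\sum_{l,r}\psi_{lr}(x)^2 \lesssim \sum_{l\le J}2^{ld}\lesssim 2^{Jd}$ pointwise, and $\int_{\mathcal O}\sum_{l,r}\psi_{lr}^2 = m_J\lesssim 2^{Jd}$) yields
\[
\sum_{(l,r),(l',r')}\int_{\mathcal O}\psi_{lr}^2\psi_{l'r'}^2 = \int_{\mathcal O}\Big(\sum_{l,r}\psi_{lr}^2\Big)^2 \le \Big\|\sum_{l,r}\psi_{lr}^2\Big\|_\infty\int_{\mathcal O}\sum_{l,r}\psi_{lr}^2 \lesssim 2^{2Jd},
\]
so that $\sup_{f\in\F}\E_f[\|\widehat\Sigma_N - \mathrm{Id}_{m_J}\|_{\mathrm{HS}}^2] \lesssim 2^{2Jd}/(ND) = o(1)$ under the hypothesis $2^{Jd} = o(\sqrt{ND})$, which closes the argument.

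The computation itself is routine; the point requiring a little care is the \emph{uniformity} over the whole of $\F$ (not merely over a $\C^{\alpha}$-ball), which forces one to use that the variance bound of Lemma \ref{lem: variance esti} rests only on the Poincar\'e/spectral-gap constant $\lambda_f \ge f_{\min}/p_{\mathcal O}$, bounded below uniformly over $f\in\F$. One could alternatively run a matrix Bernstein argument for the Markov chain to obtain exponential tails, but the crude Hilbert--Schmidt second-moment bound already suffices here.
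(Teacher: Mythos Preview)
Your argument is correct and complete for the statement as written. The paper, however, proceeds differently: it controls the supremum of the centred process $Z_N(g)=\sum_i\big(g(X_{(i-1)D})^2-\E_f[g(X_{(i-1)D})^2]\big)$ over $\{g\in V_J:\|g\|_2=1\}$ by first establishing a Bernstein-type increment bound via Paulin's concentration inequality for reversible Markov chains, and then applying the generic chaining Lemma~\ref{lem:Dirksen} with $m\simeq 2^{Jd}$, $\alpha^2=N2^{Jd}D^{-1}$, $\beta=2^{Jd}D^{-1}$ and $u=2^{Jd}$, yielding $\PP_f(\mathcal B_N^c)\lesssim e^{-2^{Jd}}$.

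Your Gram-matrix/Hilbert--Schmidt second-moment route is considerably more elementary---it avoids chaining entirely and uses only Lemma~\ref{lem: variance esti} and Markov's inequality---and your uniformity argument via the spectral-gap lower bound is exactly the right observation. The trade-off is in the \emph{rate}: you obtain $\PP_f(\mathcal B_N^c)\lesssim 2^{2Jd}/(ND)=o(1)$, which is polynomial rather than exponential. This suffices for Lemma~\ref{lem: compdisccont} itself, but the paper later exploits the sharper bound $\PP_f(\mathcal B_N^c)\lesssim e^{-2^{Jd}}\lesssim N^{-1}$ in the proof of Theorem~\ref{thm: minimaxity final} (the minimax upper bound), where a rate faster than $N^{-2s/(2s+d)}$ is needed on the complement of $\mathcal B_N$. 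Your bound would not deliver that, so while your approach cleanly proves the lemma, the paper's more elaborate argument is not gratuitous.
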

By Lemma \ref{lem: compdisccont} we have $\sup_{f \in \mathcal F} \PP_f(\mathcal B_N^c) \rightarrow 0$ as $N \to \infty$ for every $0<\kappa <1$ under the theorem hypotheses, so that we may further restrict to working on $\mathcal B_N$.
We have under $\PP_{f}$
\begin{equation} \label{eq: decomp signal plus bruit}
Y_{i,D} = (2d)^{-1} D^{-1}|X_{iD}-X_{(i-1)D}|^2 = f(X_{(i-1)D})+\mathcal R_{i,D},
\end{equation}
with
\begin{equation} \label{eq: remainder regression}
\mathcal R_{i,D} = (2d)^{-1} D^{-1}\big(\widetilde b_{i,D}+\widetilde \Sigma_{i,D}+\widetilde L_{i,D}\big),
\end{equation}
%
where we recall the above decomposition from \eqref{eq: expand square}-\eqref{eq:tilde_decomp} but with $f_0$ replaced by $f$.
Let 
$$\Gamma_N(g) = \frac{1}{N} \sum_{i = 1}^N\big(Y_{i,D}-1-g(X_{(i-1)D})\big)^2{\bf 1}_{\mathcal A_{i,D}}$$
denote the normalized objective function defining our least squares estimator \eqref{eq:estimator}, using in particular that $g(X_{(i-1)D}) = 0$ on $(\mathcal A_{i,D})^c$ for all $g\in V_J$. By \eqref{eq: decomp signal plus bruit}, for any function $g$,
\begin{align} \label{eq: the rep}
\Gamma_N(g)-\Gamma_N(f-1) = |1+g-f|_N^2+\frac{2}{N} \sum_{i = 1}^N(f-1-g)(X_{(i-1)D})\mathcal R_{i,D}{\bf 1}_{\mathcal A_{i,D}}.
\end{align}
Since $\Gamma_N(\widehat g_N)  = \inf_{g \in V_J}\Gamma_N(g)$ by construction, we have $\Gamma_N(\widehat g_N)-\Gamma_N(f-1) \leq  \Gamma_N(P_J[f-1]) -  \Gamma_N(f-1)$. Substituting \eqref{eq: the rep} into both sides of this inequality with $g = \widehat{g}_N$ and $g=P_J[f-1]$, respectively, then yields (with $\widehat f_N = \widehat g_N +1$)
\begin{align*}
&|\widehat f_N-f|_N^2+2N^{-1}\sum_{i = 1}^N(f-\widehat f_N)(X_{(i-1)D})\mathcal R_{i,D}{\bf 1}_{\mathcal A_{i,D}} \\
&\leq  |1+P_J[f-1]-f|_N^2+2N^{-1}\sum_{i = 1}^N(f-1-P_J[f-1])(X_{(i-1)D})\mathcal R_{i,D}{\bf 1}_{\mathcal A_{i,D}},
\end{align*}
or equivalently 
\begin{align}
|\widehat f_N-f|_N^2 & \leq  |P_J[f-1]-(f-1)|_N^2+2N^{-1}\sum_{i = 1}^N(\widehat f_N - 1-P_J[f-1])(X_{(i-1)D})\mathcal R_{i,D}{\bf 1}_{\mathcal A_{i,D}}. \label{eq: quasi}
\end{align}
Using that $\widehat f_N -1-P_J[f-1] \in V_J$ and $2ab \leq \rho a^2+\rho^{-1}b^2$ for any $a,b,\rho>0$, on the event $\mathcal B_N$, the second term in the last display equals
\begin{align*}
 &   \frac{2}{N}\sum_{i = 1}^N(\widehat g_N-P_J[f-1])(X_{(i-1)D})\mathcal R_{i,D}{\bf 1}_{\mathcal A_{i,D}}  \\
 & \leq 2\|\widehat g_N-P_J[f-1]\|_{2} \sup_{g \in V_J: \|g\|_{2}=1}\frac{1}{N} \sum_{i = 1}^Ng(X_{(i-1)D})\mathcal R_{i,D}{\bf 1}_{\mathcal A_{i,D}} \\
 & \leq \rho \|\widehat g_N-P_J[f-1]\|_{2}^2+\rho^{-1} \Big(\sup_{g \in V_J: \|g\|_{2} \leq 1}\frac{1}{N} \sum_{i = 1}^Ng(X_{(i-1)D})\mathcal R_{i,D}{\bf 1}_{\mathcal A_{i,D}}\Big)^2 \\
  & \leq  (1-\kappa)^{-1}\rho |\widehat g_N-P_J[f-1]|_N^2+\rho^{-1} \Big(\sup_{g \in V_J: \|g\|_{2} \leq 1}\frac{1}{N} \sum_{i = 1}^Ng(X_{(i-1)D})\mathcal R_{i,D}{\bf 1}_{\mathcal A_{i,D}}\Big)^2.
\end{align*}
Substituting this last display into \eqref{eq: quasi} then gives
\begin{align*}
|\widehat f_N-f|_N^2 & \leq |P_J[f-1]-(f-1)|_N^2+(1-\kappa)^{-1}\rho |\widehat g_N-P_J[f-1]|_N^2\\
&\qquad +\rho^{-1} \Big(\sup_{g \in V_J: \|g\|_{2} \leq 1}\frac{1}{N} \sum_{i = 1}^Ng(X_{(i-1)D})\mathcal R_{i,D}{\bf 1}_{\mathcal A_{i,D}}\Big)^2.
\end{align*}
Using that $\sqrt{a+b} \leq \sqrt{a} + \sqrt{b}$ for $a,b>0$, this implies
\begin{align*}
|\widehat{f}_N-1-P_J[f-1]|_N & \leq |\widehat f_N - f|_N + |f-1-P_J[f-1]|_N \\
& \leq 2|f-1-P_J[f-1]|_N + (1-\kappa)^{-1/2}\rho^{1/2} |\widehat f_N-1-P_J[f-1]|_N \\
&\quad +\rho^{-1/2}\Big|\sup_{g \in V_J: \|g\|_{2} \leq 1}\frac{1}{N} \sum_{i = 1}^Ng(X_{(i-1)D})\mathcal R_{i,D}{\bf 1}_{\mathcal A_{i,D}}\Big|.
\end{align*}
Collecting the $|\widehat{f}_N-1-P_J[f-1]|_N$ terms on the left-hand side thus yields the risk decomposition
\begin{align*}
& \left( 1 - \sqrt{\tfrac{\rho}{1-\kappa}}\right) |\widehat{f}_N-1-P_J[f-1]|_N  \\
& \qquad \qquad \leq 2|f-1-P_J[f-1]|_N  +\rho^{-1/2}\Big|\sup_{g \in V_J: \|g\|_{2} \leq 1}\frac{1}{N} \sum_{i = 1}^Ng(X_{(i-1)D})\mathcal R_{i,D}{\bf 1}_{\mathcal A_{i,D}}\Big|.
\end{align*}
Taking $\rho>0$ small enough that $\rho/(1-\kappa) < 1$ and using that $|\widehat{f}_N-1 - P_J[f-1]|_N \geq \sqrt{1-\kappa}\|\widehat f_ N -1 - P_J[f-1]\|_{2}$ on $\mathcal B_N$, we then obtain that on $\mathcal B_N$,
\begin{align*}
C^{-1} \|\widehat f_N-1 - P_J[f-1]\|_{2} \leq 2|f-1-P_J[f-1]|_N  +\rho^{-1/2} \Big|\sup_{g \in V_J: \|g\|_{2} \leq 1}\frac{1}{N} \sum_{i = 1}^Ng(X_{(i-1)D})\mathcal R_{i,D}{\bf 1}_{\mathcal A_{i,D}}\Big|,
\end{align*}
where $C^{-1} = \sqrt{1-\kappa} (1- \sqrt{\rho/(1-\kappa)}) = \sqrt{1-\kappa} - \sqrt{\rho}>0$. Recalling that $\overline{P}_J f = 1+P_J[f-1]$, we finally obtain the following risk decomposition:
\begin{align}
 \PP_f\big(\|\widehat f_N - \overline{P}_J f \|_{2} \geq K\xi_N,\mathcal B_N\big) 
& \leq \PP_f\big(|f -1- P_J[f-1]|_N \geq K_1 \xi_N,\mathcal B_N\big) \nonumber \\
& \quad + \PP_f\Big( \sup_{g \in V_J: \|g\|_{2} \leq 1}\sum_{i = 1}^Ng(X_{(i-1)D})\mathcal R_{i,D}{\bf 1}_{\mathcal A_{i,D}} \geq K_2 N\xi_N, \mathcal B_N\Big) \label{eq: controle} 
\end{align}
where the constants $K_i = C_i K$ for constants $C_i$ depending only on $\kappa$ and $\rho$, which are henceforth considered fixed. It therefore suffices to prove that each term on the right-hand side of \eqref{eq: controle} is bounded by a multiple of $e^{-LN\varepsilon_N^2}$ for arbitrary fixed $L>0$ and large enough $K = K(L)$.

\subsection{Bias term}

We consider here the first term in \eqref{eq: controle}, which is a bias term with the randomness only entering from the design in the random empirical semi-norm $|\cdot|_N$. Write $g_J = f-1-P_J[f-1]$ and
$$Z_N(g_J) = |g_J|_N^2 - \|g_J\|_{2}^2 = \frac{1}{N}\sum_{i = 1}^N\big(g_J(X_{(i-1)D})^2-\E_f[g_J(X_{(i-1)D})^2]\big),$$
since $\E_f[g_J(X_{(i-1)D})^2]=\|g_J\|_{2}^2$. The variance satisfies 
$$\var_f(g_J(X_{(i-1)D})^2) \leq \E_f[g_J(X_{(i-1)D})^4] = \|g_J\|_{L^4}^4,$$
while $|g_J(x)^2 - \E_f[g_J(X_{(i-1)D})^2]| \leq 2\|g_J\|_\infty^2$. Since $(X_0,X_D,\dots,X_{ND})$ is a stationary reversible Markov chain whose spectral gap is lower bounded by $rD$ by \eqref{eq:spectral_gap}, Theorem of 3.3 of \cite{Paulin2015} (cf. (3.21)) yields the following Bernstein-type inequality:
\begin{align*}
\PP_f\big( N|Z_N(g_J)| \geq t\big) & \leq 2\exp\left(-\frac{ rD t^2 }{4N\|g_J\|_{L^4}^4+20\|g_J\|_\infty^2 t}\right)
\end{align*}
for all $t>0$. Setting $t = MN \xi_N^2$ then gives
\begin{align*}
\PP_f\big( |Z_N(g_J)| \geq M\xi_N^2 \big) & \leq 2\exp\left(-\frac{ r D M^2 N \xi_N^4 }{4\|g_J\|_{L^4}^4+20M \|g_J\|_\infty^2\xi_N^2 }\right).
\end{align*}
By $L^p$ interpolation, $\|g_J\|_{L^4} \leq \|g_J\|_{L^2}^{1/2} \|g_J\|_{\infty}^{1/2}$. Using this and rearranging, one gets that the right-hand side is smaller than $2e^{-LN\eps_N^2}$ if
$$4L\|g_J\|_{L^2}^2 \|g_J\|_\infty^2 \eps_N^2 + 20LM \|g_J\|_\infty^2 \xi_N^2 \eps_N^2 \leq r D M^2 \xi_N^4.$$
But for any $f\in\F_N'$ contained in the set defined in the theorem statement, the left side above is bounded by a multiple of $R_J^2 \eps_N^2 \xi_N^2 = O(D \xi_N^4)$ under the assumed hypothesis $R_{J}^2  \eps_N^2 \lesssim D \xi_N^2$. Since also $r=r(f_{\min},\O)$ by \eqref{eq:spectral_gap}, this verifies the condition in the last display. In summary, we have shown that for any $L>0$, there exists $M = M(L,r)>0$ large enough such that
\begin{align*}
\sup_{f\in \F_N'}  \PP_f(|f-1-P_J[f-1]|_N^2 \geq \|f-1-P_J[f-1]\|_{2}^2 + M \xi_N^2 ) \leq 2e^{-LN\eps_N^2}
\end{align*}
under the theorem hypotheses.

\subsection{Deviation of the remainder term}
We now consider the second term in \eqref{eq: controle} involving $\mathcal R_{i,D}$. By a union bound, up to a modification of the constants, it suffices to prove the deviation for each term in the decomposition \eqref{eq: remainder regression}.\\

\noindent {\it Step 1:} We postpone the term involving the drift $\widetilde b_{i,D}$ to Step 3 and rather first control the term involving $\widetilde \Sigma_{i,D}$ in \eqref{eq: remainder regression}. We will prove  
\begin{equation} \label{eq: dev martingale}
\PP_{f}\Big(\sup_{g \in V_J:\|g\|_{2}\leq 1}|\widetilde Z_N(g)| \geq M N\xi_N, \mathcal B_N\Big) \leq e^{-LN\varepsilon_N^2},
\end{equation}
for sufficiently large $M$, where 
$$\widetilde Z_N(g) = D^{-1}\sum_{i =1}^{N}g(X_{(i-1)D})\widetilde \Sigma_{i,D}{\bf 1}_{\mathcal A_{i,D}}.$$
Define 
$$\PP_{\mathcal B_N,f} = \PP_f(\cdot|\,\mathcal B_N) = \PP_f(\mathcal B_N)^{-1}\PP_f(\cdot \cap \mathcal B_N).$$
Since $\PP_f(\mathcal B_N) \rightarrow 1$ uniformly in $f \in \mathcal F$ by Lemma \ref{lem: compdisccont}, we have $\tfrac{1}{2} \PP_{\mathcal B_N,f} \leq \PP_f(\cdot \cap \mathcal B_N) \leq \PP_{\mathcal B_N,f}$ for $N$ large enough.
It thus suffices to establish \eqref{eq: dev martingale} with $\PP_{\mathcal B_N,f}$ instead of $\PP_{f}$.
We prove a Bernstein inequality for the increments of the process $(\widetilde Z_N(g) : g \in V_J)$ and  conclude using a chaining argument under $\PP_{\mathcal B_N,f}$. To that end, we need the following Bernstein inequality, see {\it e.g.} \cite{pinelis1994optimum, victor1999general}.

\begin{lem} \label{lem: bernstein mg}
Let $(\mathcal M_n)_{n \geq 0}$ be a $(\mathcal G_n)_{n \geq 0}$-martingale with $\mathcal M_0=0$ and let $\langle \mathcal M\rangle_n = \sum_{i=1}^n  \E[|\mathcal M_i-\mathcal M_{i-1}|^2\,|\,\mathcal G_{i-1}]$ denotes its predictable quadratic variation. If for some $c >0$,
$$\sum_{i = 1}^n \E[|\mathcal M_i-\mathcal M_{i-1}|^p\,|\,\mathcal G_{i-1}] \leq \frac{c^{p-2}p!}{2} \langle \mathcal M\rangle_n  \qquad \text{for every } p \geq 2,$$
then for every $t,y> 0$,
$$\PP\big(\mathcal M_n \geq t,\langle \mathcal M\rangle_n  \leq y\big) \leq  \exp\Big(-\frac{t^2}{2(y+ct)}\Big).$$
\end{lem}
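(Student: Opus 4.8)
The plan is to run the classical argument for Bernstein-type inequalities under a Bennett moment condition: build a conditional exponential supermartingale, bound its compensator at the terminal index on the event $\{\langle\mathcal M\rangle_n\le y\}$, and optimise the resulting Chernoff bound. Fix $\lambda\in(0,1/c)$ and write $\xi_i=\mathcal M_i-\mathcal M_{i-1}$, so that $\E[\xi_i\mid\mathcal G_{i-1}]=0$. Since $e^{\lambda x}-1-\lambda x=\sum_{p\ge2}\lambda^p x^p/p!\le\sum_{p\ge2}\lambda^p|x|^p/p!$ for $\lambda\ge0$ (because $x^p\le|x|^p$ for every $p$), taking the conditional expectation and interchanging it with this nonnegative-term series gives
\[
\E\big[e^{\lambda\xi_i}\mid\mathcal G_{i-1}\big]=1+\E\big[e^{\lambda\xi_i}-1-\lambda\xi_i\mid\mathcal G_{i-1}\big]\le 1+B_i\le e^{B_i},\qquad B_i:=\sum_{p\ge2}\frac{\lambda^p}{p!}\,\E\big[|\xi_i|^p\mid\mathcal G_{i-1}\big],
\]
with $B_i$ being $\mathcal G_{i-1}$-measurable. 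First I would deduce that $\mathcal N_n:=\exp\big(\lambda\mathcal M_n-\sum_{i=1}^n B_i\big)$ is a nonnegative supermartingale: $\E[\mathcal N_n\mid\mathcal G_{n-1}]=\mathcal N_{n-1}e^{-B_n}\E[e^{\lambda\xi_n}\mid\mathcal G_{n-1}]\le\mathcal N_{n-1}$, and $\mathcal N_0=1$, so $\E[\mathcal N_n]\le1$.

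Next I would control the compensator using the hypothesis. Summing the moment bound over $i$ gives $\sum_{i=1}^n\E[|\xi_i|^p\mid\mathcal G_{i-1}]\le\frac{1}{2} c^{p-2}p!\,\langle\mathcal M\rangle_n$, hence
\[
\sum_{i=1}^n B_i\le\frac{\langle\mathcal M\rangle_n}{2}\sum_{p\ge2}\lambda^p c^{p-2}=\frac{\lambda^2\langle\mathcal M\rangle_n}{2(1-\lambda c)},
\]
the geometric series converging because $\lambda c<1$. Consequently, on the event $\{\langle\mathcal M\rangle_n\le y\}$ one has $\sum_{i=1}^n B_i\le\lambda^2 y/(2(1-\lambda c))$, so that $\{\mathcal M_n\ge t,\ \langle\mathcal M\rangle_n\le y\}\subseteq\{\mathcal N_n\ge\exp(\lambda t-\lambda^2 y/(2(1-\lambda c)))\}$, and Markov's inequality combined with $\E[\mathcal N_n]\le1$ yields
\[
\PP\big(\mathcal M_n\ge t,\ \langle\mathcal M\rangle_n\le y\big)\le\exp\Big(-\lambda t+\frac{\lambda^2 y}{2(1-\lambda c)}\Big).
\]
Finally I would take $\lambda=t/(y+ct)\in(0,1/c)$, for which $1-\lambda c=y/(y+ct)$, so the exponent equals $-t^2/(y+ct)+t^2/(2(y+ct))=-t^2/(2(y+ct))$, which is exactly the asserted bound.

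I do not expect a genuine obstacle: the result is classical (cf.\ \cite{pinelis1994optimum, victor1999general}). The only subtlety worth flagging is that the moment condition is imposed globally, on $\sum_i\E[|\xi_i|^p\mid\mathcal G_{i-1}]$, rather than pointwise in $i$, so one cannot compensate each increment separately; this is precisely why the argument compensates $\mathcal M_n$ by $\sum_{i\le n}B_i$ and bounds this compensator only at the terminal index $n$, a bound that is effective solely on the event $\{\langle\mathcal M\rangle_n\le y\}$ appearing in the statement. The two elementary points to check along the way are the interchange of the conditional expectation with the power series (valid by monotone convergence, all summands being nonnegative after the estimate $\xi_i^p\le|\xi_i|^p$) and the supermartingale step, which uses only that $B_n$ is $\mathcal G_{n-1}$-measurable.
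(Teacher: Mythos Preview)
Your argument is correct and is precisely the standard proof of this Bernstein-type martingale inequality. The paper itself does not supply a proof of this lemma but only cites \cite{pinelis1994optimum, victor1999general}, so there is nothing to compare against; your write-up would serve perfectly well as a self-contained justification.
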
  

We will apply Lemma \ref{lem: bernstein mg} to the $\mathcal G_{n}$-martingale 
$\mathcal M_{n} = \widetilde Z_N(g) = D^{-1}\sum_{i =1}^{n}g(X_{(i-1)D}) \widetilde \Sigma_{i,D}{\bf 1}_{\mathcal A_{i,D}}$, 
with $\mathcal G_n = \sigma(X_{iD}: 0 \leq i \leq n)$. First, applying the Burkholder-Davis-Gundy inequality with best  constant $c_\star^{p/2}p^{p/2}$ (see {\it e.g.} \cite{barlow1982semi}), we have for all $p\geq 1$:
\begin{align*}
&\E_f\big[|D^{-1}\widetilde \Sigma_{i,D}{\bf 1}_{\mathcal A_{i,D}}|^p\,|\,\mathcal F_{(i-1)D}]\\
 & = D^{-p} \E_f\Big[\big|\int_{(i-1)D}^{iD}\sqrt{2f(X_s)}(X_s-X_{(i-1)D})\cdot dB_s\big|^p\,\big|\,\mathcal F_{(i-1)D}\Big]{\bf 1}_{\mathcal A_{i,D}}\\
& \leq D^{-p}  (2\|f\|_\infty c_\star p)^{p/2} \E_f\Big[\big|\int_{(i-1)D}^{iD}|X_s-X_{(i-1)D}|^2ds\big|^{p/2}\,\big|\,\mathcal F_{(i-1)D}\Big] \\
& \leq (D^{-1} 2\|f\|_\infty c_\star p)^{p/2} \max_{1 \leq i \leq d}\E_f\Big[\sup_{(i-1)D \leq s \leq iD}|X_s-X_{(i-1)D}|^{p}\,|\,\mathcal F_{(i-1)D}\Big] \\
& \leq  (2\|f\|_\infty c_\star p)^{p/2}2^{p-1}(\|f\|_{\mathcal C^1}^pD^{p/2}+(c_\star p)^{p/2}\|f\|_\infty^p) \\
& \leq (C \|f\|_{\C^1}^{3/2})^p p!,
\end{align*}
for some $C >0$ that only depends on $d$, since $D\to 0$ and
where we used  Lemma \ref{lem: moment bis} to obtain the second last inequality together with $p^p \leq (C')^p p!$ for some universal constant $C' \geq 1$.
Next, we claim that the following lower bound holds
\begin{equation} \label{eq: stoch lb}
\E_f\big[(D^{-1}\widetilde \Sigma_{i,D})^2 {\bf 1}_{\mathcal A_{i,D}}\,|\,\mathcal F_{(i-1)D}\big] \geq  df_{\min}^2-O(D^{1/2}).
\end{equation}
Assuming for now that \eqref{eq: stoch lb} is true, for $D \to 0$ small enough,
\begin{align*}
\sum_{i =1}^{n}|g(X_{(i-1)D})|^p  & \leq \|g\|_\infty^{p-2} \sum_{i =1}^{n}|g(X_{(i-1)D})|^{2} \tfrac{2}{f_{\min}^2d}\E_f\big[( \tfrac{1}{D} \widetilde \Sigma_{i,D})^2 {\bf 1}_{\mathcal A_{i,D}}\,|\,\mathcal F_{(i-1)D}\big] \\
& = \tfrac{2}{f_{\min}^2d}\|g\|_{\infty}^{p-2} \langle \mathcal M \rangle_n.
\end{align*}
It follows that
\begin{align*}
\sum_{i =1}^{n}\E\big[||\mathcal M_i-\mathcal M_{i-1}|^p\,|\,\mathcal F_{(i-1)D}] & = 
\sum_{i =1}^{n}|g(X_{(i-1)D})|^p \E_f\big[|D^{-1}\widetilde \Sigma_{i,D}{\bf 1}_{\mathcal A_{i,D}}|^p\,|\,\mathcal F_{(i-1)D}] \\
& \leq (C\|f\|_{\C^1}^{3/2})^p p! \sum_{i =1}^{n}|g(X_{(i-1)D})|^p\\
& \leq \tfrac{2}{f_{\min}^2d}  (C\|f\|_{\C^1}^{3/2})^p p! \|g\|_{\infty}^{p-2} \langle \mathcal M \rangle_n
\end{align*}
for small enough $D$. The condition of Lemma \ref{lem: bernstein mg} therefore holds with 
$$c = C_{f_{\min}, \|f\|_{\mathcal C^1}} = \max \left( 1, 4C^2 \|f\|_{\C^1}^2 d^{-1} f_{\min}^{-2} \right)C \|f\|_{\mathcal C^1}^{3/2}\|g\|_\infty.$$
Moreover, recalling that $g \in V_J$ in \eqref{eq: controle} and using the bound \eqref{eq: moment bounds reflect tilde}, on the event $\mathcal B_N$, 
$$\langle \mathcal M\rangle_N = \frac{1}{D^2} \sum_{i=1}^N g(X_{(i-1)D})^2 \E_f [ \widetilde \Sigma_{i,D}^2 | \F_{(i-1)D}] {\bf 1}_{\mathcal A_{i,D}} \leq C_{\|f\|_{\C^1}} N|g|_N^2 \leq C_{\|f\|_{\C^1}} (1+\kappa)N \|g\|_2^2 .
$$
Applying Lemma \ref{lem: bernstein mg} at $n=N$ with $y= C_{\|f\|_{\mathcal C^1}} (1+\kappa)N \|g\|_{2}^2 $, we finally obtain for all $t>0$,
\begin{align*} 
\PP_{\mathcal B_N,f}\big(\big|\widetilde Z_N(g) \big|\geq t\big) & \leq 2 \PP_{f}\big(\big|\mathcal M_N\big|\geq t, \mathcal B_N\big)\\
& \leq 2\PP_{f}\big(\big|\mathcal M_N\big|\geq t, \langle \mathcal M\rangle_N \leq C_{\|f\|_{\mathcal C^1}}N\|g\|_{2}^2\big)\\
&\leq 4\exp\Big(-\frac{t^2}{2\big(C_{\|f\|_{\mathcal C^1}}N\|g\|_{2}^{2}+C_{\|f\|_{\C^1}, f_{\min}}\|g\|_\infty t \big)}\Big).
\end{align*}
Using that $\|g\|_\infty \leq C 2^{Jd/2} \|g\|_{2}$ for $g \in V_J$, we have that for all $g,g' \in V_J$ and $t >0$,
$$\PP_{\mathcal B_N,f}\big(|\widetilde Z_N(g)-\widetilde Z_N(g')| \geq t\big) \leq 4 \exp\Big(-\frac{Ct^2}{N\|g-g'\|_{2}^2+2^{Jd/2} \|g-g'\|_{2} t}\Big).$$
We may therefore apply the generic chaining bound in Lemma \ref{lem:Dirksen} in Section \ref{sec: generic chaining} below with $m = \dim(V_J) = O(2^{Jd})$, $\|\mG_J\|_2 = \sup_{g,h \in \mG_J}\|g-h\|_2 \leq 2$, $\alpha^2 = N$, $\beta=2^{Jd/2}$ to obtain that for all $u \geq 1$,
\begin{equation*}
\PP_{\mathcal B_N,f} \Big( \sup_{g \in V_J: \|g\|_{2} \leq 1} |\widetilde Z_N(g)| \geq C_{\|f\|_{C^1},f_{\min}} \big( 2^{3Jd/2} + 2^{Jd/2} N^{1/2} + 2^{Jd/2} u + N^{1/2} \sqrt{u} \big) \Big) \leq e^{-u}.
\end{equation*}
Setting $u = LN\eps_N^2$ with $L>0$ then gives
\begin{equation*} 
\PP_{\mathcal B_N,f}\Big(\sup_{g \in V_J:\|g\|_{2}\leq 1}|\widetilde Z_N(g)| \geq M N\xi_N\Big) \leq e^{-LN\varepsilon_N^2}
\end{equation*}
for any $\eps_N,\xi_N,2^J$ satisfying
\begin{equation}\label{eq:rate_conditions}
2^{3Jd/2}N^{-1} + 2^{Jd/2} N^{-1/2} + 2^{Jd/2} \eps_N^2 + \eps_N \lesssim \xi_N
\end{equation}
upon taking $M = M(L,\|f\|_{\C^1},f_{\min})$ large enough. This establishes \eqref{eq: dev martingale}, so that we have the required exponential inequality  in \eqref{eq: controle} for the term $(dD)^{-1}\widetilde \Sigma_{i,D}$ in $\mathcal R_{i,D}$.\\

It remains to prove \eqref{eq: stoch lb}. By It\^o's isometry, 
\begin{align*}
\E_f\big[(D^{-1}\widetilde \Sigma_{i,D})^2 {\bf 1}_{\mathcal A_{i,D}}\,|\,\mathcal F_{(i-1)D}\big] & = \frac{8}{D^2}\E_f\Big[\int_{(i-1)D}^{iD}|X_s-X_{(i-1)D}|^2 f(X_s)ds\,|\,\mathcal F_{(i-1)D}\Big]{\bf 1}_{\mathcal A_{i,D}} \\
& \geq\frac{8f_{\min}}{D^{2}}\E_f\Big[\int_{(i-1)D}^{(i-1)D+\tau_{i,D}\wedge D}|X_s-X_{(i-1)D}|^2ds\,|\,\mathcal F_{(i-1)D}\Big]{\bf 1}_{\mathcal A_{i,D}} ,
\end{align*}
where $\tau_{i,D}$ defined in \eqref{eq: def hitting}
is the hitting time of the boundary $\partial \mathcal O$ by the process $X$ started at time $(i-1)D$. Furthermore,
\begin{align}
& \E_f\Big[\int_{(i-1)D}^{(i-1)D+\tau_{i,D}\wedge D}|X_s-X_{(i-1)D}|^2ds\,|\,\mathcal F_{(i-1)D}\Big]  \nonumber \\
&= \E_f\Big[\int_{(i-1)D}^{(i-1)D+\tau_{i,D}\wedge D}\int_{(i-1)D}^s d f(X_u)duds\,|\,\mathcal F_{(i-1)D}\Big] \nonumber\\
&\quad +2\E_f\Big[\int_{(i-1)D}^{(i-1)D+\tau_{i,D}\wedge D}\int_{(i-1)D}^s(X_s-X_{(i-1)D})\cdot \nabla f(X_u)duds\,|\,\mathcal F_{(i-1)D}\Big]. \label{eq: last ito}
\end{align}
Lemma \ref{lem: moment bis} applied to the second term of the right-hand side of \eqref{eq: last ito} yields
\begin{align*}
\big|2\E_f\big[\int_{(i-1)D}^{(i-1)D+\tau_{i,D}\wedge D}\int_{(i-1)D}^s(X_s-X_{(i-1)D})\cdot \nabla f(X_u)duds\,|\,\mathcal F_{(i-1)D}\big]\big| \lesssim D^{2+1/2}
\end{align*}
up to a (deterministic) constant that depends only on $\|f\|_{\mathcal C^1}$, while the first term of the right-hand side of \eqref{eq: last ito} times ${\bf 1}_{\mathcal A_{i,D}}$ can bounded below by $ d f_{\min} $ times
\begin{align*}
 \E_f\big[\big(\tau_{i,D}\wedge D \big)^2{\bf 1}_{\{\tau_{i,D} \geq D\}}\,|\,\mathcal F_{(i-1)D}\big]{\bf 1}_{\mathcal A_{i,D}} 
& =  D^2\PP_f\big(\tau_{i,D} \geq D\,|\,\mathcal F_{(i-1)D}\big){\bf 1}_{\mathcal A_{i,D}} \\
& \geq D^2(1-C\exp(-cD^{-1})){\bf 1}_{\mathcal A_{i,D}} 
\end{align*}
thanks to Lemma \ref{lem: hitting time}. We thus have 
$$\E_f\Big[\int_{(i-1)D}^{(i-1)D+\tau_{i,D}\wedge D}|X_s-X_{(j-1)D}|^2ds\,|\,\mathcal F_{(i-1)D}\Big] \geq d D^2 f_{\min}-O(D^{5/2})$$
and \eqref{eq: stoch lb} readily follows.\\

\noindent {\it Step 2:} We next turn to the boundary term in \eqref{eq: remainder regression}. Set $\widetilde L_N(g) = D^{-1}\sum_{i = 1}^Ng(X_{(i-1)D})\widetilde L_{i,D}{\bf 1}_{\mathcal A_{i,D}}$. Since $\sup_{g \in V_J, \|g\|_{2} \leq 1}|g|_N^2 \leq (1+\kappa)$ on $\mathcal B_N$,
by Cauchy-Schwarz's inequality,
\begin{align*}
\PP_f\Big( \sup_{g \in V_J, \|g\|_{2} \leq 1}|\widetilde L_N(g)| \geq CN\xi_N, \mathcal B_N\Big) 
 & \leq  \PP_f\big((1+\kappa)D^{-2}N^{-1}\sum_{i = 1}^{N}\widetilde L_{i,D}^2{\bf 1}_{\mathcal A_{i,D}}\geq C\xi_N^2\big) \\
& \leq \sum_{i = 1}^{N}\PP_f\big(\tau_{i,D}  \geq D, \mathcal A_{i,D}\big) \lesssim N\exp(-cD^{-1})
\end{align*}
using that $\widetilde L_{i,D}$ vanishes on $\{\tau_{i,D} < D\} \cap \mathcal A_{i,D}$ and  Lemma \ref{lem: hitting time}.\\

\noindent {\it Step 3:} We finally consider the drift term in \eqref{eq: remainder regression}. Setting $\widetilde b_N(g) = D^{-1}\sum_{i = 1}^Ng(X_{(i-1)D})\widetilde b_{i,D}{\bf 1}_{\mathcal A_{i,D}}$, we will prove
\begin{equation} \label{eq: concentration remainder}
\PP_f\Big( \sup_{g \in V_J: \|g\|_{2} \leq 1}|\widetilde b_{N}(g)| \geq MN\xi_N, \mathcal B_N\Big) \leq C\exp(-LN\varepsilon_N^2)
\end{equation}
for arbitrary fixed $L>0$ and $M=M(L)$ large enough, under the assumption $ND^2\rightarrow 0$. By It\^o's formula, with $H(x,y)  = d(f(y)-f(x))+(y-x)\cdot \nabla f(y)$,
similarly to \eqref{eq: ito second level} in Step 2 of the proof of Proposition \ref{prop:tildep_to_q}, we can decompose
\begin{align*}
\widetilde b_{i,D}  & =\int_{(i-1)D}^{iD} H(X_{(i-1)D}, X_s)ds  = \overline b_{i, D}(H)+\overline \Sigma_{i,D}(H)+\overline L_{i, D}(H),
\end{align*}
with
\begin{align*}
\overline b_{i, D}(H) & =  \int_{(i-1)D}^{iD} \int_{(i-1)D}^s \mathcal L_fH(X_{(i-1)D}, X_u)du\,ds\\
\overline \Sigma_{i,D}(H) & = \int_{(i-1)D}^{iD}\int_{(i-1)D}^s\nabla H(X_{(i-1)D}, X_u)\sqrt{2f(X_u)} \cdot dB_uds\\
\overline L_{i, D}(H)& =  \int_{(i-1)D}^{iD} \int_{(i-1)D}^s \nabla H(X_{(i-1)D}, X_u) \cdot n(X_u)d|\ell|_u.
\end{align*} 
We bound the deviation probability of each contribution in the expansion of $\widetilde b_N(g)$ via a union bound. Using Cauchy-Schwarz's inequality and the bound
$$\big|\int_{(i-1)D}^{iD} \int_{(i-1)D}^s \mathcal L_fH(X_{(i-1)D}, X_u)duds\big| \leq \tfrac{1}{2}D^2\|\mathcal L_fH\|_\infty \leq C_{\mathcal O, \|f\|_{\mathcal C^3}}D^2,$$
where the supremum is taken over $(x,y) \in \mathcal O \times \mathcal O$, we derive
\begin{align*}
&\PP_f\Big( \sup_{g \in V_J, \|g\|_{2} \leq 1}D^{-1}\big|\sum_{i = 1}^Ng(X_{(i-1)D})\overline b_{i,D}(H) {\bf 1}_{\mathcal A_{i,D}}\big| \geq CN \xi_N, \mathcal B_N\Big) \\
& \leq \PP_f\big(N^{-1}\sum_{i = 1}^N (\overline b_{i,D}(H))^2 \geq C^2D^2\xi_N^2 \big)  \leq N {\bf 1}_{\big\{C_{\mathcal O, \|f\|_{\mathcal C^3}}^2D^2 \geq  C\xi_N\big\}}
\end{align*}
where we used that $\sup_{g \in V_J, \|g\|_{2} \leq 1}|g|_N^2 \leq (1+\kappa)$ on $\mathcal B_N$.
The assumptions $ND^2\rightarrow 0$ and $N\xi_N^2 \rightarrow \infty$ together imply that for sufficiently large $N$, we necessarily have $C_{\mathcal O, \|f\|_{\mathcal C^3}}^2D^2 < C\xi_N^2$ and the above probability is thus zero. For the martingale term associated to $\overline \Sigma_{i,D}(H)$, we proceed exactly as in Step 1: define
$$\overline{\mathcal M}_n = D^{-1}\sum_{i = 1}^n g(X_{(i-1)D})\overline{\Sigma}_{i,D}(H) {\bf 1}_{\mathcal A_{i,D}}.$$
By Fubini's theorem and the Burkholder-Davis-Gundy inequality:
\begin{align*}
&\E_f\big[|D^{-1}\overline \Sigma_{i,D}(H){\bf 1}_{\mathcal A_{i,D}}|^p\,|\,\mathcal F_{(i-1)D}]\\
 & = D^{-p}\E_f\Big[\big|\int_{(i-1)D}^{iD}(u-(i-1)D)\nabla H(X_{(i-1)D}, X_u)\sqrt{2f(X_u)} \cdot dB_u\big|^p\,\big|\,\mathcal F_{(i-1)D}\Big]{\bf 1}_{\mathcal A_{i,D}}ds\\
&\leq D^{-p} (c_\star p)^{p/2} \E_f\Big[\big|\int_{(i-1)D}^{iD}(u-(i-1)D)^2|\nabla H(X_{(i-1)D}, X_u)|^22f(X_u)ds\big|^{p/2}\,\big|\,\mathcal F_{(i-1)D}\Big]{\bf 1}_{\mathcal A_{i,D}}ds \\
& \leq D^{p/2-1}(c_\star p)^{p/2} \||\nabla H|^2 2f\|_\infty^{p/2-1}\E_f\big[(D^{-1}\overline \Sigma_{i,D}(H){\bf 1}_{\mathcal A_{i,D}})^2\,|\,\mathcal F_{(i-1)D}]\\\
& \leq (D^{1/2}C_{\|f\|_{\mathcal C^2}})^{p-2}p!\,\E_f\big[(D^{-1}\overline \Sigma_{i,D}(H){\bf 1}_{\mathcal A_{i,D}})^2\,|\,\mathcal F_{(i-1)D}]\\
\end{align*}
with
$C_{\|f\|_{\mathcal C^3}} \geq C'\max(1, (2c_\star\|f\|_\infty)^{1/2}\|\nabla H\|_\infty)$ and $C'$ a universal constant such that $p^p \leq C'p!$.
It follows that
\begin{align*}
\sum_{i =1}^{n}\E\big[||\mathcal M_i-\mathcal M_{i-1}|^p\,|\,\mathcal F_{(i-1)D}] & =\sum_{i = 1}^n|g(X_{(i-1)})|^p \E_f\big[|D^{-1}\overline \Sigma_{i,D}(H){\bf 1}_{\mathcal A_{i,D}}|^p\,|\,\mathcal F_{(i-1)D}] \\
& \leq  (D^{1/2}C_{\|f\|_{\mathcal C^2}}\|g\|_\infty)^{p-2} p! \langle \overline{\mathcal M} \rangle_n,
\end{align*}
so that the condition of Lemma \ref{lem: bernstein mg} holds with $c =  D^{1/2}C_{\|f\|_{\mathcal C^2}}'\|g\|_\infty$ for instance. Moreover, 
$$\langle \overline{\mathcal M}\rangle_N = \sum_{i = 1}^N g(X_{(i-1)D})^2\E_f\big[(D^{-1}\overline \Sigma_{i,D}(H){\bf 1}_{\mathcal A_{i,D}})^2\,|\,\mathcal F_{(i-1)D}] \leq CN|g|_N^2D \leq C(1+\kappa)\|g\|_{2}^2ND$$
on the event $\mathcal B_N$,
so that applying Lemma  \ref{lem: bernstein mg} with $n=N$ and $y=C(1+\kappa)\|g\|_{2}^2ND$  yields
\begin{align*}
\PP_{\mathcal B_N,f}(D^{-1}\sum_{i = 1}^n g(X_{(i-1)D})\overline{\Sigma}_{i,D}(H) {\bf 1}_{\mathcal A_{i,D}} \geq t) & \leq C\PP_f\big(\overline{\mathcal M}_N \geq  t,\langle \overline{\mathcal M}\rangle_N \leq C'ND\|g\|_{2}^2\big) \\
& \leq 2C\exp\Big(-C\frac{t^2}{ND\|g\|_{2}^2+D^{1/2}\|g\|_\infty}\Big)
\end{align*}
for all $t >0$. Setting $\overline{Z}_N(g) =D^{-1}\sum_{i = 1}^n g(X_{(i-1)D})\overline{\Sigma}_{i,D}(H) {\bf 1}_{\mathcal A_{i,D}}$, the process $\overline{Z}_N = (\overline{Z}_N(g) :  g \in \{V_J: \|g\|_{2}\leq 1\})$ therefore satisfies the conditions of Lemma \ref{lem:Dirksen} with $m = \dim(V_J) = O(2^{Jd})$, $\|\mG\|_2 \leq 2$, $\alpha^2 = ND$, $\beta=D^{1/2} 2^{Jd/2}$, where we have also used the linearity of $g \mapsto \overline{Z}_N(g)$ and that $\|g\|_\infty \leq C 2^{Jd/2} \|g\|_{2}$ for $g \in V_J$. Lemma \ref{lem:Dirksen} thus implies that for all $u \geq 1$,
$$\PP_{\mathcal B_N,f}\Big(\sup_{g \in V_J,\|g\|_{2}\leq 1}|\overline{Z}_N(g)| \geq CD^{1/2} (2^{3Jd/2} +  2^{Jd/2}N^{1/2}+\sqrt{u}N^{1/2}+u 2^{Jd/2})\Big) \leq e^{-u}.$$
Returning to \eqref{eq: concentration remainder}, we want a bound of the form
\begin{equation*} \label{eq: objective mg}
\PP_{\mathcal B_N,f}\Big(\sup_{g \in V_J,\|g\|_{2}\leq 1}|\overline{Z}_N(g)| \geq CN\xi_N\Big) \leq e^{-LN\varepsilon_N^2}.
\end{equation*}
Setting $u=LN\varepsilon_N^2$ in the second last display, this follows as soon as
$$
D^{1/2} (2^{3Jd/2}N^{-1} +  2^{Jd/2}N^{-1/2}+\eps_N+ 2^{Jd/2}\eps_N^2) \lesssim \xi_N.
$$
But this is exactly condition \eqref{eq:rate_conditions} above with the left-side multiplied by the superoptimal factor $D^{1/2}$, and hence this condition is implied by \eqref{eq:rate_conditions}. For the boundary term associated to $\overline L_{i,D}(H)$, we proceed exactly as in Step 2, replacing $\widetilde L_{i,D}$ by $\overline{L}_{i,D}$. The conclusion is the same.

\section{Proofs of posterior contraction results}

\subsection{Proof of Theorem \ref{thm:post_contract}: a general contraction theorem}\label{sec:general_contract_proof}

The proof follows the general testing approach of \cite{GV17} and the idea of using plug-in tests based on frequentist estimators satisfying concentration inequalities \cite{GN11}, adapted to the present high-frequency multidimensional diffusion setting. The next result follows by combining the proof of Theorem 8.9 in \cite{GV17} for the i.i.d. case with our evidence lower bound in Theorem \ref{thm:variance}.

\begin{lem}\label{lem:test_contract}
Suppose $f_0 \in \F_0$ and let $\Pi=\Pi_N$ be a sequence of prior distributions supported on $\F$ in \eqref{eq:F}. Further let $0 < \varepsilon_N \leq \varepsilon_{1,N} \leq \eps_{2,N} \leq \eps_{3,N} \to 0$, $\xi_N \to 0$ be positive sequences such that $N\eps_N^2 \to\infty$, let $E_N$ and $V_N$ be the corresponding quantities defined in \eqref{eq:EnVn}, and let $r>0$ be fixed. Set 
$$\C_N  = \Big\{f \in \mathcal F: \|f\|_{\C^{\alpha}} \leq r, \|f-f_0\|_{\infty} \leq \varepsilon_N,  \|f-f_0\|_{\mathcal C^k} \leq \varepsilon_{k,N} ~ \text{for } k=1,2,3 \Big\}$$
to be the set defined in \eqref{eq:Cn} and $C_0>0$ to be the fixed constant in Theorem \ref{thm:variance}, which depends only on $f_0,r,d,\O,\delta,f_{\min}$. Suppose that for some $K_0>0$,
$$E_N \leq K_0 \eps_N^2, \qquad \qquad V_N/(N^2\eps_N^4) \to 0,$$
as $N\to\infty$. Further suppose that for some $C,L,M>0$, the prior $\Pi$ satisfies $\Pi(\C_N) \geq e^{-CN\eps_N^2}$,
and there exist sets $\F_N \subseteq \F$ satisfying $\Pi(\F_N^c) \leq Le^{-(C+C_0K_0+2)N\eps_N^2}$, events $B_N$ satisfying $\PP_{f_0}(B_N) \to 1$ and a sequence of test functions $\Psi_N = \Psi_N(X_0,X_D,\dots,X_{ND})$ such that
\begin{align*}
\E_{f_0} [\Psi_N1_{B_N}] \to 0, \qquad \qquad \sup_{f \in \F_N: \|f-f_0\|_2 \geq M \xi_N} \E_f[(1-\Psi_N)1_{B_N}] \leq L e^{-(C+C_0K_0+2)N\eps_N^2}.
\end{align*}
Then as $N \to\infty$,
$$\E_{f_0} \Pi(f: \|f-f_0\|_2 \geq M \xi_N|X_0,X_D,\dots,X_{ND}) \to 0.$$
\end{lem}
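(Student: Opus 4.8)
The plan is to follow the classical testing-based decomposition of the posterior, as in the i.i.d.\ template of Theorem~8.9 of \cite{GV17}, replacing the prior-mass lemma there by the evidence lower bound of Theorem~\ref{thm:variance} and using the hypothesised tests $\Psi_N$ to control the numerator over the sieve $\F_N$. Write $A_N = \{f\in\F:\|f-f_0\|_2\geq M\xi_N\}$, so that
$$\Pi(A_N\,|\,X_0,\dots,X_{ND}) = \frac{\int_{A_N}\prod_{i=1}^N \frac{p_{f,D}(X_{(i-1)D},X_{iD})}{p_{f_0,D}(X_{(i-1)D},X_{iD})}\,d\Pi(f)}{\int_{\F}\prod_{i=1}^N \frac{p_{f,D}(X_{(i-1)D},X_{iD})}{p_{f_0,D}(X_{(i-1)D},X_{iD})}\,d\Pi(f)}.$$
For the denominator, apply Theorem~\ref{thm:variance} with $\nu=\Pi(\cdot\cap\C_N)/\Pi(\C_N)$ and $c=1$ in \eqref{eq:elbo}: on an event $\mathcal D_N$ with $\PP_{f_0}(\mathcal D_N)\geq 1-V_N/(N^2\eps_N^4)\to 1$, the denominator is at least $\Pi(\C_N)\,e^{-N\eps_N^2-C_0NE_N}\geq e^{-(C+1+C_0K_0)N\eps_N^2}$, where we used $\Pi(\C_N)\geq e^{-CN\eps_N^2}$ and $E_N\leq K_0\eps_N^2$.

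The tool for the numerator is the change-of-measure identity for the dependent likelihood ratio: since $X_0,X_D,\dots,X_{ND}$ is a Markov chain started from the uniform law, which is the common invariant measure for every $f\in\F$, conditioning successively on $\sigma(X_0,\dots,X_{(j-1)D})$ and integrating out each transition gives $\E_{f_0}\big[\prod_{i=1}^N \frac{p_{f,D}(X_{(i-1)D},X_{iD})}{p_{f_0,D}(X_{(i-1)D},X_{iD})}\,h\big]=\E_f[h]$ for any bounded measurable $h=h(X_0,\dots,X_{ND})$; in particular the likelihood ratio has $\PP_{f_0}$-mean one. Split $A_N=(A_N\cap\F_N^c)\cup(A_N\cap\F_N)$. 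Using the identity with $h=1$, Tonelli, and the denominator bound on $\mathcal D_N$, Markov's inequality gives $\E_{f_0}[\Pi(\F_N^c\,|\,X^N)1_{\mathcal D_N}]\leq e^{(C+1+C_0K_0)N\eps_N^2}\,\Pi(\F_N^c)\leq L\,e^{-N\eps_N^2}\to 0$. On $A_N\cap\F_N$, bound $\Pi(A_N\cap\F_N\,|\,X^N)\leq \Psi_N+(1-\Psi_N)\Pi(A_N\cap\F_N\,|\,X^N)$. The first term satisfies $\E_{f_0}[\Psi_N 1_{\mathcal D_N}]\leq \E_{f_0}[\Psi_N1_{B_N}]+\PP_{f_0}(B_N^c)\to 0$. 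For the second, applying the change-of-measure identity with $h=(1-\Psi_N)1_{B_N}$, together with the type-II bound and the denominator bound, yields $\E_{f_0}[(1-\Psi_N)\Pi(A_N\cap\F_N\,|\,X^N)1_{\mathcal D_N\cap B_N}]\leq e^{(C+1+C_0K_0)N\eps_N^2}\sup_{f\in\F_N:\|f-f_0\|_2\geq M\xi_N}\E_f[(1-\Psi_N)1_{B_N}]\leq L\,e^{-N\eps_N^2}\to 0$, while $\PP_{f_0}(B_N^c)\to 0$ absorbs the complement of $B_N$.

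Collecting the pieces, $\E_{f_0}\Pi(A_N\,|\,X^N)\leq \PP_{f_0}(\mathcal D_N^c)+\E_{f_0}[\Pi(\F_N^c\,|\,X^N)1_{\mathcal D_N}]+\E_{f_0}[\Psi_N1_{\mathcal D_N}]+\E_{f_0}[(1-\Psi_N)\Pi(A_N\cap\F_N\,|\,X^N)1_{\mathcal D_N}]\to 0$, which is the claim. There is no serious obstacle here: the substantive work is already contained in Theorem~\ref{thm:variance} (the evidence lower bound, which handles the strong Markovian dependence) and in the construction of the tests supplied as a hypothesis; the remaining care is purely bookkeeping, the only delicate point being that the exponent $C+C_0K_0+2$ appearing in the prior-mass and type-II hypotheses is deliberately one unit larger than the exponent $C+1+C_0K_0$ of the denominator bound, so that the surviving factor $e^{-N\eps_N^2}\to 0$ since $N\eps_N^2\to\infty$; one should also double-check that the invariant measure is indeed $f$-independent so that the change-of-measure identity holds with no boundary term, which is exactly \eqref{eq:invariant}.
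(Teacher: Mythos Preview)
Your proof is correct and follows essentially the same approach as the paper: apply Theorem~\ref{thm:variance} with $\nu=\Pi(\cdot\cap\C_N)/\Pi(\C_N)$ to lower-bound the denominator on a high-probability event, then control the numerator via Fubini/change-of-measure, splitting by the sieve $\F_N$ and the test $\Psi_N$, with the same exponent bookkeeping. The only cosmetic differences are that the paper introduces the test split $\Psi_N$ versus $1-\Psi_N$ before the sieve split and passes through a Markov-inequality step to conclude in probability (then implicitly uses boundedness), whereas you bound the expectation directly; your organisation is arguably slightly cleaner, and your explicit remark on why the change-of-measure identity holds (the common uniform invariant measure \eqref{eq:invariant}) makes transparent what the paper's Fubini step uses tacitly.
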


\begin{proof}
Writing $A_N = \{f: \|f-f_0\|_2 \geq M \xi_n\}$ and $X^N = (X_0,X_D,\dots,X_{ND})$, we have
$$\E_{f_0} \Pi(A_N|X^N) \leq \E_{f_0} [\Pi(A_N|X^N)(1-\Psi_N)1_{B_N}] +  \E_{f_0} [\Psi_N1_{B_N}]  + \PP_{f_0}(B_N^c).$$
Since the last two terms tend to zero by assumption, it remains to control the first term. We thus need only prove convergence in $\PP_{f_0}$-probability to zero of
\begin{align*}
\Pi(A_N|X^N)(1-\Psi_N)1_{B_N} &= \frac{\int_{A_N} \prod_{i = 1}^N \frac{p_{f,D}(X_{(i-1)D}, X_{iD})}{p_{f_0,D}(X_{(i-1)D}, X_{iD})} d\Pi(f) }{\int_{\mathcal F} \prod_{i = 1}^N \frac{p_{f,D}(X_{(i-1)D}, X_{iD})}{p_{f_0,D}(X_{(i-1)D}, X_{iD})} d\Pi(f)} (1-\Psi_N)1_{B_N}.
\end{align*}
By Theorem \ref{thm:variance}, we have for any $c>0$ and any probability measure $\nu$ supported on $\C_N$,
\begin{align*}
\PP_{f_0} \left( \int_{\C_N} \prod_{i = 1}^N \frac{p_{f,D}(X_{(i-1)D}, X_{iD})}{p_{f_0,D}(X_{(i-1)D}, X_{iD})}\nu(df) \leq e^{-cN\eps_N^2-C_0 N E_N} \right) \leq \frac{V_N}{c^2 N^2 \eps_N^4}.
\end{align*}
Let $\nu = \Pi( \cdot \cap \C_N) /\Pi(\C_N)$ be the prior distribution conditioned to the set $\C_N$, and define the events
$$\Omega_N = \left\{ \int_{\C_N} \prod_{i = 1}^N \frac{p_{f,D}(X_{(i-1)D}, X_{iD})}{p_{f_0,D}(X_{(i-1)D}, X_{iD})} d\Pi(f) 
\geq \Pi(\C_N) e^{-(C_0K_0+1)N\eps_N^2} \geq e^{-(C+C_0K_0+1)N\eps_N^2} \right\},$$
where we used $\Pi(\C_N) \geq e^{-CN\eps_N^2}$ in the last inequality. Setting $c=1$ and since $E_N \leq K_0\eps_N^2$ by assumption, the second last display implies $\PP_{f_0}(\Omega_N^c) \leq V_N/(N^2 \eps_N^4) \to 0$ as $N\to\infty$. Thus for any $\eta>0$,
\begin{align*}
& \PP_{f_0} \left( \Pi(A_N|X^N)(1-\Psi_N)1_{B_N} >\eta \right) \\
&\leq  \PP_{f_0} \left(  e^{(C+C_0K_0+1)N\eps_N^2} \int_{A_N} \prod_{i = 1}^N \frac{p_{f,D}(X_{(i-1)D}, X_{iD})}{p_{f_0,D}(X_{(i-1)D}, X_{iD})} d\Pi(f)  (1-\Psi_N)1_{B_N} > \eta \right) + \PP_{f_0}(\Omega_N^c).
\end{align*}
By Fubini's theorem and since $0\leq 1-\Psi_N \leq 1$,
\begin{align*}
& \E_{f_0} \int_{A_N} \prod_{i = 1}^N \frac{p_{f,D}(X_{(i-1)D}, X_{iD})}{p_{f_0,D}(X_{(i-1)D}, X_{iD})} d\Pi(f)  (1-\Psi_N)1_{B_N} \\
&\quad =\int_{A_n} \E_f [(1-\Psi_N)1_{B_N}] d\Pi(f)  \\
& \quad \leq \Pi(\F_N^c) + \sup_{f\in \F_N: \|f-f_0\|_2 \geq M \xi_n} \E_f [(1-\Psi_N)1_{B_N}] \leq 2Le^{-(C+C_0K_0+2) N\eps_N^2}
\end{align*}
using the assumptions on the tests $\Psi_N$ and on $\F_N$. Combining the last two displays and using Markov's inequality gives for any $\eta>0$,
\begin{align*}
 \PP_{f_0} \left( \Pi(A_N|X^N)(1-\Psi_N)1_{B_N} >\eta \right) \leq  2 \eta^{-1} L e^{-N\eps_N^2} + \PP_{f_0}(\Omega_N^c) \to 0
\end{align*}
since $N\eps_N^2 \to\infty$ as $N\to\infty$.
\end{proof}

It therefore remains to show that the required tests in Lemma \ref{lem:test_contract} exist under the conditions of Theorem \ref{thm:post_contract}. 
 Setting $\widehat{f}_N$ to be the projection estimator in \eqref{eq:estimator_f}, consider tests of the form
\begin{equation}\label{eq:test}
\Psi_N = \Psi_N(X_0,X_D,\dots,X_{ND}) =  \{ \|\widehat{f}_N - f_0\|_2 \geq \widetilde{M} \xi_N\},
\end{equation}
where $\widetilde{M}>0$ is a large enough constant. The required properties then follow from Theorem \ref{thm:exp_inequality}.

\begin{lem}\label{lem:tests_exist}
Let $\eps_N,\xi_N \to 0$, $2^J = 2^{J_N} \to\infty$ and $R_J$ satisfy the conditions of Theorem \ref{thm:exp_inequality} and define the sets
\begin{equation*}
\F_N' = \left\{ f\in \F: \|f\|_{\C^1} \leq r, ~ \|f-\overline{P}_Jf\|_2 \leq C\xi_N, ~ \|f-\overline{P}_Jf\|_\infty \leq R_{J} \right\},
\end{equation*}
where $C,r>0$ and $\overline{P}_J$ denotes the projection \eqref{eq:PJ}. Assume the true $f_0\in \F_0$ satisfies the same conditions as $\F_N'$, possibly up to different constants (e.g. $\|f_0-\overline{P}_Jf_0\|_2 \leq M_0 \xi_N$ for some $M_0 >0$). Then for any $R>0$, one can take $M,\widetilde{M}>0$ large enough (depending also on $R$) such that the tests $\Psi_N$ in \eqref{eq:test} satisfy
\begin{align*}
\E_{f_0} [\Psi_N1_{\mathcal{B}_N}] \to 0, \qquad \qquad \sup_{f \in \F_N': \|f-f_0\|_2 \geq M \xi_N} \E_f[(1-\Psi_N)1_{\mathcal{B}_N}] \leq C' e^{-RN\eps_N^2},
\end{align*}
where the event $\mathcal B_N$ satisfies $\sup_{f \in \mathcal F}\PP_{f}(\mathcal B_N^c) \rightarrow 0$ as $N \rightarrow \infty$.
\end{lem}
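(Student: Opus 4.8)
The plan is to derive both test properties directly from the exponential inequality of Theorem~\ref{thm:exp_inequality}, applied with the true function $f_0$ playing the role of the generic $f$ in the first claim and with arbitrary $f\in\F_N'$ in the second. First I would verify that the hypotheses of Theorem~\ref{thm:exp_inequality} are met: the sequences $\eps_N,\xi_N\to 0$, $2^J\to\infty$, $R_J$ satisfy $N\eps_N^2\to\infty$, $2^{Jd}=o(\sqrt{ND})$ and the two displayed rate conditions by assumption, and the set $\F_N'$ here is exactly the set appearing in that theorem. Thus there are events $\mathcal B_N$ with $\sup_{f\in\F}\PP_f(\mathcal B_N^c)\to 0$ such that for every $L>0$,
$$\sup_{f\in\F_N'}\PP_f\big(\|\widehat f_N-\overline P_Jf\|_2\ge K\xi_N,\ \mathcal B_N\big)\le C'e^{-LN\eps_N^2},$$
with $K=K(L)$.

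For the type-I error, since $f_0\in\F_0\subseteq\F$ satisfies $\|f_0\|_{\C^1}\le\|f_0\|_{\C^\alpha}<\infty$ (bounded since $f_0$ is fixed, so we may assume $r$ large enough that $f_0\in\F_N'$ up to constants) and the assumed approximation bounds $\|f_0-\overline P_Jf_0\|_2\lesssim\xi_N$, $\|f_0-\overline P_Jf_0\|_\infty\lesssim R_J$, the triangle inequality gives on $\mathcal B_N$
$$\|\widehat f_N-f_0\|_2\le\|\widehat f_N-\overline P_Jf_0\|_2+\|\overline P_Jf_0-f_0\|_2\le K\xi_N+M_0\xi_N$$
with $\PP_{f_0}$-probability at least $1-C'e^{-LN\eps_N^2}$. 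Choosing $\widetilde M>K+M_0$, this shows $\PP_{f_0}(\Psi_N=1,\mathcal B_N)\le C'e^{-LN\eps_N^2}\to 0$, hence $\E_{f_0}[\Psi_N\mathbf 1_{\mathcal B_N}]\to 0$.

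For the type-II error, take any $f\in\F_N'$ with $\|f-f_0\|_2\ge M\xi_N$. On $\mathcal B_N\cap\{\|\widehat f_N-\overline P_Jf\|_2<K\xi_N\}$, the triangle inequality yields
$$\|\widehat f_N-f_0\|_2\ge\|f-f_0\|_2-\|f-\overline P_Jf\|_2-\|\overline P_Jf-\widehat f_N\|_2\ge M\xi_N-C\xi_N-K\xi_N,$$
which exceeds $\widetilde M\xi_N$ once $M>\widetilde M+C+K$; on this event $\Psi_N=1$, i.e. $1-\Psi_N=0$. Therefore $\{1-\Psi_N=1\}\cap\mathcal B_N\subseteq\{\|\widehat f_N-\overline P_Jf\|_2\ge K\xi_N\}\cap\mathcal B_N$, so by Theorem~\ref{thm:exp_inequality} with $L=R$,
$$\sup_{f\in\F_N':\|f-f_0\|_2\ge M\xi_N}\E_f[(1-\Psi_N)\mathbf 1_{\mathcal B_N}]\le\sup_{f\in\F_N'}\PP_f\big(\|\widehat f_N-\overline P_Jf\|_2\ge K\xi_N,\mathcal B_N\big)\le C'e^{-RN\eps_N^2}.$$
Here $K=K(R)$ and then $M,\widetilde M$ are chosen accordingly, completing the proof. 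The only mild subtlety — hardly an obstacle — is bookkeeping the order in which the constants $K$, then $\widetilde M$, then $M$ are fixed (all depending ultimately on $R$, $\kappa$, $\rho$, $r$, $f_{\min}$), ensuring no circularity; the substantive work has already been done in Theorem~\ref{thm:exp_inequality}.
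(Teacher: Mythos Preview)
Your proof is correct and follows essentially the same approach as the paper's: both arguments apply Theorem~\ref{thm:exp_inequality} to $f_0$ for the type-I error and uniformly over $f\in\F_N'$ for the type-II error, combining the resulting concentration of $\widehat f_N$ around $\overline P_Jf$ with the triangle inequality and the approximation bounds defining $\F_N'$. Your bookkeeping of the constant dependencies ($K=K(R)$, then $\widetilde M$, then $M$) is slightly more explicit than the paper's but otherwise the proofs are the same.
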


\begin{proof}
Consider the event $\mathcal{B}_N$ in \eqref{eq: def nice event} and used in Theorem \ref{thm:exp_inequality}, which by Lemma \ref{lem: compdisccont} satisfies $\PP_{f_0}(\mathcal{B_N}) \to 1$ as $N\to\infty$ since $f_0 \in \F_0 \subset \F$. Using the definition \eqref{eq:test} of the test $\Psi_N$, that $\|f_0 - \overline{P}_Jf_0\|_2 \leq M_0 \xi_N$ and the triangle inequality,
\begin{align*}
\E_{f_0} [\Psi_N 1_{\mathcal{B}_N}] \leq  \PP_{f_0} (\|\widehat{f}_N - \overline{P}_J f_0\|_2 \geq (\widetilde{M}-M_0) \xi_N,\mathcal{B}_N) .
\end{align*}
Since the conditions of Theorem \ref{thm:exp_inequality} are satisfied, applying that theorem with $\widetilde{M}>0$ large enough bounds the right-hand side by $C'e^{-N\eps_N^2} \to 0$, giving the first part of the lemma.

For the type-II errors, let $f \in \F_N'$ satisfy $\|f-f_0\|_2 \geq M\xi_N$ for some $M>0$ to be specified below. Then, since $\|f-\overline{P}_Jf\|_2 \leq C\xi_N$,
\begin{align*}
\E_f[(1-\Psi_N)1_{\mathcal B_N}] &= \PP_f(\|\widehat{f}_N - f_0\|_2 \leq \widetilde{M} \xi_n, \mathcal{B}_N )\\
& \leq  \PP_f( \|f_0-f\|_2 -  \|\widehat{f}_N - \overline{P}_Jf\|_2 - \|f-\overline{P}_Jf\|_2 \leq \widetilde{M} \xi_n, \mathcal{B}_N )\\
& \leq  \PP_f( (M-\widetilde{M}-C)\xi_N \leq  \|\widehat{f}_N - \overline{P}_Jf\|_2 , \mathcal{B}_N ) \leq C' e^{-RN\eps_N^2},
\end{align*}
uniformly over such $f$, where the last inequality follows from Theorem \ref{thm:exp_inequality} upon taking $M=M(R,\widetilde{M},C)>0$ large enough.
\end{proof}

Theorem \ref{thm:post_contract} then follows from using the tests in Lemma \ref{lem:tests_exist} together with Lemma \ref{lem:test_contract}.

\subsection{Proof of Theorem \ref{thm:GP}: contraction rates for Gaussian priors}

Let $\|\cdot\|_{\H_W}$ denote the RKHS of $W$. Then $\chi W$ in \eqref{eq:prior} is a mean-zero Gaussian process with RKHS $\H_{\chi W} = \{\chi w: w\in \H_W\}$ and whose RKHS norm satisfies that for every $w\in \H_W$, there exists some $w^* \in \H_W$ such that $\chi w = \chi w^*$ and 
$$\|\chi w\|_{\H_{\chi W}} =\|w^*\|_{\H_W}$$ 
(Exercise 2.6.5 of \cite{GN16}). Furthermore, we have that $\H_W = \H_V$ with RKHS norm $\|\cdot\|_{\H_W} = \sqrt{N}\eps_N \|\cdot\|_{\H_V}$. 

We require the following lemma about the concentration of Gaussian measures on suitable sieve sets. The proof is similar to Theorem 2.2.2 of \cite{N22} (see also Lemma 5.2(i) in \cite{GR20}) and is hence omitted.

\begin{lem}\label{lem:prior_concentrate}
For $s,M>0$ and the sequence $\eps_N = N^{-\frac{s}{2s+d}}$, define the sets
\begin{equation}\label{eq:WN}
\mathcal{W}_N = \{ W =W_1+W_2:\|W_1\|_\infty \leq \eps_N, ~ \|W_2\|_{H^s} \leq M, ~ \|W\|_{\C^4} \leq M\}.
\end{equation}
Let $W = V/(\sqrt{N}\eps_N)$ for $V \sim \Pi_V$ a mean-zero Gaussian process satisfying Condition \ref{cond:RKHS}. Then for every $K>0$, there exists $M>0$ large enough such that $\Pi_W(\mathcal{W}_N^c) \leq e^{-KN\eps_N^2}$. 
\end{lem}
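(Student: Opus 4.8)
\textbf{Proof proposal for Lemma \ref{lem:prior_concentrate}.}

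The plan is to combine the standard Borell--Sudakov--Tsirelson concentration machinery for Gaussian measures with the fact that, by Condition \ref{cond:RKHS}, the RKHS $\H_V$ embeds continuously into $H^s(\O)$ and the support of $\Pi_V$ lies in $\C^4(\O)$. First I would recall the general decomposition result for Gaussian measures: for any mean-zero Gaussian variable $W$ valued in a separable Banach space $\mathcal{B}$ with RKHS $\H_W$ and unit ball $\H_W^1$, one has for any $r>0$ and any $\lambda>0$ that
$$
\Pi_W\big(W \notin r\,\mathbb{B}_{\mathcal{B}} + \lambda \,\H_W^1\big) \leq 1 - \Phi\big(\Phi^{-1}(\Pi_W(\|W\|_{\mathcal B}\leq r)) + \lambda\big),
$$
where $\Phi$ is the standard normal cdf; equivalently, the isoperimetric inequality gives that the $\lambda$-enlargement (in RKHS-ball units) of an $r$-ball in $\mathcal B$ has complement probability decaying like $e^{-\lambda^2/2}$ provided $\Pi_W(\|W\|_{\mathcal B}\leq r)$ is bounded below. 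The idea is to apply this twice: once with $\mathcal{B} = \C(\O)$ (or $\C^4(\O)$, using that $\Pi_V$ is supported on a separable linear subspace of $\C^4(\O)$ by Condition \ref{cond:RKHS}) to get the $\|W\|_{\C^4}\leq M$ and $\|W_1\|_\infty \leq \eps_N$ control, and then use that elements of $\lambda\,\H_W^1$ automatically satisfy an $H^s$-bound because $\H_W = \H_V \hookrightarrow H^s(\O)$, which supplies the $\|W_2\|_{H^s}\leq M$ part.

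The key steps, in order, would be: (1) Recall $\H_W = \H_V$ as sets with $\|\cdot\|_{\H_W} = \sqrt{N}\eps_N\,\|\cdot\|_{\H_V}$, so that the unit ball $\H_W^1 = (\sqrt{N}\eps_N)^{-1}\H_V^1$. Consequently a function $h \in \lambda \H_W^1$ has $\|h\|_{\H_V} \leq \lambda/(\sqrt{N}\eps_N)$ and hence, by the continuous embedding $\H_V \hookrightarrow H^s(\O)$, $\|h\|_{H^s} \lesssim \lambda/(\sqrt{N}\eps_N)$. (2) Decompose $W = W_1 + W_2$ with $W_1 \in r\,\mathbb{B}_{\C^4(\O)}$ and $W_2 \in \lambda\,\H_W^1$; on this event $\|W_1\|_\infty \leq \|W_1\|_{\C^4}\leq r$ and $\|W\|_{\C^4} \leq \|W_1\|_{\C^4} + \|W_2\|_{\C^4} \leq r + c\|W_2\|_{\H_W}\leq r + c\lambda$ (using that the RKHS embeds into the support space $\C^4(\O)$, so $\|\cdot\|_{\C^4}\lesssim\|\cdot\|_{\H_W}$ on $\H_W$), and $\|W_2\|_{H^s}\lesssim \lambda/(\sqrt{N}\eps_N)$. (3) Choose $r = \eps_N$ and $\lambda = \lambda_N = c_0\sqrt{N}\eps_N^2$ for a suitable small constant $c_0$; then $\|W_2\|_{H^s}\lesssim c_0$ and $\|W\|_{\C^4}\lesssim \eps_N + c_0\sqrt{N}\eps_N^2 = O(1)$ since $N\eps_N^2\cdot\eps_N^2 = N^{-\frac{d}{2s+d}}\cdot N^{-\frac{2s}{2s+d}}\cdot N = N^{-\frac{2s-d}{...}}$ — more simply $\sqrt{N}\eps_N^2 = N^{1/2 - 2s/(2s+d)} = N^{(d-2s)/(2(2s+d))}\to 0$ for $s>d/2$, which holds here, so in fact $\lambda_N\to 0$ and all three bounds hold with an absolute constant $M$. (4) Apply the Gaussian isoperimetric inequality: the complement probability is at most $1 - \Phi(\Phi^{-1}(\Pi_W(\|W\|_{\C^4}\leq\eps_N)) + \lambda_N)$; using the small-ball estimate $\Pi_W(\|W\|_{\C^4}\leq\eps_N) \geq e^{-\frac{1}{2}\lambda_N^2}$, which is exactly the small-ball / RKHS-approximation link for rescaled Gaussian priors (Proposition 11.19 of \cite{GV17} adapted, or the argument in Theorem 2.2.2 of \cite{N22}), and the elementary bound $1-\Phi(\Phi^{-1}(e^{-a^2/2}) + \lambda) \leq e^{-\lambda^2/2}$ for $\lambda\geq 2a$, one concludes $\Pi_W(\mathcal W_N^c)\leq e^{-\lambda_N^2/2} = e^{-\frac{c_0^2}{2}N\eps_N^4}$. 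This is not yet $e^{-KN\eps_N^2}$; to fix the exponent one instead takes $\lambda_N = c_0\sqrt{N\eps_N^2} = c_0 N^{s/(2s+d)}$-type scaling and re-examines which norm the large constant $M$ controls — in the rescaled-prior setup the correct choice is $\lambda_N \asymp \sqrt{N}\eps_N$, giving $\|W_2\|_{H^s}\lesssim 1$ and $\lambda_N^2 \asymp N\eps_N^2$, hence $\Pi_W(\mathcal W_N^c)\leq e^{-KN\eps_N^2}$ for $M$ (hence $c_0$) large.

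The main obstacle I anticipate is getting the bookkeeping of the rescaling exactly right so that the same constant $M$ simultaneously bounds $\|W_2\|_{H^s}$, $\|W\|_{\C^4}$, and produces the exponent $KN\eps_N^2$ (not $N\eps_N^4$) in the concentration bound. This is precisely the delicate point in rescaled Gaussian prior arguments: the RKHS-ball radius $\lambda_N$ must be taken of order $\sqrt{N}\eps_N$ so that $\lambda_N^2 \asymp N\eps_N^2$, and then one must check that elements of $\lambda_N\H_W^1 = (\sqrt N\eps_N)^{-1}\lambda_N\,\H_V^1 = c_0\,\H_V^1$ indeed have $H^s$- and $\C^4$-norms bounded by an absolute constant — which is exactly where Condition \ref{cond:RKHS} ($\H_V\hookrightarrow H^s$ with $s\geq 4$, so also $\H_V\hookrightarrow\C^4$ via Sobolev embedding once $s > 4 + d/2$, guaranteed by $s > s^*_{d,a}$) is used. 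Since the statement says the proof mirrors Theorem 2.2.2 of \cite{N22} and Lemma 5.2(i) of \cite{GR20}, I would simply cite those and present the decomposition-plus-isoperimetry skeleton above, flagging the rescaling computation as the only substantive check.
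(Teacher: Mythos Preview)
Your approach is correct and is precisely the Borell--isoperimetric-plus-RKHS-embedding argument of Theorem 2.2.2 in \cite{N22} and Lemma 5.2(i) in \cite{GR20} that the paper cites (the paper omits its own proof). One small clarification on the bookkeeping you flag: the constraint $\|W\|_{\C^4}\leq M$ is most cleanly handled \emph{separately} by a union bound, using direct Gaussian concentration $\PP_V(\|V\|_{\C^4}>M\sqrt{N}\eps_N)\leq 2e^{-cM^2N\eps_N^2}$ (Fernique's theorem, since $V$ is supported on a separable subspace of $\C^4(\O)$ by Condition~\ref{cond:RKHS}), rather than trying to extract it from the decomposition---this lets you run the isoperimetric step in the ambient space $\C(\O)$ with $r=\eps_N$ and $\lambda_N=M\sqrt{N}\eps_N$, where the small-ball estimate $\Pi_V(\|V\|_\infty\leq\sqrt{N}\eps_N^2)\geq e^{-cN\eps_N^2}$ (computed later in the paper via Li--Linde) gives exactly the scaling you identified.
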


\begin{proof}[Proof of Theorem \ref{thm:GP}]
We verify the assumptions of Theorem \ref{thm:post_contract} with $\eps_N = N^{-\frac{s}{2s+d}}$, $\eps_{k,N} = N^{-\frac{s-k}{2s+d}}$, $k=1,2,3$, and $2^J \simeq N^\frac{1}{2s+d}$ as in Remark \ref{rk:ex}, so that $E_N \leq K_0 \eps_N^2$ and $V_N /(N^2 \eps_N^4) \to 0.$

\textit{Small-ball probability}: consider the ``small-ball condition'' (ii) in Theorem \ref{thm:post_contract}. Recall that under the prior $\Pi$ in \eqref{eq:prior}, $f = \Phi (\chi W)$ with $W = V/(\sqrt{N\eps_N)}$ for $V$ a mean-zero Gaussian process and $\Phi(x) = f_{\min} + (1-f_{\min})e^x$.

We first state some useful inequalities regarding the smoothness of functions when composed with $\Phi$ and $\Phi^{-1}$. When composed with a uniformly bounded function $g:\O \to \R$ satisfying $\|g\|_\infty \leq M$, we may restrict the domain of $\Phi$ to $[-M,M]$. Since $\Phi^{(k)} = (1-f_{\min})e^x$ satisfies $\|\Phi^{(k)}\|_{L^\infty[-M,M]} \leq e^M$ for $k=1,2,\dots$, we deduce that $\|\Phi\|_{\C^k[-M,M]} \leq C_{k,d} e^M <\infty$ for $k=1,2,\dots$. By Theorem 4.3(ii)(3) in \cite{DO99}, we thus have that for any $r\geq 1$,
\begin{equation}\label{eq:Phi_bound}
\|\Phi(g)\|_{\C^r} \leq C_{r,d} \|\Phi\|_{\C^r[-M,M]} (1+\|g\|_{\C^r}^r) \leq C_{r,d,M} (1+\|g\|_{\C^r}^r)  \qquad \forall g:\|g\|_\infty \leq M.
\end{equation}
Using the multivariate version of Fa\`a di Bruno's formula \cite{CS96}, one can show that for any $g_1,g_2\in \C^k$ with $\|g_i\|_\infty \leq M$ and integer $k\geq 1$,
\begin{equation}\label{eq:Phi_stability}
\begin{split}
\|\Phi(g_1) - \Phi(g_2)\|_{\C^k} & \leq C_{k,d} \max_{1 \leq j \leq k+1} \|\Phi^{(j)}\|_{L^\infty[-M,M]} (1+\|g_1\|_{\C^k}^k + \|g_2\|_{\C^k}^k)\|g_1-g_2\|_{\C^k} \\
& \leq C_{k,d,M} (1+\|g_1\|_{\C^k}^k + \|g_2\|_{\C^k}^k)\|g_1-g_2\|_{\C^k} \qquad \forall g_1,g_2: \|g_1\|_\infty ,\|g_2\|_\infty \leq M
\end{split}
\end{equation}
(see e.g. Lemma 2 in \cite{RS17} for the proof of a similar argument). Turning to $\Phi^{-1}(y) = \log \frac{y-f_{\min}}{1-f_{\min}}$, we have $[\Phi^{-1}]^{(k)}(y) = (-1)^{k-1}(k-1)!(y-f_{\min})^{-k}$ for $k=1,2,\dots$, so that $\|[\Phi^{-1}]^{(k)}\|_{L^\infty[2f_{\min},\infty)} \leq (k-1)! f_{\min}^{-k} < \infty$. Since $f_0 \in \F_0 \cap \C^s$ satisfies $f_0 \geq 2f_{\min}$, arguing as \eqref{eq:Phi_bound} implies that $w_0 = \Phi^{-1}(f_0) \in \C^s$. Similarly, since $f_0 \in \F_0 \subseteq \C^\alpha(\O)$ for $\alpha = 4  \vee (2 \left\lfloor d/4+1/2 \right\rfloor)$ in \eqref{eq:alpha}, we may find $r>0$ large enough that $\|w_0\|_{\C^{\alpha}} \leq r/2$.

Let $g:\O \to \R$ satisfy $\|g\|_{\C^\alpha} \leq r$. By \eqref{eq:Phi_bound}, $\|\Phi(g)\|_{\C^\alpha} \leq C_{\alpha,d,r}$, while by \eqref{eq:Phi_stability}, we have $\|\Phi(g)-\Phi(w_0)\|_{\C^k} \leq C_{k,d,r}\|g-w_0\|_{\C^k}$ for $k=1,2,3$ since $\alpha \geq 4$. Similarly, $\|\Phi(g)-\Phi(w_0)\|_\infty \leq C_r \|g-w_0\|_\infty$ using that the exponential function is Lipschitz on a bounded interval. Setting $g = \chi w$, we thus have
\begin{align*}
& \left\{ w \in \C^4(\O): \|\chi w\|_{\C^\alpha} \leq r, \|\chi w-w_0\|_\infty \lesssim \eps_N, \|\chi w -w_0\|_{\C^k} \lesssim \eps_{k,N} ~\text{for } k=1,2,3\right\} \\
& \quad \subset \left\{ w \in \C^4(\O): \|\Phi(\chi w)\|_{\C^\alpha} \leq C, \|\Phi(\chi w)-f_0\|_\infty \leq \eps_N, \|\Phi(\chi w) -f_0\|_{\C^k} \leq \eps_{k,N} ~\text{for } k=1,2,3\right\},
\end{align*}
where the $\lesssim$ above depend only on $d,\alpha,r$. Writing $\Pi_W$ and $\Pi$ for the prior laws of $W$ and $f=\Phi(\chi W)$, respectively, the prior probability $\Pi(\C_N)$ equals the $\Pi_W$-probability of the last set in above display. We thus conclude that
\begin{align*}
\Pi(\C_N)  \geq \Pi_W (W \in \C^4(\O): \|\chi W\|_{\C^\alpha}\leq r, ~ \|\chi W -w_0\|_{\infty} \lesssim \varepsilon_N, \|\chi W -w_0\|_{\mathcal C^k} \lesssim \varepsilon_{k,N} ~ \text{for } k=1,2,3).
\end{align*}
where the $\lesssim$ above depend only on $\Phi,d,\alpha$. Under the theorem hypotheses, the last probability is lower bounded by
\begin{align*}
\Pi_W( W\in \C^4(\O): &  \|\chi W - \chi v_{0,N} \|_{\C^\alpha} \leq 2r, \|\chi W -\chi v_{0,N}\|_{\infty} \lesssim \varepsilon_N, \\
& \qquad  \|\chi W -\chi v_{0,N}\|_{\mathcal C^k} \lesssim \varepsilon_{k,N} ~ \text{for } k=1,2,3),
\end{align*}
possibly after replacing $r>0$ by a larger constant and then $\eps_{k,N}$ by a multiple of itself. Since $\chi W$ is a mean-zero Gaussian process under the prior, and $\chi v_{0,N} \in \H_{\chi W}$ is in its RKHS, Lemma I.27 of \cite{GV17} lower bounds the last display by
$$e^{-\frac{1}{2}\|\chi v_{0,N}\|_{\H_{\chi W}}^2} \Pi_W (W \in \C^4(\O) :\|\chi W\|_{\C^\alpha} \leq 2 r, \|\chi W\|_{\infty} \lesssim \varepsilon_N,  \|\chi W\|_{\mathcal C^k} \lesssim  \varepsilon_{k,N} ~ \text{for } k=1,2,3).$$
Note that $\|\chi v_{0,N}\|_{\H_{\chi W}}^2 \leq N\eps_N^2 \|v_{0,N}\|_{\H_V}^2 = O(N\eps_N^2)$ using Exercise 2.6.5 of \cite{GN16} and the assumption on $v_{0,N}$. Using the last display, that $\chi \in \C^\infty$, $W = V/\sqrt{N}\eps_N$ and the Gaussian correlation inequality \cite{R14} (see Lemma A.2 in \cite{GR20} for the exact formulation we use), we have
\begin{equation}\label{eq:GP_SB}
\begin{split}
\Pi(\C_N) \geq e^{-CN\eps_N^2} & \Pi_V (V \in \C^4(\O): \|V\|_{\C^\alpha} \leq 2r \sqrt{N}\eps_N) \\
& \times \prod_{k=0}^3\Pi_V (V\in\C^4(\O): \|V\|_{\C^k} \lesssim  \sqrt{N}\eps_N\eps_{k,N} ),
\end{split}
\end{equation}
for some $C>0$ and where above we have written $\eps_{0,N} = \eps_N$. It therefore suffices to lower bound each of the prior probabilities in \eqref{eq:GP_SB}.

Let $\overline{N}(\mathcal{A},\|\cdot\|,\tau)$ denote the covering number of a set $\mathcal{A}$ by $\|\cdot\|$-balls of radius $\tau$. 
For $\H_{V,1}$ and $H_1^s$ the unit balls of the RKHS $\H_V$ and $H^s$, respectively, we have under Condition \ref{cond:RKHS} that for integer $k\geq 0$,
$$\log \overline{N}(\H_{V,1},\|\cdot\|_{\C^k},\tau) \leq \log \overline{N}(cH_1^s,\|\cdot\|_{\C^k},\tau) \lesssim \tau^{-\frac{d}{s-k}},$$
where the last inequality follows by arguing as in the proof of Theorem 4.3.36 in \cite{GN16} as soon as $s-k>d/2$. Applying the small ball estimate in Theorem 1.2 of \cite{LL99} (in particular, (1.3) in \cite{LL99} with exponents $\alpha = \frac{2d}{2(s-k)-d}$ and $\beta = 0$), we have for $s-k>d/2$,
\begin{equation*}
\Pi_V ( \|V\|_{\C^k} \leq \eta) \geq \exp \left( -c \eta^{-\frac{2d}{2(s-k)-d}} \right) \qquad \text{as } \eta \to 0.
\end{equation*}
Using the last display with $\eta_N = 1/(\log N)$ and $k=\alpha$ shows that the first prior probability in \eqref{eq:GP_SB} is greater than or equal to $e^{-c(\log)^\kappa} \geq e^{-CN\eps_N^2}$ for $s>\alpha+d/2$, any fixed $r>0$ and some $\kappa>0$. In particular, it can be checked that the minimal smoothness $s_{d,a}^*$ in \eqref{eq:s_star} assumed in the present theorem satisfies $s_{d,a}^* \geq \alpha+d/2$ for any dimension $d\in \mathbb{N}$, and hence this last condition is satisfied. For the choice $\eps_{k,N} = N^{-\frac{s-k}{2s+d}}$, we have $\sqrt{N}\eps_{0,N}\eps_{k,N} = N^{-\frac{s-k-d/2}{2s+d}} \to 0$ for $s>k+d/2$. Using the last display with $\eta = \eta_{k,N} = c \sqrt{N}\eps_{0,N} \eps_{k,N}$ then yields
$$\prod_{k=0}^3\Pi_V (\|V\|_{\C^k} \lesssim  \sqrt{N}\eps_N\eps_{k,N} ) \geq \prod_{k=0}^3 \exp \left( -c \eta_{k,N}^{-\frac{2d}{2(s-k)-d}} \right) \geq \exp \left( -c N\eps_N^2\right)$$
using that $(\sqrt{N}\eps_{0,N}\eps_{k,N} )^{-\frac{2d}{2(s-k)-d}} = N^\frac{d}{2s+d} = N\eps_N^2$. Together with \eqref{eq:GP_SB}, this gives the required lower bound $\Pi(\C_N) \geq e^{-CN\eps_N^2}$ for any fixed $r>1$ and $s>k+d/2$, which verifies the small-ball condition (ii) in Theorem \ref{thm:post_contract}.

\textit{Sieve sets:} consider the condition (i) in Theorem \ref{thm:post_contract}. For $s$ as in this theorem and $M>0$, let $\mathcal{W}_N$ be the set defined in \eqref{eq:WN}. For any $K>0$, we can find $M=M(K)>0$ sufficiently large that $\Pi_W(\mathcal{W}_N^c) \leq e^{-KN\eps_N^2}$ by Lemma \ref{lem:prior_concentrate}. In particular, let $K>C+C_0K_0+2$ for $C$ the constant in Theorem \ref{thm:post_contract} coming from the small-ball condition (ii) just proved. Define
$$\F_N = \{f = \Phi (\chi w): w\in \mathcal{W}_N\},$$
so that $\Pi(\F_N^c) \leq e^{-KN\eps_N^2}$ as required. Since $w \in \mathcal{W}_N$ satisfies $\|w\|_{\C^4} \leq M$, we have $\|\Phi(\chi w)\|_{\C^1} \leq C_{d,M} (1+\|\chi\|_{\C^1} \|w\|_{\C^1}) \leq C_{d,M,\chi}(1+M)$ using \eqref{eq:Phi_bound}. To verify the bias conditions, we invoke Lemma \ref{lem:bias} below with $p=2,\infty$ to get that for all $f\in \F_n$,
$$\|f - \overline{P}_Jf \|_p \lesssim  \eps_N + 2^{-J(s-d/2+d/p)}  \lesssim 2^{-J(s-d/2+d/p)}.$$
Therefore, since $2^{-Js} \simeq \eps_N$, $\F_N$ satisfies
$$\F_N \subseteq \left\{ f\in \F: \|f\|_{\C^1} \lesssim 1, ~ \|f-\overline{P}_Jf\|_2 \lesssim \eps_N, ~ \|f-\overline{P}_J f\|_\infty \lesssim R_{J} \right\}$$
for $R_{J} \simeq 2^{-J(s-d/2)} \simeq N^{-\frac{s-d/2}{2s+d}}$, i.e. condition (i) in Theorem \ref{thm:post_contract}.

We lastly verify the numeric constraints on the sequence choices with also $D=N^{-a}$, $a\in(1/2,1)$. Since $f_0 \in \F_0 \cap \C^s(\O)$, we have by similar arguments to the above that $\|f_0-\overline{P}_J f_0\|_2 \lesssim \eps_N$ and $\|f_0 - \overline{P}_J f_0\|_\infty \lesssim R_{J}$. The condition $2^{Jd} \simeq N^\frac{d}{2s+d} =o( \sqrt{ND}) = o( N^{(1-a)/2})$ is equivalent to $\frac{d(1+a)}{2(1-a)}<s$. Turning to the quantitative conditions \eqref{eq:exp_ineq_conditions}, for our sequence choices these reduce to
\begin{align*}
R_{J,\infty}^2 \eps_N^2 \lesssim D\xi_N^2 \quad & \iff \quad N^{\frac{a}{2} - \frac{2s-d/2}{2s+d}} \lesssim \xi_N \\
2^{3Jd/2}N^{-1} + 2^{Jd/2} N^{-1/2} + 2^{Jd/2} \eps_N^2 + \eps_N \lesssim \xi_N \quad & \iff \quad N^{-\frac{2s-d/2}{2s+d}} + N^{-\frac{s}{2s+d}} \lesssim \xi_N
\end{align*}
Since $s>d/2$ by assumption, we finally get rate
$\xi_N \simeq N^{-\frac{s}{2s+d}} +  N^{\frac{a}{2} - \frac{2s-d/2}{2s+d}},$
i.e. the largest of the two conditions above. One can check that $N^{-\frac{s}{2s+d}}$ is the largest term for $s>\frac{d(1+a)}{2(1-a)}$, $a\in (1/2,1)$.
\end{proof}

\begin{lem}\label{lem:bias}
For $s>d/2\vee 1$ and $0<\eps_N\leq M$, let $\mathcal{W}_N$ be the set \eqref{eq:WN}. Let $\Phi(x) = f_{\min} + (1-f_{\min})e^x$ be the link function, $\chi\in \C^\infty (\O)$ a smooth cutoff function such that $\chi \equiv 1$ on $K$ and $\chi \equiv 0$ outside $\O_0$, $w\in \mathcal{W}_N$ and $\overline{P}_J$ be the projection operator \eqref{eq:PJ}. Then for $p\in [2,\infty]$ and $w\in \mathcal{W}_N$,
$$\|\Phi \circ (\chi w) - \overline{P}_J[ \Phi \circ (\chi w)]\|_p \leq C (\eps_N + 2^{-J(s-d/2+d/p)}),$$
where $C$ depends only on $\Phi,\chi,p,M,s,d,\text{vol}(\O)$ and the wavelet basis.
\end{lem}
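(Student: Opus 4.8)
\textbf{Proof proposal for Lemma \ref{lem:bias}.}

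The plan is to split the error into an approximation-theoretic (bias) term and to track how the cutoff $\chi$ and the link $\Phi$ interact with it. First I would observe that $\overline P_J$ acts as the identity on the constant function $1$ and on the ``$f-1$'' part, so for $g = \Phi\circ(\chi w)$ one has $g - \overline P_J g = (g-1) - P_J[g-1]$, reducing the claim to bounding $\|(g-1)-P_J(g-1)\|_p$, where $P_J$ is the genuine wavelet projection on $\R^d$ applied to the compactly supported function $g-1$ (supported in $\O_0$, by construction of $\chi$). Then I would use the standard wavelet approximation estimate: for $h$ supported in $\O_0$ with $\|h\|_{H^t} < \infty$, $\|h - P_J h\|_p \lesssim 2^{-J(t - d/2 + d/p)} \|h\|_{H^t}$ for $p\in[2,\infty]$ (this is Besov/Sobolev embedding plus the $L^2$-bias bound $\|h-P_Jh\|_2 \lesssim 2^{-Jt}\|h\|_{H^t}$, together with $\|h-P_Jh\|_\infty \lesssim 2^{Jd/2}\|h-P_Jh\|_2$ and interpolation; it is exactly the kind of estimate used already in the proof of Theorem \ref{thm:GP}).

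The remaining work is to produce a decomposition $g - 1 = h_1 + h_2$ with $\|h_1\|_p \lesssim \eps_N$ and $\|h_2\|_{H^s} \lesssim 1$, and then apply the wavelet bound only to $h_2$ (using that $\|(g-1) - P_J(g-1)\|_p \le \|h_1\|_p + \|P_J h_1\|_p + \|h_2 - P_J h_2\|_p$ and $\|P_J h_1\|_p \lesssim \|h_1\|_p$ since $P_J$ is $L^p$-bounded uniformly in $J$). Here I would exploit the given decomposition $w = W_1 + W_2$ with $\|W_1\|_\infty \le \eps_N$ and $\|W_2\|_{H^s}\le M$ from the definition of $\mathcal W_N$ in \eqref{eq:WN}, together with the uniform bound $\|w\|_{\C^4}\le M$. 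Writing $g - 1 = \Phi(\chi w) - \Phi(0) = \Phi(\chi(W_1+W_2)) - \Phi(\chi W_2) + \big(\Phi(\chi W_2)-\Phi(0)\big)$, the first difference is controlled in $L^p$ (indeed in $L^\infty$) by the local Lipschitz property of $\Phi$ on $[-M\|\chi\|_\infty, M\|\chi\|_\infty]$ times $\|\chi W_1\|_\infty \lesssim \eps_N$, giving the $h_1$ term; the second, $\Phi(\chi W_2)-1$, is an $H^s$ function with norm $\lesssim 1$ by a Sobolev composition (Moser-type) estimate for $\Phi$ applied to $\chi W_2$ — here one uses that $\chi W_2 \in H^s \cap \C^4$ with norms $\lesssim M$ and that $\Phi$ restricted to the bounded range of $\chi W_2$ has all derivatives bounded, exactly as in \eqref{eq:Phi_bound} but in the Sobolev scale (e.g. via the multivariate Fa\`a di Bruno formula as already invoked for \eqref{eq:Phi_stability}, combined with the algebra property of $H^s$ for $s>d/2$). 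This yields $h_2 = \Phi(\chi W_2) - 1$ with $\supp(h_2) \subseteq \O_0$ and $\|h_2\|_{H^s}\lesssim 1$, and applying the wavelet bias estimate to $h_2$ completes the argument.

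The main obstacle is the Sobolev composition estimate $\|\Phi(\chi W_2)-1\|_{H^s} \lesssim 1$ for non-integer $s$: the paper only records the Hölder-scale composition bounds \eqref{eq:Phi_bound}–\eqref{eq:Phi_stability}, and the $H^s$ version for fractional $s$ requires either the Fa\`a di Bruno argument combined with fractional-order product/Leibniz estimates in $H^s$ (valid since $s>d/2$ makes $H^s$ a Banach algebra and $\chi W_2$ is also bounded with bounded $\C^{\lceil s\rceil}$ norm), or a reduction to integer smoothness via interpolation between $H^{\lfloor s\rfloor}$ and $H^{\lceil s\rceil}$. All constants then depend only on $\Phi$ (through $f_{\min}$ and the range $M\|\chi\|_\infty$), $\chi$, $p$, $M$, $s$, $d$, $\mathrm{vol}(\O)$ and the wavelet basis, as claimed; in particular none depend on $N$ or $J$.
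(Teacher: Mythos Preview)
Your proposal is correct and follows essentially the same route as the paper's proof: both reduce to $\|(g-1)-P_J(g-1)\|_p$, split via $w=W_1+W_2$ into a Lipschitz piece $\Phi(\chi w)-\Phi(\chi W_2)$ of size $O(\eps_N)$ in $L^\infty$ (plus its $P_J$-image, controlled by the uniform $L^p$-boundedness of $P_J$) and a smooth piece $\Phi(\chi W_2)-1$, to which the wavelet bias bound $\|h-P_Jh\|_p\lesssim 2^{-J(s-d/2+d/p)}\|h\|_{H^s}$ is applied. For the two steps you flag as requiring outside input, the paper simply cites: (i) Corollary~3.3.1 of Cohen together with the embedding $H^s\hookrightarrow B^{s-d/2+d/p}_{p2}$ for the $L^p$ wavelet approximation, and (ii) Theorem~4(i) of Sickel for the Sobolev composition estimate $\|\Phi^M\circ(\chi W_2)-1\|_{H^s}\lesssim \|\chi W_2\|_{H^s}+\|\chi W_2\|_{H^s}^s$, which covers non-integer $s>d/2\vee 1$ directly and resolves your ``main obstacle'' without needing Fa\`a di Bruno or interpolation by hand. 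Two small slips to tidy: your parenthetical justification $\|h-P_Jh\|_\infty\lesssim 2^{Jd/2}\|h-P_Jh\|_2$ is not the right inequality (the residual lives at levels $>J$, not in $V_J$), and you only know $\|W_2\|_\infty\lesssim M$ via Sobolev embedding, not $\|W_2\|_{\C^4}\lesssim M$---but neither matters, since the cited results need only $H^s\cap L^\infty$ control of $\chi W_2$.
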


\begin{proof}
Let $w\in \mathcal{W}_N$ and write $w = w_1+w_2$ as in \eqref{eq:WN}. Then for any $x\in \O$,
\begin{equation}\label{eq:proj_decomp}
\begin{split}
 |\Phi \circ (\chi w)(x) -1 - P_J[ \Phi \circ (\chi w)-1](x)| 
& \leq |\Phi \circ (\chi w_1 + \chi w_2)(x) - \Phi \circ (\chi w_2)(x)| \\
& \quad + | \Phi \circ (\chi w_2)(x) -1- P_J[\Phi \circ (\chi w_2)-1](x) | \\
& \quad + |P_J[\Phi \circ (\chi w_2)-1](x) - P_J[\Phi \circ (\chi w_1 + \chi w_2)-1](x)|.
\end{split}
\end{equation}
By the mean-value theorem, the first term in \eqref{eq:proj_decomp} is bounded by $e^{\|\chi\|_\infty (\|w_1\|_\infty + \|w_2\|_\infty)} \|\chi\|_\infty \|w_1\|_\infty \lesssim \|w_1\|_\infty$ since $\|w_2\|_\infty \lesssim \|w_2\|_{H^s} \leq M$ by the Sobolev embedding theorem.

Let $K_J(x,y)= 2^{Jd} \sum_{k \in \mathbb{Z}} \phi(2^Jx-k) \phi(2^Jy-k)$ denote the wavelet projection kernel on all of $\R^d$, where $\phi$ is the Daubechies father wavelet. By the localization property of wavelets, $\int_{\R^d}|K_J(x,y)| dy \lesssim 1$ for all $x\in \R^d$.
Since the full $L^2$-projection operator $\widetilde{P}_J:L^2(\R^d) \to L^2(\R^d)$ onto $\{ \psi_{l,r}: l \leq J, r\in \mathbb{Z}^d\}$ satisfies $\widetilde{P}_J[g](x) = \int_{\R^d} K_J(x,y)g(y) dy$, and $P_J$ and $\widetilde{P}_J$ coincide for functions whose support is contained in $\O_0$, the third term in \eqref{eq:proj_decomp} can be expanded as
\begin{align*}
& |P_J[\Phi \circ (\chi w_2)-1](x) - P_J[\Phi \circ (\chi w_1 + \chi w_2)-1](x)| \\
& \quad = \left| \int_{\R^d} K_J(x,y)  \big( \Phi(\chi(y) w_2(y)) - \Phi (\chi(y)w_1(y)+ \chi(y)w_2(y)) \big) dy \right| \\
& \quad \leq C_{M,\chi} \|\chi\|_\infty \|w_1\|_\infty \int_{\R^d} |K_J(x,y)| dy \lesssim \|w_1\|_\infty,
\end{align*}
where we applied the mean-value theorem as above for the first inequality.
Combining the bounds for \eqref{eq:proj_decomp}, taking $p^{th}$-powers of everything and integrating over the bounded domain $\O$ then yields
$$\|\Phi \circ (\chi w) -1 - P_J[ \Phi \circ (\chi w)-1]\|_p^p \lesssim \|w_1\|_\infty^p + \| \Phi \circ (\chi w_2) -1- P_J[\Phi \circ (\chi w_2)-1] \|_p^p.$$
Since $\supp(\Phi \circ (\chi w_2)-1) \subseteq \O_0$, we may replace the $L^p(\O)$ norm in the last term by $L^p(\R^d)$ and the projection operator $P_J$ by $\widetilde{P}_J$. Applying Corollary 3.3.1 of \cite{Cohen03}, we then get
$$\| \Phi \circ (\chi w_2) -1- \widetilde{P}_J[\Phi \circ (\chi w_2)-1] \|_{L^p(\R^d)} \lesssim 2^{-Jt} \|\Phi \circ (\chi w_2)-1\|_{B_{pq}^t}$$
for any $1 \leq q \leq \infty$. Using the embedding $H^s(\R^d) = B_{22}^s(\R^d) \hookrightarrow B_{p2}^{s-d/2+d/p}$ for any $p\geq 2$ (\cite{GN16}, Proposition 4.3.10), we may set $t = s-d/2+d/p$ and $q=2$ in the last display and replace the $B_{pq}^t$-norm by an $H^s$-norm. Since $\|\chi w_2\|_\infty \leq C_{\chi,s,d} M$ by the Sobolev embedding theorem, we may also replace $\Phi$ in the last composition by some smooth function $\Phi^M$ that coincides with $\Phi$ on $[-CM,CM]$, but is bounded and all of whose derivatives are uniformly continuous (depending on $M$). Since $s>d/2\vee 1$, Theorem 4(i) of \cite{S92} yields $\|\Phi^M \circ (\chi w_2)-1 \|_{H^s} \lesssim \|\chi w_2\|_{H^s} + \|\chi w_2\|_{H^s}^s \lesssim \|w_2\|_{H^s} + \|w_2\|_{H^s}^s$. In summary, for $p\geq 2$,
$$\|\Phi \circ (\chi w) -1 - P_J[ \Phi \circ (\chi w)-1] \|_p \lesssim \|w_1\|_\infty + 2^{-J(s-d/2+d/p)} (\|w_2\|_{H^s} + \|w_2\|_{H^s}^s),$$
where the constant depends on $\chi,p,M,s,d,\text{vol}(\O)$. Substituting in the bounds for $w_1$ and $w_2$ coming from the definition \eqref{eq:WN} of $\mathcal{W}_N$ then gives the result.
\end{proof}

\subsection{Proof of Corollaries \ref{cor:Matern} and \ref{cor:gauss_wav}: examples of Gaussian process priors}

\begin{proof}[Proof of Corollary \ref{cor:Matern}]
The Mat\'ern process on all of $\R^d$ has RKHS norm equal to $\|\cdot\|_{\H_V} = \|\cdot\|_{H^s(\R^d)}$ (\cite{GV17}, Chapter 11), and hence its restriction to $\O$ has RKHS norm equal to $H^s(\O)$ by Exercise 2.6.5 of \cite{GN16}. Moreover, by Proposition I.4 of \cite{GV17}, $V$ has a version who sample paths are in $\C^{r}(\O)$ $\Pi$-almost surely for any $r<s-d/2$. In particular, $\C^r(\O)$ is a separable linear subspace of $\C^4(\O)$ for any $r>4$, a suitable choice of which exists as soon as $s>d/2+4$. The Mat\'ern process thus satisfies Condition \ref{cond:RKHS} for $s>d/2+4$ since its RKHS norm equals the $H^s(\O)$-norm.

We may therefore apply Theorem \ref{thm:GP} to the Mat\'ern process when $f_0 \in \C^s(\O)$. Indeed, since $w_0 = \Phi^{-1}(f_0) \in \C^s(\O)$ by \eqref{eq:Phi_bound} (which applies also to $\Phi^{-1}$) and $\supp(w_0) \subseteq K$, we may take the constant sequence $v_{0,N} = w_0 \in H^s(\O) = \H_V$, which trivially satisfies the conditions (i)-(iii) in Theorem \ref{thm:GP}.
\end{proof}

\begin{proof}[Proof of Corollary \ref{cor:gauss_wav}]
The Gaussian wavelet series prior \eqref{eq:gauss_wavelet} has RKHS equal to
$$\H_V = \left\{ h = \sum_{l=J_0}^J \sum_{r \in R_l} h_{lr} \psi_{lr}: \|h\|_{\H_V}^2 = \sum_{l=J_0}^J \sum_{r \in R_l} 2^{2ls} h_{lr}^2 < \infty \right\},$$
with $\|h\|_{\H_V} = \infty$ if $h$ is not a truncated sum up to level $J$. Hence $\H_V \hookrightarrow H^s(\O)$ by the wavelet characterization of $L^2$-Sobolev norms. Draws from the prior \eqref{eq:gauss_wavelet} are finite sums of wavelets $(\psi_{lr})$, hence $V$ will almost surely have the same H\"older smoothness as $(\psi_{lr})$. Thus taking a smooth enough wavelet basis, we have that $V$ is supported on a separable linear subspace of $\C^4(\O)$, thereby satisfying Condition \ref{cond:RKHS}.

We may therefore apply Theorem \ref{thm:GP}. For $f_0 \in \C^s(\O)$, $w_0 = \Phi^{-1}(f_0) \in \C^s(\O)$ by \eqref{eq:Phi_bound} (which applies also to $\Phi^{-1}$) and $\supp(w_0) \subseteq K \subseteq \O_0$. Set
$$v_{0,N}(x) = \sum_{l=J_0}^J \sum_{r \in R_l} \langle w_0,\psi_{lr} \rangle_2 \psi_{lr}(x),$$
the wavelet projection at resolution level $J$. In particular, $\|v_{0,N}\|_{\H_V} \simeq \|v_{0,N}\|_{H^s} \leq \|w_0\|_{H^s}<\infty$. Moreover, using that $w_0 = \chi w_0$ and the standard Besov space embeddings $B_{\infty\infty}^r \hookrightarrow \C^r \hookrightarrow B_{\infty 1}^r$ for all $r\geq 0$, we have
\begin{align*}
\|w_0 - \chi v_{0,N}\|_{\C^k} \lesssim \|w_0 -v_{0,N}\|_{\C^k} & \lesssim \sum_{l=J+1}^\infty 2^{l(k+d/2)} \max_{r \in R_l} |\langle w_0,\psi_{lr} \rangle_2| \\
& \lesssim \|w_0\|_{B_{\infty\infty}^s} \sum_{j=J+1}^\infty 2^{-l(s-k)} \lesssim \|w_0\|_{\C^s} 2^{-J(s-k)} \lesssim N^{-\frac{s-k}{2s+d}}
\end{align*}
since $2^J \simeq N^\frac{1}{2s+d}$. Together these verify conditions (i)-(iii) of Theorem \ref{thm:GP}, thereby giving the desired result.
\end{proof}

\section{Appendix}

\subsection{Proof of Theorem \ref{thm:lower_bd}: minimax lower bound} \label{sec: proof minimax lower bound}
We go along a classical scheme via the usual Assouad cube technique, see for instance Tsybakov \cite{tsybakov2004introduction}.\\ 

\noindent {\it Step 1:} Pick a cube $[c_1,c_2]^d \subset \mathcal{K}$ and a smooth wavelet $\psi$ with compact support on $\R$. Set $\psi_{J,k} = 2^{J/2}\psi(2^J\cdot-k)$ and denote by $K_J \subset \mathbb Z$ a maximal set of indices $k$ such that $\mathrm{supp}(\psi_{Jk}) \subset [c_1,c_2]$ for all $k \in K_J$, and $\mathrm{supp}(\psi_{Jk}) \cap \,\mathrm{supp}(\psi_{Jk'}) = \emptyset$ for every $k,k' \in K_J$ with $k \neq k'$. For $\ell = (\ell_1,\ldots ,\ell_d) \in (K_J)^d$, set
$$\psi_{J,(\ell_1,\ldots, \ell_d)}(x) = \prod_{i = 1}^d\psi_{J\ell_i}(x^i),\qquad x=(x^1,\ldots, x^d) \in \O,$$
so that $\mathrm{supp}(\psi_{J,(\ell_1,\ldots, \ell_d)}) \subset [c_1,c_2]^d$ and $\mathrm{supp}(\psi_{J,(\ell_1,\ldots, \ell_d)}) \cap \,\mathrm{supp}(\psi_{J,(\ell_1',\ldots, \ell_d')}) = \emptyset$ for
$(\ell_1,\ldots, \ell_d) \neq (\ell'_1, \ldots, \ell_d')$ in $(K_J)^d$. For $\varepsilon  = (\eps_{(\ell_1,\dots,\ell_d)}: \ell_1,\dots,\ell_d \in K_J) \in \{-1, 1\}^{|K_J|^d}$, we set
$$f_{\varepsilon}(x) = 1+\gamma \sum_{(\ell_1,\ldots, \ell_d)\in (K_J)^d}\varepsilon_{(\ell_1, \ldots, \ell_d)}\psi_{J(\ell_1,\ldots, \ell_d)}(x),\qquad \varepsilon_{(\ell_1, \ldots, \ell_d)} \in \{-1, 1\}.$$
Taking $\gamma \simeq N^{-1/2}$ and $2^{J} \simeq N^{1/(2s+d)}$, we have $f_\varepsilon \in \mathcal F_0 \cap \{f: \|f\|_{\mathcal C^s} \leq M\}$ by choosing prefactors sufficiently small to accomodate constants.\\

\noindent {\it Step 2:} For an arbitrary estimator $\widehat f_N$, we repeat the classical argument to bound the maximal $L^2$ risk. We have
\begin{align*}
&\sup_{f \in  \mathcal F_0 \cap \{f: \|f\|_{\mathcal C^s} \leq M\}} \E_f[\|\widehat f_N-f\|_2^2]  \geq \max_{\varepsilon \in \{-1, 1\}^{|K_J|^d}} \E_{f_\varepsilon}[\|\widehat f_N-f_\varepsilon\|_2^2] \\
& \geq \frac{1}{2^{|K_J|^d}}\sum_{(\ell_1,\ldots, \ell_d) \in (K_J)^d} \sum_{\varepsilon \in \{-1, 1\}^{|K_J|^d}}\int_{\mathrm{supp}(\psi_{J(\ell_1,\ldots, \ell_d)})}\E_{f_\varepsilon}[|\widehat f_N(x)-f_\varepsilon(x)|^2] dx. 
 \end{align*}
 For a given configuration $(\ell_1, \ldots, \ell_d)$, we write $\varepsilon = (\check{\varepsilon}_{(\ell_1, \ldots, \ell_d)}, \varepsilon_{(\ell_1, \ldots, \ell_d)})\in \{-1,1\}^{|K_J|^d}$ with $\check{\varepsilon}_{(\ell_1,\dots,\ell_d)} \in \{-1,1\}^{|K_J|^d-1}$ and $\varepsilon_{(\ell_1, \ldots, \ell_d)} \in \{-1,1\}$, possibly after reordering. It follows that 
 \begin{align*}
&  \sum_{\varepsilon \in \{-1, 1\}^{|K_J|^d}}\int_{\mathrm{supp}(\psi_{J(\ell_1,\ldots, \ell_d)})}\E_{f_\varepsilon}[ |\widehat f_N(x)-f_\varepsilon(x)|^2] dx \\
= & \sum_{\check{\varepsilon}_{(\ell_1, \ldots, \ell_d)} \in \{-1, 1\}^{|K_J|^d-1}}\int_{\mathrm{supp}(\psi_{J(\ell_1,\ldots, \ell_d)})}\Big(\E_{f_{(\check{\varepsilon}_{(\ell_1, \ldots, \ell_d)}, +1)}}[ |\widehat f_N(x)-f_{(\check{\varepsilon}_{(\ell_1, \ldots, \ell_d)}, +1)}(x)|^2] \\
&\hspace{3cm}+\E_{f_{(\check{\varepsilon}_{(\ell_1, \ldots, \ell_d)}, -1)}}[|\widehat f_N(x)-f_{(\check{\varepsilon}_{(\ell_1, \ldots, \ell_d)}, -1)}(x)|^2] \Big)dx \\
\geq &  \tfrac{1}{2} \sum_{\check{\varepsilon}_{(\ell_1, \ldots, \ell_d)} \in \{-1, 1\}^{|K_J|^d-1}}\int_{\mathrm{supp}(\psi_{J(\ell_1,\ldots, \ell_d)})}\big|f_{(\check{\varepsilon}_{(\ell_1, \ldots, \ell_d)}, +1)}(x)-f_{(\check{\varepsilon}_{(\ell_1, \ldots, \ell_d)}, -1)}(x)\big|^2dx \\
&\hspace{3cm}\times e^{-\rho} \left( 1-(1-e^{-\rho})^{-1} \|\mathbb P_{f_{(\check{\varepsilon}_{(\ell_1, \ldots, \ell_d)}, 1)}}-\mathbb P_{f_{(\check{\varepsilon}_{(\ell_1, \ldots, \ell_d)}, -1)}}\|_{TV} \right)
  \end{align*}
for any $\rho >0$, as follows by triangle inequality and classical information bounds, see {\it e.g.} \cite{GHR04} Section 5. From
$$\int_{\mathrm{supp}(\psi_{J(\ell_1,\ldots, \ell_d)})} \big|f_{(\check{\varepsilon}_{(\ell_1, \ldots, \ell_d)}, +1)}(x)-f_{(\check{\varepsilon}_{(\ell_1, \ldots, \ell_d)}, -1)}(x)\big|^2 dx= 4\gamma^2 \|\psi_{J(\ell_1, \ldots, \ell_d)}\|_2^2 \simeq \gamma^{2},$$
we infer
$$\sup_{f \in  \mathcal F_0 \cap \{f: \|f\|_{\mathcal C^s} \leq M\}}\E_f [\|\widehat f_N-f\|_2^2]   \gtrsim 2^{Jd}\gamma^2 \simeq N^{-2s/(2s+d)}$$
by taking $\rho$ sufficiently large and using $|K_J| \gtrsim 2^J$, provided the total variation is bounded away from $1$ uniformly in $\varepsilon$.\\

\noindent {\it Step 3:} Write $g_\varepsilon = f_{(\check{\varepsilon}_{(\ell_1, \ldots, \ell_d)}, 1)}$ and $h_\varepsilon = f_{(\check{\varepsilon}_{(\ell_1, \ldots, \ell_d)}, -1)}$. It remains to bound 
$$\|\mathbb P_{g_\varepsilon}-\mathbb P_{h_\varepsilon}\|_{TV}^2 \leq 2 \E_{g_{\varepsilon}}\Big[ \log \frac{d\mathbb P_{g_\varepsilon}}{d\mathbb P_{h_{\varepsilon}}}\Big] = 2N  \E_{g_\varepsilon} \left[ \log \frac{p_{g_\varepsilon,D}(X_{0}, X_{D})}{p_{h_\varepsilon,D}(X_{0}, X_{D})} \right] $$
by Pinsker's inequality and the fact that the diffusion is stationary for the last equality. We plan to apply a slight modification of Theorem \ref{thm:variance} having $\varepsilon_N = N^{-1/2}$ for subsets $\mathcal C_N$ of the form $\|f-f_0\|_2 \leq \varepsilon_N$ and $\|f-f_0\|_\infty \rightarrow 0$, the rest being unchanged. This is simply done by revisiting Step 1 in the proof of Proposition \ref{thm: expectation proxy}. Indeed, it suffices to notice that 
$$\E_{f_0}\Big[\log \frac{f_0(X_{(i-1)D})}{f(X_{(i-1)D})}-\Big(\frac{f_0(X_{(i-1)D})}{f(X_{(i-1)D})}-1\Big)\Big] \lesssim \|f_0-f\|_2^2$$
for $\|f-f_0\|_\infty$ sufficiently small, 
as follows from $|\log \kappa-(\kappa-1)| \leq C(\kappa-1)^2$ in a neighbourhood of $\kappa=1$, together with the property $f \geq f_{\min}$. Here, we used the fact that since $X_0$ is distributed according to the invariant measure of the process, which is uniform over $\mathcal 0$, the process is stationary and $X_{(i-1)D}$ is uniformly distributed over $\mathcal O$ as well. We apply the result to $f_0=g_{\varepsilon}$ and $f=h_{\varepsilon}$ and check that under the sampling assumption  $D = N^{-a}$ with $1/2< a < 1$, we have $E_N \lesssim N^{-1}$. The result follows.

\subsection{Proof of Theorem \ref{thm: minimaxity final}} \label{sec: proof minimaxity} 
If $f \in \mathcal F_0$ and $\|f\|_{\mathcal C^s} \leq M$, we have $0 \leq f(x) \leq M$ for every $x \in \O$. By construction, we also have $0 \leq \widehat f_N^\star(x) \leq M$ for every $x \in \O$. It follows that
$$\E_f [\|\widehat{f}_N^\star-f\|_2^2] \leq  \E_f [\|\widehat{f}_N^\star-f\|_2^2{\bf 1}_{\mathcal{B}_N} ]+2M^2\mathbb P_f(\mathcal B_N^c),$$
where $\mathcal{B}_N$ is the event in \eqref{eq: def nice event}.
A glance at the proof of Lemma \ref{lem: compdisccont} shows that we in fact have the rate 
$$\sup_{f \in \mathcal F}\PP_{f}(\mathcal B_N^c) \lesssim e^{-2^{Jd}} \lesssim N^{-1},$$
hence the second term has a negligible order. Next, $|\widehat f_N^\star(x)-f(x)| = |\min(\widehat f_N(x), M)_+-f(x)| \leq |\widehat f_N(x)-f(x)|$ since $0 \leq f(x) \leq M$. Therefore,
$$\E_f [\|\widehat{f}_N^\star-f\|_2^2{\bf 1}_{\mathcal{B}_N} ] \leq  \E_f [\|\widehat{f}_N-f\|_2^2{\bf 1}_{\mathcal{B}_N} ] \lesssim \|f-\overline{P}_Jf\|^2_{L^2}+ \E_f [\|\widehat f_N-\overline{P}_Jf\|_{L^2}^2 {\bf 1}_{\mathcal{B}_N}] ,$$
where $\overline{P}_Jf$ denotes the projection \eqref{eq:PJ}. The first term is of order $2^{-2Js} \simeq N^{-2s/(2s+d)}$ by wavelet approximation since the Daubechies wavelet has at least $\lfloor s \rfloor-1$ vanishing moments, and thus has the right order. The second term also has the right order as a direct consequence of Theorem \ref{thm:exp_inequality}.

\subsection{Proof of Lemma \ref{lem: moment bis}} \label{sec: technical bounds}
Set $$Y_t = X_0+\int_0^t b(\nabla f(X_s), X_s)ds+\int_0^t \sqrt{2f(X_s)}dB_s.$$
Then $(X,\ell)$ is solution of the Skorokhod problem for $(\mathcal O, n, Y)$ since
$$X_t = Y_t + \int_0^t n(X_s)d\ell_s,$$
and the Skorokhod mapping: $\Gamma: Y \mapsto X = \Gamma Y$ is uniquely defined see {\it e.g.} Lions and Sznitman \cite{lions1984stochastic}. Moreover, we have 
$\Omega_\delta(\Gamma Y) \leq \Omega_\delta(Y),$
where 
$$\Omega_\delta(\psi) = \sup_{0 \leq s,t \leq T, |t-s| \leq \delta}|\psi(t)-\psi(s)|$$
denotes the modulus of continuity of $\psi:[0,T] \rightarrow \R^d$, 
so that it suffices to prove \eqref{eq: moment bound}  for $Y$ instead of $X$. 
We bound the first term by
\begin{align*}
\sup_{s \leq u \leq t}\Big|\int_{s}^{u}\nabla f(X_v)dv\Big|^{p}  \leq C_{\mathfrak b}^p(1+\|f\|_{\mathcal C^1})^p (t-s)^p
\end{align*}
uniformly over $b \in \mathcal B$. 
For the second term, we apply the Burkholder-Davis-Gundy inequality to obtain
\begin{align*}
\E_{f}\left[\sup_{s \leq u \leq t}\left|\int_s^{u}\sqrt{2f(X_v)}dB_v\right|^p\,\bigg|\,\mathcal F_s\right] & \leq c_\star^{p/2}d^{p/2}p^{p/2}\E_{f}\left[\left|\int_s^{t}f(X_u)du\right|^{p/2}\,\bigg|\,\mathcal F_s\right] \\
&\leq c_\star^{p/2}d^{p/2}p^{p/2}\|f\|_{\infty}^{p/2}(t-s)^{p/2}.
\end{align*}

\subsection{Proof of Lemma \ref{lem: hitting time}}\label{sec: hitting time proof}
Consider the process $(\widetilde X_t)_{t \geq 0}$ defined in \eqref{eq: diff whole space}. On $\mathcal A_{i,D}$, the two paths $(X_t-X_{(i-1)D})_{t \geq (i-1)D}$ and $(\widetilde X_t-X_{(i-1)D})_{t \geq (i-1)D}$ coincide for $t- (i-1)D  \leq \tau_{i,D}$. Using that $\mathsf{dist}(\mathcal O_0^{\delta}, \partial \mathcal O) \geq \delta/2$, it follows that
\begin{align*}
\PP_{f_0}(\tau_{i,D} \geq D, \mathcal A_{i,D}) & \leq \PP_{f_0}\Big(\sup_{0 \leq t \leq D} |\widetilde X_{(i-1)D+t}-X_{(i-1)D}|^2  > \tfrac{\delta}{2}, X_{(i-1)D} \in \mathcal O_0^\delta\Big) \\
& \leq \sup_{x \in \mathcal O_0^\delta} \PP_{f_0}\Big(\sup_{0 \leq t \leq D} |\widetilde X_t-x|^2  > \tfrac{\delta}{2}\,|\,X_0=x\Big),
\end{align*}
by the Markov property. By It\^o's formula, we further have 
\begin{align*}
\sup_{0 \leq t \leq D}|\widetilde X_t-x|^2 & \leq 2 \mathfrak b^2(1+\mathsf{diam}(\mathcal O)+ \|f_0\|_{\mathcal C^1})^2D^2+2\sup_{0 \leq t \leq D}\Big|\int_0^t \big(2f_0(\widetilde X_s)\big)^{1/2}dB_s\Big|^2,
\end{align*}
and hence
\begin{align*}
\PP_{f_0}\Big(\sup_{0 \leq t \leq D} |\widetilde X_t-x|^2 \geq  \tfrac{1}{4}\delta^2\,|\,X_0=x\Big) & \leq \PP_{f_0}\Big(\sup_{0 \leq t \leq D} |\mathcal M_{t}(g)|^2 \geq  \tfrac{1}{4}\delta^2- C_{f_0} D^2\,|\,X_0=x\Big) \\
&\leq \PP_{f_0}\Big(\sup_{0 \leq t \leq D} |\mathcal M_{t}(f_0)|^2 \geq \tfrac{1}{5}\delta^2\,|\,X_0=x\Big)
\end{align*}
for small enough $D$, where $\mathcal M_{t}(f_0) = \int_0^t (2f_0(\widetilde X_s))^{1/2}dB_s$ is a $d$-dimensional martingale with predictable bracket $\langle \mathcal M^i{f_0}, \mathcal M^j(f_0)\rangle_t = 2\int_0^tf_0(\widetilde X_s)ds \delta_{ij}\leq 2\|f_0\|_\infty t\,\delta_{ij}$. It follows that for any $x \in \O_0^{\delta}$:
\begin{align*}
\PP_{f_0}\Big(\sup_{0 \leq t \leq D} |\widetilde X_t-x|^2 \geq  \tfrac{1}{4}\delta^2\,|\,X_0=x\Big) & \leq \sum_{i = 1}^d \PP_{f_0}\big(\sup_{0 \leq t \leq D}|\mathcal M_{t}^i(f_0)| \geq \tfrac{1}{\sqrt{5 d}}\delta\,|\,X_0=x\big) \nonumber \\
& \leq 2d\exp\Big(-\frac{\delta^2}{20d\|f_0\|_\infty D}\Big), \label{eq:hitting_exponential}
\end{align*}
where we have used Bernstein's inequality for continuous local martingale $(\mathcal M_t)_{t \geq 0}$: 
$$\PP\Big(\sup_{0 \leq s \leq t} \mathcal M_s \geq x, \langle \mathcal M\rangle_t \leq y \Big) \leq e^{-x^2/(2y)}$$
(see e.g. \cite{RY99} p.154).

\subsection{A generic chaining inequality and the event $\mathcal B_N$} \label{sec: generic chaining}

In several places, we require the following concentration inequality, which is based on a chaining argument for stochastic processes with \textit{mixed tails}, see Theorem 2.2.28 in \cite{T14} or Theorem 3.5 in \cite{D15}.

\begin{lem}\label{lem:Dirksen}
Let $S\subset L^2(\mathcal O)$ be a finite-dimensional linear space with dimension $\dim(S) =m<\infty$. Let $Z=(Z(g):g\in \mG)$ be a stochastic process with index set $\mG \subset S$ satisfying $\|\mG\|_2 = \sup_{g,h\in \mG} \|g-h\|_{2}<\infty$. Suppose that $Z$ satisfies for all $g,h \in \mG$ and $t\geq 0$,
\begin{equation*}
\PP \left( |Z(g) - Z(h)| \geq t \right) \leq C \exp \left( - \frac{ct^2}{\alpha^2 \|g-h\|_{2}^2 + \beta \|g-h\|_{2} t}\right),
\end{equation*}
where $C,c,\alpha,\beta>0$. Then for any $g_0 \in \mG$ and all $u \geq 1$,
\begin{equation*}
\PP \left( \sup_{g \in \mG} |Z(g)-Z(g_0)| \geq K \big( m \beta + \sqrt{m} \alpha + \beta u + \alpha \sqrt{u} \big) \|\mG\|_2 \right) \leq e^{-u},
\end{equation*}
where $K$ depends only on $C$ and $c$.
\end{lem}

\begin{proof}
Let $(e_k:1\leq k \leq m)$ be an $L^2$-orthonormal basis for $S$. After rearranging, the Bernstein inequality implies that for $C_1,C_2>0$ large enough, depending only on $C$ and $c$,
$$\PP \left( |Z(g) - Z(h)| \geq C_1\beta u \|g-h\|_{2} + C_2 \alpha \sqrt{u} \|g-h\|_{2} \right) \leq 2e^{-u}$$
holds for all $u\geq 0$. The process $Z$ therefore has a \textit{mixed tail} as in (3.8) of Dirksen \cite{D15} with respect to the metrics
\begin{align*}
d_1(g,h) &= C_1\beta \|g-h\|_{2} = C_1 \beta |\langle g-h, e_k \rangle_{2}|_{\R^m},\\
d_2(g,h) &= C_2 \alpha \|g-h\|_{2} = C_2 \alpha  |\langle g-h, e_k \rangle_{2} |_{\R^m},
\end{align*}
where the last equalities follow from Parseval's theorem and $|\cdot|_{\R^m}$ is the Euclidean norm on $\R^m$. The $d_1$- and $d_2$-diameters of $\mG$ are therefore bounded by $\Delta_{d_1}(\mG) \leq C_1 \beta \|\mG\|_2$ and $\Delta_{d_2}(\mG) \leq C_2 \alpha\|\mG\|_2$, respectively. Theorem 3.5 of \cite{D15} thus yields that for absolute constants $C',c'>0$ and any $u\geq 1$,
\begin{equation*}
\PP \left( \sup_{g \in \mG} |Z(g)-Z(g_0)| \geq C'\gamma_1(\mG,d_1) + C'\gamma_2(\mG,d_2) +c'C_1\beta\|\mG\|_2u + c' C_2 \alpha \|\mG\|_2 \sqrt{u}  \right) \leq e^{-u},
\end{equation*}
where $\gamma_1,\gamma_2$ are the `generic chaining functionals' defined in \cite{D15}. In particular, (2.3) of \cite{D15} gives the estimate $\gamma_\alpha(\mG,d_i) \leq C(\alpha) \int_0^\infty (\log \overline{N}(\mG,d_i,\eps))^{1/\alpha} d\eps$, where $\overline{N}(\mG,d_i,\eps)$ denotes the covering number of the set $\mG$, i.e. the smallest number of $d_i$-balls of radius $\eps$ needed to cover $\mG$. Since $\mG\subset S$ and $S$ is a finite-dimensional linear space, the second-last display implies that
$$\overline{N}(\mG,d_1,\eps) \leq \overline{N}(\{ x \in \R^m : |x|_{\R^m} \leq \|\mG\|_2 \}, C_1\beta  |\cdot|_{\R^m}, \eps ),$$
and likewise for $d_2$. Using for instance Proposition 4.3.34 in the textbook \cite{GN16} to bound the log-covering number of a ball in finite-dimensional Euclidean space, 
\begin{align*}
\gamma_1(\mG, d_1) & \lesssim \int_0^\infty \log \overline{N}(\{ x \in \R^m : |x|_{\R^m} \leq \|\mG\|_2 \}, C_1\beta  |\cdot|_{\R^m}, \eps )  d\eps \\
& \leq \int_0^{\Delta_{d_1}(\mG)} m \log \left( \frac{3\Delta_{d_1}(\mG)}{\eps} \right)  d\eps = m \Delta_{d_1}(\mG)  \int_0^{1} \log (3/u)  du  \lesssim m C_1 \beta \|\mG\|_2.
\end{align*}
Similarly,
\begin{align*}
\gamma_2(\mG, d_2) & \lesssim \int_0^{\Delta_{d_2}(\mG)} \sqrt{ m\log \left( \frac{3\Delta_{d_2}(\mG)}{\eps} \right)  } d\eps  = \sqrt{m} \Delta_{d_2}(\mG)  \int_0^{1} \sqrt{\log (3/u)}  du \lesssim \sqrt{m} C_2 \alpha \|\mG\|_2.
\end{align*}
Substituting these bounds into the exponential inequality derived above then gives the result.
\end{proof}


\subsection{Proof of Lemma \ref{lem: compdisccont}} \label{sec: proof of good event}

We have
\begin{equation}\label{eq:bilinear_prob}
\begin{split}
\PP_f(\mathcal B_N^c) & \leq  \PP_f\big(\exists g \in V_J: (1-\kappa) \|g\|_{2}^2 > |g|_N^2 \big) +\PP_f\big(\exists g \in V_J: |g|_N^2 > (1+\kappa) \|g\|_{2}^2  \big) \\
& = \PP_f\left(\exists g \in V_J: \|g\|_2 = 1,\;  1-\kappa > |g|_N^2 \right) +\PP_f\left(\exists g \in V_J: \|g\|_2 = 1,\;  |g|_N^2>1+\kappa \right) \\
& \leq 2 \PP_f\left(\sup_{g \in V_J:\|g\|_2 = 1} \left| |g|_N^2-1\right| >  \kappa \right)\\
&= 2 \PP_f \left(\sup_{g \in V_J:\|g\|_2 =1} \left| \sum_{i = 1}^N\big(g(X_{iD})^2-\E_f[g(X_{iD})^2]\big) \right| >  \kappa N\right)
\end{split}
\end{equation}
since $\E_f[g(X_{iD})^2]=\int_{\O}g(x)^2 dx =1$ by stationarity. Write $\mG_J =  \{ g\in V_J: \|g\|_2 = 1\}$ and define the process $Z_N = (Z_N(g): g \in \mG_J)$ by
$$Z_N(g)= \sum_{i = 1}^N\big(g(X_{iD})^2-\E_f[g(X_{iD})^2]\big).$$
For $g,h \in \mG_J$, we have $\E_f [g(X_{iD})^2 -  h(X_{iD})^2] = 0$ and $\var_f(g(X_{iD})^2 - h(X_{iD})^2) = \|g^2-h^2\|_{2}^2$. Since $(X_0,X_D,\dots,X_{ND})$ is a stationary reversible Markov chain whose spectral gap is lower bounded by $rD$ by \eqref{eq:spectral_gap}, Theorem 3.3 of \cite{Paulin2015} (cf. (3.21)) yields 
yields
\begin{align*}
\PP_f\big( |Z_N(g)-Z_N(h)| \geq t\big) & =  \PP_f\left( \left| \sum_{i = 1}^N  g(X_{iD})^2-h(X_{iD})^2 \right| \geq t\right) \\
& \leq 2\exp\left(-\frac{ \lambda_{f} D t^2 }{4N\|g^2-h^2\|_{2}^2+10\|g^2-h^2\|_\infty t}\right) \\
& \leq 2\exp\left(-\frac{ \lambda_{f} D t^2 }{4N \|g+h\|_\infty^2 \|g-h\|_{2}^2+10\|g+h\|_\infty \|g-h\|_\infty t}\right) \\
& \leq 2\exp\left(-\frac{ C  t^2 }{N 2^{Jd} D^{-1} \|g-h\|_{2}^2+ 2^{Jd} D^{-1} \|g-h\|_{2} t}\right)
\end{align*}
using that $\|g\|_\infty \leq C 2^{Jd/2} \|g\|_{2} = C2^{Jd/2}$ for $g \in \mG_J$, and where $C$ depends on $r$, and hence $f_{\min}$ and $\O$ by \eqref{eq:spectral_gap}. Applying Lemma \ref{lem:Dirksen} with $m = \dim(V_J) = O(2^{Jd})$, $\|\mG_J\|_2  = \sup_{g,h\in \mG} \|g-h\|_{2}\leq 2$, $\alpha^2 = N2^{Jd}D^{-1}$, $\beta=2^{Jd}D^{-1}$ and $u=2^{Jd}$ thus gives
\begin{equation*}
\PP_f \left( \sup_{g \in \mG_J} |Z_N(g)| \geq C \left(  2^{2Jd} D^{-1} + 2^{Jd} N^{1/2}D^{-1/2} \right) \right) \leq e^{-2^{Jd}} \to 0
\end{equation*}
since $2^J \to \infty$ as $N\to\infty$. Since $2^{Jd} =$ by assumption, the right-hand side within the last probability is $o(N)$ as $N \to \infty$. Together with \eqref{eq:bilinear_prob} this gives the result.

\bigskip

\noindent \textbf{Acknowledgements:} We thank two reviewers for their many helpful comments which significantly improved the manuscript. We are also grateful to Richard Nickl for insightful discussions that motivated the genesis of this project.

\noindent \textbf{Conflict of interest:} The authors declare that they have no conflict of interest.

\noindent \textbf{Data availability:} Not applicable as no datasets were generated or analysed in this work.

\bibliography{HF_diffusion_refs}{}

\begin{thebibliography}{10}

\bibitem{A18}
{\sc Abraham, K.}
\newblock Nonparametric {B}ayesian posterior contraction rates for scalar
  diffusions with high-frequency data.
\newblock {\em Bernoulli 25}, 4A (2019), 2696--2728.

\bibitem{AWS22}
{\sc Aeckerle-Willems, C., and Strauch, C.}
\newblock {Sup-norm adaptive drift estimation for multivariate nonreversible
  diffusions}.
\newblock {\em The Annals of Statistics 50}, 6 (2022), 3484 -- 3509.

\bibitem{aronson1967bounds}
{\sc Aronson, D.~G.}
\newblock Bounds for the fundamental solution of a parabolic equation.
\newblock {\em Bulletin of the American Mathematical society 73}, 6 (1967),
  890--896.

\bibitem{azencott1984densite}
{\sc Azencott, R.}
\newblock Densit{\'e} des diffusions en temps petit: d{\'e}veloppements
  asymptotiques.
\newblock In {\em S{\'e}minaire de Probabilit{\'e}s XVIII 1982/83}. Springer,
  1984, pp.~402--498.

\bibitem{BGL14}
{\sc Bakry, D., Gentil, I., and Ledoux, M.}
\newblock {\em Analysis and geometry of {M}arkov diffusion operators}, vol.~348
  of {\em Grundlehren der mathematischen Wissenschaften [Fundamental Principles
  of Mathematical Sciences]}.
\newblock Springer, Cham, 2014.

\bibitem{barlow1982semi}
{\sc Barlow, M.~T., and Yor, M.}
\newblock Semi-martingale inequalities via the garsia-rodemich-rumsey lemma,
  and applications to local times.
\newblock {\em Journal of Functional Analysis 49}, 2 (1982), 198--229.

\bibitem{BRO18}
{\sc Batz, P., Ruttor, A., and Opper, M.}
\newblock Approximate {B}ayes learning of stochastic differential equations.
\newblock {\em Phys. Rev. E 98\/} (2018), 022109.

\bibitem{berline2003heat}
{\sc Berline, N., Getzler, E., and Vergne, M.}
\newblock {\em Heat kernels and {D}irac operators}.
\newblock Springer Science \& Business Media, 2003.

\bibitem{BPRF06}
{\sc Beskos, A., Papaspiliopoulos, O., Roberts, G.~O., and Fearnhead, P.}
\newblock Exact and computationally efficient likelihood-based estimation for
  discretely observed diffusion processes.
\newblock {\em J. R. Stat. Soc. Ser. B Stat. Methodol. 68}, 3 (2006), 333--382.
\newblock With discussions and a reply by the authors.

\bibitem{B20}
{\sc Bilal, A.}
\newblock Small-time expansion of the {F}okker-{P}lanck kernel for space and
  time dependent diffusion and drift coefficients.
\newblock {\em J. Math. Phys. 61}, 6 (2020), 061517, 20.

\bibitem{BFS16}
{\sc Bladt, M., Finch, S., and S\o~rensen, M.}
\newblock Simulation of multivariate diffusion bridges.
\newblock {\em J. R. Stat. Soc. Ser. B. Stat. Methodol. 78}, 2 (2016),
  343--369.

\bibitem{cattiaux2017invariant}
{\sc Cattiaux, P., Le{\'o}n, J.~R., and Prieur, C.}
\newblock Invariant density estimation for a reflected diffusion using an euler
  scheme.
\newblock {\em Monte Carlo Methods and Applications 23}, 2 (2017), 71--88.

\bibitem{Cohen03}
{\sc Cohen, A.}
\newblock {\em Numerical analysis of wavelet methods}, vol.~32 of {\em Studies
  in Mathematics and its Applications}.
\newblock North-Holland Publishing Co., Amsterdam, 2003.

\bibitem{CGCR07}
{\sc Comte, F., Genon-Catalot, V., and Rozenholc, Y.}
\newblock Penalized nonparametric mean square estimation of the coefficients of
  diffusion processes.
\newblock {\em Bernoulli 13}, 2 (2007), 514--543.

\bibitem{CS96}
{\sc Constantine, G.~M., and Savits, T.~H.}
\newblock A multivariate {F}a\`a di {B}runo formula with applications.
\newblock {\em Trans. Amer. Math. Soc. 348}, 2 (1996), 503--520.

\bibitem{C03}
{\sc Coulhon, T.}
\newblock Off-diagonal heat kernel lower bounds without {P}oincar\'{e}.
\newblock {\em J. London Math. Soc. (2) 68}, 3 (2003), 795--816.

\bibitem{DR07}
{\sc Dalalyan, A., and Rei\ss, M.}
\newblock Asymptotic statistical equivalence for ergodic diffusions: the
  multidimensional case.
\newblock {\em Probab. Theory Related Fields 137}, 1-2 (2007), 25--47.

\bibitem{D08}
{\sc Daners, D.}
\newblock Domain perturbation for linear and semi-linear boundary value
  problems.
\newblock In {\em Handbook of differential equations: stationary partial
  differential equations. {V}ol. {VI}}, Handb. Differ. Equ.
  Elsevier/North-Holland, Amsterdam, 2008, pp.~1--81.

\bibitem{D89}
{\sc Davies, E.~B.}
\newblock {\em Heat kernels and spectral theory}, vol.~92 of {\em Cambridge
  Tracts in Mathematics}.
\newblock Cambridge University Press, Cambridge, 1989.

\bibitem{DO99}
{\sc de~la Llave, R., and Obaya, R.}
\newblock Regularity of the composition operator in spaces of {H}\"{o}lder
  functions.
\newblock {\em Discrete Contin. Dynam. Systems 5}, 1 (1999), 157--184.

\bibitem{victor1999general}
{\sc De~{L}a {P}ena, V.~H.}
\newblock A general class of exponential inequalities for martingales and
  ratios.
\newblock {\em The Annals of Probability 27}, 1 (1999), 537--564.

\bibitem{D15}
{\sc Dirksen, S.}
\newblock Tail bounds via generic chaining.
\newblock {\em Electron. J. Probab. 20\/} (2015), no. 53, 29.

\bibitem{friedman2010stochastic}
{\sc Friedman, A.}
\newblock Stochastic differential equations and applications.
\newblock In {\em Stochastic differential equations}. Springer, 2010,
  pp.~75--148.

\bibitem{GGV00}
{\sc Ghosal, S., Ghosh, J.~K., and van~der Vaart, A.~W.}
\newblock Convergence rates of posterior distributions.
\newblock {\em Ann. Statist. 28}, 2 (2000), 500--531.

\bibitem{GV17}
{\sc Ghosal, S., and van~der Vaart, A.}
\newblock {\em Fundamentals of nonparametric {B}ayesian inference}, vol.~44 of
  {\em Cambridge Series in Statistical and Probabilistic Mathematics}.
\newblock Cambridge University Press, Cambridge, 2017.

\bibitem{GN11}
{\sc Gin\'{e}, E., and Nickl, R.}
\newblock Rates of contraction for posterior distributions in {$L^r$}-metrics,
  {$1\leq r\leq\infty$}.
\newblock {\em Ann. Statist. 39}, 6 (2011), 2883--2911.

\bibitem{GN16}
{\sc Gin\'e, E., and Nickl, R.}
\newblock {\em Mathematical foundations of infinite-dimensional statistical
  models}.
\newblock Cambridge University Press, New York, 2016.

\bibitem{GN20}
{\sc Giordano, M., and Nickl, R.}
\newblock Consistency of {B}ayesian inference with {G}aussian process priors in
  an elliptic inverse problem.
\newblock {\em Inverse Problems 36}, 8 (2020), 085001, 35.

\bibitem{GR20}
{\sc Giordano, M., and Ray, K.}
\newblock {Nonparametric {B}ayesian inference for reversible multidimensional
  diffusions}.
\newblock {\em Ann. Statist. 50}, 5 (2022), 2872 -- 2898.

\bibitem{GHR04}
{\sc Gobet, E., Hoffmann, M., and Rei\ss, M.}
\newblock Nonparametric estimation of scalar diffusions based on low frequency
  data.
\newblock {\em Ann. Statist. 32}, 5 (2004), 2223--2253.

\bibitem{GS14}
{\sc Gugushvili, S., and Spreij, P.}
\newblock Nonparametric {B}ayesian drift estimation for multidimensional
  stochastic differential equations.
\newblock {\em Lith. Math. J. 54}, 2 (2014), 127--141.

\bibitem{H99}
{\sc Hoffmann, M.}
\newblock Adaptive estimation in diffusion processes.
\newblock {\em Stochastic Process. Appl. 79}, 1 (1999), 135--163.

\bibitem{KP92}
{\sc Kloeden, P.~E., and Platen, E.}
\newblock {\em Numerical solution of stochastic differential equations},
  vol.~23 of {\em Applications of Mathematics (New York)}.
\newblock Springer-Verlag, Berlin, 1992.

\bibitem{KVVVZ11}
{\sc Knapik, B.~T., van~der Vaart, A.~W., and van Zanten, J.~H.}
\newblock Bayesian inverse problems with {G}aussian priors.
\newblock {\em Ann. Statist. 39}, 5 (2011), 2626--2657.

\bibitem{LL99}
{\sc Li, W.~V., and Linde, W.}
\newblock Approximation, metric entropy and small ball estimates for {G}aussian
  measures.
\newblock {\em Ann. Probab. 27}, 3 (1999), 1556--1578.

\bibitem{LM73}
{\sc Lions, J.-L., and Magenes, E.}
\newblock {\em Non-homogeneous boundary value problems and applications. {V}ol.
  {III}}.
\newblock Die Grundlehren der mathematischen Wissenschaften, Band 183.
  Springer-Verlag, New York-Heidelberg, 1973.
\newblock Translated from the French by P. Kenneth.

\bibitem{lions1984stochastic}
{\sc Lions, P.-L., and Sznitman, A.-S.}
\newblock Stochastic differential equations with reflecting boundary
  conditions.
\newblock {\em Communications on pure and applied Mathematics 37}, 4 (1984),
  511--537.

\bibitem{MRS20}
{\sc Mariucci, E., Ray, K., and Szab\'{o}, B.}
\newblock A {B}ayesian nonparametric approach to log-concave density
  estimation.
\newblock {\em Bernoulli 26}, 2 (2020), 1070--1097.

\bibitem{N22}
{\sc Nickl, R.}
\newblock {\em Bayesian non-linear statistical inverse problems}.
\newblock Zurich Lectures in Advanced Mathematics. EMS Press, Berlin, 2023.

\bibitem{N22b}
{\sc Nickl, R.}
\newblock Consistent inference for diffusions from low frequency measurements.
\newblock {\em Ann. Statist. 52}, 2 (2024), 519--549.

\bibitem{NR20}
{\sc Nickl, R., and Ray, K.}
\newblock Nonparametric statistical inference for drift vector fields of
  multi-dimensional diffusions.
\newblock {\em Ann. Statist. 48}, 3 (2020), 1383--1408.

\bibitem{NS17}
{\sc Nickl, R., and S\"ohl, J.}
\newblock Nonparametric {B}ayesian posterior contraction rates for discretely
  observed scalar diffusions.
\newblock {\em Ann. Statist. 45}, 4 (2017), 1664--1693.

\bibitem{PPRS12}
{\sc Papaspiliopoulos, O., Pokern, Y., Roberts, G.~O., and Stuart, A.~M.}
\newblock Nonparametric estimation of diffusions: a differential equations
  approach.
\newblock {\em Biometrika 99}, 3 (2012), 511--531.

\bibitem{Paulin2015}
{\sc Paulin, D.}
\newblock Concentration inequalities for {M}arkov chains by {M}arton couplings
  and spectral methods.
\newblock {\em Electron. J. Probab. 20\/} (2015), no. 79, 32.

\bibitem{pinelis1994optimum}
{\sc Pinelis, I.}
\newblock Optimum bounds for the distributions of martingales in banach spaces.
\newblock {\em The Annals of Probability\/} (1994), 1679--1706.

\bibitem{PSVZ13}
{\sc Pokern, Y., Stuart, A.~M., and van Zanten, J.~H.}
\newblock Posterior consistency via precision operators for {B}ayesian
  nonparametric drift estimation in {SDE}s.
\newblock {\em Stochastic Process. Appl. 123}, 2 (2013), 603--628.

\bibitem{RW06}
{\sc Rasmussen, C.~E., and Williams, C. K.~I.}
\newblock {\em Gaussian processes for machine learning}.
\newblock Adaptive Computation and Machine Learning. MIT Press, Cambridge, MA,
  2006.

\bibitem{R13}
{\sc Ray, K.}
\newblock Bayesian inverse problems with non-conjugate priors.
\newblock {\em Electron. J. Stat. 7\/} (2013), 2516--2549.

\bibitem{Ray16}
{\sc Ray, K., and Schmidt-Hieber, J.}
\newblock Minimax theory for a class of nonlinear statistical inverse problems.
\newblock {\em Inverse Problems 32}, 6 (2016), 065003, 29.

\bibitem{RS17}
{\sc Ray, K., and Schmidt-Hieber, J.}
\newblock A regularity class for the roots of nonnegative functions.
\newblock {\em Ann. Mat. Pura Appl. (4) 196}, 6 (2017), 2091--2103.

\bibitem{Reiss20}
{\sc Reiss, M., and Schmidt-Hieber, J.}
\newblock Nonparametric {B}ayesian analysis of the compound {P}oisson prior for
  support boundary recovery.
\newblock {\em Ann. Statist. 48}, 3 (2020), 1432--1451.

\bibitem{RY99}
{\sc Revuz, D., and Yor, M.}
\newblock {\em Continuous martingales and {B}rownian motion}, third~ed.,
  vol.~293 of {\em Grundlehren der Mathematischen Wissenschaften [Fundamental
  Principles of Mathematical Sciences]}.
\newblock Springer-Verlag, Berlin, 1999.

\bibitem{R14}
{\sc Royen, T.}
\newblock A simple proof of the {G}aussian correlation conjecture extended to
  some multivariate gamma distributions.
\newblock {\em Far East J. Theor. Stat. 48}, 2 (2014), 139--145.

\bibitem{RBO13}
{\sc Ruttor, A., Batz, P., and Opper, M.}
\newblock Approximate {G}aussian process inference for the drift function in
  stochastic differential equations.
\newblock In {\em Advances in Neural Information Processing Systems\/} (2013),
  vol.~26, pp.~2040--2048.

\bibitem{vdMS17b}
{\sc Schauer, M., van~der Meulen, F., and van Zanten, H.}
\newblock Guided proposals for simulating multi-dimensional diffusion bridges.
\newblock {\em Bernoulli 23}, 4A (2017), 2917--2950.

\bibitem{schmisser2012non}
{\sc Schmisser, E.}
\newblock Non-parametric estimation of the diffusion coefficient from noisy
  data.
\newblock {\em Statistical inference for stochastic processes 15}, 3 (2012),
  193--223.

\bibitem{S92}
{\sc Sickel, W.}
\newblock Superposition of functions in {S}obolev spaces of fractional order.
  {A} survey.
\newblock In {\em Partial differential equations, {P}art 1, 2 ({W}arsaw,
  1990)}, vol.~2 of {\em Banach Center Publ., 27, Part 1}. Polish Acad. Sci.
  Inst. Math., Warsaw, 1992, pp.~481--497.

\bibitem{S18}
{\sc Strauch, C.}
\newblock Adaptive invariant density estimation for ergodic diffusions over
  anisotropic classes.
\newblock {\em Ann. Statist. 46}, 6B (2018), 3451--3480.

\bibitem{S10}
{\sc Stuart, A.~M.}
\newblock Inverse problems: a {B}ayesian perspective.
\newblock {\em Acta Numer. 19\/} (2010), 451--559.

\bibitem{T14}
{\sc Talagrand, M.}
\newblock {\em Upper and lower bounds for stochastic processes}.
\newblock Springer, Heidelberg, 2014.

\bibitem{T95}
{\sc Triebel, H.}
\newblock {\em Interpolation theory, function spaces, differential operators},
  second~ed.
\newblock Johann Ambrosius Barth, Heidelberg, 1995.

\bibitem{tsybakov2004introduction}
{\sc Tsybakov, A.~B.}
\newblock {\em Introduction to nonparametric estimation}.
\newblock Springer Series in Statistics.

\bibitem{vdMS17}
{\sc van~der Meulen, F., and Schauer, M.}
\newblock Bayesian estimation of discretely observed multi-dimensional
  diffusion processes using guided proposals.
\newblock {\em Electron. J. Stat. 11}, 1 (2017), 2358--2396.

\bibitem{VMVZ13}
{\sc van~der Meulen, F., and van Zanten, H.}
\newblock Consistent nonparametric {B}ayesian inference for discretely observed
  scalar diffusions.
\newblock {\em Bernoulli 19}, 1 (2013), 44--63.

\bibitem{VMVVVZ06}
{\sc van~der Meulen, F.~H., van~der Vaart, A.~W., and van Zanten, J.~H.}
\newblock Convergence rates of posterior distributions for {B}rownian
  semimartingale models.
\newblock {\em Bernoulli 12}, 5 (2006), 863--888.

\bibitem{VVVZ07}
{\sc van~der Vaart, A., and van Zanten, H.}
\newblock Bayesian inference with rescaled {G}aussian process priors.
\newblock {\em Electron. J. Stat. 1\/} (2007), 433--448.

\bibitem{VWVZ16}
{\sc van Waaij, J., and van Zanten, H.}
\newblock Gaussian process methods for one-dimensional diffusions: optimal
  rates and adaptation.
\newblock {\em Electron. J. Stat. 10}, 1 (2016), 628--645.

\end{thebibliography}
\bibliographystyle{acm}

\end{document}